\newcommand{\Gl}{\mathrm{GL}}
\newcommand{\PP}{\mathbb{P}}
\newcommand{\RR}{\mathbb{R}}
\newcommand{\TT}{\mathbb{T}}
\newcommand{\ZZ}{\mathbb{Z}}
\newcommand{\cC}{\mathcal{C}}
\newcommand{\cD}{\mathcal{D}}
\newcommand{\cE}{\mathcal{E}}
\newcommand{\cF}{\mathcal{F}}
\newcommand{\cO}{\mathcal{O}}
\newcommand{\cU}{\mathcal{U}}
\newcommand{\Bcal}{\mathcal{B}}
\newcommand{\Nbb}{\mathbb{N}}
\newcommand{\Zbb}{\mathbb{Z}}
\newcommand{\Rbb}{\mathbb{R}}
\newcommand{\Leb}{\mathrm{Leb}}
\newcommand{\pr}{\mathrm{pr}}
\newcommand{\supp}{\mathrm{supp}}
\newcommand{\Prob}{\mathrm{Prob}}
\newcommand{\Lip}{\mathrm{Lip}}
\newcommand{\CS}[2]{\hypertarget{#1#2}{#1_{#2}}}
\newcommand{\C}[2]{\hyperlink{#1#2}{#1_{#2}}}
\newcommand{\OLD}[1]{}
\def\be#1\ee{\begin{align}\begin{split} #1 \end{split}\end{align}}
\def\beq#1\eeq{\begin{align*}\begin{split} #1 \end{split}\end{align*}}
\newcommand{\Sl}{\rm SL}
\newlist{enumlemma}{enumerate}{3} 
\setlist[enumlemma]{label*={ (\alph*)}, ref= {(\alph*)} }
\newlist{enumcount}{enumerate}{3} 
\setlist[enumcount]{label*={ (\arabic*)}, ref= {(\arabic*)} }
\DeclareMathOperator{\1}{\mathbf 1}
\long\def\symbolfootnote[#1]#2{\begingroup\def\thefootnote{\fnsymbol{footnote}}
\footnote[#1]{#2}\endgroup}
\DeclarePairedDelimiterX{\inner}[2]{\langle}{\rangle}{#1, #2}
\DeclareFontFamily{U}{wncy}{}
\DeclareFontShape{U}{wncy}{m}{n}{<->wncyr10}{}
\DeclareSymbolFont{mcy}{U}{wncy}{m}{n}
\DeclareMathSymbol{\Sh}{\mathord}{mcy}{"58}
\renewcommand{\phi}{\varphi}
\long\def\symbolfootnote[#1]#2{\begingroup\def\thefootnote{\fnsymbol{footnote}}
\footnote[#1]{#2}\endgroup}
\newcommand{\FFC}{{\mathfrak{C}}}
\newcommand{\SL}{{\rm SL}}
\newcommand{\Diff}{{\rm Diff}}
\newcommand{\Ucal}{ {\mathcal U}}
\newcommand{\Tbb}{{\mathbb T}}
\newcommand\restrict[1]{\raisebox{-.0ex}{$\upharpoonright$}_{#1}}
		\theoremstyle{Theorem}
\newtheorem{theorem}{Theorem} [section]
\newtheorem{ltheorem}{Theorem}
	\newtheorem{prop}[theorem]{Proposition} 
\newtheorem{claim}[theorem]{Claim}
\newtheorem{cor}[theorem]{Corollary}
\newtheorem{lem}[theorem]{Lemma}
\newtheorem*{theorem*}{Theorem}
\newtheorem{eg}[theorem]{Example}
		\theoremstyle{definition}
\newtheorem{Remark}[theorem]{Remark}
	\newtheorem{dfn}[theorem]{Definition}
	\newtheorem*{Not*}{Notation}
	\numberwithin{equation}{section}
	\numberwithin{theorem}{section}
		\theoremstyle{remark}
\newtheorem{rmk}[theorem]{\textbf{Remark}}
		\theoremstyle{remark}
  \theoremstyle{theorem}
\begin{document}

\title{Absolute continuity of stationary measures}
\author{Aaron Brown}
\address{Northwestern University, 2033 Sheridan Road, Evanston, IL 60208}
\email{\href{mailto:awb@northwestern.edu}{awb@northwestern.edu}}

\author{Homin Lee}
\address{Northwestern University, 2033 Sheridan Road, Evanston, IL 60208}
\email{\href{mailto:homin.lee@northwestern.edu}{homin.lee@northwestern.edu}}
\author{Davi Obata}
\address{Brigham Young University,  275 TMCB Brigham Young University Provo, UT 84602 }
\email{\href{mailto:davi.obata@mathematics.byu.edu}{davi.obata@mathematics.byu.edu}}

\author{Yuping Ruan}
\address{Northwestern University, 2033 Sheridan Road, Evanston, IL 60208}
\email{\href{mailto:ruanyp@northwestern.edu; ruanyp@umich.edu}{ruanyp@northwestern.edu; ruanyp@umich.edu}}

%\symbolfootnote[0]{\it Preliminary version.  Updated \today.}
\maketitle

%%% Abstract

\begin{abstract}
Let $f$ and $g$ be two volume preserving, Anosov diffeomorphisms on $\mathbb{T}^2$, sharing  common stable and unstable cones. In this paper, we find conditions for the existence of (dissipative) neighborhoods of $f$ and $g$,  $\mathcal{U}_f$ and $\mathcal{U}_g$, with the following property: for any probability measure $\mu$, supported on the union of these neighborhoods, and verifying certain conditions, the unique $\mu$-stationary SRB measure is absolutely continuous with respect to the ambient Haar measure.  Our proof is inspired in the work of Tsujii for partially hyperbolic endomorphisms \cite{Tsujii-bigpaper}. We also obtain some equidistribution results using the main result of \cite{Brown-Hertz}. 
\end{abstract}

%%% TOC
\let\oldtocsection=\tocsection
\let\oldtocsubsection=\tocsubsection 
\let\oldtocsubsubsection=\tocsubsubsection
 
\renewcommand{\tocsection}[2]{\hspace{0em}\oldtocsection{#1}{#2}}
\renewcommand{\tocsubsection}[2]{\hspace{1em}\oldtocsubsection{#1}{#2}}
\renewcommand{\tocsubsubsection}[2]{\hspace{2em}\oldtocsubsubsection{#1}{#2}}
\setcounter{tocdepth}{2}

\tableofcontents

\section{Introduction}
Given a smooth action of a group $\Gamma$ on a manifold $M$, many natural questions arise including the extent to which it is possible to classify all orbit closures and all invariant or stationary measures.  For many homogeneous actions, the classification of orbit closures is very related to various number-theoretic questions.   

\subsection{Priori results in homogeneous, Teichm\"uller, and smooth dynamics}
As a motivating result, we recall a simple case of the main result of the seminal work  Benoist and Quint, \cite{MR2831114}.  As formulated, this also follows from the main result in the work by Bourgain, Furman, Lindenstrauss, and Mozes, \cite{MR2340439}.   Let $S=\{A_1, \cdots, A_k\}\in \Gl(n,\Zbb)$ and let $\Gamma$ denote the sub-semi-group generated by $S$.  We view each $A\in S$ and thus $\Gamma$ as acting on the torus $\mathbb{T}^n$ by automorphisms.  Given a probability measure $\mu$ on $S$, we say a probability measure $\nu$ on $\mathbb{T}^n$ is $\mu$-stationary if $\int A_* \nu \ d \mu(A)= \nu$.  
Assuming that (1) $\mu(A_i)>0$ for every $A_i\in S$ and (2) $\Gamma$, the semigroup generated by the support of $\mu$, is Zariski dense in  $\SL(n,\Rbb)$, in \cite{MR2831114} (see also \cite{MR2340439}) it is shown that:
\begin{enumerate}
\item every $\mu$-stationary probability measure $\nu$ on $\mathbb{T}^n$ is $\Gamma$-invariant;
\item every $\Gamma$-invariant probability measure $\nu$ on $\mathbb{T}^n$ is either finitely supported or the Haar measure; 
\item every $\Gamma$-orbit in $\mathbb{T}^n$ is either finite or dense.  
\end{enumerate}
Similar results for actions on semisimple homogeneos spaces $H/\Lambda$ and when the Zariski closure of $\Gamma$ is semisimple are obtained in \cite{MR2831114,MR3037785}.  

In the setting Teichm\"uller dynamics,   the (affine) action of $\Sl(2, \Rbb)$  on a strata $\mathcal H(\kappa)$ in the moduli space of  abelian differentials  on a surface was studied in the breakthrough work by Eskin and Mirzakhani in \cite{MR3814652}.  For the action of the upper-triangular subgroup $P\subset \Sl(2,\Rbb)$ and for certain measures  $\nu$ on   $\Sl(2,\Rbb)$, the  $P$-invariant and $\nu$-stationary measures are shown in \cite{MR3814652} to be  $\Sl(2,\Rbb)$-invariant and to coincide with  natural volume forms on affine submanifolds.   The classification of $P$-invariant measures was used in the work of Eskin, Mirzakhani, and Mohammadi (\cite{MR3418528}) to show that    $P$- and $\Sl(2,\Rbb)$-orbit closures are affine submanifolds.    

Beyond homogeneous or affine dynamics, for smooth ($C^2$ or $C^\infty$) actions on a manifold $M$ generated by finitely many diffeomoprhisms $\{f_1, \dots, f_k\}$, one would like a criterion on $\Gamma = \langle f_1, \dots, f_k \rangle$ that ensures a classification of stationary and invariant measures and of orbit closures.  For $C^2$-actions on surfaces,  \cite{Brown-Hertz}, the first author of this paper and Rodriguez Hertz provided a mechanism to classify all ergodic stationary measures satisfying a certain dynamical criterion (hyperbolicity and non-deterministicy of the associated Lyapunov flag) 
as either (1) finitely supported or (2)  satisfying the SRB property.   Such a classification is particularly useful when the generators $\{f_i\}$ are assumed to be volume preserving; in this case, all ergodic stationary measures satisfying the dynamical criterion are either finitely supported or an ergodic component of the ambient volume.  

One checkable criterion on a volume-preserving action that implies the dynamical criterion of \cite{Brown-Hertz} holds for every stationary measure is the uniform expansion criterion (see Section \ref{sec:equidistribution}). Under this criterion, in \cite{chung},  Chung used the classification in \cite{Brown-Hertz}  to classify all orbit closures for any volume-preserving, uniformly expanding $C^2$ action on a connected surface by showing all orbits are either finite or dense.  

\subsection{Overview of new results}
This paper continues the study of smooth ($C^2$) actions on surfaces.  
One question left unresolved in  \cite{Brown-Hertz} in the setting of dissipative group actions is the question of when a stationary measure satisfying the SRB property is absolutely continuous with respect to an ambient volume.  

Our main result in this paper provides a large class of group actions on the 2-torus $\mathbb{T}^2$ for which every ergodic stationary measure is either finitely supported or aboslutely continuous with respect to the ambient Haar measure.  We emphasize that we work in the dissipative setting where our generators $\{f_1, \dots, f_k\}$ are not assumed to preserve a common volume measure (although they are perturbations of volume-preserving diffeomorphisms).  Our hypotheses  also imply that each generator $f_i$ is Anosov and that the generators  satisfy a common cone condition.  

From a classification of all stationary measures we adapt the arguements of \cite{chung} to similarly classify all orbit closures (by showing all orbits are finite or dense). 

The arguments in this paper closely follow the arguments in the work of Tsujii, \cite{Tsujii-bigpaper}, where the author studied the existence and the absolute continuity of physical measures for partially hyperbolic endomorphisms on $\Tbb^{2}$ (see, also \cite{MR1862809}). 
\subsection{Setting and statement of the main theorem}\label{section.simplified}

Let $m$ be a smooth probability measure on $\mathbb{T}^2$, and let $\mathrm{Diff}^2_m(\mathbb{T}^2)$ be the set of $C^2$-diffeomorphisms preserving $m$.  Fix two diffeomorphisms $f,g\in \mathrm{Diff}^2_m(\mathbb{T}^2)$. Consider the following conditions: 
\begin{enumerate}
\item[\textbf{(C1)}] $f$ and $g$ are Anosov diffeomorphisms having a splitting $T\mathbb{T}^2  = E^s_{\star} \oplus E^u_\star$, for $\star= f,g$. 
\item[\textbf{(C2)}] There exist continuous cone fields $x \mapsto \mathcal{C}^s_x$ and $x\mapsto \mathcal{C}^u_x$,  Riemannian metrics $q^s$ and $q^u$ on $\TT^2$ and positive constants $0<\lambda_{s,-} < \lambda_{s,+}<1< 
\lambda_{u,-}< \lambda_{u,+}$ with the following property: for any $x\in \mathbb{T}^2$ and any non-zero vectors $v^s\in \mathcal{C}^s_{x}$ and $v^u  \in \mathcal{C}^u_x$,
\begin{itemize}
\item $Df^{-1}(x) \mathcal{C}^s_x \subset \mathcal{C}^s_{f^{-1}(x)}$ and $\lambda_{s,+}^{-1}\|v^s\|_{q^s} < \|Df^{-1}(x) v^s\|_{q^s}< \lambda_{s,-}^{-1}\|v^s\|_{q^s}$; and 
\item $Dg^{-1}(x) \mathcal{C}^s_x \subset \mathcal{C}^s_{g^{-1}(x)}$ and $\lambda_{s,+}^{-1}\|v^s\|_{q^s} < \|Dg^{-1}(x) v^s\|_{q^s} <\lambda_{s,-}^{-1}\|v^s\|_{q^s}$, 
where $\|\cdot\|_{q^s}$ denotes the norm induced by the Riemannian metric $q^s$.
\item $Df(x) \mathcal{C}^u_x \subset \mathcal{C}^u_{f(x)}$ and $\lambda_{u,-}\|v^u\|_{q^u} < \|Df(x) v^u\|_{q^u}< \lambda_{u,+}\|v^u\|_{q^u}$; and 
\item $Dg(x) \mathcal{C}^u_x \subset \mathcal{C}^u_{g(x)}$ and $\lambda_{u,-}\|v^u\|_{q^u}< \|Dg(x) v^u\|_{q^u}< \lambda_{u,+}\|v^u\|_{q^u}$, where $\|\cdot\|_{q^u}$ denotes the norm induced by the Riemannian metric $q^u$.
\end{itemize}
\item[\textbf{(C3)}] For every $x\in \mathbb{T}^2$,  $ E^u_f(x) \cap E^u_g(x) = \{0\}$.
\item[\textbf{(C4)}] For every $x\in \mathbb{T}^2$,  $ E^s_f(x) \cap E^s_g(x) = \{0\}$.
\end{enumerate}
Throughout this paper, we always assume that $f,g$ satisfies \textbf{(C1)} and \textbf{(C2)}. 
It is worth to mention that there are plenty of pairs $(f,g)$ of diffeomorphisms on $\Tbb^{2}$ that satisfies the conditions \textbf{(C1)} to \textbf{(C4)} as follows:

\begin{eg}
Let \[A=\begin{bmatrix} 2&1\\1&1 \end{bmatrix}, \textrm{ and } B=\begin{bmatrix} 3&5\\1&2 \end{bmatrix}.\] $A$ and $B$ induce toral automorphisms $L_{A}$ and $L_{B}$ on $\mathbb{T}^{2}$, respectively. We trivialize the tangent bundle $T\mathbb{T}^{2}$ to $\mathbb{T}^{2}\times \mathbb{R}^{2}$. It is easy to check that $L_{A}$ and $L_{B}$ satisfy conditions \textbf{(C1)} to \textbf{(C4)} with $\mathcal{C}^{s}=\{(x,y)\in \mathbb{R}^{2}:x<0<y \textrm{ or } y<0<x\}$ and $\mathcal{C}^{u}=\{(x,y)\in\mathbb{R}^{2}: 0<y<x \textrm{ or } x<y<0\}$. (Both $q^s$ and $q^u$ in \textbf{(C3)} and \textbf{(C4)} can be chosen as the standard product metric on $\TT^2$.)

 One can find more linear examples in toral automorphisms that satisfies conditions \textbf{(C1)} to \textbf{(C4)} as follows; Let $A,B$ be two hyperbolic matrices in $\textrm{GL}(2,\mathbb{Z})$. For hyperbolic matrix $L$ in $\textrm{GL}(2,\Rbb)$, let $E^{s}_{L}$ and $E^{u}_{L}$ be the eigenspace with eigenvalue smaller than $1$ and bigger than $1$, respectively. If two hyperbolic matrices $A$ and $B$ do not commute, either the pair $(A,B)$ or the pair $(A,B^{-1})$ has the property that there is an open cone $\mathcal{C}$ in $\Rbb^{2}$ such that $\mathcal{C}$ contains $E^{s}_{A}$ and $E^{s}_{B}$, and does not contain $E^{u}_{A}$ and $E^{u}_{B}$. This implies that for all sufficiently large $n$, the pair $(A^{n},B^{n})$ or $(A^{n},B^{-n})$ induces the pair of toral automorphisms satisfying conditions \textbf{(C1)} to \textbf{(C4)}. 

Also, it is easy to see that conditions \textbf{(C1)} to \textbf{(C4)} are $C^1$-open.  In particular, assume that $f,g\in \Diff^{2}_{m}(\mathbb{T}^{2})$ satisfies \textbf{(C1)} to \textbf{(C4)}. Then,  there are $C^1$-neighborhoods, $\widetilde{\mathcal{U}}_{f}$ and $\widetilde{\mathcal{U}}_{g}$,  of $f$ and $g$, respectively, in $\Diff^{1}(\mathbb{T}^{2})$ such that every pair $(\widetilde{f},\widetilde{g}) \in \widetilde{\mathcal{U}}_f \times \widetilde{\mathcal{U}}_g$ satisfies \textbf{(C1)} to \textbf{(C4)}. Hence, for instance, many non-linear examples can be found from the perturbation of linear examples.

 \end{eg}

\begin{dfn}\label{dfn.stationary}
Given a probability measure $\mu$ on $\mathrm{Diff}^2(\mathbb{T}^2)$, a probability measure $\nu$ on $\mathbb{T}^2$ is \emph{$\mu$-stationary}, if 
\[\nu=\mu*\nu:=\int_{\Omega} \left(f_{*}\nu\right) d\mu(f).\]
\end{dfn}
The operation $\mu*\nu$ is called the convolution of $\mu$ and $\nu$. Also, $\mu^{*n}*\nu$ is defined by $n$ times convolution. Our main theorem is about improving the SRB property to absolute continuity with respect to the Lebesgue class.

\begin{ltheorem}\label{thm.mainthm}
Let $f$ and $g$ verify the conditions \textbf{(C1)}-\textbf{(C3)} above. For any $\beta \in (0, \frac{1}{2}]$, there exist $C^2$-neighborhoods of $f$ and $g$ in $\mathrm{Diff}^2(\mathbb{T}^2)$, $\mathcal{U}_f$ and $\mathcal{U}_g$, with the following property: let $\mu$ be any probability measure on $\mathrm{Diff}^2(\mathbb{T}^2)$ such that $\mu(\mathcal{U}_f \cup \mathcal{U}_g) = 1$ and $\mu(\mathcal{U}_\star) \in [\beta, 1-\beta]$, for $\star=f,g$. Then, the unique $\mu$-stationary SRB measure $\nu$ is absolutely continuous with respect to $m$. Moreover, $\frac{d\nu}{dm}$ belongs to $L^2(m)$. 
\end{ltheorem}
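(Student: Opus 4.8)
The plan is to follow Tsujii's transfer-operator strategy from \cite{Tsujii-bigpaper}. Each diffeomorphism in the support of $\mu$ is in particular a local diffeomorphism, so the push-forward of $m$ stays absolutely continuous; writing $h_n:=\tfrac{d(\mu^{*n}*m)}{dm}\ge 0$, one has $\|h_n\|_{L^1(m)}=1$, $h_{n+1}=\mathcal L h_n$ with $\mathcal L:=\int\mathcal L_\phi\,d\mu(\phi)$ the averaged transfer operator and $\mathcal L_\phi u=(u\,|J\phi|^{-1})\circ\phi^{-1}$, and $\mu^{*n}*m=h_n\,dm$ converges weakly to the (unique) stationary SRB measure $\nu$ (this last point being where the SRB structure and its uniqueness are used). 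Hence it suffices to prove $\sup_n\|h_n\|_{L^2(m)}<\infty$, which forces $\nu\ll m$ with $\tfrac{d\nu}{dm}\in L^2(m)$. A single dissipative $\mathcal L_\phi$ need not contract on $L^2$ — already $\|\mathcal L_\phi 1\|_{L^2}=\big(\int|J\phi|^{-1}\,dm\big)^{1/2}\ge 1$ — so the contraction must come from iterating the \emph{averaged} operator while exploiting \textbf{(C1)}--\textbf{(C3)}.

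The core is a Lasota--Yorke inequality for $\mathcal L^N$ with $N$ a large fixed integer: I would prove $\|\mathcal L^N h\|_{L^2}^2\le\theta\,\|h\|_{L^2}^2+C\|h\|_{L^1}^2$ with $\theta<1$. Expanding,
\[
\|\mathcal L^N h\|_{L^2}^2=\iint\big\langle\mathcal L_{\mathbf w}h,\ \mathcal L_{\mathbf w'}h\big\rangle_{L^2(m)}\,d\mu^{\otimes N}(\mathbf w)\,d\mu^{\otimes N}(\mathbf w'),
\]
where for a word $\mathbf w=(\phi_1,\dots,\phi_N)$ we put $\mathcal L_{\mathbf w}=\mathcal L_{\phi_N}\cdots\mathcal L_{\phi_1}$ and $f_{\mathbf w}=\phi_N\circ\dots\circ\phi_1$, and each $\phi_i$ carries a ``colour'' $f$ or $g$ according to which of $\mathcal U_f,\mathcal U_g$ contains it. Taking the neighborhoods $C^1$-small, \textbf{(C1)}--\textbf{(C2)} make every such $f_{\mathbf w}$ uniformly hyperbolic for the common cones $\mathcal C^{s},\mathcal C^{u}$; the density $\mathcal L_{\mathbf w}h$ is smooth along the unstable foliation of $f_{\mathbf w}$ and oscillates transverse to it at scale $\sim\lambda_{s,+}^{N}$. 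I would split the double integral according to the last colour-coordinate at which $\mathbf w$ and $\mathbf w'$ disagree.

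For pairs disagreeing within their last $cN$ colours (``recent branching'', with $c=\tfrac{\log(1/\lambda_{s,+})}{\log(\lambda_{u,-}/\lambda_{s,+})}\in(0,1)$): \textbf{(C3)}, via a cone-contraction argument on pairs of lines inside $\mathcal C^{u}$, gives that $E^u_{f_{\mathbf w}}$ and $E^u_{f_{\mathbf w'}}$ are uniformly transverse; integrating $\langle\mathcal L_{\mathbf w}h,\mathcal L_{\mathbf w'}h\rangle$ along the unstable leaves of $f_{\mathbf w}$, the first factor is smooth while the second genuinely oscillates across the $f_{\mathbf w'}$-foliation, so integration by parts in the leaf direction gains a factor exponentially small in $N$; these pairs contribute at most $\tfrac14\|h\|_{L^2}^2+C\|h\|_{L^1}^2$. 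For all remaining pairs (including the diagonal $\mathbf w=\mathbf w'$) use $|\langle\mathcal L_{\mathbf w}h,\mathcal L_{\mathbf w'}h\rangle|\le\big(\sup_\phi\|\mathcal L_\phi\|_{L^2\to L^2}\big)^{2N}\|h\|_{L^2}^2$; such pairs must have matching colours on their last $\sim cN$ entries, so their $\mu^{\otimes 2N}$-measure is $\lesssim\big((1-\beta)^2+\beta^2\big)^{cN}$. Since $(1-\beta)^2+\beta^2<1$ for $\beta\in(0,1)$ and $\sup_\phi\|\mathcal L_\phi\|_{L^2\to L^2}\to 1$ as the neighborhoods shrink toward the volume-preserving $f,g$ (for volume-preserving $\phi$, $\mathcal L_\phi$ is an $L^2$-isometry), this contribution is $\le\tfrac14\|h\|_{L^2}^2$ once $\mathcal U_f,\mathcal U_g$ are chosen small enough \emph{depending on $\beta$} and $N$ is large; combining, $\|\mathcal L^N h\|_{L^2}^2\le\tfrac12\|h\|_{L^2}^2+C\|h\|_{L^1}^2$. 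This is exactly why the neighborhoods must depend on $\beta$: a small $\beta$ lets $\mu$ be lopsided, forcing the maps to be taken even closer to volume-preserving so as to tame the $L^2$-growth of long one-colour words.

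Iterating the inequality (using $\|h_n\|_{L^1}\equiv1$) gives $\sup_n\|h_n\|_{L^2}<\infty$, hence $\nu\ll m$ and $\tfrac{d\nu}{dm}\in L^2(m)$. I expect the main obstacle to be the recent-branching estimate: converting the \emph{pointwise} transversality \textbf{(C3)} into a uniform, iteration-stable transversality of the random unstable directions, and carrying out the non-stationary-phase/integration-by-parts estimate rigorously for the curved, only finitely smooth unstable foliations, together with the attendant distortion and curvature bounds — this is where $C^2$-smallness of the neighborhoods is needed, beyond the mere $C^1$-openness of \textbf{(C1)}--\textbf{(C3)}, and it is essentially the technical heart of \cite{Tsujii-bigpaper} transported to the present random, area-dissipative setting. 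Finally, \textbf{(C4)} plays no role here — only transversality of \emph{unstable} directions is used, the transversality of $E^u_{f_{\mathbf w}}$ to $E^s_{f_{\mathbf w'}}$ being automatic from \textbf{(C2)} — and the restriction $\beta\le\tfrac12$ is needed merely so that $\mu(\mathcal U_\star)\in[\beta,1-\beta]$ is compatible with $\mu(\mathcal U_f\cup\mathcal U_g)=1$ for both $\star=f,g$.
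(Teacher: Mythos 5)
Your overall blueprint --- a Lasota--Yorke bound for the averaged push-forward operator, obtained by splitting a double integral over word pairs into transverse and non-transverse ones, followed by an iteration giving a uniform $L^2$-type bound and hence $\nu\ll m$ via weak-$*$ convergence and lower semicontinuity --- is the same architecture as the paper's (with $L^2$ densities in place of measures and $\rho$-semi-norms), and your observations that \textbf{(C4)} is not used and that the smallness of the neighborhoods must depend on $\beta$ are both correct. The gap lies in how you propose to control transversality; it is not a cosmetic imprecision but a step that fails as stated.

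You decompose word pairs $(\mathbf w,\mathbf w')$ according to whether they disagree among the last $cN$ letters, claiming this forces uniform transversality of the associated unstable directions. It does not. Take $\mathbf w,\mathbf w'$ agreeing everywhere except at one position $j$ among the last $cN$ entries; the cones $Df^{j}_{\mathbf w}\,\cC^u$ and $Df^{j}_{\mathbf w'}\,\cC^u$ differ only by a single application of an $\cU_f$-map versus an $\cU_g$-map to a common subcone of $\cC^u$, and these two image cones overlap in general --- \textbf{(C3)} separates the asymptotic lines $E^u_f(x)$ and $E^u_g(x)$, not $Df\,\cC^u$ from $Dg\,\cC^u$; only a long one-colour run contracts into an $E^u_f$- or $E^u_g$-neighborhood. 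Pushing through the common tail $\phi_{j+1},\dots,\phi_N$ then contracts both cones toward the same thin slab at rate $\sim(\lambda_{s,+}/\lambda_{u,-})^{N-j}$, so the final angle admits no useful lower bound. The paper's remedy is not a colour-pattern dichotomy but a quantitative probabilistic statement: Proposition~\ref{prop.holderness} (built from the $(k_0,\beta,\varphi)$-separation condition and the iteration in Proposition~\ref{prop.gen holderness}) gives a H\"older bound $\leq C_5\,r^\alpha$ for the distribution on the projective bundle of the random unstable direction, valid down to scale $\eta^n$, and Lemma~\ref{lem:nontrans.est} turns this into an exponential tail bound on the set of word pairs that fail transversality at the critical exponentially small angle threshold. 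The exponent $\alpha<1$ is the crux: non-transversality is exponentially rare but decays more slowly than the cone width itself, and balancing these two rates against the change-of-scale loss from Lemma~\ref{lemma.maininequality1} and Lemma~\ref{generalized large<small} is exactly the content of the Lasota--Yorke estimate Lemma~\ref{lem.lasotayork}. Finally, the gain from transversality is a geometric overlap bound for two thin strips meeting at a lower-bounded angle (the estimate of the term $I$ in Lemma~\ref{lem.lasotayork}), not an integration-by-parts/non-stationary-phase cancellation: the transported densities are nonnegative and there is no sign oscillation to exploit.
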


The rest of our results uses the measure rigidity result by Brown and Rodriguez Hertz.  We will assume that $f$ and $g$ verify conditions \textbf{(C1)} - \textbf{(C4)} for Corollaries \ref{thm.thmforsurfaces} and \ref{thm.invariantmeasure}  below. 
Condition \textbf{(C3)} gives information about the oscillations of the unstable direction depending on the choice of past.  This condition allows us to improve the regularity of SRB measures, obtaining that they are absolutely continuous. Condition \textbf{(C4)} above gives information about the oscillation of the stable direction depending on the choice of future.   This is used to obtain measure rigidity results, thus classifying the possible stationary measures. Condition \textbf{(C4)} is related to a notion called \emph{uniform expansion} (see Section \ref{sec:equidistribution}) which has been used for obtaining several measure rigidity results in the random setting.

\begin{cor}\label{thm.thmforsurfaces}
Let $f$ and $g$ verify the conditions \textbf{(C1)}-\textbf{(C4)} above. Fix $\beta \in (0,\frac{1}{2}]$ and let $\mathcal{U}_f$ and $\mathcal{U}_g$ be given by Theorem \ref{thm.mainthm}. Let $\mu$ be a probability measure on $\mathrm{Diff}^2(\mathbb{T}^2)$  such that $\mu(\mathcal{U}_f \cup \mathcal{U}_g) = 1$, and $\mu( \mathcal{U}_\star) \in [\beta, 1-\beta]$, for $\star=f,g$. Then any ergodic $\mu$-stationary measure $\nu$ is either atomic or absolutely continuous with respect to $m$. 
\end{cor}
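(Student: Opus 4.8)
The plan is to combine the measure-classification dichotomy of \cite{Brown-Hertz} with Theorem \ref{thm.mainthm}. Since conditions \textbf{(C1)}--\textbf{(C4)} are $C^1$-open and $f,g$ satisfy them, we may (after shrinking $\mathcal{U}_f$ and $\mathcal{U}_g$ if necessary, which does not affect the conclusion of Theorem \ref{thm.mainthm}) assume that every diffeomorphism in $\mathcal{U}_f\cup\mathcal{U}_g$ is Anosov and that every pair $(h_1,h_2)\in\mathcal{U}_f\times\mathcal{U}_g$ satisfies \textbf{(C1)}--\textbf{(C4)} with one fixed pair of continuous cone fields $\mathcal{C}^s,\mathcal{C}^u$ and one fixed set of constants. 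Fix an ergodic $\mu$-stationary measure $\nu$. I will (i) check that $\nu$ meets the dynamical criterion of \cite{Brown-Hertz} --- hyperbolicity together with non-determinism of the stable Lyapunov flag; (ii) conclude from \cite{Brown-Hertz} that $\nu$ is finitely supported or SRB; and (iii) in the SRB case, invoke Theorem \ref{thm.mainthm} to upgrade the SRB property to absolute continuity.

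For hyperbolicity: by \textbf{(C2)}, for every $h$ in the support of $\mu$ the cone field $\mathcal{C}^u$ is $Dh$-forward-invariant with vectors expanded by a factor at least $\lambda_{u,-}>1$, while $\mathcal{C}^s$ is $Dh^{-1}$-forward-invariant with vectors expanded by a factor at least $\lambda_{s,+}^{-1}>1$. Applying the Oseledets theorem to the derivative cocycle over the Bernoulli shift $(\mu^{\mathbb{N}},\sigma)$ with the ergodic stationary measure $\nu$, the two Lyapunov exponents satisfy $\lambda_1\geq\log\lambda_{u,-}>0>\log\lambda_{s,+}\geq\lambda_2$, and the $\nu$-a.e.\ defined Oseledets stable line lies in $\mathcal{C}^s$; in particular $\nu$ is hyperbolic. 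For non-determinism I will use \textbf{(C4)}: as established in Section \ref{sec:equidistribution}, conditions \textbf{(C2)} and \textbf{(C4)} together force the derivative cocycle to be \emph{uniformly expanding}, i.e.\ there exist $N_0\in\mathbb{N}$ and $c_0>0$ such that $\frac{1}{N_0}\int\log\|D(h_{N_0}\circ\cdots\circ h_1)(x)v\|\,d\mu(h_1)\cdots d\mu(h_{N_0})\geq c_0$ for every $x\in\mathbb{T}^2$ and every unit vector $v\in T_x\mathbb{T}^2$ --- intuitively, by \textbf{(C4)} a direction contracted by the Anosov dynamics near $f$ is not contained in a common stable line, hence is expanded by the dynamics near $g$, and vice versa. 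Suppose the stable Lyapunov flag of $\nu$ were deterministic, meaning the Oseledets stable line $E^s(x)$ depends only on $x$ and satisfies $Dh(x)\,E^s(x)=E^s(h(x))$ for $(\mu\times\nu)$-a.e.\ $(h,x)$. Applying the uniform-expansion bound to unit vectors along $E^s(x)$, integrating over $x\sim\nu$, and using that $\mu^{\mathbb{N}}\times\nu$ is invariant and ergodic for the skew product together with Birkhoff's theorem, one would obtain $\lambda_2=\int\log\|Dh(x)|_{E^s(x)}\|\,d\mu(h)\,d\nu(x)\geq c_0>0$, contradicting $\lambda_2<0$. Hence the stable flag of $\nu$ is non-deterministic.

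It then follows from the main theorem of \cite{Brown-Hertz} that $\nu$ is either finitely supported --- hence atomic --- or SRB. In the SRB case, conditions \textbf{(C1)}--\textbf{(C3)} hold and $\nu$ is a $\mu$-stationary SRB measure, so Theorem \ref{thm.mainthm} applies and gives $\nu\ll m$, with $d\nu/dm\in L^2(m)$. In either case $\nu$ is atomic or absolutely continuous with respect to $m$, as claimed.

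The crux is the non-determinism step, specifically the passage from the transversality hypothesis \textbf{(C4)} to uniform expansion of the random cocycle. The difficulty is that $\nu$ is only $\mu$-stationary and not invariant under $f$ or $g$ individually, so one cannot rule out a deterministic stable line by Poincaré recurrence for a single map; instead one needs the uniform (in the base point) lower bound on the average logarithmic growth of \emph{every} direction under long random words, which is where the geometry of \textbf{(C4)} --- the stable bundles of the two Anosov maps never coinciding --- really enters. Granting that estimate, the remaining ingredients (the Oseledets/cone argument for hyperbolicity, the black-box dichotomy of \cite{Brown-Hertz}, and the application of Theorem \ref{thm.mainthm}) are comparatively routine.
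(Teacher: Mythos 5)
Your proof is correct and follows essentially the same approach as the paper: apply the Brown--Rodriguez Hertz dichotomy (using \textbf{(C1)}, \textbf{(C2)}, \textbf{(C4)} to verify hyperbolicity and non-determinism of the stable flag, the latter via uniform expansion), then upgrade the SRB alternative to absolute continuity via Theorem~\ref{thm.mainthm}. The paper's proof is much terser — it simply states that \textbf{(C4)} makes the stable direction random and cites \cite{Brown-Hertz} — whereas you fill in the cone-field argument for hyperbolicity and the uniform-expansion-versus-Birkhoff contradiction for non-determinism, which are the same verifications implicitly packaged in the paper's one-line citation and in Lemma~\ref{lemma.uniformexpansionforus}.
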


Another application  is the following. 

\begin{cor}\label{thm.invariantmeasure}
Let $f$ and $g$ verify conditions \textbf{(C1)} - \textbf{(C4)}. Fix $\beta \in (0,\frac{1}{2}]$ and let $\mathcal{U}_f$ and $\mathcal{U}_g$ be given by Theorem \ref{thm.mainthm}.  Suppose that $\nu$  is a non-atomic probability measure  such that $\nu$ is invariant by some diffeomorphism $\widehat{f} \in \mathcal{U}_f$ and by some diffeomorphism $\widehat{g} \in \mathcal{U}_g$. Then $\nu$ is absolutely continuous with respect to $m$.  
\end{cor}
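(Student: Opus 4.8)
The plan is to deduce Corollary~\ref{thm.invariantmeasure} from Corollary~\ref{thm.thmforsurfaces} by choosing a suitable random walk and decomposing $\nu$ into ergodic stationary components. First I would note that the neighbourhoods $\mathcal U_f,\mathcal U_g$ produced by Theorem~\ref{thm.mainthm} may be taken disjoint (conditions \textbf{(C1)}--\textbf{(C4)} force $f\neq g$, so $f$ and $g$ have disjoint $C^2$-neighbourhoods, and the proof of Theorem~\ref{thm.mainthm} applies to all sufficiently small such neighbourhoods). Set
\[
\mu:=\tfrac12\,\delta_{\widehat f}+\tfrac12\,\delta_{\widehat g}.
\]
Then $\mu(\mathcal U_f\cup\mathcal U_g)=1$, and since $\widehat f\notin\mathcal U_g$ and $\widehat g\notin\mathcal U_f$ we get $\mu(\mathcal U_\star)=\tfrac12\in[\beta,1-\beta]$ for $\star=f,g$, because $\beta\le\tfrac12$. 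Moreover, as $\nu$ is invariant under both $\widehat f$ and $\widehat g$,
\[
\mu*\nu=\tfrac12\,\widehat f_*\nu+\tfrac12\,\widehat g_*\nu=\nu,
\]
so $\nu$ is $\mu$-stationary and $\mu$ satisfies the hypotheses of Corollary~\ref{thm.thmforsurfaces}.

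Next I would invoke the ergodic decomposition of $\mu$-stationary measures. The set of $\mu$-stationary Borel probability measures on $\mathbb{T}^2$ is convex and weak-$*$ compact, and its extreme points are precisely the ergodic $\mu$-stationary measures; hence there is a Borel probability measure $\widehat{\mathbb P}$ on the set of ergodic $\mu$-stationary measures with $\nu=\int\eta\,d\widehat{\mathbb P}(\eta)$. By Corollary~\ref{thm.thmforsurfaces}, for $\widehat{\mathbb P}$-almost every $\eta$ the measure $\eta$ is either atomic or absolutely continuous with respect to $m$. Splitting the integral according to this alternative gives $\nu=\nu_{\mathrm{at}}+\nu_{\mathrm{ac}}$ with $\nu_{\mathrm{ac}}\ll m$, and it remains to prove $\nu_{\mathrm{at}}=0$, i.e.\ that $\widehat{\mathbb P}$ assigns zero mass to atomic ergodic stationary measures.

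For this, the key observation is that an ergodic $\mu$-stationary measure $\eta$ possessing an atom is supported on a finite set: if $A$ denotes its (finite, nonempty) set of atoms of maximal mass, then evaluating $\eta=\tfrac12\widehat f_*\eta+\tfrac12\widehat g_*\eta$ at points of $A$ forces $\widehat f^{-1}(A)\subseteq A$ and $\widehat g^{-1}(A)\subseteq A$, hence $\widehat f(A)=A=\widehat g(A)$ by injectivity and finiteness, with $\eta$ uniform on $A$; consequently $\eta|_A$ and $\eta|_{A^c}$ are both $\mu$-stationary, so ergodicity (extremality) gives $\eta(A^c)=0$. In particular every point of $A$ is periodic for $\widehat f$, and $\widehat f\in\mathcal U_f$ is Anosov (as \textbf{(C1)}--\textbf{(C2)} are $C^1$-open), so $\mathrm{Per}(\widehat f)$ is countable. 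Hence the supports of all atomic ergodic $\mu$-stationary measures lie in a single countable set $P\subseteq\mathrm{Per}(\widehat f)$, so $\nu_{\mathrm{at}}$ is concentrated on $P$ while $\nu_{\mathrm{at}}\le\nu$; since $\nu$ is non-atomic, $\nu_{\mathrm{at}}(\{x\})=0$ for every $x\in P$, so $\nu_{\mathrm{at}}(P)=0$ and therefore $\nu_{\mathrm{at}}=0$. We conclude $\nu=\nu_{\mathrm{ac}}\ll m$.

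The step I expect to demand the most care is the ergodic decomposition: one must know that the decomposing measures are themselves $\mu$-stationary, so that Corollary~\ref{thm.thmforsurfaces} applies componentwise, and that ``ergodic $\mu$-stationary'' coincides with ``extreme point of the compact convex set of $\mu$-stationary measures''. Once that is in place, the remaining inputs — that atomic ergodic stationary measures live on periodic orbits of $\widehat f$, hence on a countable set, and that non-atomicity of $\nu$ then kills the atomic part — are elementary.
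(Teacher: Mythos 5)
Your proof is correct and follows essentially the same route as the paper: set $\mu=\tfrac12\delta_{\widehat f}+\tfrac12\delta_{\widehat g}$, observe $\nu$ is $\mu$-stationary, apply Corollary~\ref{thm.thmforsurfaces}, and rule out the atomic alternative using that atomic ergodic $\mu$-stationary measures live on finite (hence $\widehat f$-periodic, hence countably many) orbits, which a non-atomic $\nu$ cannot see. The paper packages this last step by first concluding $\nu$ is ergodic (from the countability of ergodic stationary measures) and then invoking Corollary~\ref{thm.thmforsurfaces} once, whereas you run the ergodic decomposition and kill the atomic part directly; the two are logically interchangeable, and your aside verifying that $\mathcal U_f$ and $\mathcal U_g$ are disjoint (so that $\mu(\mathcal U_\star)=\tfrac12$ genuinely lies in $[\beta,1-\beta]$) is a point the paper leaves implicit.
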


 Given a set $S\subset \mathrm{Diff}^2(\mathbb{T}^2)$, let $\Gamma_S$ be the semigroup generated by $S$.  $\Gamma_S$ acts naturally on $\mathbb{T}^2$.  The $\Gamma_S$-orbit of a point $x\in \mathbb{T}^2$ is defined as the set $\{h(x): h\in \Gamma_S\}$. For \Cref{thm.equidistribution}, \Cref{thm.orbitclosureclassification}, and \Cref{thm.genericminimality} below, we will assume that $f$ and $g$ verify conditions \textbf{(C1)}, \textbf{(C2)} and \textbf{(C4)}. 

\begin{ltheorem}\label{thm.equidistribution}
Let $f$ and $g$ verify conditions \textbf{(C1)}, \textbf{(C2)} and \textbf{(C4)} above. There exist $C^2$-neighborhoods of $f$ and $g$, $\mathcal{U}_f$ and $\mathcal{U}_g$, with the following property.  Let $S$ be a finite subset of $\mathcal{U}_f  \cup \mathcal{U}_g$ and let $\mu$ be a probability measure such that $\mu(S) = 1$, $\mu(\mathcal{U}_f)$ and $\mu(\mathcal{U}_g) >0 $,  and let $\nu$ be the unique $\mu$-stationary SRB measure.  Suppose that  $x\in \mathbb{T}^2$ has infinite  $\Gamma_S$-orbit. Then, 
\[
\displaystyle \lim_{n\to +\infty} \frac{1}{n} \sum_{j=0}^{n-1} \left(\mu^{*j}*\delta_x \right) =\nu,
\] 
where the convergence is in the weak*-topology. 
\end{ltheorem}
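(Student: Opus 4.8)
The plan is to prove that every weak-$*$ limit point of the empirical averages
$$\nu_n := \frac1n\sum_{j=0}^{n-1}\mu^{*j}*\delta_x$$
equals $\nu$; since $\mathbb{T}^2$ is compact, $\{\nu_n\}$ is relatively compact, so this yields $\nu_n\to\nu$. First, any weak-$*$ limit $\nu_\infty$ of a subsequence $\{\nu_{n_k}\}$ is $\mu$-stationary, because $\mu*\nu_n-\nu_n=\tfrac1n(\mu^{*n}*\delta_x-\delta_x)\to 0$ weakly. Next, for $f,g$ satisfying \textbf{(C1)}, \textbf{(C2)}, \textbf{(C4)} (with $\mathcal U_f,\mathcal U_g$ chosen small enough), the dynamical criterion of \cite{Brown-Hertz} holds for every $\mu$-stationary measure — the cone condition \textbf{(C2)} forces every such measure to be hyperbolic, and \textbf{(C4)} gives the non-determinism of the stable Lyapunov flag (see Section~\ref{sec:equidistribution}) — so by the main result of \cite{Brown-Hertz}, every ergodic $\mu$-stationary measure on $\mathbb{T}^2$ is either finitely supported or SRB, and by uniqueness of the $\mu$-stationary SRB measure each ergodic SRB one equals $\nu$. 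Decomposing $\nu_\infty$ into ergodic components therefore gives $\nu_\infty=t\nu+(1-t)\eta$ with $t\in[0,1]$, where $\eta$ is a convex combination of finitely supported $\mu$-stationary measures; in particular $\eta$ is carried by the countable union of the finite $\Gamma_S$-orbits.

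It remains to prove $t=1$. Suppose not; then $\nu_\infty$ has an atom. Put $a=\max_{p}\nu_\infty(\{p\})>0$ and $Z=\{p:\nu_\infty(\{p\})=a\}$, a finite set. Stationarity of $\nu_\infty$ forces $h(Z)=Z$ for $\mu$-a.e.\ $h$, so $Z$ is a finite $\Gamma_S$-invariant set; then $Z^c$ is also $\Gamma_S$-invariant, and since $x$ has infinite $\Gamma_S$-orbit we get $x\notin Z$ and the $\mu$-random orbit of $x$ almost surely never meets $Z$. I would then exploit that the $\mu$-random system is uniformly expanding — a consequence of \textbf{(C1)}, \textbf{(C2)}, \textbf{(C4)} established in Section~\ref{sec:equidistribution} — to construct a Lyapunov function adapted to $Z$, following \cite{chung}: trivializing near the finitely many points of $Z$ and applying Jensen's inequality to the uniform-expansion estimate $\int\log\|D_y h(u)\|\,d\mu^{*n_0}(h)\ge c>0$ (for unit vectors $u$), one obtains a small $\alpha>0$, a constant $\kappa>1$ and a neighborhood $W_0$ of $Z$ such that $V:=\dist(\cdot,Z)^{\alpha}$ satisfies $\int V(h(y))\,d\mu^{*n_0}(h)\ge\kappa\,V(y)$ for all $y\in W_0$. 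An excursion analysis of $V$ along the random orbit of $x$ then shows the orbit is repelled from $Z$: it exits any fixed neighborhood of $Z$ in finite expected time, it reaches the deep neighborhood $\{\dist(\cdot,Z)<\epsilon\}$ only with probability $O(\epsilon^{\lambda})$ per excursion (for some $\lambda>0$), and each such visit lasts only boundedly long; combining these, $\limsup_n\nu_n(\{\dist(\cdot,Z)<\epsilon\})\to 0$ as $\epsilon\to0$, so $\nu_\infty(Z)=0$, contradicting $\nu_\infty(\{p\})=a>0$ for $p\in Z$. Hence $t=1$, every weak-$*$ limit point of $\{\nu_n\}$ is $\nu$, and the theorem follows.

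The main obstacle is this last step: ruling out escape of empirical mass onto a finite invariant set. The passage to stationary limits and the ergodic decomposition are soft (Krylov--Bogolyubov together with the cited classification), and the Lyapunov estimate near $Z$ is immediate once uniform expansion is available; the genuine work is the quantitative recurrence estimate showing that the random orbit of a point with infinite orbit spends asymptotically negligible time in shrinking neighborhoods of $Z$. This is exactly the mechanism by which \cite{chung} excludes finite orbits in the uniformly expanding, volume-preserving setting, and it must be transported to the present dissipative setting, where the attracting stationary measure $\nu$ is singular rather than the ambient volume; one should verify that the excursion argument based on uniform expansion is insensitive to this change.
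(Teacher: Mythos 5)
Your proposal follows essentially the same route as the paper's proof, which is to transport Chung's Proposition 4.1 to this setting: show that any weak-$*$ limit of the empirical averages is stationary, invoke the Brown--Rodriguez Hertz classification (valid here because the cone conditions force hyperbolicity and \textbf{(C4)} makes the stable flag non-deterministic, so $\mu$ is uniformly expanding) together with uniqueness of the SRB stationary measure to write the limit as $t\nu+(1-t)\eta$ with $\eta$ atomic, and then use a Margulis-function drift estimate near finite $\Gamma_S$-invariant sets to show $1-t=0$. The paper's Lemma \ref{lem.fullsupport}, Lemma \ref{lemma.uniformexpansionforus}, Proposition \ref{proposition.countablefiniteorbits} and Lemma \ref{lemma.omegaset} are precisely the ingredients you re-derive, and the paper's observation that $\nu$ (rather than Lebesgue) plays the role of Chung's volume because it is fully supported is exactly what makes the transport to the dissipative case harmless, as you correctly flag.

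One technical comment on the final (and hardest) step. You take $V=\dist(\cdot,Z)^{\alpha}$, which tends to $0$ on $Z$, with the submartingale-type drift $\int V(h(y))\,d\mu^{*n_0}(h)\geq\kappa V(y)$ ($\kappa>1$) near $Z$. The standard Margulis function in Chung's Lemma 4.3 is instead $u=\dist(\cdot,Z)^{-\alpha}$, which tends to $+\infty$ on $Z$, with $\int u(h(y))\,d\mu^{*n_0}(h)\leq\kappa' u(y)+b$ for some $\kappa'<1$; iterating the latter and using stationarity yields $\sup_n\int u\,d\nu_n<\infty$ directly, and Markov's inequality then gives $\nu_n(\{\dist(\cdot,Z)<\epsilon\})=O(\epsilon^{\alpha})$ uniformly in $n$, hence $\nu_\infty(Z)=0$ by Portmanteau. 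Your convention forces the excursion analysis you sketch, and one point there is imprecise: each excursion that reaches depth $\epsilon$ does \emph{not} last boundedly long --- the escape time from a fixed neighborhood of $Z$ given the orbit reaches distance $\epsilon$ grows like $\log(1/\epsilon)$. This is compensated by the polynomial decay $O(\epsilon^{\lambda})$ of the probability of reaching depth $\epsilon$, so the expected occupation time per excursion is still finite, but the accounting is genuinely more delicate than the phrase suggests. With the $u=\dist^{-\alpha}$ convention these complications disappear, which is presumably why Chung (and hence the paper) uses it. This is a matter of technique, not a gap in the overall argument.
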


\begin{cor}\label{thm.orbitclosureclassification}
Under the same assumptions as Theorem \ref{thm.equidistribution}, let $\mu$ be a probability measure such that $\mu(S)=1$, $\mu(\mathcal{U}_f)$ and $\mu(\mathcal{U}_g)>0$.  Then every $\Gamma_S$-orbit is either finite or dense. 
\end{cor}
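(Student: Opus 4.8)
The plan is to derive \Cref{thm.orbitclosureclassification} from the equidistribution statement in \Cref{thm.equidistribution} together with the fact that the unique $\mu$-stationary SRB measure $\nu$ has full support, following the strategy of Chung \cite{chung}. If $x\in\mathbb{T}^2$ has finite $\Gamma_S$-orbit there is nothing to prove, so suppose the $\Gamma_S$-orbit of $x$ is infinite; I must show $\overline{\Gamma_S\cdot x}=\mathbb{T}^2$. By \Cref{thm.equidistribution}, $\frac1n\sum_{j=0}^{n-1}\mu^{*j}*\delta_x$ converges weak-$*$ to $\nu$. Fix any open set $U$ with $\nu(U)>0$. By the portmanteau theorem, $\liminf_{n}\frac1n\sum_{j=0}^{n-1}(\mu^{*j}*\delta_x)(U)\ge\nu(U)>0$, so $(\mu^{*j}*\delta_x)(U)>0$ for some $j$. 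Since $S$ is finite, $(\mu^{*j}*\delta_x)(U)=\mu^{*j}(\{h:h(x)\in U\})$ and the set $\{h:h(x)\in U\}$ is open; having positive $\mu^{*j}$-measure, it meets $\supp\mu^{*j}\subseteq\Gamma_S$, giving $h\in\Gamma_S$ with $h(x)\in U$. Thus $\overline{\Gamma_S\cdot x}$ meets every open set of positive $\nu$-measure, hence $\overline{\Gamma_S\cdot x}\supseteq\supp\nu$.

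It then remains to show $\supp\nu=\mathbb{T}^2$. Write $F:=\supp\nu$. From $\nu=\mu*\nu$ and $\supp\mu\subseteq S$ one gets $F=\overline{\bigcup_{h\in\supp\mu}h(F)}$, so $F$ is closed, nonempty, and forward-invariant: $h(F)\subseteq F$ for all $h\in\supp\mu$. Since $\mu(\mathcal{U}_f)>0$ and $\mathcal{U}_f$ is open, I may fix $\widehat f\in\supp\mu\cap\mathcal{U}_f$; then $\widehat f$ is an Anosov diffeomorphism of $\mathbb{T}^2$ that preserves the unstable cone field $\mathcal{C}^u$ and expands vectors in it by at least $\lambda_{u,-}>1$ by \textbf{(C2)}. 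On the other hand, because $\nu$ is an SRB measure its conditionals along unstable curves are absolutely continuous, so $F$ contains a nondegenerate curve $\gamma$ tangent to $\mathcal{C}^u$. Iterating, $\widehat f^n(\gamma)\subseteq F$ is a curve tangent to $\mathcal{C}^u$ of length tending to infinity and, by cone contraction of $\widehat f$, $C^1$-close to the unstable foliation of $\widehat f$; a standard inclination-lemma argument shows the closure of $\bigcup_{n\ge0}\widehat f^n(\gamma)$ contains a full unstable leaf $W^u_{\widehat f}(p)$. Since $F$ is closed, $W^u_{\widehat f}(p)\subseteq F$, and since the unstable foliation of an Anosov diffeomorphism of $\mathbb{T}^2$ is minimal (every leaf is dense, via the conjugacy to its linearization), $\mathbb{T}^2=\overline{W^u_{\widehat f}(p)}\subseteq F$, i.e.\ $F=\mathbb{T}^2$.

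Putting the two steps together, $\overline{\Gamma_S\cdot x}\supseteq\supp\nu=\mathbb{T}^2$ whenever the $\Gamma_S$-orbit of $x$ is infinite, which is the desired dichotomy. The step that requires genuine care is the equality $\supp\nu=\mathbb{T}^2$: the precise claim that forward $\widehat f$-iterates of a short unstable-cone curve accumulate on a full, hence dense, unstable leaf of $\widehat f$ is exactly where conditions \textbf{(C1)}--\textbf{(C2)} and the SRB property are used, and I would refer to \cite{chung} for the detailed implementation of this argument, which transfers to the present setting with only notational changes. Everything else is a soft consequence of \Cref{thm.equidistribution} and weak-$*$ convergence.
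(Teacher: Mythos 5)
Your proof is correct and follows essentially the same route as the paper: apply Theorem~\ref{thm.equidistribution} to an infinite orbit, use weak-$*$ convergence (portmanteau) to show the orbit closure contains $\supp\nu$, and then invoke full support of $\nu$. The only difference is that you re-derive the full-support claim from the SRB property, forward-invariance of $\supp\nu$, and minimality of the unstable foliation of an Anosov diffeomorphism on $\TT^2$; the paper isolates this as Lemma~\ref{lem.fullsupport} and simply cites it, but the argument there is the same as yours, so you could shorten your write-up by referring to that lemma directly.
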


As an application of \Cref{thm.orbitclosureclassification}, we obtain the following result.

\begin{cor}\label{thm.genericminimality}
For any $\widehat{g} \in \mathcal{U}_g$, there exists a dense $\text{G}_\delta$ subset of  $\mathcal{U}_f$,  $\mathcal{R}_{\widehat{g}}$, with the following property. For any  $\widehat{f} \in \mathcal{R}_{\widehat{g}}$, define $S =\{ \widehat{f}, \widehat{g}\}$ and let $\Gamma_S$ be the semigroup generated by $S$. Then, the $\Gamma_S$-action is minimal, that is, every $\Gamma_S$ orbit is dense. 
\end{cor}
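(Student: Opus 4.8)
The plan is to deduce this from \Cref{thm.orbitclosureclassification} together with a Baire category argument. Fix $\widehat g\in\mathcal U_g$. By \Cref{thm.orbitclosureclassification}, for any $\widehat f\in\mathcal U_f$ and $S=\{\widehat f,\widehat g\}$ every $\Gamma_S$-orbit is finite or dense, so the $\Gamma_S$-action is minimal as soon as there is no \emph{finite} $\Gamma_S$-orbit. Now if $F$ is a nonempty finite set with $\widehat f(F)\subset F$ and $\widehat g(F)\subset F$, then $\widehat f$ and $\widehat g$ restrict to bijections of $F$, so $\widehat g(F)=F$ and every point of $F$ is $\widehat g$-periodic of period at most $|F|$. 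Since $\widehat g$ is Anosov by \textbf{(C1)}, the set $Q_n:=\bigcup_{1\le k\le n}\mathrm{Fix}(\widehat g^{k})$ is finite, and it depends only on $\widehat g$. Hence a finite $\Gamma_S$-orbit of cardinality $\le n$ would lie in $Q_n$ and be invariant under $\widehat f$, so $\widehat f(Q_n)\cap Q_n\neq\varnothing$. It therefore suffices to prove that
\[
\mathcal R_{\widehat g}:=\bigcap_{n\ge1}\mathcal O_n,\qquad \mathcal O_n:=\{\widehat f\in\mathcal U_f:\ \widehat f(Q_n)\cap Q_n=\varnothing\},
\]
is a dense $\text{G}_\delta$ subset of $\mathcal U_f$, since for $\widehat f\in\mathcal R_{\widehat g}$ no finite $\Gamma_S$-orbit can exist.

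Each $\mathcal O_n$ is open: $Q_n$ is a \emph{fixed} finite set, so the complement of $\mathcal O_n$ in $\mathcal U_f$ is the finite union $\bigcup_{q,q'\in Q_n}\{\widehat f:\widehat f(q)=q'\}$ of closed sets. Density of $\mathcal O_n$ is the one place where an argument is needed, and it is a routine local perturbation: given $\widehat f_0\in\mathcal U_f$ and $\varepsilon>0$, set $Q_n'=\widehat f_0(Q_n)$ (a finite set), pick pairwise disjoint balls $B(p,\rho)$, $p\in Q_n'$, and choose for each $p\in Q_n'$ a vector $v_p$ with $0<|v_p|<\rho$ and $p+v_p\notin Q_n$ --- possible because $Q_n$ is finite. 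Using cut-off functions one builds $h\in\mathrm{Diff}^2(\mathbb{T}^2)$ supported in $\bigcup_p B(p,\rho)$ with $h(p)=p+v_p$ for all $p\in Q_n'$ and $\|h-\mathrm{id}\|_{C^2}$ as small as desired (shrinking the $|v_p|$ relative to $\rho$). Then $\widehat f:=h\circ\widehat f_0$ is $\varepsilon$-close to $\widehat f_0$, hence lies in $\mathcal U_f$, and $\widehat f(q)=h(\widehat f_0(q))=\widehat f_0(q)+v_{\widehat f_0(q)}\notin Q_n$ for every $q\in Q_n$, i.e.\ $\widehat f\in\mathcal O_n$.

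Since $\mathcal U_f$ is an open subset of the Baire space $\mathrm{Diff}^2(\mathbb{T}^2)$, the intersection $\mathcal R_{\widehat g}=\bigcap_n\mathcal O_n$ is dense $\text{G}_\delta$. Finally, for $\widehat f\in\mathcal R_{\widehat g}$ let $S=\{\widehat f,\widehat g\}$ and $\mu=\tfrac12(\delta_{\widehat f}+\delta_{\widehat g})$; then $\mu(S)=1$, $\mu(\mathcal U_f)\ge\tfrac12>0$ and $\mu(\mathcal U_g)\ge\tfrac12>0$, so \Cref{thm.orbitclosureclassification} applies. If some $\Gamma_S$-orbit $\Gamma_S\cdot x$ were finite, of cardinality $n$, it would be a nonempty finite set invariant under $\widehat f$ and $\widehat g$ contained in $Q_n$, forcing $\widehat f(Q_n)\cap Q_n\neq\varnothing$ and contradicting $\widehat f\in\mathcal O_n$. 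Hence every $\Gamma_S$-orbit is dense, i.e.\ the $\Gamma_S$-action is minimal. The main (and only non-formal) obstacle is the density step, but it is easy here precisely because the candidate points are confined to the fixed finite set $Q_n$, so the requirement reduces to finitely many generic point-avoidance conditions; the sole care needed is controlling the $C^2$-size of the perturbation so that it stays inside $\mathcal U_f$.
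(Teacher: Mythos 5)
Your proof is correct and follows the same overall Baire-category strategy as the paper: define open-dense sets indexed by $n$, intersect them, and for any $\widehat f$ in the intersection rule out finite $\Gamma_S$-orbits so that \Cref{thm.orbitclosureclassification} (applied to $\mu=\tfrac12(\delta_{\widehat f}+\delta_{\widehat g})$) forces minimality. Where you diverge is in the choice of open-dense set. The paper takes $\mathcal U_{f,\widehat g,n}=\{\widehat f\in\mathcal U_f:\ \Per_n(\widehat f)\cap\Per_n(\widehat g)=\varnothing\}$, whereas you take the stronger-to-state (but logically incomparable) set $\mathcal O_n=\{\widehat f\in\mathcal U_f:\ \widehat f(Q_n)\cap Q_n=\varnothing\}$ with $Q_n=\Per_n(\widehat g)$. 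Both conditions suffice to exclude a finite orbit of cardinality at most $n$: the paper observes that each point of such an orbit would lie in $\Per_n(\widehat f)\cap\Per_n(\widehat g)$, while you observe the orbit would be a subset of $Q_n$ preserved by $\widehat f$, forcing $\widehat f(Q_n)\cap Q_n\neq\varnothing$. Your variant buys you a genuinely simpler density step: $\mathcal O_n$ is defined by finitely many point-avoidance constraints on the map $\widehat f$ itself, so density is a single local bump perturbation near the fixed finite set $\widehat f_0(Q_n)$, whereas the paper's condition is about where the periodic orbits of $\widehat f$ land — density there requires perturbing those periodic orbits, a standard but heavier step that the paper compresses into ``it is easy to see.'' Either argument is correct; yours is the more self-contained of the two.
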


\subsection*{Acknowledgments}

A.\ B.\ was partially supported by the  National Science Foundation under Grant Nos.\ DMS-2020013 and DMS-2400191.  H.\ L.\ was supported by an AMS-Simons Travel Grant. D.\ O.\ was partially supported by the National Science Foundation under Grant No. \ DMS-2349380. 

\section{Preliminaries}

\subsection{Skew extension and stationary measure}\label{subsec:skew} We recall facts on random dynamical systems on smooth manifolds.  We mainly deal with random dynamical systems in the setting of \Cref{thm.mainthm}. Most of the arguments can be found in many literatures, such as \cite{Liu-Qian-book}.

Let $M$ be a smooth manifold. Consider $\Diff^{2}(M)$ with the $C^2$-topology and denote by $\Bcal(\Diff^{2}(M) )$ the Borel $\sigma$-algebra on $\Diff^2(M)$. Note that $\Diff^{2}(M)$ is a Polish space. Let $\mu$ be a probability measure on $(\Diff^{2}(M) ,\Bcal(\Diff^{2}(M) ))$. When we have a probability measure on this space, we always consider the completion of the $\sigma$-algebra with respect to the measure and still denote the completion of $\sigma$-algebra by the same notation.

Let $\Omega^{+}=(\Diff^2(M))^{\Nbb}$ and $\Omega = (\Diff^2(M))^{\mathbb{Z}}$.  Consider $\Omega^{+}$ equipped with the Borel probability $\mu^{\Nbb}$ which is an infinite product of $\mu$ and the ($\mu^{\Nbb}$ completion of) Borel $\sigma$ algebra $\Bcal(\Diff^2(M))^{\Nbb}$. For each $\omega\in \Omega^{+}$, $\omega=\left(f_{0}, f_1, f_2, \cdots \right)$, we define 
\[f_{\omega}^{0}=id, f_{\omega}^{n}=f_{n-1} \circ\dots\circ f_{0}\quad\textrm{ for } n\ge 1.\]
Moreover, if $\omega = \left((\cdots, f_{-2}, f_{-1}, f_0, f_1, f_2 \cdots \right)\in \Omega$, then
\[
f^{-n}_\omega = (f_{-n})^{-1} \circ \cdots (f_{-1})^{-1}\quad\textrm{ for } n\geq 1.
\]
We remark that $(f_\omega^n)^{-1}$ is defined for one sided words, and it is different from $f_\omega^{-n}$.

Naturally, we can consider a skew product related to the random dynamical system $F^{+}\colon\Omega^{+}\times M \to \Omega^{+}\times M$ as \[F^{+} \left(\omega, x\right)= \left(\sigma(\omega),f_{\omega}(x)\right),\] where $\sigma:\Omega^{+}\to \Omega^{+}$ is the (left) shift map and $f_{\omega}=f^{1}_{\omega}$. 

For Claim \ref{claim.1} and Proposition \ref{prop.basic} below, see Chapter $1$ in \cite{Liu-Qian-book}. 

\begin{claim}\label{claim.1}
$\nu$ is a $\mu$-stationary measure if and only if $\mu^{\Nbb}\otimes \nu$ is $F^{+}$-invariant. Furthermore, $\nu$ is $\mu$ ergodic stationary measure if and only if $\mu^{\Nbb}\otimes \nu$ is $F^{+}$-ergodic invariant measure. 
\end{claim}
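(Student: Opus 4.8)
The plan is to establish the two equivalences in turn: the first, ``$\nu$ is stationary $\iff$ $\mu^{\Nbb}\otimes\nu$ is $F^{+}$-invariant'', is a direct computation with Fubini's theorem, and the second, ``$\nu$ is ergodic stationary $\iff$ $\mu^{\Nbb}\otimes\nu$ is $F^{+}$-ergodic'', follows from it once one identifies the $F^{+}$-invariant square-integrable functions on $\Omega^{+}\times M$ with a suitable class of functions on $M$. For the first equivalence, let $\phi\colon\Omega^{+}\times M\to\Rbb$ be bounded and measurable, write $\omega=(f_{0},\omega')$ with $\omega'=\sigma(\omega)$, and use that $\mu^{\Nbb}$ is the joint law of independent $f_{0}\sim\mu$ and $\omega'\sim\mu^{\Nbb}$. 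Then, by Fubini and the definition of $\mu*\nu$,
\begin{align*}
\int\phi\circ F^{+}\,d(\mu^{\Nbb}\otimes\nu)
&=\int_{\Omega^{+}}\int_{M}\phi\big(\sigma(\omega),f_{\omega}(x)\big)\,d\nu(x)\,d\mu^{\Nbb}(\omega)\\
&=\int_{\Omega^{+}}\int_{\Diff^{2}(M)}\int_{M}\phi\big(\omega',f_{0}(x)\big)\,d\nu(x)\,d\mu(f_{0})\,d\mu^{\Nbb}(\omega')\\
&=\int_{\Omega^{+}}\int_{M}\phi(\omega',y)\,d(\mu*\nu)(y)\,d\mu^{\Nbb}(\omega').
\end{align*}
If $\nu=\mu*\nu$ the right-hand side is $\int\phi\,d(\mu^{\Nbb}\otimes\nu)$, so $\mu^{\Nbb}\otimes\nu$ is $F^{+}$-invariant; conversely, taking $\phi(\omega,x)=h(x)$ with $h\in C(M)$ gives $\int h\,d(\mu*\nu)=\int h\,d\nu$ for all such $h$, hence $\mu*\nu=\nu$.

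For the second equivalence I would prove that a function $\phi\in L^{2}(\mu^{\Nbb}\otimes\nu)$ is $F^{+}$-invariant if and only if it has the form $(\omega,x)\mapsto\psi(x)$ for some $\psi\in L^{2}(\nu)$ with $\psi\circ f=\psi$ $\nu$-a.e.\ for $\mu$-a.e.\ $f$. One direction is easy: if $\psi$ is such a function, then since $f^{n}_{\omega}(x)$ has law $\mu^{*n}*\nu=\nu$ under $\mu^{\Nbb}\otimes\nu$, an induction on $n$ gives $\psi(f^{n}_{\omega}(x))=\psi(x)$ for $\mu^{\Nbb}\otimes\nu$-a.e.\ $(\omega,x)$, and the case $n=1$ says precisely that $(\omega,x)\mapsto\psi(x)$ is $F^{+}$-invariant. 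For the converse, given $F^{+}$-invariant $\phi$, set $\bar\phi(x):=\int_{\Omega^{+}}\phi(\omega,x)\,d\mu^{\Nbb}(\omega)\in L^{2}(\nu)$; iterating $\phi\circ F^{+}=\phi$ gives $\phi(\omega,x)=\phi\big(\sigma^{n}(\omega),f^{n}_{\omega}(x)\big)$, and conditioning on $\sigma(f_{0},\dots,f_{n-1})\otimes\mathcal{B}(M)$, using that $\sigma^{n}(\omega)$ is independent of $(f_{0},\dots,f_{n-1})$ and of $x$, yields $\mathbb{E}\big[\phi\mid\sigma(f_{0},\dots,f_{n-1})\otimes\mathcal{B}(M)\big]=\bar\phi\circ f^{n}_{\omega}$. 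These form a martingale, so $\phi=\lim_{n}\bar\phi\circ f^{n}_{\omega}$ $\mu^{\Nbb}\otimes\nu$-a.e.; comparing levels $n$ and $n+1$ and again using that $f^{n}_{\omega}(x)\sim\nu$ shows $P\bar\phi=\bar\phi$ $\nu$-a.e., where $(P\psi)(x):=\int\psi(f(x))\,d\mu(f)$. The elementary identity $\int|\psi\circ f-\psi|^{2}\,d\mu\,d\nu=2\int\psi^{2}\,d\nu-2\int\psi\,(P\psi)\,d\nu$ (valid because $\nu$ is stationary) then upgrades $P\bar\phi=\bar\phi$ to $\bar\phi\circ f=\bar\phi$ $\nu$-a.e.\ for $\mu$-a.e.\ $f$, and feeding this into $\phi=\lim_{n}\bar\phi\circ f^{n}_{\omega}$ gives $\phi(\omega,x)=\bar\phi(x)$ a.e. Granting this characterization: $\mu^{\Nbb}\otimes\nu$ is $F^{+}$-ergodic iff its only $F^{+}$-invariant $L^{2}$ functions are constants iff the only $\psi\in L^{2}(\nu)$ with $\psi\circ f=\psi$ ($\mu$-a.e.\ $f$) are constants, and the latter is a standard reformulation of $\nu$ being an ergodic $\mu$-stationary measure — equivalently, of $\nu$ being an extreme point of the convex set of $\mu$-stationary measures, which one checks by splitting $\nu$ along the level sets of such a $\psi$ and, for the reverse implication, applying the same characterization to $\mu^{\Nbb}\otimes\nu$.

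Setting the routine first equivalence aside, I expect the only real work to lie in the ergodicity step: installing the conditioning/martingale argument cleanly (the independence assertions, the passage from $P\bar\phi=\bar\phi$ to pointwise $f$-invariance, the reduction of $F^{+}$-ergodicity to triviality of the invariant functions) together with the bookkeeping that identifies ``no non-constant $\mu$-invariant function in $L^{2}(\nu)$'' with extremality of $\nu$ among stationary measures. None of this is deep, and a complete treatment is given in Chapter~I of \cite{Liu-Qian-book}.
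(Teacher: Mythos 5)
Your proof is correct. The paper does not actually prove Claim~\ref{claim.1} — it simply refers the reader to Chapter~I of \cite{Liu-Qian-book} — and the argument you give (the Fubini computation for the first equivalence, and the martingale-convergence characterization of $F^{+}$-invariant $L^2$ functions as functions of $x$ alone that are $\mu$-a.e.\ $f$-invariant, together with the identity $\int|\psi\circ f-\psi|^{2}\,d\mu\,d\nu=2\int\psi^{2}\,d\nu-2\int\psi\,(P\psi)\,d\nu$ which uses stationarity) is exactly the standard treatment found in that reference and its predecessors.
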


We can consider the natural extension of $F^+$, which is the map $F:\Omega \times M \to \Omega \times M$ defined in the same way as $F^+$.

\begin{prop}\label{prop.basic}
Given a $\mu$-stationary measure $\nu$, there exists a unique Borel probability measure $\widehat{\nu}$ on $\Omega\times M$ such that 
\begin{enumerate}
\item $\widehat{\nu}$ is $F$-invariant, and
\item $P_{*}^{+}\left(\widehat{\nu}\right)=\mu^{\Nbb}\otimes \nu$, where $P^{+}\colon \Omega\times M\to \Omega^{+}\times M$ is the natural projection.
\end{enumerate}
Furthermore, if we disintegrate $\widehat{\nu}$ with respect to $P\colon \Omega\times M\to \Omega$, there is a family of Borel probability measure $\left\{\nu_{\omega}\right\}_{\omega\in \Omega}$ such that \[\widehat{\nu}=\int_{\Omega} \nu_{\omega} d\mu^{\mathbb{Z}}(\omega).\] Moreover, for $\mu^{\mathbb{Z}}$-almost every $\omega=(\dots, f_{-1}, f_{0},f_{1},\dots)$, $\nu_{\omega}$ only depends on $\omega^{-}=\left(\dots, f_{-2},f_{-1}\right)$.
\end{prop}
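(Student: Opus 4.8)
The plan is to realize $\widehat\nu$ as the invariant measure of the \emph{natural extension} of the one-sided system $(\Omega^{+}\times M, F^{+}, \mu^{\NN}\otimes\nu)$, which is $F^{+}$-invariant by \Cref{claim.1}. Since the two-sided shift $\sigma$ on $\Omega$ is invertible and every fiber map $f_\omega$ is a diffeomorphism, $F\colon\Omega\times M\to\Omega\times M$ is a homeomorphism and $P^{+}\colon\Omega\times M\to\Omega^{+}\times M$ is a factor map, $P^{+}\circ F=F^{+}\circ P^{+}$. First I would set up the key generating algebra: for $n\ge 0$ let $\mathcal{A}_n\subset\Bcal(\Omega\times M)$ be the $\sigma$-algebra of sets depending only on the coordinates $(f_k)_{k\ge -n}$ of $\omega=(\dots,f_{-1},f_0,f_1,\dots)$ and on the point of $M$. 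Unwinding $F^{n}(\omega,x)=(\sigma^{n}\omega,f^{n}_\omega(x))$ and $F^{-n}(\omega,x)=(\sigma^{-n}\omega,f^{-n}_\omega(x))$, and recalling that $f^{-n}_\omega=f_{-n}^{-1}\circ\cdots\circ f_{-1}^{-1}$ involves only $x$ and $f_{-1},\dots,f_{-n}$, one checks $\mathcal{A}_n=F^{n}\big((P^{+})^{-1}\Bcal(\Omega^{+}\times M)\big)$. The $\mathcal{A}_n$ increase with $n$, and $\bigcup_{n\ge 0}\mathcal{A}_n$ is an algebra generating $\Bcal(\Omega\times M)$, since it contains the projection to $M$ and every coordinate projection $\Omega\times M\to\Diff^{2}(M)$.

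For uniqueness, I would argue that any $\widehat\nu$ satisfying (1) and (2) is already prescribed on this generating algebra: every $A\in\mathcal{A}_n$ has the form $A=F^{n}B$ with $B=(P^{+})^{-1}C$, $C\in\Bcal(\Omega^{+}\times M)$, so $F$-invariance and (2) give
\[
\widehat\nu(A)=\widehat\nu(F^{n}B)=\widehat\nu(B)=\big(P^{+}_{*}\widehat\nu\big)(C)=\big(\mu^{\NN}\otimes\nu\big)(C),
\]
and the monotone class theorem forces at most one such measure. For existence I would run this in reverse: the displayed formula defines a finitely additive set function on $\bigcup_n\mathcal{A}_n$ whose consistency across the $\mathcal{A}_n$ is exactly $F^{+}$-invariance of $\mu^{\NN}\otimes\nu$ and which is countably additive on each $\mathcal{A}_n\cong\Bcal(\Omega^{+}\times M)$; the Kolmogorov/Carathéodory extension theorem then produces a Borel probability $\widehat\nu$ on $\Omega\times M$. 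By construction $P^{+}_{*}\widehat\nu=\mu^{\NN}\otimes\nu$, and $\widehat\nu$ is $F$-invariant on the generating algebra, hence everywhere.

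For the disintegration, I would first note that $F$-invariance makes $P_{*}\widehat\nu$ a $\sigma$-invariant measure on $\Omega$ whose marginal on the nonnegative coordinates is $\mu^{\NN}$; shift-invariance then forces every finite-dimensional marginal to be a product of copies of $\mu$, i.e.\ $P_{*}\widehat\nu=\mu^{\ZZ}$, so (everything being Polish) $\widehat\nu$ disintegrates as $\widehat\nu=\int_\Omega\nu_\omega\,d\mu^{\ZZ}(\omega)$. To get the past-dependence of $\nu_\omega$, I would introduce the $M$-probability measures $\xi_n(\omega):=(f_{-1})_{*}(f_{-2})_{*}\cdots(f_{-n})_{*}\nu$, which depend only on $(f_{-1},\dots,f_{-n})$. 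Stationarity, $\int g_{*}\nu\,d\mu(g)=\nu$, yields $\EE\big[\xi_{n+1}\mid f_{-1},\dots,f_{-n}\big]=\xi_n$, so $(\xi_n)_n$ is a martingale for the filtration generated by the past; testing against a countable weak$^{*}$-dense family of continuous functions on $\mathcal{P}(M)$ (compact, since $M=\mathbb{T}^{2}$), martingale convergence gives $\xi_\infty(\omega^{-}):=\lim_n\xi_n(\omega)$, measurable with respect to $\omega^{-}=(\dots,f_{-2},f_{-1})$. Finally I would verify that $\widetilde\nu:=\int_\Omega\xi_\infty(\omega^{-})\,d\mu^{\ZZ}(\omega)$ is $F$-invariant (using $\xi_\infty((\sigma\omega)^{-})=(f_0)_{*}\xi_\infty(\omega^{-})$, from continuity of pushforward, together with $\sigma_{*}\mu^{\ZZ}=\mu^{\ZZ}$) and satisfies $P^{+}_{*}\widetilde\nu=\mu^{\NN}\otimes\nu$ (using $\int_\Omega\xi_n\,d\mu^{\ZZ}=\nu$, obtained by applying stationarity $n$ times, and bounded convergence). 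By the uniqueness already proved, $\widetilde\nu=\widehat\nu$, whence $\nu_\omega=\xi_\infty(\omega^{-})$ for $\mu^{\ZZ}$-a.e.\ $\omega$, which depends only on $\omega^{-}$.

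The routine but genuinely fiddly point is the coordinate bookkeeping identifying $F^{n}(P^{+})^{-1}\Bcal(\Omega^{+}\times M)$ with $\mathcal{A}_n$. The main obstacle I anticipate is the martingale step: one must work with a measure-valued martingale and justify a.e.\ convergence in the weak$^{*}$ topology — unproblematic here because $M=\mathbb{T}^{2}$ is compact, so $\mathcal{P}(M)$ is compact metrizable, though for a general manifold one would instead cite the corresponding statement in \cite{Liu-Qian-book} — and then confirm that $\widetilde\nu$ is honestly $F$-invariant with the correct $P^{+}$-projection so that uniqueness closes the loop.
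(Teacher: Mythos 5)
Your proof is correct and is essentially the standard argument cited by the paper (Chapter 1 of \cite{Liu-Qian-book}): realize $\widehat\nu$ as the natural extension of $\mu^{\NN}\otimes\nu$ and identify the fiber measures $\nu_\omega$ as the almost-sure weak$^{*}$ limits of the measure-valued martingale $(f_{-1})_{*}\cdots(f_{-n})_{*}\nu$, which are $\omega^{-}$-measurable. The paper itself does not reproduce a proof; your treatment of the generating-algebra bookkeeping, the Kolmogorov/Carath\'eodory extension, and the martingale convergence (which is unproblematic here since $\TT^2$ is compact, so $\Prob(\TT^2)$ is compact metrizable) fills in the standard details faithfully.
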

We call the probability measure $\nu_{\omega}=\nu_{\omega^{-}}$ on $M$ a \emph{sample measure} with respect to $\omega$.

\subsection{Stable and Unstable manifold}\label{sec:stablemfd}  In this subsection, we assume that $f$ and $g$ satisfy conditions \textbf{(C1)} and \textbf{(C2)} above. 

Then, we can choose sufficiently small $C^{1}$-neighborhoods $\Ucal_{f}$ and $\Ucal_{g}$ of $f$ and $g$ in $\Diff^{2}(\Tbb^{2})$ so that,  for any $x\in\TT^2$, $n\in\ZZ_{\geq 0}$,  $\omega\in\cU^{\ZZ}$,  vectors $v^s\in\cC^s_x$ and $v^u\in\cC_x^u$, 
\begin{itemize}
\item $Df^{-n}_\omega(x) \mathcal{C}^s_x \subset \mathcal{C}^s_{f^{-n}_\omega(x)}$ and {$(\C{C''}{0})^{-1}\lambda_{s,+}^{-n}\|v^s\| < \|Df^{-n}_\omega(x) v^s\|< \C{C''}{0}\lambda_{s,-}^{-n}\|v^s\|$}; 
\item $Df^n_\omega(x) \mathcal{C}^u_x \subset \mathcal{C}^u_{f^n_\omega(x)}$ and {$(\C{C''}{0})^{-1}\lambda_{u,-}^{n}\|v^u\| < \|Df^{n}_\omega(x) v^s\|< \C{C''}{0}\lambda_{u,+}^{n}\|v^u\|$},
\end{itemize}
where $\Ucal=\Ucal_{f}\cup \Ucal_{g}$.

\subsubsection{Uniform hyperbolicity}\label{sec:mu-axioms}

 Let $\mu$ be a probability measure supported on $\Ucal$.  The following moment condition holds automatically: \begin{equation*}
\int_{\Omega}\left( \log^{+}||f||_{C^{2}} +\log^{+}||f^{-1}||_{C^{2}} \right) d\mu(f) <\infty 
\end{equation*}
where $\log^{+}(x)=\max\{x,0\}$ and $||\cdot||_{C^{2}}$ is the $C^{2}$-norm of a diffeomorphism.

Consider the skew products defined in Section \ref{subsec:skew}. We will restrict these skew products to $\mathcal{U}^{\mathbb{Z}} \times \mathbb{T}^2$ and $\mathcal{U}^{\mathbb{N}} \times \mathbb{T}^2$.  Because of the joint cone condition, we can observe that  the skew product is uniformly hyperbolic on the fibers. Indeed, the joint cone condition let us  define stable and unstable distribution for every point $x\in \Tbb^{2}$ uniformly as follows.

\begin{prop}\label{prop:UH}
Under the setting above, for every word $\omega\in \mathcal{U}^{\mathbb{Z}}$, there exists a (continuous) splitting $T\Tbb^{2}=E^{s}_{\omega}\oplus E^{u}_{\omega}$ and constants $0<\gamma<1$, $C>0$, $\CS{L}{0}>1$, $0<\theta<1$ such that 
\begin{enumerate}
\item $Df_{\omega_{0}}E^{s}_{\omega}=E^{s}_{\sigma(\omega)}$ and $Df_{\omega_{0}}E^{u}_{\omega}=E^{u}_{\sigma(\omega)}$
\item For $v^{s}\in E^{s}_{\omega}$ and $v^{u}\in E^{u}_{\omega}$, we have
\[ ||Df_{\omega}^{n}v^{s}|| <C\gamma^{n}||v^{s}|| \textrm{ and } ||Df_{\omega}^{-n}v^{u}||<C\gamma^{n}||v^{u}||\] for all $n\ge 0$

\item for all $\omega\in \mathcal{U}^{\mathbb{Z}}$, $x\mapsto E^{s}_{\omega,x}$ and $x\mapsto E^{u}_{\omega,x}$ are $(\C{L}{0},\theta)$-H\"older continuous,
\item for all $\omega\in \mathcal{U}^{\mathbb{Z}}$ and for all $x\in \Tbb^{2}$, $\sphericalangle(E^{u}_{\omega,x},E^{s}_{\omega,x})>\alpha$.
\end{enumerate}
\end{prop}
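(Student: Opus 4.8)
The plan is to build the splitting $E^s_\omega \oplus E^u_\omega$ by the standard cone-intersection argument, exploiting the fact that the joint cone conditions \textbf{(C2)} (extended to the neighborhoods $\mathcal U_f,\mathcal U_g$ as recorded just before the statement) make the fiberwise dynamics genuinely uniformly hyperbolic, not just non-uniformly so. First I would fix $\omega\in\mathcal U^{\mathbb Z}$ and define, for each $x\in\mathbb T^2$,
\[
E^u_{\omega,x} \;=\; \bigcap_{n\geq 0} Df^n_{\sigma^{-n}\omega}\bigl(\sigma^{-n}\omega\text{-image of }x\bigr)\,\mathcal C^u_{\bullet},
\]
i.e. the nested intersection of forward images of the unstable cone along the past of $\omega$; similarly $E^s_{\omega,x}$ is the nested intersection of backward images of $\mathcal C^s$ along the future of $\omega$. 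The contraction estimates in \textbf{(C2)}/Section \ref{sec:stablemfd} force the diameters of these nested cone images to shrink geometrically (the ratio $\lambda_{u,-}/\lambda_{u,+}$ controls the angular width), so each intersection is a single line, giving well-defined one-dimensional distributions; item (1), the equivariance $Df_{\omega_0}E^{\bullet}_\omega = E^{\bullet}_{\sigma\omega}$, is immediate from the definition by reindexing. Transversality of $E^u_{\omega,x}$ and $E^s_{\omega,x}$, item (4), follows because $E^u_{\omega,x}\subset\mathcal C^u_x$ and $E^s_{\omega,x}\subset\mathcal C^s_x$ and the cones $\mathcal C^s,\mathcal C^u$ are disjoint (away from $0$) uniformly in $x$ by continuity and compactness, yielding a uniform lower angle bound $\alpha$.

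Next I would establish the uniform exponential bounds, item (2). Once a vector lies in the invariant line $E^u_{\omega,x}\subset\mathcal C^u_x$, the cone estimate $(\C{C''}{0})^{-1}\lambda_{u,-}^n\|v\| < \|Df^n_\omega(x)v\|$ from Section \ref{sec:stablemfd} gives exponential growth under $Df^n_\omega$, hence exponential decay under $Df^{-n}_\omega$ with rate $\gamma := \max\{\lambda_{u,-}^{-1}, \lambda_{s,+}\}<1$ and constant $C:=\C{C''}{0}$; symmetrically for $E^s_\omega$ under forward iteration. The only mild subtlety is that these one-sided cocycle bounds hold for all $n\geq 0$ uniformly in $\omega$ and $x$ precisely because the constant $\C{C''}{0}$ and the rates $\lambda_{\bullet,\pm}$ in Section \ref{sec:stablemfd} do not depend on the word — this is the payoff of the common cone hypothesis.

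Finally, for the Hölder regularity in $x$, item (3), I would invoke the standard argument that invariant cone-field distributions of a (fiberwise) uniformly hyperbolic system depending on a $C^2$ (hence $C^{1+\mathrm{Lip}}$) family of maps are Hölder continuous, with exponent $\theta$ and constant $\C{L}{0}$ depending only on the hyperbolicity rates $\lambda_{\bullet,\pm}$, the bound on $\C{C''}{0}$, and a uniform $C^1$ bound on the generators over $\mathcal U_f\cup\mathcal U_g$ (finite since these are small $C^2$-neighborhoods): one compares the cone images along two nearby orbits and sums a geometric series whose ratio is a power of the contraction rate divided by the expansion rate, exactly as in the Anosov case (see e.g. the graph-transform / bunching argument in standard references on hyperbolic dynamics). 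Since all constants are uniform over $\omega\in\mathcal U^{\mathbb Z}$, the Hölder data is uniform in $\omega$ as claimed. The main obstacle is bookkeeping: making sure every estimate is genuinely uniform in the word $\omega$ (this is where conditions \textbf{(C1)},\textbf{(C2)} together with the extension to $\mathcal U_f,\mathcal U_g$ do the real work), and checking that the constant in the Hölder estimate can be taken independent of $\omega$ despite the $x$-dependence of the metrics $q^s,q^u$ — but since those metrics are fixed, continuous, and do not vary with $\omega$, this causes no trouble.
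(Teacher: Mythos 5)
Your construction is exactly the standard cone-intersection argument that the paper itself cites (Chapter 6 of Katok--Hasselblatt), and all four items are handled correctly. The only slip is cosmetic: the angular contraction rate of the nested cone images under forward iteration is governed by the domination ratio $\lambda_{s,+}/\lambda_{u,-}$ (contraction transverse to the cone over expansion inside it), which is precisely what the paper records later in its cone-contraction estimate in Section~\ref{subsect:setting}, rather than by $\lambda_{u,-}/\lambda_{u,+}$ --- but this misattribution does not affect the validity of the argument.
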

The proof is basically the same as in the single Anosov diffeomorphism case (see, for instance, \cite[Chapter 6]{KH}). From \Cref{prop:UH}, we can get the local stable and unstable manifolds for every word $\omega\in \mathcal{U}^{\mathbb{Z}}$ and points $x\in \Tbb^{2}$ using the graph transform method (see, for instance, \cite[Theorem 6.2.8]{KH}).
Let \[W_{r}^{s}(\omega,x)=\left\{y\in \Tbb^{2}: d(f^{n}_{\omega}(y),f^{n}_{\omega}(x))\le r, \textrm{ for all $n\ge 0$ and }\lim_{n\to\infty}d(f^{n}_{\omega}(y),f^{n}_{\omega}(x))=0\right\}\] and
\[W_{r}^{u}(\omega,x)=\left\{y\in \Tbb^{2}: d(f^{-n}_{\omega}(y),f^{-n}_{\omega}(x))\le r,  \textrm{ for all $n\ge 0$ and } \lim_{n\to\infty}d(f^{-n}_{\omega}(y),f^{-n}_{\omega}(x))=0\right\}\]
\begin{prop}\label{prop:stablemfd}
Under the above setting, there is $r>0$ such that for all $\omega\in \mathcal{U}^{\mathbb{Z}}$, 
\begin{enumerate}
\item $W_{r}^{s}(\omega,x)$ and $W_{r}^{u}(\omega,x)$ is a $C^{2}$ embedded curve tangent to $E^{\star}_{\omega,x}$. 
\item $W_{r}^{\star}(\omega,x)$ is continuous in $x$ with respect to the $C^{2}$ topology, for $\star =s,u$.
\item There exists $C\ge 1$ and $0<\lambda<1$ such that $W_{r}^{\star}(\omega,x)$ can be characterized by 
\[W_{r}^{s}(\omega,x)=\left\{y\in \Tbb^{2}:	d(f^{n}_{\omega}(y),f^{n}_{\omega}(x))\le r\textrm{  and } d(f^{n}_{\omega}(y),f^{n}_{\omega}(x))\le C\lambda^{n}d(f^{n}_{\omega}(y),f^{n}_{\omega}(x))	\textrm{ for all $n\ge 0$}		\right\}\]
\[W_{r}^{u}(\omega,x)=\left\{y\in \Tbb^{2}:d(f^{-n}_{\omega}(y),f^{-n}_{\omega}(x))\le r\textrm{  and } d(f^{-n}_{\omega}(y),f^{-n}_{\omega}(x))\le C\lambda^{n}d(f^{-n}_{\omega}(y),f^{-n}_{\omega}(x))	\textrm{ for all $n\ge 0$}.		\right\}\]
\end{enumerate}
\end{prop}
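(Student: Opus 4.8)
The plan is to prove \Cref{prop:stablemfd} by carrying out the classical graph--transform (Hadamard--Perron) construction, as in \cite[Theorem~6.2.8]{KH}, fiberwise over each word $\omega\in\mathcal{U}^{\ZZ}$, and to check that every constant is uniform in $\omega$ because the uniform hyperbolicity and H\"older data produced by \Cref{prop:UH}, together with the uniform cone estimates recorded in \Cref{sec:stablemfd}, are uniform in $\omega$. First I would fix, using compactness of $\TT^2$, a radius $\rho>0$ so that $\exp_x\colon B(0,\rho)\subseteq T_x\TT^2\to\TT^2$ is a diffeomorphism onto its image for every $x$, and so that in these charts every $h\in\mathcal{U}$ is $C^1$-close to the linear map $v\mapsto Dh(x)v$ and has $C^2$-norm bounded by a constant depending only on $\mathcal{U}$ (shrinking $\mathcal{U}_f,\mathcal{U}_g$ if necessary). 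For a fixed $\omega\in\mathcal{U}^{\ZZ}$ write $x_n=f^n_\omega(x)$ and $\omega_n=\sigma^n(\omega)$, and for $0<r\le\rho$ let $\mathcal{G}^s_n(r)$ denote the set of graphs, over the $r$-ball in $E^s_{\omega_n,x_n}$, of $C^1$ maps into $E^u_{\omega_n,x_n}$ through $x_n$ whose derivative stays in the stable cone and is uniformly bounded. Pulling such a graph back by $f_{\omega_n}^{-1}$ and truncating to radius $r$ defines a graph transform $\Gamma_n\colon\mathcal{G}^s_{n+1}(r)\to\mathcal{G}^s_n(r)$ (well defined once $r$ is small, using $Df_{\omega_n}^{-1}\mathcal{C}^s\subseteq\mathcal{C}^s$ and the uniform $C^2$-bound), and a bounded block $\Gamma_n\circ\cdots\circ\Gamma_{n+N-1}$ is a uniform contraction in the $C^0$ (sup) metric with rate less than $1$ once $N$ is large (of order $C''_0\lambda_{u,-}^{-N}$), uniformly in $n$ and $\omega$.

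Next I would set $W^s_r(\omega,x)=\bigcap_{n\ge0}(\Gamma_0\circ\cdots\circ\Gamma_{n-1})(\mathcal{G}^s_n(r))$: the images are nested with $C^0$-diameters shrinking geometrically, so the intersection is a single graph, independent of all truncations. The map induced by the transform on tangent lines is the projectivized inverse derivative cocycle restricted to the stable cone, whose unique attracting section is $E^s_\omega$ by \Cref{prop:UH}, so the limit graph is $C^1$ with tangent line $E^s_{\omega,x}$ at $x$. To upgrade $C^1$ to $C^2$ I would apply the standard fiber-contraction ($C^r$-section) argument to the cocycle that $\{f_{\omega_n}\}$ induces on the bundle of $1$-jets of admissible graphs: the base map contracts at a definite rate while the induced map on fibers is Lipschitz with a constant controlled by the uniform $C^2$-bound, so the invariant section is $C^1$ and $W^s_r(\omega,x)$ is a $C^2$ curve; note that only the smoothness of the individual maps $f_{\omega_n}$ is used here, never any smoothness of $\omega\mapsto E^s_\omega$ (which is merely H\"older). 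This gives item (1); item (2) follows because each finite composition $\Gamma_0\circ\cdots\circ\Gamma_{n-1}$ depends continuously on $x$ in the $C^2$ topology and the convergence to the fixed graph is uniform in $x$. The statements for $W^u_r$ are obtained verbatim after replacing $f^{-n}_\omega$, $\mathcal{C}^s$, $\lambda_{s,\pm}$ by $f^n_\omega$, $\mathcal{C}^u$, $\lambda_{u,\pm}$.

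For item (3), take $\lambda=\lambda_{s,+}\in(0,1)$ and $C\ge1$ large enough to absorb $C''_0$ and the chart distortion. If $y\in W^s_r(\omega,x)$, then $f^n_\omega$ maps the sub-arc of $W^s_r(\omega,x)$ from $x$ to $y$ onto the sub-arc of $W^s_r(\sigma^n\omega,x_n)$ from $x_n$ to $f^n_\omega(y)$, and every tangent line to these curves lies in $\mathcal{C}^s$; integrating $\|Df^n_\omega\cdot\|$ along such tangent vectors and using $\|Df^n_\omega v^s\|<C''_0\lambda_{s,+}^{\,n}\|v^s\|$ gives $d(f^n_\omega y,f^n_\omega x)\le C\lambda^n d(y,x)$ for all $n\ge0$. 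Conversely, suppose $d(f^n_\omega y,f^n_\omega x)\le r$ for all $n\ge0$ but $y\notin W^s_r(\omega,x)$. In the chart at $x_n$ let $\delta_n$ be the displacement of $f^n_\omega(y)$ transverse to the graph $W^s_r(\sigma^n\omega,x_n)$, measured in the $E^u_{\omega_n}$-direction; then $\delta_0\ne0$, and since $f_{\omega_n}$ carries $W^s_r(\sigma^n\omega,x_n)$ into $W^s_r(\sigma^{n+1}\omega,x_{n+1})$ while, to first order, transporting the transverse displacement by $Df_{\omega_n}$ applied to an $E^u_{\omega_n}$-vector, the $n$-step expansion estimate of \Cref{sec:stablemfd} (and the bounded aperture of $\mathcal{C}^u$) give $|\delta_n|\ge \mathrm{const}^{-1}(C''_0)^{-1}\lambda_{u,-}^{\,n}|\delta_0|$, with the nonlinear errors controlled since $r$ is small. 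As $\lambda_{u,-}>1$ this forces $f^n_\omega(y)$ to leave the radius-$r$ chart for large $n$, contradicting $d(f^n_\omega y,f^n_\omega x)\le r$; hence $y\in W^s_r(\omega,x)$. The argument for $W^u_r$ is symmetric.

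I expect the one genuinely delicate point to be the uniform $C^2$ assertion of item (1): one must verify that the fiber-contraction estimate for the $1$-jet cocycle uses only the uniform $C^2$-bound on elements of $\mathcal{U}$ and the uniform spectral gap of \Cref{prop:UH}, so that a single radius $r$ and a single $C^2$ modulus of continuity work simultaneously for all $\omega\in\mathcal{U}^{\ZZ}$; the rest is a routine transcription of the single-diffeomorphism proof, with the word $\omega$ playing the role of an additional ``time'' parameter whose influence enters only through the uniform data above.
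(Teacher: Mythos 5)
Your proposal follows exactly the route the paper itself takes: the paper gives no detailed proof of \Cref{prop:stablemfd}, instead referring to the graph--transform (Hadamard--Perron) method of \cite[Theorem 6.2.8]{KH} applied fiberwise over $\omega$, with the only non-routine point being that all constants are uniform in $\omega$ because they come from \Cref{prop:UH} and the uniform cone data over $\mathcal{U}=\mathcal{U}_f\cup\mathcal{U}_g$. Your sketch correctly fills in those standard details (including silently correcting the obvious typo in item (3), where $d(f^n_\omega y,f^n_\omega x)\le C\lambda^n d(f^n_\omega y,f^n_\omega x)$ should read $d(f^n_\omega y,f^n_\omega x)\le C\lambda^n d(y,x)$), so it is correct and essentially identical in approach to what the paper intends.
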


Indeed, for each $\omega\in \mathcal{U}^{\mathbb{Z}}$ and $x\in \Tbb^{2}$, there exists a $C^{2}$ function $\phi_{\omega,x}^{s}:E^{s}_{\omega,x}(r)\to E^{u}_{\omega,x}$ so that $\phi_{\omega,x}^{s}(0)=0$, $D_{0}\phi_{\omega,x}^{s}(0)=0$ and $W_{r}^{s}(\omega,x)=\exp(\textrm{graph}\phi_{\omega,x}^{s})$ where $E^{s}_{\omega,x}(r)=\{v\in E^{s}_{\omega,x}:||v||<r\}$. The same  holds for unstable manifolds.

The global stable and unstable manifolds are defined by 
\begin{align*}
W^{s}(\omega,x)&=\bigcup_{n\ge 0}f_{\omega}^{-n}W^{s}(\sigma^{n}\omega, f^{n}_{\omega}(x))=\left\{y\in \Tbb^{2}:	\lim_{n\to\infty} d(f^{n}_{\omega}(y),f^{n}_{\omega}(x))=0	\right\}\end{align*}

\begin{align*}
W^{u}(\omega,x)&=\bigcup_{n\ge 0}f_{\omega}^{n}W^{u}(\sigma^{-n}\omega, f^{-n}_{\omega}(x))=\left\{y\in \Tbb^{2}:	\lim_{n\to-\infty} d(f^{n}_{\omega}(y),f^{n}_{\omega}(x))=0	\right\}\end{align*}

\subsubsection{Random SRB measure}\label{sec:SRBdef} 
 Recall that given a $\mu$-stationary measure $\nu$, we can construct an $F$-invariant probability measure $\widehat{\nu}$ on $\mathcal{U}^{\mathbb{Z}}\times \Tbb^{2}$ as in \cref{prop.basic}. Let us fix a $\mu$-stationary measure, $\nu$ and let $\hat{\nu}$ be its lift.

\begin{dfn}
A $\hat{\nu}$-measurable partition $\eta$ of $\mathcal{U}^{\mathbb{Z}}\times \Tbb^{2}$ is said to be subordinated to $W^{u}$ manifolds if 
for $\widehat{\nu}$-almost every $(\omega,x)$, $\{y\in \Tbb^{2}:(\omega,y)\in \eta(\omega,x)\}$ is 
\begin{enumerate}
\item precompact in $W^{u}(\omega,x)$, 
\item contained in $W^{u}(\omega,x)$, and
\item  contains an open neighborhood of $x$ in $W^{u}(\omega,x)$.
\end{enumerate}
\end{dfn}
Note that such a measurable partition always exists.
Let $\hat{\nu}_{(\omega,x)}^{\eta}$ be a system of conditional measures with respect to a $W^{u}$- subordinated $\hat{\nu}$-measurable partition $\eta$. 

\begin{dfn}[Random SRB]  A $\mu$-stationary measure $\nu$ has the SRB property if for every $W^{u}$-subordinated $\hat{\nu}$-measurable partition $\eta$,  for $\mu^{\mathbb{Z}}$-almost every $\omega$ and $\nu_\omega$-almost every $x$,  the measure $\hat{\nu}_{(\omega,x)}^{\eta}$ is absolutely continuous with respect to the Lebesgue measure on $W^{u}(\omega,x)$ inherited by the immersed Riemannian submanifold structure on $W^{u}(\omega,x)$.
\end{dfn}

\begin{lem}\label{lem.fullsupport}
Let $\mu$ be a probability measure supported on $\mathcal{U}$ and suppose that $\nu$ is a $\mu$-stationary SRB measure. Then $\text{supp}(\mu) = \mathbb{T}^2$.
\end{lem}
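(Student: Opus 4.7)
First, let me remark that the conclusion as written is almost surely a typo: since $\mu$ is a measure on $\mathrm{Diff}^2(\mathbb{T}^2)$ rather than on $\mathbb{T}^2$, the intended statement is $\mathrm{supp}(\nu)=\mathbb{T}^2$. I will proceed under that interpretation.

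Set $K=\mathrm{supp}(\nu)$, which is closed in $\mathbb{T}^2$. The first step is to show $K$ is forward invariant under $\mu$-a.e.\ $f$. Starting from the defining identity $\nu=\int f_{\ast}\nu\, d\mu(f)$, integrating against the characteristic function of $\mathbb{T}^2\setminus K$ gives
\[
0=\nu(\mathbb{T}^2\setminus K)=\int (f_{\ast}\nu)(\mathbb{T}^2\setminus K)\, d\mu(f),
\]
so that $f(K)\subseteq K$ for $\mu$-a.e.\ $f$. Since a full-measure set meets $\mathrm{supp}(\mu)$, I may fix a specific diffeomorphism $f^{\ast}\in\mathrm{supp}(\mu)\subseteq\mathcal{U}$ with $f^{\ast}(K)\subseteq K$. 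In particular, $f^{\ast}$ is a $C^2$ Anosov diffeomorphism of $\mathbb{T}^2$ whose unstable cone is $\mathcal{C}^u$.

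Next I extract an unstable curve inside $K$ from the SRB hypothesis. Fix a $W^u$-subordinated measurable partition $\eta$. By the SRB property, on a set of full $\hat{\nu}$-measure the conditional $\hat{\nu}^{\eta}_{(\omega,x)}$ is absolutely continuous on the plaque $\eta(\omega,x)\subseteq W^u(\omega,x)$, and the plaque contains an open neighborhood of $x$ in $W^u(\omega,x)$. Choose one such $(\omega,x)$; then there is an arc $\gamma\subseteq W^u(\omega,x)$ through $x$ whose intersection with $\eta(\omega,x)$ has positive Lebesgue measure along $W^u(\omega,x)$. Hence the arc $\gamma$, which is a $C^2$ curve tangent to $E^u_{\omega}\subseteq\mathcal{C}^u$, lies inside $\mathrm{supp}(\nu_\omega)\subseteq K$. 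So $K$ contains a nondegenerate curve $\gamma$ tangent everywhere to $\mathcal{C}^u$.

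Now iterate $f^{\ast}$ forward. By cone invariance, each $(f^{\ast})^n(\gamma)$ is a $C^2$ curve tangent to $\mathcal{C}^u$, by forward invariance each $(f^{\ast})^n(\gamma)\subseteq K$, and by the uniform expansion on $\mathcal{C}^u$ the arclength of $(f^{\ast})^n(\gamma)$ grows at rate $\lambda_{u,-}^n\to\infty$. I now invoke the classical fact that for a $C^2$ Anosov diffeomorphism of $\mathbb{T}^2$, the forward orbit of any $C^1$ curve tangent to the unstable cone becomes $C^0$-dense in $\mathbb{T}^2$. This follows, for instance, because $(f^{\ast})^n(\gamma)$ is $C^1$-close to a piece of a genuine $f^{\ast}$-unstable leaf of length comparable to $\lambda_{u,-}^n \cdot \mathrm{length}(\gamma)$, and the unstable foliation of an Anosov diffeomorphism on $\mathbb{T}^2$ is minimal, so long plaques become dense. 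Therefore $\overline{\bigcup_{n\geq 0}(f^{\ast})^n(\gamma)}=\mathbb{T}^2$. Since $K$ is closed and contains $(f^{\ast})^n(\gamma)$ for every $n$, I conclude $K=\mathbb{T}^2$.

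The only nontrivial step to be careful about is the density of forward iterates of an unstable-cone curve under a single Anosov $f^{\ast}$; this is standard but deserves to be recorded with a precise reference (e.g., via minimality of the unstable foliation or equivalently topological transitivity plus a shadowing argument). Everything else is bookkeeping from the definitions of $\mu$-stationarity and of an SRB measure given in \S\ref{sec:SRBdef}.
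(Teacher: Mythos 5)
Your proof is correct and takes essentially the same route as the paper: extract from the SRB property a curve tangent to $\mathcal{C}^u$ inside $\mathrm{supp}(\nu)$, show $\mathrm{supp}(\nu)$ is forward invariant under (some or all) elements of $\mathrm{supp}(\mu)$, then push the curve forward under a single Anosov element and use minimality of its unstable foliation. (You also correctly identified that the stated conclusion $\mathrm{supp}(\mu)=\mathbb{T}^2$ is a typo for $\mathrm{supp}(\nu)=\mathbb{T}^2$.) The one small divergence: the paper proves, via a direct estimate on balls, that $h(\mathrm{supp}(\nu))\subseteq\mathrm{supp}(\nu)$ for \emph{every} $h\in\mathrm{supp}(\mu)$, so it can iterate an arbitrary element of the support; you obtain forward invariance only for $\mu$-a.e.\ $f$ directly from the stationarity identity and then intersect with $\mathrm{supp}(\mu)$ to pick one good $f^{\ast}$. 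Both versions suffice here, but the paper's slightly stronger claim is the one that lets one choose $h$ freely (which is occasionally convenient later). Your observation that absolute continuity alone does not immediately place the plaque in the support is worth being precise about — what one really uses is that the SRB density is strictly positive (the log-Lipschitz density from Theorem \ref{thm.log.lip.density}), so the conditional measure is \emph{equivalent} to Lebesgue on the plaque, whence the whole plaque lies in $\mathrm{supp}(\nu_\omega)\subseteq\mathrm{supp}(\nu)$.
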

\begin{proof}
Let us first show the following claim.
\begin{claim}\label{claim.stationaryinclusion}
Suppose that $\nu'$ is a $\mu$-stationary measure. Then 
\[\displaystyle \bigcup_{h\in \text{supp}(\mu)} h(\text{supp}(\nu')) \subset \text{supp}(\nu').\]
\end{claim}
\begin{proof}[Proof of Claim \ref{claim.stationaryinclusion}]
Take $x\in \displaystyle \bigcup_{h\in \text{supp}(\mu)} h(\text{supp}(\nu')$, then, there exists $\widehat{h}\in \text{supp}(\mu)$ such that $\widehat{h}^{-1}(x) \in \text{supp}(\nu')$.  In particular, for any $r,\delta>0$, we have
\[
\displaystyle \int_{B(\widehat{h},\delta)} \nu'(B(h^{-1}(x), r)) d\mu(h) >0.
\]
For each $R>0$, there exists $r>0$ such that $h^{-1}(B(x,R)) \supset B(h^{-1}(x),r)$, for every $h\in \text{supp}(\mu)$.   Therefore,
\[
\displaystyle \nu'(B(x,R)) = \int_{\mathcal{U}'} \nu'(h^{-1}(B(x,R))) d\mu(h) \geq  \int_{B(\widehat{h},\delta)} \nu'(B(h^{-1}(x), r)) d\mu(h) >0.
\]
Since this is true for any $R>0$, we have that $x\in \text{supp}(\nu')$. \qedhere
\end{proof}

Suppose that $\nu$ is a $\mu$-stationary SRB measure. In particular, the support of $\nu$ contains a curve $\gamma$ tangent to the unstable cone.  Take $h\in \text{supp}(\mu)$. By Claim \ref{claim.stationaryinclusion} and by induction, we obtain that $h^n(\gamma) \subset \text{supp}(\nu)$ for every $n\in \mathbb{N}$.  Observe that $h$ is an Anosov diffeomorphism, in particular,  the unstable foliation is minimal.  For any $\varepsilon>0$, there exists $L>0$ such that any unstable leaf for $h$ of length $L$ is $\varepsilon$-dense.  For each $n$ large enough, there exists $D_n \subset \gamma$ such that $h^n (D_n)$ is $\varepsilon$-close to an unstable manifold of length $L$. In particular $h^n(D_n)$ is $2\varepsilon$-dense. It is easy to conclude that this implies that $\text{supp}(\nu) = \mathbb{T}^2$. \qedhere

\end{proof}

We can ensure that, in our setting, there is a unique $\mu$-stationary SRB measure $\nu$ as follows:

\begin{theorem}[\cite{Liu-Qian-book}] \label{thm.uniquesrb}
Let $\mu$ be a probability measure $\mu$ supported on $\mathcal{U}$. Then there exists a unique $\mu$-stationary SRB measure $\nu$. 
\end{theorem}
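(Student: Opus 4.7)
The plan is to apply the general theory of random SRB measures developed in \cite{Liu-Qian-book}, adapted to our uniformly hyperbolic setting where the joint cone condition provides strong geometric control on the fibers of the skew product $F$. There are two parts to establish: existence and uniqueness.

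\textbf{Existence.} Fix a $C^2$ curve $\gamma_0 \subset \mathbb{T}^2$ everywhere tangent to the unstable cone $\mathcal{C}^u$, and let $\lambda_0$ be its normalized arc-length measure. Set
\[
\nu_n := \frac{1}{n}\sum_{k=0}^{n-1} \mu^{*k}*\lambda_0.
\]
By weak-* compactness on $\mathbb{T}^2$, some subsequence converges to a probability measure $\nu$, and a standard telescoping computation shows that $\nu$ is $\mu$-stationary. To verify the SRB property of its lift $\hat\nu$ (Proposition \ref{prop.basic}), I would combine condition \textbf{(C2)} with a bounded-distortion estimate: for every finite word $\omega$ the image $f^k_\omega(\gamma_0)$ is a curve tangent to $\mathcal{C}^u$ with arc-length expansion uniformly in $[\lambda_{u,-}^k,\lambda_{u,+}^k]$, and the uniform $C^2$-bound on $\overline{\mathcal{U}}$ yields a $k$- and $\omega$-independent inequality of the form $\det Df^k_\omega|_{E^u}(x)/\det Df^k_\omega|_{E^u}(y) \le K$ for $x,y$ in a short arc of $\gamma_0$. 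Consequently $(f^k_\omega)_*\lambda_0$ has density with respect to arc length on $f^k_\omega(\gamma_0)$ that is uniformly bounded above and below. Passing to the limit along the chosen subsequence, the sample measures of $\hat\nu$ inherit absolutely continuous conditionals on unstable plaques, i.e.\ $\nu$ is SRB.

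\textbf{Uniqueness.} Let $\nu_1,\nu_2$ be two $\mu$-stationary SRB measures, with lifts $\hat\nu_1,\hat\nu_2$. By Lemma \ref{lem.fullsupport}, both have full support $\mathbb{T}^2$. The SRB property is preserved under conditioning on invariant sub-$\sigma$-algebras (absolute continuity on $W^u$-plaques passes to ergodic components), so it suffices to treat the case where each $\hat\nu_i$ is $F$-ergodic. Then I would run a Hopf-style argument: for any $\varphi\in C(\mathbb{T}^2)$, the backward Birkhoff average $\hat\varphi^-(\omega,x)=\lim_{n\to\infty}\tfrac{1}{n}\sum_{k=0}^{n-1}\varphi(f^{-k}_\omega(x))$ exists $\hat\nu_i$-a.e., equals $\int\varphi\,d\nu_i$ on a full-measure set, and is constant along local unstable manifolds. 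By uniform hyperbolicity (Proposition \ref{prop:UH}), local stable holonomies between unstable leaves are absolutely continuous with respect to arc length (the random-dynamics analog of the Anosov absolute continuity theorem); hence $\hat\varphi^-$ is also a.e.\ constant along stable plaques. Combined with the bounded-below unstable densities from the existence step and the full support, this forces $\int\varphi\,d\nu_1=\int\varphi\,d\nu_2$ for every $\varphi$, so $\nu_1=\nu_2$.

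The main obstacle is the uniqueness part: the Hopf-style chain of equalities requires absolute continuity of random stable holonomies between unstable leaves, which is the key technical input taken from \cite{Liu-Qian-book}. It is precisely the uniform-hyperbolicity and uniform-cone hypotheses of Section \ref{sec:stablemfd} that make this holonomy well-behaved in our setting, in contrast to the general nonuniformly hyperbolic framework where the same statement is much more delicate.
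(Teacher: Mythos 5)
Your proposal is correct and follows essentially the same route as the paper's own argument: existence via Ces\`aro averages of pushed-forward unstable arc-length and bounded distortion, and uniqueness via Lemma \ref{lem.fullsupport} together with a Hopf argument exploiting uniform hyperbolicity of the skew product (the paper delegates this to Lemma 3.1 and Proposition 3.4 in Chapter VII of \cite{Liu-Qian-book}, which is precisely the absolute-continuity-of-holonomy input you identify as the main technical point).
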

\begin{proof}
The proof follows the same steps of the proof of Theorem $1.1$ in Chapter VII of \cite{Liu-Qian-book}. Even though in their setting the authors work with random perturbations of a single system, the key feature to make the proof work is uniform hyperbolicity for any point and any choice of word $\omega$, which we have in our setting.  The proof the follows the following steps. Consider any disk $D^u$ tangent to $\mathcal{C}^u$. The riemannian metric of $\mathbb{T}^2$ induces a riemannian volume on $D^u$. Let $m^u$ be the normalized volume measure on $D^u$.  For each $n\in \mathbb{N}$, consider 
\[
\nu_n := \displaystyle \frac{1}{n} \sum_{j=0}^{n-1} \mu^{*j}* m^u.
\]
Since the skew product is uniformly hyperbolic, one obtains bounded distortion estimates. This implies that any accumulation measure of the sequence $(\nu_n)_{n\in \mathbb{N}}$ is a $\mu$-stationary measure having  the SRB property.   This implies the existence part of the statement.

Suppose there are two different ergodic $\mu$-stationary SRB measures $\nu$ and $\nu'$.  By Lemma \ref{lem.fullsupport}, and by using that for any choice of past, the stable and unstable manifolds have uniform size, one can find homoclinic relations between the two measures and then apply a Hopf argument to conclude that $\nu = \nu'$.  See Lemma $3.1$ and Proposition $3.4$ in Chapter VII of \cite{Liu-Qian-book} for more details on the Hopf argument. 
\end{proof}

The goal of Theorem \ref{thm.mainthm} is to show that the $\mu$-stationary SRB measure $\nu$ is actually absolutely continuous with respect to the Lebesgue measure.  For a $W^{u}$-subordinated $\hat{\nu}$-measurable partition $\eta$, we denote by $\eta_{\omega}(x)$ the set $\{y\in \Tbb^{2}:(\omega,y)\in \eta(\omega,x)\}$. Note that for each $\omega$, $\eta_{\omega}(x)$ forms a $\nu_\omega$-measurable partition on $\Tbb^{2}$. 

\begin{theorem}[Log-Lipschitz regularity of the density]\label{thm.log.lip.density}
Let $\mu$ be as a probability measure as in \cref{sec:mu-axioms} and let $\nu$ be the unique $\mu$-stationary measure on $\Tbb^{2}$ with the SRB property. 
Let $m^{u}_{(\omega,x)}$ denote the Lebesgue measure on $W^{u}(\omega,x)$ induced by the immersed Riemannian structure on $W^{u}(\omega,x)$. Fix a  $W^{u}$-subordinated $\hat{\nu}$-measurable partition $\eta$ of $\mathcal{U}^{\mathbb{Z}}\times \Tbb^2$. 

Then, for $\widehat \nu$-a.e.\ $(\omega,x)$, there exists 
%all $\omega\in \Omega$, there exist 
a log-Lipschitz function $h^\eta_{\omega,x}\colon  \eta_\omega(x) \to \Rbb^{+}$ % such that for $\widehat{\nu}$ almost every $(\omega,x)$, 
such that 
\[h^\eta_{\omega,x}(y)=\frac{d\hat{\nu}_{(\omega, x)}^{\eta}}{dm^{u}_{(\omega,x)}}(y)\] for $m_{(\omega,x)}^{u}$-almost every $y\in \eta_{\omega}(x)$.  Moreover, the log-Lipschitz constant is uniform over the choice of $W^u$-subordinated partition $\eta$ and $(\omega,x)\in \mathcal{U}^{\mathbb{Z}}\times \Tbb ^2$.

\end{theorem}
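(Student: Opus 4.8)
The plan is to obtain the density $h^\eta_{\omega,x}$ as a limit of normalized pushforwards of Lebesgue-type measures on unstable curves, and to control its regularity by tracking the standard bounded-distortion estimate along inverse branches. Fix $(\omega,x)$ generic. Since $\nu$ is SRB and unique, by the construction recalled in the proof of \Cref{thm.uniquesrb}, the sample measures $\nu_\omega$ restricted to a plaque $\eta_\omega(x)\subset W^u(\omega,x)$ can be recovered as a weak-$*$ limit: writing $\omega=(\dots,f_{-1},f_0,f_1,\dots)$, the plaque $\eta_\omega(x)$ is the forward image under $f^{n}_{\sigma^{-n}\omega}$ of a plaque inside $W^u(\sigma^{-n}\omega, f^{-n}_\omega x)$, and along this sequence one has uniform hyperbolicity on fibers (\Cref{prop:UH}) together with $(\C{L}{0},\theta)$-Hölder control of the unstable bundle. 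First I would set up, for $n\ge 1$, the finite-step density $h_n$ on $\eta_\omega(x)$ obtained by pushing forward normalized Lebesgue measure on the length-$n$ preimage plaque and renormalizing; the SRB property plus the martingale/convergence argument of \cite[Ch.\ VII]{Liu-Qian-book} gives $h_n \to h^\eta_{\omega,x}$ in $L^1(m^u_{(\omega,x)})$ along a subsequence.

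The main computation is then the log-Lipschitz bound on each $h_n$, uniform in $n$, $\eta$, and $(\omega,x)$. For two points $y,z\in\eta_\omega(x)$ on the same unstable plaque,
\[
\log\frac{h_n(y)}{h_n(z)} = \sum_{k=0}^{n-1}\Big(\log\mathrm{Jac}^u\big(f_{\omega_{-k-1}}\big)\big(f^{-k-1}_\omega y\big) - \log\mathrm{Jac}^u\big(f_{\omega_{-k-1}}\big)\big(f^{-k-1}_\omega z\big)\Big) + o(1),
\]
where $\mathrm{Jac}^u$ denotes the Jacobian of the restriction to unstable curves and the $o(1)$ accounts for the difference of the densities of the reference measures (which goes to $0$ as the preimage plaque shrinks). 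Each summand is estimated by the standard distortion inequality: $\log\mathrm{Jac}^u f_{\omega_j}$ is Hölder (indeed Lipschitz, since $f\in\Diff^2$ and the unstable curves are $C^2$ with uniformly bounded geometry by \Cref{prop:stablemfd}) along unstable curves, and the distance $d^u(f^{-k}_\omega y, f^{-k}_\omega z)$ contracts geometrically at rate $\gamma^k$ by \Cref{prop:UH}(2). One must also absorb the variation of the unstable direction itself — this is where the $(\C{L}{0},\theta)$-Hölder continuity of $x\mapsto E^u_{\omega,x}$ enters — but all such contributions are again summable geometric series. Summing, $\big|\log(h_n(y)/h_n(z))\big| \le C\, d^u(y,z)^{\theta'}$ with constants depending only on the a priori data ($\C{C''}{0}$, $\lambda_{u,\pm}$, $\C{L}{0}$, $\theta$, $\gamma$, and $\sup\|f\|_{C^2}$ over $\cU$), hence uniform over $\eta$ and $(\omega,x)$; a short argument upgrades Hölder to log-Lipschitz using that unstable plaques have uniformly bounded length, or one keeps track of the Lipschitz norm directly since $C^2$ regularity of the generators makes $\log\mathrm{Jac}^u$ Lipschitz along curves.

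Finally, the uniform bound passes to the limit: $h^\eta_{\omega,x}$ inherits the same log-Lipschitz constant, and since it is a positive $L^1$ density with controlled oscillation on a plaque of bounded length it has a log-Lipschitz (in particular continuous, strictly positive) representative, which is then literally equal to $d\hat\nu^\eta_{(\omega,x)}/dm^u_{(\omega,x)}$ $m^u$-a.e. I expect the main obstacle to be bookkeeping the two sources of error cleanly — the drift of the reference measures $m^u$ on the shrinking preimage plaques versus the true conditional measures, and the contribution of the twisting of $E^u_\omega$ along the orbit — and verifying that the constants genuinely depend only on the fixed data and not on $\omega$ or the partition; the contraction estimates themselves are routine given \Cref{prop:UH} and \Cref{prop:stablemfd}.
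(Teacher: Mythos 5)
Your approach is essentially the paper's: the paper writes the density directly as the normalized infinite product $J_{\omega,x}(y)=\lim_n \|D_yf_\omega^{-n}|_{E^u_{\omega,y}}\|/\|D_xf_\omega^{-n}|_{E^u_{\omega,x}}\|$ (citing \cite[Cor.\ 6.1.4]{LY1}) and gets the uniform log-Lipschitz constant from the Lipschitz variation of $\log\|Df^{-1}|_{E^u}\|$ along unstable leaves together with the exponential contraction $d(f_\omega^{-n}y,f_\omega^{-n}z)\to 0$, which is exactly your distortion estimate. Your finite-step approximation $h_n$ is a cosmetic detour around the same computation, and your parenthetical observation that $\log\mathrm{Jac}^u$ is genuinely Lipschitz along the $C^2$ unstable curves (not merely Hölder via the transversal regularity of $E^u$) is the correct point on which the paper also relies.
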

Indeed, for $y\in \eta_\omega (x)$, 
we let $$J_{\omega,x}(y)= \lim_{n\to \infty} \frac{ \|{D_yf_\omega^{-n}\restrict {E^u_{\omega,y}}}\|}{ \|{D_xf_\omega^{-n}\restrict {E^u_{\omega,x}}}\|}$$
and 
$$h^\eta_{\omega,x}(y)= \frac{1}{\int_{\eta_\omega(x)} J_{\omega,x}(y) \ d m^u_{(\omega,x)}(y)} J_{\omega,x}(y).$$
A standard computation (see \cite[Corollary 6.1.4]{LY1}) shows that $h^\eta_{\omega,x}(y)= \frac{d\hat{\nu}_{(\omega,x)}^{\eta}}{dm^{u}_{(\omega,x)}}(y).$
Moreover,
since the Lipschitz variation of $(\omega,y)\mapsto \|{D_yf_\omega^{-1}\restrict {E^u_{\omega,y}}}\|$ along $W^u(\omega, x)$ is uniform, (independent of $(\omega,x)$), 
and since for $y,z\in \eta_\omega(x)$, $d(f_\omega ^{-n}(y), f_\omega ^{-n}(z))\to 0$ exponentially fast (uniformly in $\omega, y$, and $z$), there is $L$ (independent of $\eta$ and $(\omega, x)$) such that 
$$|\log h^\eta_{\omega,x}(y)- \log h^\eta_{\omega,x}(z)| = \log \frac {J_{\omega,x}(y)}{J_{\omega,x}(z)} \le L d(y,z).$$

\subsection{Other notations}\label{subsect:other.no.}
We introduce some notations and conventions which will be used throughout the paper. When we introduce new constants in the rest of this paper, we do not always track their dependence on the constants introduced in this section ($\C{C}{0}$, $\C{C'}{0}$ and $\C{C''}{0}$) and on certain constants introduced earlier ($\lambda_{\star,\pm}$ in \textbf{(C2)}; $\C{L}{0}$ and $\theta$ in Item (3) of Proposition \ref{prop:UH}).

\textbf{Notations regarding $\TT^2$}:
\begin{enumerate}
\item We identify $\TT^2$ with $\RR^2/\ZZ^2$.
\item Let $T\mathbb{T}^{2}$ and $\mathbb{P}T\mathbb{T}^{2}$ be the tangent bundle and the projective tangent bundle of $\mathbb{T}^{2}$, respectively. 
\item We fix a smooth trivialization $T\mathbb{T}^{2}\simeq \mathbb{T}^{2}\times \mathbb{R}^{2}$ and $\mathbb{P}T\mathbb{T}^{2}\simeq \mathbb{T}^{2}\times \mathbb{R}P^{1}$. 
\item We fix a standard inner product structure on $\mathbb{R}^{2}$ with an orthonormal basis $\{e_{1}:=(1,0),e_{2}:=(0,1)\}$. This induces a smooth Riemannian metric on $\TT^2$. We refer to this Riemannian metric as the \emph{standard Riemannian metric} on $\TT^2$. For simplicity, we denote by $d(\cdot, \cdot)$ the induced distance function on $T\mathbb{T}^{2}$ and the induced distance function on $\mathbb{T}^{2}$.  (In particular, the injectivity radius of $\TT^2$ equipped with the standard Riemannian metric is $1/2$.) 

Even though, we use the same notation $d$ for metrics on different spaces, it will be clear in the context.
\item Unless otherwise stated, unit vectors in $T\TT^2$ always means unit vectors in $T\TT^2$ with respect to the standard Riemannian metric. Similarly, orthogonality in $T\TT^2$ always means orthogonality in $T\TT^2$ with respect to the standard Riemannian metric. For any $v\in T\TT^2$, $\|v\|$ always denotes the norm of $v$ with respect to the standard Riemannian metric. The curvature of a $C^2$-curve $\gamma(t)$ on $\TT^2$ always refers to the curvature with respect to the standard Riemannian metric, which is given by $\det(\dot\gamma(t),\ddot\gamma(t))/\|\dot\gamma(t)\|^3$.
\item We fix a standard distance on $\mathbb{R}P^{1}$ given by the angle and the induced metric, denoted also by $d(\cdot,\cdot)$, on $\mathbb{P}T\mathbb{T}^{2}$. 
\end{enumerate}

\textbf{Notations regarding the smooth measure $m$}:
\begin{enumerate}
\item[(7)] We denote by $\Leb$ the probability measure on $\TT^2$ induced by the standard Riemannian metric on $\TT^2$. Since $m$ is a smooth probability measure on $\TT^2$, we fix $\CS{C}{0}\geq 1$ such that $\C{C}{0}^{-1}\leq dm/d\Leb\leq \C{C}{0}$.
\end{enumerate}

\textbf{Notations regarding $C^2$-norm of $f$ and $g$}:
\begin{enumerate}
\item[(8)] We fix a constant $\CS{C'}{0}>0$ such that $\max\{\|f\|_{C^2},\|f^{-1}\|_{C^2},\|g\|_{C^2},\|g^{-1}\|_{C^2}\}\leq \C{C'}{0}$.
\end{enumerate}

\textbf{Additional notations regarding conditions (C1) and (C2)}:
\begin{enumerate}
\item[(9)] Throughout this paper, whenever we choose neighborhoods $\cU_f$ of $f$ and $\cU_g$ of $g$ in $\mathrm{Diff}^2(\TT^2)$, we assume that for any $\widetilde{f}\in\cU_f$ and $\widetilde{g}\in\cU_g$, the following holds:
\begin{itemize}
\item The pair $(\widetilde{f},\widetilde{g})$ satisfies \textbf{(C1)} and \textbf{(C2)} (with respect to the same choice of cone fields, $q^\star$ and $\lambda_{\star,\pm}$ for the pair $(f,g)$, where $\star=s,u$). 
\item $\max\{\|\widetilde{f}\|_{C^2},\|\widetilde{f}^{-1}\|_{C^2},\|\widetilde{g}\|_{C^2},\|\widetilde{g}^{-1}\|_{C^2}\}\leq \C{C'}{0}$.
\end{itemize}
One can check easily that for any fixed choice of cone fields, $q^\star$ and $\lambda_{\star,\pm}$ with $\star=s,u$, conditions \textbf{(C1)-(C4)} are open. Hence any sufficiently small neighborhoods $\cU_f$ and $\cU_g$ satisfies the above two bullet points.
\item[(10)] Let $\CS{C''}{0}=\sqrt{\sup_{\star=s,u}\{\|\cdot\|_{q^\star}/\|\cdot\|, \|\cdot\|/\|\cdot\|_{q^\star}\}}\geq 1$. Then for any neighborhoods $\cU_f$ of $f$ and $\cU_g$ of $g$ in $\mathrm{Diff}^2(\TT^2)$ satisfying (9), the condition \textbf{(C2)} implies the following: let $\cU=\cU_f\cup\cU_g$. Then for any $x\in\TT^2$, for any $n\in\ZZ_+$, for any $\omega\in\cU^{\ZZ}$ and for any vector $v^s\in\cC^s_x$ and $v^u\in\cC_x^u$,
\begin{itemize}
\item $Df^{-n}_\omega(x) \mathcal{C}^s_x \subset \mathcal{C}^s_{f^{-n}_\omega(x)}$ and {$(\C{C''}{0})^{-1}\lambda_{s,+}^{-n}\|v^s\| < \|Df^{-n}_\omega(x) v^s\|< \C{C''}{0}\lambda_{s,-}^{-n}\|v^s\|$}; 
\item $Df^n_\omega(x) \mathcal{C}^u_x \subset \mathcal{C}^u_{f^n_\omega(x)}$ and {$(\C{C''}{0})^{-1}\lambda_{u,-}^{n}\|v^u\| < \|Df^{n}_\omega(x) v^s\|< \C{C''}{0}\lambda_{u,+}^{n}\|v^u\|$}.
\end{itemize}
\end{enumerate}

\section{The semi-norm $\|.\|_{\rho}$}

Given two finite measures $\nu$ and $\nu'$ on $\mathbb{T}^2$, and a number $\rho>0$, we define the $\rho$-inner product between $\nu$ and $\nu'$ by
\begin{align*}
\langle \nu, \nu'\rangle_{\rho} := \frac{1}{\rho^{4}}\int_{\mathbb{T}^2} \nu(B(z,\rho))\nu'(B(z,\rho))  dm(z),
\end{align*}
where $B(z,\rho)$ denotes the ball (with respect to the standard product metric on $\TT^2$) centered at $z$ with radius $\rho$. Define the $\rho$-semi-norm of $\nu$ by $\|\nu\|_{\rho} = \sqrt{\langle \nu, \nu\rangle_{\rho}}$. 

\begin{lem}[\cite{Tsujii-bigpaper}, Lemma 6.2]\label{lem.liminfnorm}
If $\liminf_{\rho \to 0} \|\nu\|_{\rho} < +\infty$ then $\nu$ is absolutely continuous with respect to the smooth measure $m$ and $\lim_{\rho \to 0} \|\nu\|_{\rho} = \left\| \frac{d\nu}{dm}\right\|_{L^2(m)}$.
\end{lem}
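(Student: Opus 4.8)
The statement asserts that control of the quantity $\|\nu\|_\rho = \bigl(\rho^{-4}\int_{\TT^2}\nu(B(z,\rho))^2\, dm(z)\bigr)^{1/2}$ along some sequence $\rho\to 0$ forces $\nu\ll m$ with $L^2$ density. The natural approach is via a smoothing/approximation argument: introduce a family of mollifiers built from the indicator of a $\rho$-ball. Concretely, for $\rho>0$ set $k_\rho(z) := \rho^{-2}\,\one_{B(0,\rho)}(z)$ (a smooth partition-of-unity variant works equally well) and consider the convolved density $u_\rho(z) := \rho^{-2}\nu(B(z,\rho)) = (k_\rho * \nu)(z)$, where convolution is taken with respect to the group structure on $\TT^2=\RR^2/\ZZ^2$. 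Then, after adjusting for the smooth density $dm/d\Leb$ (which is bounded above and below by $\C{C}{0}$), one checks that $\|\nu\|_\rho^2$ is comparable to $\int_{\TT^2} u_\rho(z)^2\, dz$, i.e.\ to $\|u_\rho\|_{L^2}^2$ up to a factor depending only on $\C{C}{0}$.

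First I would record the elementary fact that $u_\rho\, dm \rightharpoonup \nu$ weakly$^*$ as $\rho\to 0$: for any continuous test function $\varphi$, $\int \varphi\, u_\rho\, dm = \int \bigl(\frac{1}{\Leb(B(z,\rho))}\int_{B(z,\rho)}\varphi\bigr) d\nu(z) \to \int \varphi\, d\nu$ by uniform continuity of $\varphi$ and the fact that $\Leb(B(z,\rho))$ is comparable to $\rho^2$ (the constant absorbed into the normalization). Next, pick a sequence $\rho_n\to 0$ along which $\|\nu\|_{\rho_n}$ stays bounded, say by $M$; then $\|u_{\rho_n}\|_{L^2(m)}\le C(\C{C}{0})M$ is bounded, so $\{u_{\rho_n}\}$ is bounded in the Hilbert space $L^2(m)$. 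By weak compactness of bounded sets in $L^2(m)$, pass to a further subsequence so that $u_{\rho_n}\rightharpoonup u$ weakly in $L^2(m)$ for some $u\in L^2(m)$ with $\|u\|_{L^2(m)}\le \liminf\|u_{\rho_n}\|_{L^2(m)}$. Weak $L^2$ convergence implies weak$^*$ convergence of the measures $u_{\rho_n}\, dm$, so combined with the previous step we get $\nu = u\, dm$; hence $\nu\ll m$ and $\frac{d\nu}{dm} = u\in L^2(m)$.

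The remaining point — and the one requiring the most care — is the \emph{equality} $\lim_{\rho\to0}\|\nu\|_\rho = \|\tfrac{d\nu}{dm}\|_{L^2(m)}$, not merely an inequality, and upgrading the $\liminf$ hypothesis to a genuine limit. For the lower bound on the limit, I would use that once $\nu = u\, dm$ with $u\in L^2$, the convolutions $k_\rho * u$ converge to $u$ \emph{strongly} in $L^2$ (standard approximate-identity argument, using density of continuous functions in $L^2$ and the uniform $L^1$ bound $\int k_\rho = 1$), which gives $\|\nu\|_\rho \to \|u\|_{L^2(m)}$ along the full family $\rho\to 0$ after correcting the $m$-versus-$\Leb$ discrepancy; the correction is where one must be slightly careful, since $B(z,\rho)$ is measured by $\nu$ but integrated against $dm(z)$, so one writes $\nu(B(z,\rho)) = \int \one_{d(z,y)<\rho}\, u(y)\, dm(y)$ and expands $\|\nu\|_\rho^2$ as a double integral $\rho^{-4}\iint \one_{d(z,y)<\rho}\one_{d(z,y')<\rho} u(y)u(y')\, dm(y)dm(y')dm(z)$, recognizing the inner $z$-integral as a smooth kernel converging (after rescaling by $\rho^{-4}\cdot$) to a delta along the diagonal. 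Carrying out this limit cleanly, with all constants tracked, is the main obstacle; everything else is soft functional analysis. I would also remark that the boundedness of $\liminf$, once it yields $\nu\ll m$ with $L^2$ density, retroactively forces the full limit to exist and equal $\|d\nu/dm\|_{L^2(m)}$ by the approximate-identity convergence just described, closing the gap between $\liminf$ and $\lim$.
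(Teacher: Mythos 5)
Your overall plan is the standard one and, I believe, essentially Tsujii's: view $u_\rho(z):=\rho^{-2}\nu(B(z,\rho))$ as an element of $L^2(m)$ with $\|u_\rho\|_{L^2(m)}=\|\nu\|_\rho$ exactly (no constant $C(\C{C}{0})$ is needed for this identity), extract a weak $L^2$-limit along a bounded subsequence, identify it via weak$^*$ convergence of $u_\rho\,dm$, and close with a strong approximate-identity argument. The structure is right; two points need fixing.

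The central weak$^*$ computation is wrong as written. Applying Fubini, the pairing with $\varphi\in C(\TT^2)$ is $\int\bigl(\rho^{-2}\int_{B(y,\rho)}\varphi\,dm\bigr)\,d\nu(y)$: the inner integral is against $dm$, not $\Leb$, it is normalized by $\rho^{-2}$, not $\Leb(B(\cdot,\rho))^{-1}$, and it is centered at the $\nu$-variable $y$. That inner average converges uniformly to $\pi\,(dm/d\Leb)(y)\,\varphi(y)$, not to $\varphi(y)$, since $\Leb(B(y,\rho))=\pi\rho^2$. Hence $u_\rho\,dm\rightharpoonup\pi\,(dm/d\Leb)\cdot\nu$, not $\nu$. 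This still yields $\nu\ll m$ with $L^2(m)$ density because $\C{C}{0}^{-1}\le dm/d\Leb\le\C{C}{0}$; but the weak limit $u$ is $\pi\,(dm/d\Leb)\cdot d\nu/dm$ rather than $d\nu/dm$, and carrying the constants through the double-integral expansion you sketch gives $\lim_{\rho\to 0}\|\nu\|_\rho^2=\pi^2\int(d\nu/dm)^2(dm/d\Leb)^2\,dm$, which is \emph{not} $\|d\nu/dm\|_{L^2(m)}^2$ in general (your claim $\int k_\rho=1$ is off by the same $\pi$). So you must first reconcile the normalization with the one Tsujii actually uses before asserting the displayed identity. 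Second, and more substantially, the step that actually produces the limiting equality --- the strong $L^2$-convergence of $u_\rho$ --- is only gestured at, yet it is where the real estimates live. The clean route is the one you name: write $\|\nu\|_\rho^2=\iint K_\rho(y,y')\,u(y)\,u(y')\,dm(y)\,dm(y')$ with $u=d\nu/dm$ and $K_\rho(y,y'):=\rho^{-4}\int_{\TT^2}\one_{B(z,\rho)}(y)\one_{B(z,\rho)}(y')\,dm(z)$; note $K_\rho$ is supported on $\{d(y,y')\le 2\rho\}$, is bounded by a constant times $\rho^{-2}$, and has $\int K_\rho(y,\cdot)\,dm\to\pi^2(dm/d\Leb)(y)^2$ uniformly; then pass to the limit first for continuous $u$ and for general $u\in L^2(m)$ by density. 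That computation is the content of the lemma and is missing: as written you obtain absolute continuity with $L^2$ density (the part this paper actually uses) but not the asserted limit.
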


We will also need the following lemma.

\begin{lem}[\cite{Tsujii-bigpaper}, Lemma 6.1]
\label{lem.changeofscale}
There is a constant $\CS{C}{1}>1$, such that  for any $0< \rho \leq \delta <1$
\[
\|\nu\|_{\delta} \leq \C{C}{1}\|\nu\|_{\rho}.
\]
\end{lem}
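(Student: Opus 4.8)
The plan is to bound the coarse-scale quantity $\nu(B(z,\delta))$ by a spatial average of the fine-scale quantities $\nu(B(y,\rho))$ and then apply Cauchy--Schwarz twice: once pointwise in $z$, and once after integrating in $z$. Throughout write $F_r(z):=\nu(B(z,r))$, so that $\|\nu\|_r^2=r^{-4}\|F_r\|_{L^2(m)}^2$, and fix once and for all two volume constants depending only on $m$ and the geometry of $\TT^2$: a constant $c>0$ with $m(B(w,r))\ge c\,r^2$ for all $w\in\TT^2$ and $r\in(0,1)$, and a constant $C'>0$ with $m(B(w,r))\le C'r^2$ for all $w\in\TT^2$ and $r\in(0,2)$.

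\textbf{Step 1 (pointwise averaging).} First I would prove the pointwise estimate
\[
F_\delta(z)\;\le\;\frac{1}{c\rho^{2}}\int_{B(z,2\delta)}F_\rho(y)\,dm(y),\qquad 0<\rho\le\delta<1 .
\]
By Fubini, $\int_{B(z,\delta+\rho)}F_\rho(y)\,dm(y)=\int_{\TT^2} m\bigl(B(z,\delta+\rho)\cap B(w,\rho)\bigr)\,d\nu(w)$. If $d(w,z)<\delta$ then the triangle inequality gives $B(w,\rho)\subset B(z,\delta+\rho)$, so the integrand equals $m(B(w,\rho))\ge c\rho^2$, whence the left-hand side is at least $c\rho^2\nu(B(z,\delta))=c\rho^2F_\delta(z)$. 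Finally $B(z,\delta+\rho)\subset B(z,2\delta)$ because $\rho\le\delta$, giving the displayed bound.

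\textbf{Step 2 (two applications of Cauchy--Schwarz).} Then I would square the estimate of Step 1, apply Cauchy--Schwarz together with $m(B(z,2\delta))\le 4C'\delta^2$ to get $F_\delta(z)^2\le \tfrac{4C'\delta^2}{c^2\rho^4}\int_{B(z,2\delta)}F_\rho(y)^2\,dm(y)$, integrate in $z$ over $\TT^2$, and use Fubini a second time with $m(B(y,2\delta))\le 4C'\delta^2$ to obtain $\|F_\delta\|_{L^2(m)}^2\le \tfrac{16(C')^2\delta^4}{c^2\rho^4}\|F_\rho\|_{L^2(m)}^2$. Multiplying by $\delta^{-4}$ the exponents cancel and we get $\|\nu\|_\delta^2\le \tfrac{16(C')^2}{c^2}\|\nu\|_\rho^2$, i.e.\ the lemma with $\C{C}{1}=4C'/c$, a constant independent of $\rho,\delta,\nu$.

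\textbf{Main point of care.} The argument is short, and the only step needing attention is establishing the two volume comparisons for \emph{all} admissible radii, not merely for radii below the injectivity radius $1/2$ of $\TT^2$. The upper bound uses $\Leb(B(w,r))\le\pi r^2$, valid for every $r$ because $B(w,r)$ is the image of a Euclidean disk of area $\pi r^2$, together with $dm/d\Leb\le\C{C}{0}$; the lower bound follows for $r\le 1/2$ from $\Leb(B(w,r))=\pi r^2$ and $dm/d\Leb\ge\C{C}{0}^{-1}$, and extends to $r\in(1/2,1)$ using that $r\mapsto m(B(w,r))$ is nondecreasing while $r^2<1$. All remaining constants are absorbed without comment, following the conventions of Section~\ref{subsect:other.no.}.
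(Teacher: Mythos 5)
Your proof is correct. The two-step structure --- first the pointwise averaging inequality $F_\delta(z)\le (c\rho^2)^{-1}\int_{B(z,2\delta)}F_\rho\,dm$ via Fubini and a volume lower bound, then Cauchy--Schwarz plus a second Fubini using the volume upper bound $m(B(\cdot,2\delta))\le 4C'\delta^2$ --- closes the argument with $\C{C}{1}=4C'/c$, and your handling of the volume comparisons for $r$ up to (and slightly past) the injectivity radius $1/2$ is the right caution to take.

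Where you differ from the paper: the paper itself cites this lemma from Tsujii without proof, but it does prove a strictly stronger statement, Lemma \ref{generalized large<small}, allowing the large radius $\delta$ to be a function of the base point, at the cost of a factor $1+\log(\delta_+/\delta_-)$. That proof is by discretization: fix a maximal $(\rho/5)$-separated set $A_\rho$, establish bounded-overlap covering estimates, bound $\|\nu\|^2_\rho$ from below by $\sum_{z\in A_\rho}\nu(B(z,\rho/4))^2$, and bound $\|\nu\|^2_\delta$ from above by the same sum with an explicit radial integral producing the logarithm. Your continuous argument (convolution against the indicator of a ball, then Cauchy--Schwarz and Fubini) is shorter and cleaner for the fixed-$\delta$ case, but does not directly give the variable-scale version: if $\delta$ varies with $z$, your Step~1 kernel $\mathbbm{1}_{B(z,2\delta(z))}(y)$ is no longer symmetric in $(y,z)$, which is what makes the second Fubini collapse neatly, and the natural repair reintroduces exactly the kind of annular decomposition that the paper's discretization handles. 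So both proofs are valid; yours is the natural direct argument for this lemma, while the paper's net-based proof is chosen because it scales to the generalization they actually need downstream.
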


\begin{lem}[\cite{Tsujii-bigpaper}, Lemma 6.3]
\label{lem.normconvergence}
If a sequence of Borel finite measures $\nu_k$ converges weakly to a measure $\nu_{\infty}$, then for any $\rho>0$, we have $\|\nu_\infty\|_\rho = \lim_{k\to +\infty} \|\nu_k\|_\rho$.
\end{lem}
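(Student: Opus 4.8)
The plan is to unwind the definition to $\|\nu\|_\rho^2=\rho^{-4}\int_{\TT^2}\nu(B(z,\rho))^2\,dm(z)$ and then justify passing the limit $k\to\infty$ inside this integral by a bounded convergence argument on the probability space $(\TT^2,m)$. Testing weak convergence against the continuous function $1$ on the compact space $\TT^2$ shows $\nu_k(\TT^2)\to\nu_\infty(\TT^2)$, so $M:=\sup_k\nu_k(\TT^2)<\infty$ and $0\le\nu_k(B(z,\rho))^2\le M^2$ for all $k$ and all $z$; since $m$ is a probability measure, domination of the integrands is automatic. The only delicate point is that $z\mapsto\nu_k(B(z,\rho))^2$ need not converge pointwise to $z\mapsto\nu_\infty(B(z,\rho))^2$, because the indicator of a ball is discontinuous, so weak convergence alone does not control $\nu_k(B(z,\rho))$.

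To handle this I would show that for $m$-almost every $z$ the sphere $\partial B(z,\rho)$ is $\nu_\infty$-null. Indeed $\partial B(z,\rho)\subseteq\{w:d(z,w)=\rho\}$, and for each fixed $w$ the metric sphere $\{z:d(z,w)=\rho\}$ is a Lebesgue-null subset of $\TT^2$ (empty when $\rho$ exceeds the diameter of $\TT^2$, and otherwise a finite union of circular arcs), hence $m$-null since $\C{C}{0}^{-1}\le dm/d\Leb\le\C{C}{0}$. By Fubini,
\[
\int_{\TT^2}\nu_\infty\bigl(\partial B(z,\rho)\bigr)\,dm(z)\le\int_{\TT^2}\Bigl(\int_{\TT^2}\mathbbm{1}\{d(z,w)=\rho\}\,dm(z)\Bigr)\,d\nu_\infty(w)=0,
\]
so $\nu_\infty(\partial B(z,\rho))=0$ for $m$-a.e.\ $z$. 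For each such $z$ the portmanteau theorem (which applies to finite measures with converging total masses, after normalization) gives $\nu_k(B(z,\rho))\to\nu_\infty(B(z,\rho))$, hence $\nu_k(B(z,\rho))^2\to\nu_\infty(B(z,\rho))^2$, for $m$-a.e.\ $z$.

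With pointwise convergence $m$-a.e.\ and the uniform bound $M^2$ in hand, the bounded convergence theorem yields $\int_{\TT^2}\nu_k(B(z,\rho))^2\,dm(z)\to\int_{\TT^2}\nu_\infty(B(z,\rho))^2\,dm(z)$; dividing by $\rho^4$ and taking square roots gives $\lim_{k\to\infty}\|\nu_k\|_\rho=\|\nu_\infty\|_\rho$, as claimed.

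I expect the second paragraph — the fact that, at a fixed scale $\rho$, $\nu_\infty$ places no mass on the boundary of a $m$-typical ball $B(z,\rho)$ — to be the only real obstacle; the rest is standard measure theory and uses nothing about $\TT^2$ beyond compactness and the fact that metric spheres are $m$-null.
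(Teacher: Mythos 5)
Your proof is correct. The paper states this lemma without proof, citing Tsujii's original work, and the argument you give is the standard one: the Fubini trick showing that $\nu_\infty(\partial B(z,\rho))=0$ for $m$-a.e.\ $z$, combined with portmanteau and bounded convergence, is exactly what is needed — including the small but real observations that weak convergence controls total masses and that metric spheres on the flat torus are Lebesgue null.
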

The above semi-norm can be generalized by allowing $\rho$ to be a positive function on $\TT^2$. To be specific, for any $\nu\in\mathrm{Prob}(\TT^2)$ and any Lebesgue measurable, strictly positive function $r:\TT^2\to\RR_+$, we define
\begin{align}\label{generalized r-norm}
\|\nu\|_r^2:=\int_{\TT^2}\frac{(\nu(B(x,r(x))))^2}{(r(x))^4}dm(x).
\end{align}
In particular, for any $\rho\in\RR_+$, $\|\nu\|_\rho$ is given by \eqref{generalized r-norm} with $r(x)\equiv \rho$. We present a generalization of Lemma \ref{lem.changeofscale} by allowing $\delta$ to vary over points on $M$.

\begin{lem}\label{generalized large<small}
There exist a constant $\CS{C}{2}>1$ such that the following holds. For any positive numbers $0<\rho\leq \delta_-\leq \delta_+\leq 1$ and any Lebesgue measurable function $\delta:\TT^2\to \RR$ such that $\delta(\TT^2)\subset [\delta_-,\delta_+]$, we have 
$$\|\nu\|^2_\delta\leq \C{C}{2}(1+\log(\delta_+/\delta_-))\|\nu\|^2_\rho.$$
\end{lem}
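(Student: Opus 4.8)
The plan is to split $\TT^2$ dyadically according to the value of $\delta(\cdot)$ and to control each piece by Lemma~\ref{lem.changeofscale} at a single fixed scale; the number of dyadic scales that occur is $O(\log(\delta_+/\delta_-))$, which produces the logarithmic factor. We may assume $\nu\neq 0$.

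First I would record a scale comparison valid at \emph{all} scales $\sigma\geq\rho$: there is a constant $C_3>1$, depending only on $\C{C}{0}$ and on the constant $\C{C}{1}$ of Lemma~\ref{lem.changeofscale}, such that $\|\nu\|_\sigma^2\leq C_3\|\nu\|_\rho^2$ for every $\sigma\geq\rho$. For $\rho\leq\sigma<1$ this is exactly Lemma~\ref{lem.changeofscale}. For $\sigma\geq 1$ we use that the diameter of $\TT^2$ in the standard metric is $\sqrt2/2<1$, so $B(x,\sigma)=\TT^2$ and $\|\nu\|_\sigma^2=\nu(\TT^2)^2\sigma^{-4}\leq\nu(\TT^2)^2$; and $\nu(\TT^2)^2$ is controlled by $\|\nu\|_\rho^2$ because Jensen's inequality for the probability measure $m$ and Fubini give
\[
\|\nu\|_\rho^2\geq\frac{1}{\rho^4}\Bigl(\int_{\TT^2}\nu(B(x,\rho))\,dm(x)\Bigr)^2=\frac{1}{\rho^4}\Bigl(\int_{\TT^2}m(B(y,\rho))\,d\nu(y)\Bigr)^2\geq\Bigl(\frac{\pi}{4\C{C}{0}}\Bigr)^2\nu(\TT^2)^2,
\]
where the last inequality uses $m\geq\C{C}{0}^{-1}\Leb$ and the elementary lower bound $\Leb(B(y,\rho))\geq\pi\rho^2/4$ valid for all $y\in\TT^2$ and $\rho\in(0,1]$ (equality $\Leb(B(y,\rho))=\pi\rho^2$ for $\rho\leq 1/2$, then monotonicity and $\rho\leq1$ for $\rho\in(1/2,1]$). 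So $C_3:=\max\{\C{C}{1}^2,(4\C{C}{0}/\pi)^2\}$ works.

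Next I would perform the dyadic decomposition. Let $N\geq 1$ be the least integer with $2^N\delta_-\geq\delta_+$; then $N\leq 1+\log_2(\delta_+/\delta_-)$. For $0\leq k\leq N-1$ put $\sigma_k:=2^{k+1}\delta_-$, so that $\rho\leq\delta_-\leq\sigma_k\leq 2\delta_+\leq 2$. Partition $\TT^2$ into the measurable sets
\[
A_k:=\{x\in\TT^2:2^k\delta_-\leq\delta(x)<2^{k+1}\delta_-\}\ (0\leq k\leq N-2),\qquad A_{N-1}:=\{x\in\TT^2:\delta(x)\geq 2^{N-1}\delta_-\};
\]
these cover $\TT^2$ because $\delta(\TT^2)\subset[\delta_-,\delta_+]$ and $\delta_+\leq 2^N\delta_-$. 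On $A_k$ one has $\delta(x)\geq\sigma_k/2$ and $B(x,\delta(x))\subset B(x,\sigma_k)$ (using $\delta(x)\leq\delta_+\leq\sigma_{N-1}$ on $A_{N-1}$), hence
\[
\int_{A_k}\frac{\nu(B(x,\delta(x)))^2}{\delta(x)^4}\,dm(x)\leq\frac{16}{\sigma_k^4}\int_{\TT^2}\nu(B(x,\sigma_k))^2\,dm(x)=16\|\nu\|_{\sigma_k}^2\leq 16\,C_3\,\|\nu\|_\rho^2,
\]
the last inequality by the scale comparison above (as $\sigma_k\geq\rho$). Summing over $0\leq k\leq N-1$,
\[
\|\nu\|_\delta^2=\sum_{k=0}^{N-1}\int_{A_k}\frac{\nu(B(x,\delta(x)))^2}{\delta(x)^4}\,dm(x)\leq 16\,C_3\,N\,\|\nu\|_\rho^2\leq\frac{16\,C_3}{\log 2}\bigl(1+\log(\delta_+/\delta_-)\bigr)\|\nu\|_\rho^2,
\]
so the lemma holds with $\C{C}{2}:=16\,C_3/\log 2$.

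I do not anticipate a serious obstacle here; the only point that needs a little care is the regime $\sigma\geq 1$, where $\|\nu\|_\sigma$ no longer detects the local structure of $\nu$ (it records only the total mass $\nu(\TT^2)$), which is why the first step separately bounds $\nu(\TT^2)$ in terms of $\|\nu\|_\rho$ using the lower volume bound for small balls. Everything else is a routine dyadic summation.
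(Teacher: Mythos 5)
Your proof is correct, and it takes a genuinely different route from the paper's. You decompose $\TT^2$ into the dyadic level sets $A_k=\{2^k\delta_-\leq\delta(\cdot)<2^{k+1}\delta_-\}$, flatten $\delta$ to a constant $\sigma_k$ on each piece, and then invoke Lemma~\ref{lem.changeofscale} once per scale; the logarithmic factor arises simply because there are $O(\log(\delta_+/\delta_-))$ dyadic scales between $\delta_-$ and $\delta_+$. The paper instead fixes a single maximal $(\rho/5)$-separated set $A_\rho$ and shows that both $\|\nu\|_\rho^2$ (from below) and $\|\nu\|_\delta^2$ (from above) are comparable to the discrete quantity $\sum_{z\in A_\rho}\nu(B(z,\rho/4))^2$; there the logarithm comes out of the radial integral $\int_{\delta_-}^{2\delta_+} r^{-1}\,dr$ that appears when one controls, uniformly over $z$, the overlap multiplicity of balls of variable radius $2\delta(x)$. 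Your argument is more modular -- it quotes Lemma~\ref{lem.changeofscale} as a black box rather than reproving a covering estimate -- but this also forces you to patch the regime $\sigma\in[1,2]$ not covered by that lemma, which you do correctly by noting that there $B(x,\sigma)=\TT^2$ and controlling $\nu(\TT^2)^2$ by $\|\nu\|_\rho^2$ via Jensen and the lower volume bound $\Leb(B(y,\rho))\geq\pi\rho^2/4$. The paper's proof is self-contained (it does not use Lemma~\ref{lem.changeofscale} at all) and in effect reproves a version of it along the way. Both are correct; your route is shorter given the earlier lemma, the paper's route gives a direct proof with explicit constants and no dependence on a prior comparison lemma.
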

\begin{proof}
Let $A_\rho:=\{z_1,...,z_k\}$ be a maximal $(\rho/5)$-separated subset, that is, a maximal subset of $\TT^2$ (with respect to inclusion) such that for any $x\neq y\in A_\rho$, $d(x,y)>(\rho/5)$. Then there exist some $N_0>0$ independent of the choice of $\rho$, such that 
\begin{enumerate}
\item $\bigcup_{z\in A_\rho}B(z,\rho/4)=\TT^2$;
\item For any $x\in \TT^2$, there are at most $N_0$ points $z\in A_\rho$ such that $x\in B(z,\rho/4)$.
\end{enumerate}
Indeed, if there exists some $x\in \left(\bigcup_{z\in A_\rho}B(z,\rho/4)\right)\setminus M$, then $A_\rho\sqcup\{x\}$ is a strictly larger $(\rho/5)$-separated subset of $M$. This contradicts the maximality of $A_\rho$. For any $x\in M$, $A_\rho\cap B(x,\rho/4)$ is $(\rho/5)$-separated. Therefore $\{B(z,\rho/13)\}_{z\in A_\rho\cap B(x,\rho/4)}$ is a collection of disjoint subsets of $B(x, \rho/3)$. Hence $|A_\rho\cap B(x,\rho/4)|\cdot(\pi\rho^2/169)\leq \pi\rho^2/9$. In particular, there are at most $N_0:=19=[169/9]+1$ points $z\in A_\rho$ such that $x\in B(z,\rho/4)$.

By the second property of $A_\rho$ from above and (7) in Section \ref{subsect:other.no.}, we have
\begin{align*}
\|\nu\|^2_\rho=&\frac{1}{N_0^2\rho^4}\int_{\TT^2}\left(N_0\nu(B(x,\rho))\right)^2dm(x)\\
\geq &\frac{1}{N_0^2\rho^4}\int_{\TT^2}\left(\sum_{z\in A_\rho\cap B(x,\rho/2)}\nu(B(z,\rho/4))\right)^2dm(x)\\
\geq &\frac{1}{N_0^2\rho^4}\int_{\TT^2}\sum_{z\in A_\rho\cap B(x,\rho/2)}\left(\nu(B(z,\rho/4))\right)^2dm(x)\\
=&\frac{1}{N_0^2\rho^4}\sum_{z\in A_\rho}\left(\nu(B(z,\rho/4))\right)^2\left(\int_{B(z,\rho/2)}dm(x)\right)\geq\frac{\pi}{4N_0^2\rho^2\C{C}{0}}\sum_{z\in A_\rho}\left(\nu(B(z,\rho/4)))\right)^2.\\
\end{align*}

Therefore it suffices to show that there exists $\C{C}{2}>1$ independent of the choice of $\delta$ and $\rho$, such that 
\begin{align}\label{rho-discretize}
\|\nu\|^2_\delta\leq \frac{\C{C}{2}\pi(1+\log(\delta_+/\delta_-))}{4N_0^2\rho^2\C{C}{0}}\sum_{z\in A_\rho}\left(\nu(B(z,\rho/4)))\right)^2.
\end{align}

By the first property of $A_\rho$, we have 
\begin{align}\label{delta<rho-1}
\|\nu\|^2_\delta=&\int_{\TT^2}\frac{(\nu(B(x,\delta(x))))^2}{(\delta(x))^4}dm \nonumber\\
\leq&\int_{\TT^2}\frac{1}{(\delta(x))^4}\left(\sum_{z\in A_\rho\cap B(x,2\delta(x))}\nu(B(z,\rho/4))\right)^2dm(x) \nonumber\\
\leq &\int_{\TT^2}\frac{|A_\rho\cap B(x,2\delta(x))|}{(\delta(x))^4}\left(\sum_{z\in A_\rho\cap B(x,2\delta(x))}(\nu(B(z,\rho/4)))^2\right)dm(x) \nonumber\\
=&\sum_{z\in A_\rho}(\nu(B(z,\rho/4)))^2\int_{\{x\in \TT^2| d(x,z)<2\delta(x)\}}\frac{|A_\rho\cap B(x,2\delta(x))|}{(\delta(x))^4}dm(x).
\end{align}

Since $A_\rho$ is $(\rho/5)$-separated, $\{B(z,\rho/11)\}_{z\in A_\rho\cap B(x, 2\delta(x))}$ is a collection of pairwise disjoint subsets of $B(x,3\delta(x))$. Therefore $|A_\rho\cap B(x,2\delta(x))|\cdot(\pi\rho^2/121)\leq 9\pi(\delta(x))^2$. Hence \eqref{delta<rho-1} implies that
\begin{align}\label{delta<rho-2}
\|\nu\|^2_\delta\leq &\sum_{z\in A_\rho}(\nu(B(z,\rho/4)))^2\int_{\{x\in M| d(x,z)<2\delta(x)\}}\frac{1}{(\delta(x))^4}\cdot \frac{9\cdot 121(\delta(x))^2}{\rho^2}dm(x) \nonumber\\
=&\sum_{z\in A_\rho}(\nu(B(z,\rho/4)))^2\int_{\{x\in M| d(x,z)<2\delta(x)\}}\frac{1089}{(\delta(x))^2\rho^2} dm(x) \nonumber\\
=&\sum_{z\in A_\rho}(\nu(B(z,\rho/4)))^2\int_{B(z,\delta_-)}\frac{1089}{(\delta(x))^2\rho^2} dm(x) \nonumber\\
&+\sum_{z\in A_\rho}(\nu(B(z,\rho/4)))^2\int_{\{x\in M| d(x,z)<2\delta(x)\}\setminus B(z,\delta_-)}\frac{1089}{(\delta(x))^2\rho^2} dm(x) \nonumber\\
\leq &\sum_{z\in A_\rho}(\nu(B(z,\rho/4)))^2\int_{B(z,\delta_-)}\frac{1089}{\delta_-^2\rho^2} dm(x) \nonumber\\
&+\sum_{z\in A_\rho}(\nu(B(z,\rho/4)))^2\int_{\{x\in M| d(x,z)<2\delta(x)\}\setminus B(z,\delta_-)}\frac{1089}{(d(x,z)/2)^2\rho^2} dm(x) \nonumber\\
\leq &\sum_{z\in A_\rho}(\nu(B(z,\rho/4)))^2\left(\frac{1089\pi\C{C}{0}}{\rho^2}+\int_{B(z,2\delta_+)\setminus B(z,\delta_-)}\frac{1089\C{C}{0}}{(d(x,z)/2)^2\rho^2} d\Leb(x)\right)\nonumber\\
=&\sum_{z\in A_\rho}(\nu(B(z,\rho/4)))^2\left(\frac{1089\pi\C{C}{0}}{\rho^2}+2\pi\int_{\delta_-}^{2\delta_+}\frac{4356\C{C}{0}}{ r^2\rho^2}rdr\right)\nonumber\\
=&\C{C}{0}\pi\left(\frac{1089}{\rho^2}+\frac{8712}{\rho^2}\log(2\delta_+/\delta_-)\right)\sum_{z\in A_\rho}(\nu(B(z,\rho/4)))^2.
\end{align}
Choose $\C{C}{2}=4(\C{C}{0}N_0)^2\cdot(1089+8712(1+\log(2)))$ and the lemma follows directly from \eqref{rho-discretize} and \eqref{delta<rho-2}.
\end{proof}

 \section{Preparatory lemmas}\label{sect:prep.lems}
 In this section, we prove several lemmas that will appear in the proof of \Cref{thm.mainthm}.
\subsection{Standing assumptions and notation I}\label{subsect:setting}
From Section \ref{sect:prep.lems} to Section \ref{sec : abscont}, we fix a pair $(f,g)$ so that it satisfies \textbf{(C1)} to \textbf{(C3)}.

Here are some notations we will use:
\begin{enumerate}
\item Fix some $\CS{\theta}{0}>0$. We choose open neighborhoods $\widetilde{\mathcal{U}}_{f}$ and $\widetilde{\mathcal{U}}_{g}$ of $f$ and $g$, respectively, such that the following holds:
\begin{itemize}
\item $\min_{x\in \mathbb{T}^{2}}\left\{\sphericalangle(E,F): E\in \mathcal{C}^{s}_{x}, F\in\mathcal{C}^{u}_{x}\right\}>\C{\theta}{0}.$
\item $\widetilde{\cU}_f$ and $\widetilde{\cU}_g$ satisfies (9) in Section \ref{subsect:other.no.}.
\item For any $x\in \TT^2$, for any $\widetilde{f}\in\widetilde{\cU}_f$ and $\widetilde{g}\in\widetilde{\cU}_g$, we have $d(E^u_{\widetilde{f}}(x),E^u_{\widetilde{g}}(x))>\CS{\theta}{\Delta}$ for some $\theta_\Delta>0$. 
\end{itemize}
\item By (10) in Section \ref{subsect:other.no.}, for any $x\in\TT^2$, for any $n\in\ZZ_{\geq 0}$, for any $\omega\in\widetilde{\cU}^\ZZ$, for any lines $F\in\cC_x^u$ and $E\in \cC^s_x$, there exists $\CS{C'}{3}=\C{C'}{3}(\C{\theta}{0},\C{C''}{0})>2$ such that 
\begin{align}\label{eqn:Lip.est.Dfn.u}\|Df^n_\omega(x)\|\leq \C{C'}{3}\|Df^n_\omega(x)|_F\|\leq \C{C}{3}\lambda_{u,+}^n,
\end{align}
and
\begin{align}\label{eqn:Lip.est.Dfn.s}\|Df^{-n}_\omega(x)\|\leq \C{C'}{3}\|Df^{-n}_\omega(x)|_E\|\leq \C{C}{3}\lambda_{s,-}^{-n},
\end{align}
where $\CS{C}{3}=\C{C'}{3}\C{C''}{0}>2$.
\item By (10) in Section \ref{subsect:other.no.}, for any $x\in\TT^2$, for any $\omega\in\widetilde{\cU}^\ZZ$, for any $n\in\ZZ_{\geq 0}$, there exists some constant $\CS{C}{4}=\C{C}{4}(\C{\theta}{0},\C{C''}{0},\lambda_{s,+}/\lambda_{u,-})>1$ such that the following holds:
\begin{itemize}
\item For any lines $F_1,F_2$ in $\cC^u_x$, we have 
\begin{align}\label{eqn:cone.contraction.u}
\sphericalangle(Df^n_\omega(x)F_1,Df^n_\omega(x)F_2)\leq\frac{\C{C}{4}}{\pi}\left(\frac{\lambda_{s,+}}{\lambda_{u,-}}\right)^{n}\sphericalangle(F_1,F_2)\leq  \C{C}{4}\left(\frac{\lambda_{s,+}}{\lambda_{u,-}}\right)^{n}.
\end{align}
\item For any lines $E_1,E_2$ in $\cC^s_x$, we have 
\begin{align}\label{eqn:cone.contraction.s}
\sphericalangle(Df^{-n}_\omega(x)E_1,Df^{-n}_\omega(x)E_2)\leq\frac{\C{C}{4}}{\pi}\left(\frac{\lambda_{s,+}}{\lambda_{u,-}}\right)^{n}\sphericalangle(E_1,E_2)\leq \C{C}{4}\left(\frac{\lambda_{s,+}}{\lambda_{u,-}}\right)^{n}.
\end{align}
\end{itemize}
\item For simplicity, we let $\lambda_s= \lambda_{s,-}$.
\end{enumerate}

When we introduce new constants in the rest of this paper, we do not track their dependence on $\C{C}{3}$, $\C{C'}{3}$ and $\C{C}{4}$. We only track their dependence on $\C{\theta}{0}$ and $\C{\theta}{\Delta}$ in Proposition \ref{prop.holderness} and its proof.

\subsection{Determinant for large words}

\begin{lem}\label{lem.neardeterminant}
Fix $\varepsilon>0$, there exist $\CS{n}{0}=n_{0}(\varepsilon)>0$,  and $C^1$-neighborhoods of $f$ and $g$, $\mathcal{U}_f$ and $\mathcal{U}_g$, respectively,   with the following property:

  Let $\mathcal{U} = \mathcal{U}_f \cup \mathcal{U}_g$. {For any $\omega \in \mathcal{U}^{\mathbb{N}}$,  for any $x\in \mathbb{T}^2$, for all $n\geq \C{n}{0}$, for any line $F \subset \mathcal{C}^u_x$, and for any line $E \subset \left(Df^n_{\omega}(x)\right)^{-1} \mathcal{C}^s_{f^n_{\omega}(x)}$,  we have}
  \[
e^{-\varepsilon n} < \|Df^n_{\omega}(x)|_{F}\| \| Df^n_{\omega}(x)|_{E}\| < e^{\varepsilon n}.
\] 
\end{lem}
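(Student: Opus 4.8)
The plan is to exploit that the product $Df^n_\omega(x)|_F \cdot Df^n_\omega(x)|_E$ is comparable to $|\det Df^n_\omega(x)|$ once $F$ and $E$ are nearly orthogonal, and then to control $|\det Df^n_\omega(x)|$ using the fact that $f$ and $g$ are volume preserving (so their $C^2$-neighbors are volume-nearly-preserving). First I would observe that $F\subset\mathcal{C}^u_x$ and $E\subset (Df^n_\omega(x))^{-1}\mathcal{C}^s_{f^n_\omega(x)}$ means $Df^n_\omega(x)E\subset\mathcal{C}^s_{f^n_\omega(x)}$, so the image lines $Df^n_\omega(x)F$ and $Df^n_\omega(x)E$ lie in complementary cones and hence make an angle bounded below by $\C{\theta}{0}$ (from Section \ref{subsect:setting}), while $E$ itself — being a preimage of a stable cone under a map that expands the unstable cone — must lie inside $\mathcal{C}^u_x$ for $n$ large (this uses that $(Df^n_\omega(x))^{-1}\mathcal{C}^s$ shrinks toward $E^u$); so the angle between $F$ and $E$ is also bounded below, say by some $\alpha_0>0$, once $n\geq \C{n}{0}$. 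Under these two angle bounds there is a constant $K=K(\C{\theta}{0},\alpha_0)\geq 1$ with
\[
K^{-1}\,|\det Df^n_\omega(x)| \;\leq\; \|Df^n_\omega(x)|_F\|\cdot\|Df^n_\omega(x)|_E\| \;\leq\; K\,|\det Df^n_\omega(x)|,
\]
a standard linear-algebra estimate (decompose $\RR^2$ along $F$ and $E$, compute the determinant in that basis, and bound the basis-change constants by the angles).

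Next I would bound $|\det Df^n_\omega(x)|$. Since $f,g\in\mathrm{Diff}^2_m(\TT^2)$ preserve the smooth measure $m$, we have $|\det Df(x)| = (dm/d\Leb)(x)/(dm/d\Leb)(f(x))$ and likewise for $g$, so $\log|\det Df|$ and $\log|\det Dg|$ are continuous functions that are \emph{coboundaries} over the respective dynamics; in particular $\sum_{j=0}^{n-1}\log|\det Df(f^j x)|$ telescopes and is uniformly bounded (by $2\log\C{C}{0}$, with $\C{C}{0}$ as in (7) of Section \ref{subsect:other.no.}). For a general word $\omega$ mixing $f$'s and $g$'s the telescoping is not exact, but $\log\det$ of each generator still lies within $\C{C}{0}$-dependent bounds of being a coboundary for the common reference $\Leb$; concretely, writing $\phi=\log(dm/d\Leb)$, for any generator $h\in\{f,g\}$ we have $\log|\det Dh(y)| = \phi(y)-\phi(h(y))$, hence
\[
\log|\det Df^n_\omega(x)| \;=\; \sum_{j=0}^{n-1}\log|\det Df_{\omega_j}(f^j_\omega x)| \;=\; \phi(x)-\phi(f^n_\omega x),
\]
which is bounded in absolute value by $2\|\phi\|_\infty = 2\log\C{C}{0}$, uniformly in $n$, $x$, and $\omega$. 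Now given $\varepsilon>0$, first choose the $C^1$-neighborhoods $\mathcal{U}_f,\mathcal{U}_g$ small enough (inside the $\widetilde{\mathcal{U}}$'s of Section \ref{subsect:setting}) that for every $\widetilde{h}$ in them one has $|\log|\det D\widetilde h(y)||\leq \varepsilon/2$ for all $y$ — this is possible because $C^1$-closeness makes the determinant close to that of the volume-preserving $h$ for which $\log\det$ is small up to a coboundary; more carefully, $\log|\det D\widetilde h| = (\log|\det D\widetilde h|-\log|\det Dh|) + (\phi - \phi\circ h)$, and after absorbing the bounded coboundary term one concludes $|\log|\det D\widetilde h^n_\omega(x)||\leq 2\log\C{C}{0} + n\varepsilon/2$. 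Then choose $\C{n}{0}=\C{n}{0}(\varepsilon)$ large enough that $2\log\C{C}{0}+2\log K \leq (\varepsilon/2) n$ for all $n\geq\C{n}{0}$ (and also large enough for the cone-contraction estimate \eqref{eqn:cone.contraction.s} to force $E\subset\mathcal{C}^u_x$ with the required angle lower bound); combining with the determinant bound gives
\[
e^{-\varepsilon n} \;<\; \|Df^n_\omega(x)|_F\|\cdot\|Df^n_\omega(x)|_E\| \;<\; e^{\varepsilon n}
\]
for all $n\geq\C{n}{0}$, as desired.

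The main obstacle I anticipate is the bookkeeping around the coboundary term rather than any deep difficulty: one must be careful that $\log|\det Dh|$ being a coboundary for $m$ (equivalently, comparable to a coboundary for $\Leb$ up to the fixed constant $\log\C{C}{0}$) really does survive composition along an arbitrary word $\omega$ with a bound that is \emph{linear-in-$n$ with small slope plus a constant}, not merely $O(n)$. The clean way is exactly the telescoping identity $\log|\det Df^n_\omega(x)| = \phi(x)-\phi(f^n_\omega(x))$ valid on the nose when the generators are genuinely $m$-preserving, and then treating the perturbation $\widetilde h$ by writing its log-Jacobian as the $m$-preserving one plus an error that is $C^0$-small (hence contributes at most $n\varepsilon/2$); shrinking $\mathcal{U}_f,\mathcal{U}_g$ handles the error and $\C{n}{0}$ handles the fixed constants $\log\C{C}{0}$ and $\log K$. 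The only other point requiring a little care is verifying $E\subset\mathcal{C}^u_x$: this follows because $(Df^n_\omega(x))^{-1}\mathcal{C}^s_{f^n_\omega(x)}$ is a cone around $(Df^n_\omega(x))^{-1}E^s_{f^n_\omega(x)}$ whose angular width goes to $0$ as $n\to\infty$ by \eqref{eqn:cone.contraction.s}, and this limiting direction is the backward-unstable direction which lies in $\mathcal{C}^u_x$; so for $n\geq\C{n}{0}$ the whole cone, hence $E$, sits inside $\mathcal{C}^u_x$ and is bounded away from $\mathcal{C}^s_x$.
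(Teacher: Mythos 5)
Your approach is the same as the paper's: you identify $\|Df^n_\omega(x)|_F\|\,\|Df^n_\omega(x)|_E\|$ with $|\det Df^n_\omega(x)|$ up to basis-change factors controlled by the angles $\sphericalangle(F,E)$ and $\sphericalangle(Df^n_\omega(x)F,\,Df^n_\omega(x)E)$, bound $|\det Df^n_\omega(x)|$ by the telescoping identity $\log|\det Df^n_\omega(x)|=\phi(x)-\phi(f^n_\omega x)$ coming from $m$-preservation (the paper's equation giving $\C{C}{0}^{-2}\leq|\det Df^n_\omega(x)|\leq\C{C}{0}^2$), and then absorb the perturbation error and the fixed constants into the $\varepsilon n$ slack by taking $\C{n}{0}$ large and the neighborhoods small. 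Your linear-algebra constant $K=1/\sin\C{\theta}{0}$ is the same factor the paper extracts from $|\det U_0|=\cos\alpha_0$ and $|\det U_n^{-1}|=1/\cos\alpha_n$, with $\alpha_i$ measured from the perpendicular.

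There is, however, a genuine error in how you obtain the lower angle bound between $F$ and $E$ at time $0$. You assert that $E\subset\mathcal{C}^u_x$ for $n$ large, on the grounds that $(Df^n_\omega(x))^{-1}\mathcal{C}^s_{f^n_\omega(x)}$ ``shrinks toward $E^u$''. This is backwards. Condition \textbf{(C2)} makes the stable cone \emph{backward} invariant, so $(Df^n_\omega(x))^{-1}\mathcal{C}^s_{f^n_\omega(x)}\subset\mathcal{C}^s_x$ for every $n\geq 0$, and as $n\to\infty$ this nested family shrinks onto the stable line $E^s_{\omega,x}$ (determined by the future word), which lies in $\mathcal{C}^s_x$, not $\mathcal{C}^u_x$. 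Moreover, even if your cone inclusion were correct it would not deliver the conclusion: having both $F$ and $E$ inside the \emph{same} cone $\mathcal{C}^u_x$ gives no lower bound on $\sphericalangle(F,E)$ — the angle could be arbitrarily small. The correct (and simpler) observation is the opposite of what you wrote: $E\subset\mathcal{C}^s_x$ and $F\subset\mathcal{C}^u_x$ lie in disjoint cones separated by $\C{\theta}{0}$, so $\sphericalangle(F,E)>\C{\theta}{0}$ holds for every $n\geq 0$; no largeness of $n$ is needed for this step. The role of $\C{n}{0}$ is only to absorb the $n$-independent constants $2\log\C{C}{0}$ and $2\log(1/\sin\C{\theta}{0})$. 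With that correction, your argument closes and coincides with the paper's.
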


\begin{proof}

Let us first show Lemma \ref{lem.neardeterminant} for $\omega \in \{f,g\}^{\mathbb{N}}$ and then we will see that the estimates we obtain hold for any sequence of diffeomorphisms $C^1$-near $f$ or $g$.

Fix $\omega \in \{f,g\}^{\mathbb{N}}$,  $n>0$, $x\in \mathbb{T}^2$, and lines $F\in \mathcal{C}^u_x$,  and $E \in \left(Df^n_{\omega}(x)\right)^{-1} \mathcal{C}^s_{f^n_{\omega}(x)}$.  Write 
\[
F_n = Df^n_{\omega}(x)  F \textrm{ and } E_n = Df^n_{\omega}(x)  E.
\]

Let $\{v_0,  w_0\}$ be  two unit vectors such that $v_0$ generates  $F$, and $w_0$ generates  $E$. Consider $U_0: \mathbb{R}^2 \to T_x \mathbb{T}^2$ the linear map defined by $e_1 \mapsto v_0$ and $e_2 \mapsto w_0$, where $\{e_1,e_2\}$ is the canonical basis of $\mathbb{R}^2$.  {(See Section \ref{subsect:other.no.}.)} Let $v_0^\perp$ be the unit vector perpendicular to $F$ that points in the same direction of the projection of $w_0$ into $F^\perp$. Using the bases $\{e_1, e_2\}$ and $\{v_0, v_0^\perp\}$, the linear transformation $U_0$ is given by the matrix
\[
U_0 =
\begin{pmatrix}
1 & B_0\\
0 & \cos \alpha_0
\end{pmatrix},
\]
where $B_0 $ is a number and $\alpha_0$ is the angle between $E$ and $F^\perp$.

Let 
\[
v_n = \frac{Df^n_{\omega}(x) v_0}{\|Df^n_{\omega}(x) v_0\|} \textrm{  and  } w_n = \frac{Df^n_{\omega}(x) w_0}{\|Df^n_{\omega}(x) w_0\|}.
\]
 Let $L_n$ be the linear transformation defined by $e_1 \mapsto v_n$ and $e_2 \mapsto w_n$.  Let $v_n^\perp$ be the unit vector in $F_n^\perp$ that points in the same direction as the projection of $w_n$ into $F_n^\perp$. 

Using the bases $\{e_1, e_2\}$ and $\{v_n, v_n^\perp\}$, the linear transformation $U_n$ is given by 
\[
U_n =
\begin{pmatrix}
1 & B_n\\
0 & \cos \alpha_n
\end{pmatrix},
\]
where $B_n$ is some number and $\alpha_n$ is the angle between $E_n$ and $F_n^\perp$.  Since $E \subset (Df^n_\omega(x))^{-1} \mathcal{C}^s_{f^n_\omega(x)}$, the assumptions in Section \ref{subsect:setting}, we have $\pi/2>\pi/2-\theta_0>\max\{\alpha_0, \alpha_n\}$.

{Recall that $f$ and $g$ preserves the smooth measure $m$, by (7) in Section \ref{subsect:other.no.}, for any $\omega\in\{f,g\}^\ZZ$, any $x\in \TT^2$ and any $n$, we have 
\begin{align}\label{eqn.m.preserving.det.est}
\C{C}{0}^{-2}\leq|\det Df_\omega^n(x)|\leq \C{C}{0}^2.
\end{align}}
Consider $D_n:\mathbb{R}^2 \to \mathbb{R}^2$ given by $D_n = U_n^{-1} \circ Df^n_{\omega}(x) \circ U_0$. {By \eqref{eqn.m.preserving.det.est}, we have
\[
-\frac{2}{n}\log\C{C}{0}\leq\left|\frac{1}{n} \log |\det D_n| -\frac{1}{n} (\log |\det U_n^{-1}| + \log |\det U_0|)\right|\leq \frac{2}{n}\log\C{C}{0}.
\]}
By (1) in Section \ref{subsect:setting}, we have $\sin(\C{\theta}{0})\leq|\det U_0| = | \cos \alpha_0|\leq 1$ and $1\leq |\det U_n^{-1}| = (\cos \alpha_n)^{-1}\leq (\sin(\C{\theta}{0}))^{-1}$. Hence,
\[
\left| \frac{1}{n} |\det D_n|\right| \leq \frac{2}{n} \log \C{C}{0}-\frac{2}{n}\log(\sin(\C{\theta}{0})).
\]
In particular, given $\varepsilon>0$ there exists $\C{n}{0}=\C{n}{0}(\varepsilon)$ such that for any $n \geq \C{n}{0}$, 
\[
-\varepsilon < \frac{1}{n} \log |\det D_n | < \varepsilon.
\]

However,  using the basis $\{e_1, e_2\}$, we have
\[
D_n  = \begin{pmatrix} \| Df^n_{\omega}(x)|_F\| & 0 \\ 0& \| Df^n_\omega (x) |_E\|\end{pmatrix}.
\]
Hence,
\[
|\det D_n| = \| Df^n_{\omega}(x)|_F\| \| Df^n_\omega (x) |_E\|,
\]
and the result follows for $\omega \in \{f,g\}^{\mathbb{N}}$. Observe that $\C{n}{0}$ above can be taken uniformly, independent on the choice of $\omega$. Therefore, for small neighborhoods $\mathcal{U}_f$ and $\mathcal{U}_g$ of $f$ and $g$, respectively, for any $\omega \in \cU^{\mathbb{N}}$, the same estimate holds. 
\end{proof}

The next lemma can be seen as a type of bounded distortion.

\begin{lem}\label{lem.boundeddistortiondet}
Fix $\varepsilon>0$, there exist {$\CS{n}{1}=n_1(\varepsilon)\in \mathbb{N}$},  and $C^2$-neighborhoods of $f$ and $g$, $\mathcal{U}_f$ and $\mathcal{U}_g$, respectively,  with the following property:

Let $\mathcal{U} = \mathcal{U}_f \cup \mathcal{U}_g$. For {all $n\geq\C{n}{1}$, for any} $\omega \in \mathcal{U}^{\mathbb{N}}$,  $z\in \mathbb{T}^2$,  $\rho'\in(0,1)$,  any two points $x,y \in \mathbb{T}^2$ with $f^n_\omega(x), f^n_\omega(y) \in B(f^n_\omega(z), \lambda_s^n \rho')$, for any line $F \subset \mathcal{C}^u_x$, and for any line $E \subset \left(Df^n_{\omega}(y)\right)^{-1} \mathcal{C}^s_{f^n_{\omega}(y)}$, {we have,}
\[
e^{-2\varepsilon n} < \|Df^n_{\omega}(x)|_{F}\| \| Df^n_{\omega}(y)|_{E}\| < e^{2\varepsilon n}.
\] 
\end{lem}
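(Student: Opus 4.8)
The idea is to deduce Lemma~\ref{lem.boundeddistortiondet} from Lemma~\ref{lem.neardeterminant} by estimating how much the quantities $\|Df^n_\omega(x)|_F\|$ and $\|Df^n_\omega(y)|_E\|$ can differ from their counterparts at a single reference point. First I would apply Lemma~\ref{lem.neardeterminant} with $\varepsilon$ replaced by $\varepsilon/2$ to obtain $\C{n}{0}=\C{n}{0}(\varepsilon/2)$ and neighborhoods in which, for all $n\geq \C{n}{0}$,
\[
e^{-\varepsilon n/2}<\|Df^n_\omega(x)|_{F}\|\,\|Df^n_\omega(x)|_{E'}\|<e^{\varepsilon n/2}
\]
for any $F\subset\cC^u_x$ and any $E'\subset (Df^n_\omega(x))^{-1}\cC^s_{f^n_\omega(x)}$. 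So the task reduces to comparing $\|Df^n_\omega(y)|_E\|$ with $\|Df^n_\omega(x)|_{E'}\|$ for a suitable line $E'$ at $x$, given that $f^n_\omega(x)$ and $f^n_\omega(y)$ are within $\lambda_s^n\rho'$ of $f^n_\omega(z)$, hence within $2\lambda_s^n\rho'\leq 2\lambda_s^n$ of each other.

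\textbf{Key steps.} (1) Choose $E'\subset(Df^n_\omega(x))^{-1}\cC^s_{f^n_\omega(x)}$ to be the pullback along $Df^n_\omega(x)$ of the stable-cone line $Df^n_\omega(y)E\subset\cC^s_{f^n_\omega(y)}$ sitting at $f^n_\omega(y)$ — more precisely, transport that line to $f^n_\omega(x)$ (it lands in $\cC^s_{f^n_\omega(x)}$ after a small perturbation since the cone field is continuous and $d(f^n_\omega(x),f^n_\omega(y))$ is tiny) and pull it back. Then $Df^n_\omega(x)E'$ and $Df^n_\omega(y)E$ are nearly-parallel lines in the stable cone at nearby points. (2) The core estimate: for a point $p=f^n_\omega(z)$ and two points $x,y$ with $f^n_\omega(x),f^n_\omega(y)\in B(p,\lambda_s^n\rho')$, compare $\|Df^{-n}_\omega(f^n_\omega(x))|_{E_x}\|$ and $\|Df^{-n}_\omega(f^n_\omega(y))|_{E_y}\|$ for nearly-parallel stable-cone lines $E_x,E_y$. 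Writing $Df^{-n}_\omega=Df^{-1}_{\omega_{n-1}}\circ\cdots\circ Df^{-1}_{\omega_0}$ and using that the orbit segments $f^j_\omega(x)$ and $f^j_\omega(y)$ for $0\le j\le n$ stay exponentially close in the \emph{backward} iterates under $f^{-n}_\omega$ — indeed, since $f^n_\omega(x),f^n_\omega(y)$ are $2\lambda_s^n\rho'$-close, applying $Df^{-(n-j)}$ the distance $d(f^j_\omega(x),f^j_\omega(y))\lesssim \C{C}{3}\lambda_{s,-}^{-(n-j)}\cdot 2\lambda_s^n\rho' = 2\C{C}{3}\lambda_s^{j}\rho'$ by \eqref{eqn:Lip.est.Dfn.s}, which is summable in $j$ — one gets via the $C^2$ bound $\C{C'}{0}$ on the derivatives (so $\log\|Df^{-1}_\star\|_{E}$ is Lipschitz along orbits, with the cone-contraction estimate \eqref{eqn:cone.contraction.s} controlling the line-dependence) that
\[
\Bigl|\log\|Df^{-n}_\omega(f^n_\omega(x))|_{E_x}\|-\log\|Df^{-n}_\omega(f^n_\omega(y))|_{E_y}\|\Bigr|\leq K\rho'+K\sum_{j=0}^{n-1}\lambda_s^{j}\sphericalangle(E_x,E_y)\leq K'
\]
for a constant $K'$ uniform in everything. (3) Rewrite $\|Df^n_\omega(y)|_E\|=\|Df^{-n}_\omega(f^n_\omega(y))|_{Df^n_\omega(y)E}\|^{-1}$ and similarly for $x$, so that the bounded-distortion estimate of step (2) gives $|\log(\|Df^n_\omega(x)|_{E'}\|/\|Df^n_\omega(y)|_E\|)|\leq K'$. (4) Combine with step (1): for $n$ large enough that $K'\leq \varepsilon n/2$, i.e. $n\geq \C{n}{1}:=\max\{\C{n}{0}(\varepsilon/2),\lceil 2K'/\varepsilon\rceil\}$, the triangle inequality yields $e^{-2\varepsilon n}<\|Df^n_\omega(x)|_F\|\,\|Df^n_\omega(y)|_E\|<e^{2\varepsilon n}$. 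Shrinking the neighborhoods once more so all constants ($\C{C}{3}$, $\C{C'}{0}$, cone data) are as in Sections~\ref{subsect:other.no.} and \ref{subsect:setting} for every $\omega\in\cU^{\NN}$ completes the proof.

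\textbf{Main obstacle.} The delicate point is step (2): one must verify that the \emph{backward} orbit segments under $f^{-n}_\omega$ of the two points $f^n_\omega(x)$, $f^n_\omega(y)$ — equivalently, the forward orbit segments $\{f^j_\omega(x)\}_{j=0}^n$ and $\{f^j_\omega(y)\}_{j=0}^n$ — remain close enough, and with exponentially decaying gaps as one moves away from time $n$, that the telescoping sum of the per-step Lipschitz errors in $\log\|Df^{-1}_{\omega_j}(\cdot)|_{\text{(line)}}\|$ converges to a bound independent of $n$, $\omega$, $x$, $y$, $z$, and $\rho'$. This requires simultaneously tracking: the base-point displacement (contracting like $\lambda_s^{j}\rho'$ from time $n$ backwards, by \eqref{eqn:Lip.est.Dfn.s}), the line displacement in the stable cone (contracting like $(\lambda_{s,+}/\lambda_{u,-})^{n-j}$ times the initial angle under backward iteration, by \eqref{eqn:cone.contraction.s}), and the $C^2$-regularity ($\C{C'}{0}$) of the maps that converts these geometric displacements into additive errors in the log-norm. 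The factor $\lambda_s^n\rho'$ rather than a fixed $\rho$ in the hypothesis is precisely what makes the base-point series summable uniformly in $n$; this is the structural reason the lemma is stated with that scaling.
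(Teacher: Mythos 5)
Your overall strategy is the same as the paper's: reduce to Lemma~\ref{lem.neardeterminant} at a single reference point and then propagate to the pair $(x,y)$ via a bounded-distortion estimate whose per-step errors are summable uniformly in $n$, the summability being driven by the $\lambda_s^n\rho'$-scaling of the ball. The paper anchors the Lemma~\ref{lem.neardeterminant} estimate at $z$ (with $F^u_z$ and the true stable line $E^s_{\omega,z}$) and performs \emph{two} separate comparisons ($x$-side and $y$-side to $z$), whereas you anchor at $x$ and perform one comparison ($y$-side to $x$-side with a tailored $E'$). This is an inessential reorganization and, if anything, the paper's choice is slightly cleaner because the true stable line $E^s_{\omega,z}$ is automatically admissible and already Hölder in the base point, whereas your $E'$ requires a transport-plus-pullback construction.

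The place where your write-up has a real gap is step (2). You decompose the per-step error into a base-point term controlled by \eqref{eqn:Lip.est.Dfn.s} and a line-angle term controlled by \eqref{eqn:cone.contraction.s}, but \eqref{eqn:cone.contraction.s} only says that $Df^{-k}$ contracts angles between two lines \emph{at the same tangent space}. In your setup the two lines $E_{x,j}$ and $E_{y,j}$ (the time-$j$ backward iterates of $E_x$ at $f^n_\omega(x)$ and $E_y$ at $f^n_\omega(y)$) live at distinct base points $f^j_\omega(x)\neq f^j_\omega(y)$, so \eqref{eqn:cone.contraction.s} cannot be applied directly and the displayed bound $K\rho' + K\sum_j\lambda_s^j\sphericalangle(E_x,E_y)$ is not justified as written (indeed the $\lambda_s^j$ there does not match the cone-contraction rate, which is $\lambda^{n-j}$ with $\lambda=\lambda_{s,+}/\lambda_{u,-}$). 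The paper resolves exactly this by inserting a Hölder-continuous reference stable line field $E^s_{\omega,\cdot}$ (from Proposition~\ref{prop:UH}) and splitting the per-step error into \emph{three} pieces: the angle between the iterated line and the stable direction at the same point (cone contraction, $\lesssim\lambda^{n-j}$), the variation of the stable direction between the nearby base points (Hölder, $\lesssim(\lambda_s^j\rho')^\theta$), and the variation of the cocycle itself in the base point (Lipschitz, $\lesssim\lambda_s^j\rho'$). Without the middle piece — or some substitute such as a one-step Grönwall recursion for the cross-base-point angle combining the cone contraction with the $C^2$-Lipschitz bound on $Df^{-1}$ — the claimed uniform bound $K'$ does not follow. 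This is a fixable gap, and your ``main obstacle'' paragraph shows you have the right intuition, but the argument as displayed does not yet close.
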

\begin{proof}
Fix $\mathcal{U}$ a $C^2$-neighborhood small enough so that Lemma \ref{lem.neardeterminant} holds {for $\varepsilon$}, and fix $\omega \in \mathcal{U}^\mathbb{N}$.  Let $p\mapsto E^s_{\omega, p}$ be the stable field, which is well defined since the stable direction only depends on the future. Fix $z\in \mathbb{T}^2$, and since {$E^s_{\omega,z} \subset (Df_\omega^n(z))^{-1} \mathcal{C}^s_{f^n_\omega(z)}\subset\cC^s_z $} for every $n\in \mathbb{N}$, by Lemma \ref{lem.neardeterminant}, 
\[
e^{-\varepsilon n} < \|Df^n_{\omega}(z)|_{F^u_{z}}\| \| Df^n_{\omega}(z)|_{E^s_{\omega,z}}\| < e^{\varepsilon n},~\forall n\geq \C{n}{0}(\varepsilon).
\]
Suppose that $x,y$ verify the condition of the Lemma \ref{lem.boundeddistortiondet}.  Let $F_x \subset \mathcal{C}^u_x$ and {$E_y \subset (Df_\omega^n(y))^{-1}\mathcal{C}^s_{f^n_\omega(y)} \subset\cC^s_y $}.  Let us start by comparing $\| Df^n_{\omega}(y)|_{E_y}\|$ with $\| Df^n_{\omega}(z)|_{E^s_{\omega,z}}\|$. In what follows, we write {$y_j:=f^j_\omega(y)$, $ E_{y_j} :=Df^j_{\omega}(y) E_y$ and $z_j = f^j_\omega(z)$. Then we have 
\begin{align*}
\displaystyle  &\left|\log \| Df^n_{\omega}(y)|_{E_y}\| - \log \| Df^n_{\omega}(z)|_{E^s_{\omega,z}}\|\right|\\
\displaystyle \leq &\sum_{j=0}^{n-1}  \left|\log \| Df_{\sigma^j(\omega)}(y_j)|_{E_{y_j}}\| - \log \| Df_{\sigma^j(\omega)}(z_j)|_{E^s_{\sigma^j(\omega),z_j}}\|\right|  \\
\displaystyle \leq &\sum_{j=0}^{n-1}  \left|\log \| Df_{\sigma^j(\omega)}(y_j)|_{E_{y_j}}\| - \log \| Df_{\sigma^j(\omega)}(y_j)|_{E^s_{\sigma^j(\omega),y_j}}\|\right|  \\
\displaystyle &+\sum_{j=0}^{n-1} \left|\log \| Df_{\sigma^j(\omega)}(y_j)|_{E^s_{\sigma^j(\omega), y_j}}\| - \log \| Df_{\sigma^j(\omega)}(y_j)|_{E^s_{\sigma^j(\omega),z_j}}\|\right| \\
\displaystyle &+\sum_{j=0}^{n-1}\left|\log \| Df_{\sigma^j(\omega)}(y_j)|_{E^s_{\sigma^j(\omega), z_j}}\| - \log \| Df_{\sigma^j(\omega)}(z_j)|_{E^s_{\sigma^j(\omega),z_j}}\|\right| .
\end{align*}

Observe the following:

\begin{itemize}
\item By \eqref{eqn:Lip.est.Dfn.s} and the fact that $d(y_n,z_n)\leq \lambda^n_s\rho'$, we have $d(y_j, z_j) < \C{C}{3}\lambda_s^j \rho'$.
\item By Proposition \ref{prop:UH}, the stable bundle is $(\C{L}{0},\theta)$-H\"older continuous.
\item Let $\lambda={\lambda_{s,+}/\lambda_{u,-}}$. By \eqref{eqn:cone.contraction.s}, we have $\sphericalangle(E_{y_j}, E^s_{\sigma^j(\omega),y_j}) \leq \C{C}{4}\lambda^{n-j}$. (See the definition of $\C{\theta}{0}$.)
\end{itemize}
By (9) in Section \ref{subsect:other.no.}, there exists some constant $\CS{C}{6}>0$ depending only on $\C{C'}{0}$ such that
\begin{align*}
 |\log \| Df_{\sigma^j(\omega)}(y_j)|_{E_{y_j}}\| - \log \| Df_{\sigma^j(\omega)}(y_j)|_{E^s_{\sigma^j(\omega),y_j}}\||  \leq \C{C}{6}\sphericalangle(E_{y_j}, E^s_{\sigma^j(\omega),y_j}) \leq \C{C}{6} \C{C}{4} \lambda^{n-j},
\end{align*}
\begin{align*}
&|\log \| Df_{\sigma^j(\omega)}(y_j)|_{E^s_{\sigma^j(\omega),y_j}}\| - \log \| Df_{\sigma^j(\omega)}(y_j)|_{E^s_{\sigma^j(\omega),z_j}}\|| \\
\leq &\C{C}{6}d(E^s_{\sigma^j(\omega),y_j},E^s_{\sigma^j(\omega),z_j})\leq \C{C}{6}  \C{L}{0} d(y_j, z_j)^{\theta} \leq \C{C}{6} \C{L}{0} ( \C{C}{3}\lambda_s^j \rho')^\theta
\end{align*}
and
\begin{align*}
|\log \| Df_{\sigma^j(\omega)}(y_j)|_{E^s_{\sigma^j(\omega),z_j}}\| - \log \| Df_{\sigma^j(\omega)}(z_j)|_{E^s_{\sigma^j(\omega),z_j}}\|| \leq \C{C}{6}d(y_j,z_j)\leq \C{C}{6}  \C{C}{3}\lambda_s^j \rho'.
\end{align*}
}

Hence,
\[
\displaystyle  |\log \| Df^n_{\omega}(y)|_{E_y}\| - \log \| Df^n_{\omega}(z)|_{E^s_{\omega,z}}\|   \leq {\C{C}{6}\left(\sum_{j=0}^{n-1}  \C{C}{4} \lambda^{n-j} + \sum_{j=0}^\infty  (\C{L}{0}( \C{C}{3}\lambda_s^j \rho')^\theta +\C{C}{3}\lambda_s^j \rho'). \right)}
\]
Observe that 
\[
\displaystyle  \lim_{n\to +\infty} {\sum_{j=0}^{n-1}  \C{C}{4} \lambda^{n-j} + \sum_{j=0}^\infty  (\C{L}{0}( \C{C}{3}\lambda_s^j \rho')^\theta+\C{C}{3}\lambda_s^j \rho')} < +\infty.
\]
Therefore, there exists $L_s$ such that
\[
\displaystyle e^{-L_s} \leq \frac{\| Df^n_{\omega}(y)|_{E_y}\|}{ \| Df^n_{\omega}(z)|_{E^s_{\omega,z}}\|} \leq e^{L_s}.
\]
Fix an $(L,\theta)$-H\"older line field $p\mapsto F^u_p$ contained in $\mathcal{C}^u$. By a similar computation, using {$p\mapsto F^u_p$ and $F_x$ instead of the stable field and $E_y$}, one can find a constant $L_u$ such that
\[
\displaystyle e^{-L_u} \leq \frac{\|Df^n_\omega(x)|_{F_x}\|}{\|Df^n_\omega(z)|_{F^u_z}\|} \leq e^{L_u}.
\]
Therefore,
\begin{align*}
\displaystyle &\|Df^n_\omega(x)|_{F_x}\|\cdot \| Df^n_{\omega}(y)|_{E_y}\| \\
\displaystyle  =&\frac{\|Df^n_\omega(x)|_{F_x}\|}{\|Df^n_\omega(z)|_{F^u_z}\|}\cdot \frac{\| Df^n_{\omega}(y)|_{E_y}\|}{\| Df^n_{\omega}(z)|_{E^s_{\omega,z}}\|}\cdot \|Df^n_{\omega}(z)|_{F^u_{z}}\| \| Df^n_{\omega}(z)|_{E^s_{\omega,z}}\| \leq e^{L_s + L_u} e^{\varepsilon n}
\end{align*}
It suffices to take $\C{n}{1}$ large enough so that $\C{n}{1}\geq \C{n}{0}$ and $e^{L_s + L_u} < e^{\varepsilon \C{n}{1}}$. The lower bound follows from similar computations.
\end{proof}

\section{Admissible measures}

Let $f,g \in \mathrm{Diff}^2_m(\mathbb{T}^2)$ be two Anosov diffeomorphisms verifying conditions \textbf{(C1)} and \textbf{(C2)}. Fix $\widetilde{\mathcal{U}}_f$ and $\widetilde{\mathcal{U}}_g$ $C^2$-neighborhoods of $f$ and $g$ satisfying (9) in Section \ref{subsect:other.no.}. Let $\widetilde{\mathcal{U}} = \widetilde{\mathcal{U}}_f \cup \widetilde{\mathcal{U}}_g$. Since both $f$ and $g$ preserves $m$, we also assume that $\widetilde{\cU}$ is so small such that for any $\widetilde{f}\in\widetilde{\cU}$, we have
\begin{align}\label{eqn:quant.avp}
\left(\frac{1+\lambda_{u,-}}{2}\right)^{-1}<\frac{d(\widetilde{f}_*m)}{dm}<\frac{1+\lambda_{u,-}}{2}.
\end{align}

For any $C^2$-curve $\gamma:[a,b]\to\TT^2$ and any $t\in[a,b]$, we denote by 
\begin{align}\label{eqn:curv.notation}
\kappa(t;\gamma)=\det(\dot\gamma(t),\ddot\gamma(t))/\|\dot\gamma(t)\|^3
\end{align} 
the curvature of $\gamma$ at $\gamma(t)$. (See (5) in Section \ref{subsect:other.no.})

\begin{lem}\label{lem.curvaturecontrol}
There exist constants $K_0=\CS{K}{0}(\widetilde{\mathcal{U}})>0$ and $n_2=\CS{n}{2}(\widetilde{\mathcal{U}})\in \mathbb{N}$ such that if $\gamma$ is a $C^2$-curve tangent to $\mathcal{C}^u$ such that $|\kappa(\cdot;\gamma)|\leq \C{K}{0}$, then for any $\omega \in \widetilde{\mathcal{U}}^{\mathbb{N}}$, and any $n\geq \C{n}{2}$, we have $|\kappa(\cdot;f^n_\omega(\gamma))|\leq \C{K}{0}$.
\end{lem}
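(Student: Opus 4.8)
The plan is to derive an explicit recursion for the curvature of the iterated curve under a single step of the random dynamics, and then show that this recursion has an attracting "invariant band" $\{|\kappa|\le \C{K}{0}\}$ once $\C{K}{0}$ is chosen large and $n$ is large. First I would record the classical transformation law: if $\gamma$ is a $C^2$-curve tangent to $\cC^u$ and $h=\widetilde f\in\widetilde\cU$, then the curvature of $h(\gamma)$ at the image point is given by a formula of the shape
\[
\kappa(\cdot\,;h(\gamma)) \;=\; \frac{1}{\|Dh|_{T\gamma}\|^2}\cdot\Big(\,a(h,x)\,\kappa(\cdot\,;\gamma) \;+\; b(h,x)\,\Big),
\]
where $a(h,x)$ depends only on $Dh(x)$ restricted to the tangent line (and its comparison with the norm $\|Dh(x)\|$), and $b(h,x)$ is a bounded term involving $D^2h$; both $a$ and $b$ are controlled by $\C{C'}{0}$ (the uniform $C^2$-bound from (8) in Section \ref{subsect:other.no.}) and by the cone-angle constant $\C{\theta}{0}$ from Section \ref{subsect:setting}. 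The key gain is the prefactor: by the cone contraction in the unstable cone, \eqref{eqn:Lip.est.Dfn.u} together with the lower expansion bound in \textbf{(C2)} gives, for $\omega\in\widetilde\cU^{\NN}$,
\[
\frac{1}{\|Df^n_\omega(x)|_{T\gamma}\|^2}\;\le\; \Big(\C{C}{3}\Big)^{2}\,\lambda_{u,-}^{-2n}\cdot\big(\C{C}{3}\big)^2 \quad\text{(a crude bound)},
\]
but more usefully the \emph{composition} of the per-step factors $a(h_j,x_j)/\|Dh_j|_{T\gamma_j}\|^2$ telescopes and is dominated by a geometric term $\lesssim \lambda_{u,-}^{-n}$ (using that $a(h,x)\le \C{C'}{3}\|Dh(x)|_{T\gamma}\|$, cf. \eqref{eqn:Lip.est.Dfn.u}), so this part decays exponentially. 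Meanwhile the accumulated "inhomogeneous" contribution from the $b$-terms, when propagated forward, is a sum $\sum_{j} (\text{decay})^{n-j}\cdot(\text{bounded})$ which converges to a constant $\C{K}{0}'$ independent of $n$ and $\omega$.

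Carrying this out: I would write $\gamma_j=f^j_\omega(\gamma)$, $\kappa_j(t)=\kappa(t;\gamma_j)$ (after the natural reparametrization), and obtain from the one-step law an estimate
\[
|\kappa_{n}| \;\le\; A_n\,|\kappa_0| \;+\; \sum_{j=0}^{n-1} A_{n,j}\,\C{C}{7},
\]
where $A_n\le \C{C}{8}\,\mu_0^{\,n}$ for some $\mu_0=\mu_0(\widetilde\cU)\in(0,1)$ — here one uses that two steps of expansion in the unstable cone beat the bounded factor $a$, i.e. $\lambda_{u,-}>1$ dominates — and $A_{n,j}\le \C{C}{8}\,\mu_0^{\,n-j}$ by the same mechanism applied to the tail word. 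Then $\sum_{j=0}^{n-1}A_{n,j}\C{C}{7}\le \C{C}{8}\C{C}{7}/(1-\mu_0)=:\tfrac12\C{K}{0}$, and I would \emph{define} $\C{K}{0}:=2\C{C}{8}\C{C}{7}/(1-\mu_0)$ and then pick $\C{n}{2}$ large enough that $\C{C}{8}\mu_0^{\,\C{n}{2}}\cdot\C{K}{0}\le \tfrac12\C{K}{0}$, i.e. $A_n\le \tfrac12$ for all $n\ge\C{n}{2}$. For such $n$, if $|\kappa_0|\le\C{K}{0}$ then $|\kappa_n|\le \tfrac12\C{K}{0}+\tfrac12\C{K}{0}=\C{K}{0}$, which is exactly the claim. (One should also check the reparametrization: tangency to $\cC^u$ is preserved by \textbf{(C2)}, so $\dot\gamma_j$ never vanishes and $\kappa_j$ is well-defined throughout; and the whole argument is uniform over $\omega\in\widetilde\cU^{\NN}$ because all constants come from $\C{C'}{0}$, $\C{\theta}{0}$, and $\lambda_{u,\pm}$, not from the particular diffeomorphisms in the word.)

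The main obstacle I anticipate is bookkeeping the one-step curvature formula carefully enough to extract the correct exponential gain — in particular, making sure the prefactor is genuinely $\|Df^n_\omega|_{T\gamma}\|^{-2}$ times a \emph{bounded} (not merely polynomially growing) coefficient, so that after composing $n$ steps the homogeneous part really decays like $\lambda_{u,-}^{-n}$ rather than being spoiled by accumulated derivative-of-derivative factors. A clean way to handle this is to work in the trivialization $T\TT^2\simeq\TT^2\times\RR^2$ fixed in Section \ref{subsect:other.no.}, express $\gamma_j$ locally as a graph over its tangent line at the base point, and track the second-order Taylor coefficient; the graph-transform estimates behind Proposition \ref{prop:stablemfd} are essentially the same computation and give uniform control of exactly the quantities needed. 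Everything else — the geometric-series summation and the choice of $\C{K}{0}$ and $\C{n}{2}$ — is routine once the recursion is in hand.
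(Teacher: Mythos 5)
Your high-level scheme — derive a one-step curvature recursion of the form $\kappa_{j+1}=\dfrac{a_j}{\|Dh_j|_{T\gamma_j}\|^2}\,\kappa_j + b_j$ with bounded $b_j$, iterate it, and sum a geometric series — is a reasonable variant of what the paper does (the paper instead proves a block estimate \eqref{eqn:curv.LY} for a fixed large block length $\C{n'}{2}$ and iterates in blocks, which avoids having to make the step-by-step propagator decay). However, your justification of the homogeneous decay has a genuine gap. The coefficient $a_j$ in the one-step law is exactly $\det(Dh_j)/\|Dh_j|_{T\gamma_j}\|$, so the $n$-step propagator telescopes (by multiplicativity of the determinant and the chain rule on the tangent line) to
\[
\prod_{j=0}^{n-1}\frac{a_j}{\|Dh_j|_{T\gamma_j}\|^2}\;=\;\frac{\det(Df^n_\omega)}{\|Df^n_\omega|_{T\gamma}\|^3}.
\]
The bound you cite, $a(h,x)\le\C{C'}{3}\|Dh(x)|_{T\gamma}\|$, throws this structure away: composing it over $n$ steps yields $(\C{C'}{3})^n/\|Df^n_\omega|_{T\gamma}\|\lesssim(\C{C'}{3}/\lambda_{u,-})^n$, which is \emph{not} $\lesssim\lambda_{u,-}^{-n}$ and need not even tend to zero, since $\C{C'}{3}>2$ while the only hypothesis on $\lambda_{u,-}$ is $\lambda_{u,-}>1$. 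So the claimed $\mu_0\in(0,1)$ with $A_n\le\C{C}{8}\mu_0^n$ is not established, and with it the whole geometric-series argument collapses.

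The missing ingredient is that $f$ and $g$ preserve $m$, and $\widetilde{\cU}$ is taken small enough that \eqref{eqn:quant.avp} holds; together with (7) in Section \ref{subsect:other.no.} this gives $\lvert\det Df^n_\omega\rvert\le\C{C}{0}^2\bigl(\tfrac{1+\lambda_{u,-}}{2}\bigr)^n$. Combined with $\|Df^n_\omega|_{T\gamma}\|\ge(\C{C''}{0})^{-1}\lambda_{u,-}^n$, the telescoped propagator is $\le\C{C}{0}^2(\C{C''}{0})^3\bigl(\tfrac{1+\lambda_{u,-}}{2\lambda_{u,-}^3}\bigr)^n$, which does decay since $(1+\lambda_{u,-})/2<\lambda_{u,-}$. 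This is precisely the computation in \eqref{eqn:curv.after.n.2} and is the heart of the lemma; your proposal omits it. Once you plug in this determinant bound, your step-by-step version goes through (set $\mu_0=(1+\lambda_{u,-})/(2\lambda_{u,-}^3)$, take $b_j$ from the one-step $C^2$-bound, and sum), and is in fact a bit cleaner than the paper's block iteration, which has to carry the $n$-dependent constant $\C{C}{7}(n,\C{C'}{0})$ and deal separately with the remainder $q\in\{0,\dots,\C{n'}{2}-1\}$.
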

\begin{proof}
Let $\omega \in \widetilde{\mathcal{U}}^{\mathbb{N}}$. Suppose that $\gamma:[0,a] \to \mathbb{T}^2$ is parametrized by arclength with curvature bounded from above by $\C{K}{0}$, we will find later what $\C{K}{0}$ must be.  

{
Let $\gamma_{n,\omega}(t) = f^n_\omega(\gamma(t))$. Observe that 
\[
\dot{\gamma}_{n,\omega}(t) = Df^n_\omega(\gamma(t))\dot{\gamma}(t) \textrm{ and } \ddot{\gamma}_{n,\omega}(t) = Df^n_\omega(\gamma(t)) \ddot{\gamma}(t) + D^2f^n_\omega(\gamma(t))(\dot{\gamma}(t), \dot{\gamma}(t)).
\]
Hence,
\begin{align}\label{eqn:curv.after.n}
&\displaystyle |\kappa(t;\gamma_{n,\omega})| \nonumber\\
=& \frac{|\det(\dot{\gamma}_{n,\omega}(t), \ddot{\gamma}_{n,\omega}(t))|}{\|\dot{\gamma}_{n,\omega}(t)\|^3} \nonumber\\ 
\leq& \frac{|\det(\dot{\gamma}_{n,\omega}(t), Df^n_\omega(\gamma(t)) \ddot{\gamma}(t))|}{\|\dot{\gamma}_{n,\omega}(t)\|^3}+ \frac{|\det(\dot{\gamma}_{n,\omega}(t), D^2f^n_\omega(\gamma(t))(\dot{\gamma}(t), \dot{\gamma}(t)))|}{\|\dot{\gamma}_{n,\omega}(t)\|^3}
\end{align}

Notice that for any $n\in\ZZ_+$, by (9) in Section \ref{subsect:other.no.}, there exists some constant $\CS{C}{7}=C_7(n,\C{C'}{0})$ such that for any $\omega'\in\widetilde{\cU}^\ZZ$, we have 
$$\|f^n_{\omega'}\|_{C^2}\leq\C{C}{7}(n,\C{C'}{0}).$$
In particular, by (10) in Section \ref{subsect:other.no.} and the above, we have the following estimate for the second term in \eqref{eqn:curv.after.n}.
\begin{align}\label{eqn:curv.after.n.1}
\frac{|\det(\dot{\gamma}_{n,\omega}(t), D^2f^n_\omega(\gamma(t))(\dot{\gamma}(t), \dot{\gamma}(t)))|}{\|\dot{\gamma}_{n,\omega}(t)\|^3}  \leq& \frac{\|\dot{\gamma}_{n,\omega}(t)\|\|D^2f^n_\omega(\gamma(t))\|}{\|\dot{\gamma}_{n,\omega}(t)\|^3}  \nonumber\\
=& \frac{\C{C}{7}(n,\C{C'}{0})}{\|Df^n_\omega(\gamma(t)) \dot{\gamma}(t)\|^2} \leq  \frac{\C{C}{7}(n,\C{C'}{0})(\C{C''}{0})^2}{\lambda_{u,-}^{2n}}.
\end{align}

Recall that $\gamma$ is parametrized by arclength. In particular, $\ddot{\gamma}(t)$ is perpendicular to $\dot{\gamma}(t)$ for every $t\in [0,a]$. Moreover, $\|\ddot{\gamma}(t)\|=|\kappa(t;\gamma)|$. Thus, by \eqref{eqn:quant.avp}, (7) and (10) in Section \ref{subsect:other.no.}, we have the following estimate for the first term in \eqref{eqn:curv.after.n}.
\begin{align}\label{eqn:curv.after.n.2} 
\frac{|\det(\dot{\gamma}_{n,\omega}(t), Df^n_\omega(\gamma(t)) \ddot{\gamma}(t))|}{\|\dot{\gamma}_{n,\omega}(t)\|^3}\nonumber
&= \frac{|\det(Df^n_\omega(\gamma(t)) \dot{\gamma}(t), Df^n_\omega(\gamma(t)) \ddot{\gamma}(t))|}{\|Df^n_\omega(\gamma(t)) \dot{\gamma}(t)\|^3}\nonumber\\
&= \frac{|\kappa(t;\gamma)|\det(Df^n_\omega(\gamma(t)))}{\|Df^n_\omega(\gamma(t)) \dot{\gamma}(t)\|^3} 
\leq\frac{\displaystyle |\kappa(t;\gamma)|\C{C}{0}^2\left(1+\lambda_{u,-}\right)^n}{(\C{C''}{0})^{-3}\lambda_{u,-}^{3n}\cdot 2^n}\nonumber\\
&<\frac{\C{C}{0}^2(\C{C''}{0})^3}{\lambda_{u,-}^{2n}}\cdot|\kappa(t;\gamma)|.
\end{align}
Apply the above to \eqref{eqn:curv.after.n}, we have
\begin{align}\label{eqn:curv.LY}
|\kappa(t;\gamma_{n,\omega})| \leq \frac{\C{C}{0}^2(\C{C''}{0})^3}{\lambda_{u,-}^{2n}}\cdot|\kappa(t;\gamma)| + \frac{\C{C}{7}(n,\C{C'}{0})(\C{C''}{0})^2}{\lambda_{u,-}^{2n}}.
\end{align}
Choose $\CS{n'}{2}>0$ such that $\C{C}{0}^2(\C{C''}{0})^3\lambda_{u,-}^{-2\C{n'}{2}}<1/2$. For simplicity, we write 
$$\CS{K'}{0}=\frac{4\C{C}{7}(\C{n'}{2},\C{C'}{0})(\C{C''}{0})^2}{\lambda_{u,-}^{2\C{n'}{2}}}.$$
Choose
$$\C{K}{0}=\max\left\{\C{K'}{0}, \max_{1\leq m\leq \C{n'}{2}-1}\left\{\frac{\C{C}{0}^2(\C{C''}{0})^3}{\lambda_{u,-}^{2m}}\cdot \C{K'}{0}+ \frac{\C{C}{7}(m,\C{C'}{0})(\C{C''}{0})^2}{\lambda_{u,-}^{2n}}\right\}\right\}$$
and 
$\C{n}{2}\in\C{n'}{2}\ZZ_+$ such that $(1/2)^{\C{n}{2}/\C{n'}{2}}< \frac{\C{K'}{0}}{4\C{K}{0}}$. Then for any $n>\C{n}{2}$, we write $n=m\C{n'}{2}+q$ for some $m\in\ZZ_+$ and $q\in \{0,\cdots, \C{n'}{2}-1\}$. In particular, $m\C{n'}{2}\geq \C{n}{2}$ and hence $(1/2)^{m}< \frac{\C{K'}{0}}{4\C{K}{0}}$. Therefore, if $|\kappa(t;\gamma)|\leq \C{K}{0}$, \eqref{eqn:curv.LY} in the case $n=\C{n'}{2}$ implies that $|\kappa(t;\gamma_{m\C{n'}{2},\omega})|\leq \C{K'}{0}\leq \C{K}{0}$. If in addition that $q\neq 0$, then one can apply \eqref{eqn:curv.LY} in the case $n=q$ to $\gamma_{m\C{n'}{2},\omega}$ and show that $|\kappa(t;\gamma_{n,\omega})|\leq \C{K}{0}$. This finishes the proof.
}

\end{proof}

\begin{dfn}
A \emph{$\widetilde{\mathcal{U}}$-admissible curve} is a  $C^2$-curve tangent to $\mathcal{C}^u$ having curvature bounded from above by $\C{K}{0}(\widetilde{\mathcal{U}})$, where $\C{K}{0}(\widetilde{\mathcal{U}})$ is a constant as in Lemma \ref{lem.curvaturecontrol}. 
\end{dfn}

Let $\gamma$ be a {$\widetilde{\cU}$-admissible} curve and let $m_\gamma$ be the arc length measure on $\gamma$.  
\begin{dfn}\label{dfn:L-good} Given a constant $L>0$, we say that a measure $\nu_{\gamma}$ supported on $\gamma$ is \emph{$L$-good} if there exists a positive function $\rho $ such that $\log \rho$ is $L$-Lipschitz and $d \nu_{\gamma} (\cdot) = \rho(\cdot) d m_\gamma(\cdot)$.
\end{dfn}  Note that if $\nu_{\gamma}$ is $L$-good then $\nu_{\gamma}$ is $L'$-good for all $L'\ge L$. 

%\begin{lem}\label{lem.goodmeasurespreserved}
%There exists $L$ and $n_0\in \mathbb{N}$ such that for any admissible curve $\gamma$,  $L$-good measure $\nu_{\gamma}$,  $\omega \in \widetilde{\mathcal{U}}^{\mathbb{N}}$, and $n\geq n_0$, the measure $(f_{\omega}^n)_* \nu_\gamma$ is $L$-good.
%\end{lem}

\begin{lem}\label{lem.goodmeasurespreserved}
There exists $\CS{L}{1}(\widetilde{\mathcal{U}})$ such that, for each $L\ge \C{L}{1}$, there is $n_3=\CS{n}{3}(\widetilde{\mathcal{U}},L)\geq\C{n}{2}(\widetilde{\mathcal{U}})$ such that, for any $\widetilde{\mathcal{U}}$-admissible curve $\gamma$, for any $L$-good measure $\nu_{\gamma}$ on $\gamma$, for all $\omega \in \widetilde{\mathcal{U}}^{\mathbb{N}}$, and for any $n\geq \C{n}{3}$, the measure $(f_{\omega}^n)_* \nu_\gamma$ is $L$-good.
\end{lem}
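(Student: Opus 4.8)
The plan is to write down the density of $(f^n_\omega)_*\nu_\gamma$ with respect to arc length on $f^n_\omega(\gamma)$ and then split the estimate of its log-Lipschitz constant into two pieces: the contribution of the original density $\rho$, which is damped by the unstable expansion and becomes as small as we like for $n$ large, and a bounded-distortion contribution coming from the Jacobian along the curve, which is uniformly bounded regardless of $n$. Throughout, fix $L$, an admissible curve $\gamma$, an $L$-good measure $\nu_\gamma=\rho\,m_\gamma$ with $\log\rho$ being $L$-Lipschitz along $\gamma$, a word $\omega\in\widetilde{\cU}^{\NN}$, and $n\geq\C{n}{2}(\widetilde{\cU})$; write $\gamma_j=f^j_\omega(\gamma)$ and $x_j=f^j_\omega(x)$, and let $d_{\gamma_j}$ denote arc-length distance along $\gamma_j$.

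First I would record the curvature control. By Lemma \ref{lem.curvaturecontrol}, $\gamma_n=f^n_\omega(\gamma)$ is admissible, and for the finitely many $j<\C{n}{2}$ the curvature of $\gamma_j$ is bounded by a constant depending only on $\widetilde{\cU}$ (via the uniform $C^2$-bounds $\C{C}{7}(\C{n}{2},\C{C'}{0})$, as in estimate \eqref{eqn:curv.LY}); hence every $\gamma_j$, $j\geq 0$, has curvature $\leq K^\ast=K^\ast(\widetilde{\cU})$, so its unit tangent line field varies Lipschitzly in arc length with constant $K^\ast$. A change of variables then gives that $(f^n_\omega)_*\nu_\gamma$ is absolutely continuous with respect to $m_{\gamma_n}$ with strictly positive density $\rho_n$ satisfying $\log\rho_n(y)=\log\rho(x)-\log\|Df^n_\omega(x)|_{T_x\gamma}\|$, where $x=(f^n_\omega)^{-1}(y)$; it remains to estimate the log-Lipschitz constant of $\rho_n$ along $\gamma_n$.

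Next, for $y,y'\in\gamma_n$ with preimages $x,x'$, the chain rule and the triangle inequality give
\[
|\log\rho_n(y)-\log\rho_n(y')|\leq |\log\rho(x)-\log\rho(x')|+\sum_{j=0}^{n-1}\left|\log\|Df_{\sigma^j\omega}(x_j)|_{T_{x_j}\gamma_j}\|-\log\|Df_{\sigma^j\omega}(x'_j)|_{T_{x'_j}\gamma_j}\|\right|.
\]
Since vectors in $\mathcal{C}^u$ are expanded by $Df^k_\omega$ by at least $(\C{C''}{0})^{-1}\lambda_{u,-}^k$ (item (10) of Section \ref{subsect:other.no.}), arc length along $\gamma_j$ is expanded by $f^{n-j}_{\sigma^j\omega}$ by at least the same factor, whence $d_{\gamma_j}(x_j,x'_j)\leq\C{C''}{0}\lambda_{u,-}^{-(n-j)}d_{\gamma_n}(y,y')$; in particular the first term is $\leq L\C{C''}{0}\lambda_{u,-}^{-n}d_{\gamma_n}(y,y')$. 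For the $j$-th summand, the $C^2$-bound $\C{C'}{0}$ yields, exactly as in the proof of Lemma \ref{lem.boundeddistortiondet} (with the constant $\C{C}{6}$), the estimate $|\log\|Df_{\sigma^j\omega}(p)|_{E}\|-\log\|Df_{\sigma^j\omega}(q)|_{F}\||\leq\C{C}{6}(d(p,q)+\sphericalangle(E,F))$ for lines $E,F\subset\mathcal{C}^u$; applying it with $p=x_j$, $q=x'_j$, $E=T_{x_j}\gamma_j$, $F=T_{x'_j}\gamma_j$, and using $d(x_j,x'_j)\leq d_{\gamma_j}(x_j,x'_j)$ together with the total-curvature bound $\sphericalangle(T_{x_j}\gamma_j,T_{x'_j}\gamma_j)\leq K^\ast d_{\gamma_j}(x_j,x'_j)$, each summand is $\leq\C{C}{6}(1+K^\ast)\C{C''}{0}\lambda_{u,-}^{-(n-j)}d_{\gamma_n}(y,y')$. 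Summing the geometric series over $0\leq j\leq n-1$ bounds the sum by $D_0\,d_{\gamma_n}(y,y')$ with $D_0:=\C{C}{6}(1+K^\ast)\C{C''}{0}/(\lambda_{u,-}-1)$, a constant independent of $n$, $\omega$, and $\gamma$.

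Finally, I would set $\C{L}{1}(\widetilde{\cU}):=2D_0$. For any $L\geq\C{L}{1}$, choose $\C{n}{3}=\C{n}{3}(\widetilde{\cU},L)\geq\C{n}{2}$ large enough that $\C{C''}{0}\lambda_{u,-}^{-\C{n}{3}}\leq 1/2$. Then for $n\geq\C{n}{3}$ the two bounds above give $|\log\rho_n(y)-\log\rho_n(y')|\leq(L\C{C''}{0}\lambda_{u,-}^{-n}+D_0)d_{\gamma_n}(y,y')\leq(\tfrac L2+\tfrac L2)d_{\gamma_n}(y,y')=L\,d_{\gamma_n}(y,y')$, so $\log\rho_n$ is $L$-Lipschitz along $\gamma_n$ and $(f^n_\omega)_*\nu_\gamma$ is $L$-good. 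I expect the main obstacle to be the uniform convergence of the bounded-distortion sum: it rests on combining the a priori curvature control of all forward iterates $\gamma_j$ (Lemma \ref{lem.curvaturecontrol}, plus handling the finitely many small $j$ by hand) with the uniform backward contraction of arc length along unstable curves, after which the argument is bookkeeping parallel to Lemma \ref{lem.boundeddistortiondet}.
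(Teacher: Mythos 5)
Your proof is correct and follows essentially the same approach as the paper's: split the log-Lipschitz constant of the pushed-forward density into the damped contribution of the original density and a geometric-series bounded-distortion term from the Jacobian, using the curvature control of Lemma \ref{lem.curvaturecontrol} (with the finitely many early iterates handled by the uniform $C^2$ bounds) to make each summand comparable to arc length along the iterated curve. The constants are organized slightly differently (the paper packages the curvature bound into $\C{C'}{8}$ and uses $(2\C{C'}{0})^2$ directly rather than appealing to $\C{C}{6}$), but the decomposition, the geometric series, the choice $\C{L}{1}=2D_0$, and the choice of $\C{n}{3}$ to make the damped term at most $L/2$ all match the paper's argument.
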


\begin{proof}
Fix $\omega \in \widetilde{\mathcal{U}}^{\mathbb{N}}$ and let $\gamma$ be an admissible curve.  By Lemma \ref{lem.curvaturecontrol}, for any $n\in \mathbb{N}$, the curve $f^n_\omega (\gamma)$ is a $C^2$-curve with uniformly bounded curvature. 

For each $n\in \mathbb{N}$, and $y\in f^n_\omega(\gamma)$, let {$J_{\omega,n}(y) := \|(Df^n_\omega((f^n_\omega)^{-1}(y))^{-1}|_{T_y f^n_\omega(\gamma)}\|$.}  By the change of variables formula, for any measurable set $A$, we have
\[
(f^n_\omega)_*\nu_\gamma (A) = \displaystyle \int_{(f^n_\omega)^{-1}(A) \cap \gamma} \rho(x) dm_\gamma(x) = \int_{A\cap f^n_\omega(\gamma)} \rho((f^n_\omega)^{-1}(y)) J_{\omega,n}(y) dm_{f^n_\omega(\gamma)}(y).
\]
Hence, the density of $(f^n_\omega)_*\nu_\gamma$ with respect to $m_{f^n_\omega(\gamma)}$ is given by 
\[
\rho_n(y) = \rho((f^n_\omega)^{-1}(y))J_{\omega,n}(y).
\]  
For any $y_1,y_2\in f^n_\omega(\gamma)$, we have 
\begin{align}\label{eqn:adm.msr}
\begin{split}
|\log \rho_n(y_1) - \log \rho_n(y_2)| \leq& |\log \rho((f^n_\omega)^{-1}(y_1) - \log \rho((f^n_\omega)^{-1}(y_2))|\\
 & +|\log J_{\omega,n}(y_1) - \log J_{\omega,n}(y_2)|.
\end{split}
\end{align}
By (10) in Section \ref{subsect:other.no.}, the fact that $\gamma$ is $\widetilde{\cU}$-admissible and the fact that $\log\rho$ is $L$-Lipschitz, we have
\begin{align}\label{eqn:adm.msr.est1}
\begin{split}
|\log \rho((f^n_\omega)^{-1}(y_1) - \log \rho((f^n_\omega)^{-1}(y_2))|\leq& Ld_\gamma((f^n_\omega)^{-1}(y_1),(f^n_\omega)^{-1}(y_2))\\
\leq& L\C{C''}{0}\lambda^{-n}_{u,-}d_{f^n_\omega\gamma}(y_1,y_2).
\end{split}
\end{align}
Before estimating the second term in \eqref{eqn:adm.msr}, we observe that for any $K>0$ and any $C^2$-curve $\gamma$ on $\TT^2$ satisfies $|\kappa(\cdot,\gamma)|\leq K$, the following holds:
\begin{enumerate}
\item  For any $p_1,p_2\in\gamma$, we have $d(T_{p_1}\gamma, T_{p_2}\gamma)\leq \sqrt{1+K^2}\cdot d_\gamma(p_1,p_2)$. (Here, we view $T_{p_1}\gamma$ and $T_{p_1}\gamma$ as points in $\PP T\TT^2$. See (6) in Section \ref{subsect:other.no.}.)
\item By similar computations in \eqref{eqn:curv.after.n}, \eqref{eqn:curv.after.n.1} and \eqref{eqn:curv.after.n.2}, for any $C^2$-map $F:\TT^2\to\TT^2$ with $\|F\|_{C^2}<C$, and for any $n>0$, there exists some constant $C_8=\CS{C}{8}(n,C,K)>0$ such that $\sum_{j=0}^{n-1}\sup_{t}|\kappa(t,F^j(\gamma)|\leq \C{C}{8}(n,C,K)$.
\end{enumerate}
To simplify notations, we let $y_j^{i} = f^{n-i}_\omega ((f^n_\omega)^{-1}(y_j))$, for $j=1,2$ and 
$$\CS{C'}{8}=C'_8(\widetilde{\cU}):=(2\C{C'}{0})^2\sqrt{1+\left(\max\{\C{K}{0}(\widetilde{\cU}),\C{C}{8}(\C{n}{2}(\widetilde{\cU}),2\C{C'}{0},\C{K}{0}(\widetilde{\cU}))\}\right)^2}.$$ By the above discussion (used in the third and the fourth inequalities below), Lemma \ref{lem.curvaturecontrol} (used in the fourth inequalities below) and (9) and (10) in Section \ref{subsect:other.no.} (used in the second and the fifth inequalities below), we have
\begin{align}\label{eqn:adm.msr.est2}
 &|\log J_{\omega,n}(y_1) - \log J_{\omega,n}(y_2)| \nonumber\\
 \leq & \sum_{i=0}^{n-1} |\log J_{\sigma^{n-i - 1}(\omega), 1}(y^i_1) - \log J_{\sigma^{n-i-1}(\omega), 1}(y_2^i)|\nonumber\\
 \leq & (2\C{C'}{0})^2\sum_{i=0}^{n-1}  d(T_{y^i_1}f^{n-i}_\omega(\gamma), T_{y^i_2}f^{n-i}_\omega(\gamma)) \nonumber \\
 \leq &(2\C{C'}{0})^2\sqrt{1+(\sup_t|\kappa(t,f^{n-i}_\omega(\gamma))|)^2}\cdot\sum_{i=0}^{n-1}d_{f^{n-i}_\omega(\gamma)}(y^i_1, y^i_2) \nonumber \\
  \leq &\C{C'}{8}\sum_{i=0}^{n-1}d_{f^{n-i}_\omega(\gamma)}(y^i_1, y^i_2) \nonumber \\
 \leq & \C{C'}{8}\sum_{i=0}^{n-1}\C{C''}{0} \lambda_{u,-}^{-i} d_{f^n_\omega(\gamma)}(y_1,y_2) \leq \frac{\C{C'}{8}\C{C''}{0}}{1-\lambda_{u,-}^{-1}}d_{f^n_\omega(\gamma)}(y_1,y_2).
\end{align}
Apply \eqref{eqn:adm.msr.est1} and \eqref{eqn:adm.msr.est2} to \eqref{eqn:adm.msr}, the lemma then follows from choosing
$\C{L}{1}:=\frac{2\C{C'}{8}\C{C''}{0}}{1-\lambda_{u,-}^{-1}}$ and $\C{n}{3}\geq\C{n}{2}(\widetilde{\cU})$ such that $\C{C''}{0}\lambda_{u,-}^{-\C{n}{3}}<1/2$.

\end{proof}
Let $\mathfrak{C}(\mathcal{U}',L')$ be the set of $L'$-good measures with respect to $\mathcal{U}'$ for each $L'>0$ and an open set $\mathcal{U}'$ containing $f$ and $g$.
%From now one let $L$ be the constant given by Lemma \ref{lem.goodmeasurespreserved} and let $\mathfrak{C}$ be the set of $L$-good measures.  
We could consider $\mathfrak{C}(\mathcal{U}',L')$ as a measurable subset of $M(\mathbb{T}^{2})$ where $M(\mathbb{T}^{2})$ is the set of all finite measures on $\mathbb{T}^{2}$. Here, we put $M(\mathbb{T}^{2})$ with weak Borel structure, that is, the smallest $\sigma$-algebra that makes the map $\delta\mapsto \delta(E)$ becomes measurable for all finite measure $\delta\in M(\mathbb{T}^{2})$ and for all Borel set $E\subset \mathbb{T}^{2}$, so that $M(\mathbb{T}^{2})$ becomes a standard Borel space.

\begin{dfn} \label{def-admissiblemeasure}
We say that a measure $\nu_{0}$ on $\mathbb{T}^{2}$ is \emph{$(\widetilde{\mathcal{U}},L)$-admissible} if there exists a measure $\widehat{\nu}_{0}$ on $\mathfrak{C}(\widetilde{\mathcal{U}},L)$, such that $\displaystyle \nu_{0} = \int_{\mathfrak{C}(\widetilde{\mathcal{U}},L)} \widetilde{\nu}_{0} d\widehat{\nu}_{0}(\widetilde{\nu}_{0}).$
\end{dfn}
\begin{dfn} For each $\widetilde{\mathcal{U}}$ and $L'$, let  $\displaystyle \nu_{0} = \int_{\mathfrak{C}(\widetilde{\mathcal{U}},L')} \widetilde{\nu}_{0} d\widehat{\nu}_{0}(\widetilde{\nu}_{0})$ be a $(\widetilde{\mathcal{U}},L)$- admissible measure. We say that $\nu_{0}$ is \emph{supported on curves of length bounded from below by $r>0$} if for $\widehat{\nu}_{0}$-almost every $\widetilde{\nu}_{0}$, the measure $\widetilde{\nu}_{0}$ is supported on an admissible curve of length at least $r$. 
\end{dfn}

  The following corollary is a direct consequence of \Cref{lem.goodmeasurespreserved}.

\begin{cor}\label{cor.admissibletoadmissible}
{Let $\C{L}{1}$ and $\C{n}{3}$ be the same as in Lemma \ref{lem.goodmeasurespreserved}. For all sufficiently small open neighborhoods $\widetilde{\mathcal{U}}_{f}$ and $\widetilde{\mathcal{U}}_{g}$ with $\widetilde{\mathcal{U}}=\widetilde{\mathcal{U}}_{f}\cup \widetilde{\mathcal{U}}_{g}$, for any $L\geq \C{L}{1}(\widetilde{\cU})$, for any $(\widetilde{\mathcal{U}},L)$-admissible measure $\nu_0$, for any $\omega \in \widetilde{\mathcal{U}}^{\mathbb{N}}$ and for any $n\geq \C{n}{3}(\widetilde{\cU},L)$, the measure $(f^n_\omega)_* \nu_0$ is also $(\widetilde{\mathcal{U}},L)$-admissible.}
\end{cor}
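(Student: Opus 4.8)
The plan is to unwind Definition \ref{def-admissiblemeasure} and push the disintegrating measure forward fiber by fiber, the analytic content being entirely contained in Lemmas \ref{lem.curvaturecontrol} and \ref{lem.goodmeasurespreserved}. Fix $\widetilde{\mathcal{U}}_f$ and $\widetilde{\mathcal{U}}_g$ small enough that (9) of Section \ref{subsect:other.no.} holds and that Lemmas \ref{lem.curvaturecontrol} and \ref{lem.goodmeasurespreserved} apply to $\widetilde{\cU}=\widetilde{\cU}_f\cup\widetilde{\cU}_g$, and fix $L\geq\C{L}{1}(\widetilde{\cU})$, $n\geq\C{n}{3}(\widetilde{\cU},L)$ and $\omega\in\widetilde{\cU}^{\mathbb{N}}$. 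Let $\nu_0$ be $(\widetilde{\cU},L)$-admissible and write $\nu_0=\int_{\mathfrak{C}(\widetilde{\cU},L)}\widetilde{\nu}\,d\widehat{\nu}_0(\widetilde{\nu})$ as in Definition \ref{def-admissiblemeasure}. Since $f^n_\omega$ is a $C^2$ diffeomorphism of $\TT^2$, for every Borel $E\subset\TT^2$ one has
\[
\big((f^n_\omega)_*\nu_0\big)(E)=\nu_0\big((f^n_\omega)^{-1}E\big)=\int_{\mathfrak{C}(\widetilde{\cU},L)}\big((f^n_\omega)_*\widetilde{\nu}\big)(E)\,d\widehat{\nu}_0(\widetilde{\nu}),
\]
that is, $(f^n_\omega)_*\nu_0=\int_{\mathfrak{C}(\widetilde{\cU},L)}(f^n_\omega)_*\widetilde{\nu}\,d\widehat{\nu}_0(\widetilde{\nu})$.

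Next I would verify that push-forward by $f^n_\omega$ preserves the fiber class $\mathfrak{C}(\widetilde{\cU},L)$. If $\widetilde{\nu}\in\mathfrak{C}(\widetilde{\cU},L)$, then by definition it is an $L$-good measure carried by a $\widetilde{\cU}$-admissible curve $\gamma$; since $n\geq\C{n}{3}\geq\C{n}{2}$, Lemma \ref{lem.curvaturecontrol} shows that $f^n_\omega(\gamma)$ is again $\widetilde{\cU}$-admissible, and Lemma \ref{lem.goodmeasurespreserved} shows that $(f^n_\omega)_*\widetilde{\nu}$ is $L$-good on $f^n_\omega(\gamma)$. Hence the map $T\colon\mathfrak{C}(\widetilde{\cU},L)\to\mathfrak{C}(\widetilde{\cU},L)$ given by $T(\widetilde{\nu})=(f^n_\omega)_*\widetilde{\nu}$ is well defined.

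The one bookkeeping point that needs a line of justification is that $T$ is measurable for the weak Borel structure on $M(\TT^2)$ recalled just before Definition \ref{def-admissiblemeasure}: for any Borel $E\subset\TT^2$ the assignment $\delta\mapsto\big((f^n_\omega)_*\delta\big)(E)=\delta\big((f^n_\omega)^{-1}E\big)$ is one of the evaluation maps generating that $\sigma$-algebra, so $\delta\mapsto(f^n_\omega)_*\delta$ is Borel on $M(\TT^2)$, and restricting it to the measurable subset $\mathfrak{C}(\widetilde{\cU},L)$ keeps it measurable. I may therefore set $\widehat{\nu}_0':=T_*\widehat{\nu}_0$, a probability measure on $\mathfrak{C}(\widetilde{\cU},L)$, and the change of variables for push-forwards of measures, combined with the displayed identity above, gives $(f^n_\omega)_*\nu_0=\int_{\mathfrak{C}(\widetilde{\cU},L)}\widetilde{\nu}\,d\widehat{\nu}_0'(\widetilde{\nu})$. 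By Definition \ref{def-admissiblemeasure} this is precisely the assertion that $(f^n_\omega)_*\nu_0$ is $(\widetilde{\cU},L)$-admissible.

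I do not expect any genuine obstacle here: everything substantive has already been carried out in Lemmas \ref{lem.curvaturecontrol} and \ref{lem.goodmeasurespreserved}, and the only care required is to take $\widetilde{\cU}$ small enough that those lemmas apply and to keep the constants $\C{L}{1}$, $\C{n}{2}$, $\C{n}{3}$ equal to the ones they produce, so that the choice of neighborhoods is not circular. The same argument also transports the auxiliary property of being supported on admissible curves of length bounded below by $r>0$: since $f^n_\omega$ expands $\widetilde{\cU}$-admissible curves, each $f^n_\omega(\gamma)$ has length at least that of $\gamma$, so the lower bound on lengths is preserved (in fact improved).
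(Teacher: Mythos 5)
Your proposal is correct and follows the route the paper intends: the paper dismisses this as ``a direct consequence of Lemma~\ref{lem.goodmeasurespreserved},'' and your proof supplies exactly the missing bookkeeping --- pushing the disintegration forward fiber by fiber, invoking Lemmas~\ref{lem.curvaturecontrol} and~\ref{lem.goodmeasurespreserved} (with $n\geq \C{n}{3}\geq \C{n}{2}$) to see that $T\colon\widetilde{\nu}\mapsto(f^n_\omega)_*\widetilde{\nu}$ maps $\mathfrak{C}(\widetilde{\cU},L)$ into itself, and then taking $\widehat{\nu}_0'=T_*\widehat{\nu}_0$. The measurability observation for $T$ (that $\delta\mapsto\delta\big((f^n_\omega)^{-1}E\big)$ is a generator of the weak Borel $\sigma$-algebra) is a correct and worthwhile detail that the paper leaves implicit, and your closing remark about the length lower bound being preserved is also right, since the proof of Lemma~\ref{lem.goodmeasurespreserved} picks $\C{n}{3}$ so that $\C{C''}{0}\lambda_{u,-}^{-\C{n}{3}}<1/2$, hence $f^n_\omega$ strictly expands admissible curves for $n\geq\C{n}{3}$.
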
 

\section{H\"older regularity of measures on the projective bundle}\label{sec:holder}

In order to say that we have enough transversality for unstable manifolds, we need Proposition \ref{prop.holderness} below. Roughly, it says that, in \Cref{subsect:setting}, unstable directions cannot be concentrated too much in one direction.

\begin{prop}\label{prop.holderness}
Fix $\beta \in ( 0, \frac{1}{2}]$, and let $f$ and $g$ be diffeomorphisms as in the statement of Theorem \ref{thm.mainthm}. Then, there exist $\eta{=\eta(\beta, \C{\theta}{0},\C{\theta}{\Delta}})\in (0,1)$, neighborhoods $\mathcal{U}_f$ and $\mathcal{U}_g$ of $f$ and $g$, respectively, and constants $\CS{C}{5} =\C{C}{5} (\beta, \C{\theta}{0},\C{\theta}{\Delta})$ and $\alpha = \alpha(\beta, {\C{\theta}{0},\C{\theta}{\Delta}})$, with the following property:

For any probability measure $\mu$ on $\Diff^2(\mathbb{T}^2)$ such that $\mu(\mathcal{U}_\star) \in [\beta, 1-\beta]$, $\star=f,g$, for any $\widehat{\nu} = \{\widehat{\nu}_x\}_{x\in \mathbb{T}^{2}}$ continuous family of probability measures {$\widehat{\nu}_{x}\in \Prob(\PP T_x\TT^2)$ supported in $\PP\cC^u_x$}, for any $n> 0$, for any $x\in \mathbb{T}^2$, for any $u\in {\mathbb{P}T_x\mathbb{T}^{2}}$ and for any $r\geq \eta^n$, we have %$\mathbb{G}_{\mathrm{d-1}}$, for any $n> 0$, $x\in \mathbb{T}^2$,  $u\in \mathbb{R}P^{1}$  and $r\geq \eta^n$, we have
\[
(\mu^{*n}*\widehat{\nu})_x( B_r(u)) \leq \C{C}{5} r^{\alpha}.
\]

Here $B_{r}(u)$ is the open ball of radius $r$ centered at $u$ in ${\mathbb{P}T_x\mathbb{T}^{2}}$.
\end{prop}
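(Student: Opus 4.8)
The plan is to prove the estimate by a self-improving induction over blocks of maps of a fixed length $N_0$, in which condition \textbf{(C3)} is used exactly once: to arrange that a block of $N_0$ maps near $f$ and a block of $N_0$ maps near $g$ collapse the cone field $\PP\cC^u$ into two regions which are separated \emph{uniformly in the basepoint}. (As in Theorem~\ref{thm.mainthm}, I take $\mu$ supported on $\cU_f\cup\cU_g$.) First I would use the cone-contraction estimate \eqref{eqn:cone.contraction.u} to fix $N_0=N_0(\C{\theta}{0},\C{\theta}{\Delta})\in\NN$ so large that $\C{C}{4}(\lambda_{s,+}/\lambda_{u,-})^{N_0}<\C{\theta}{\Delta}/4$. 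Since $E^u_f$ (resp.\ $E^u_g$) is tangent to $\cC^u$ and $Df$-invariant (resp.\ $Dg$-invariant), for the unperturbed maps, any $z$, and $x=f^{N_0}(z)$, the interval $D_zf^{N_0}(\PP\cC^u_z)\subset\PP T_x\TT^2$ has length $<\C{\theta}{\Delta}/4$ and contains $E^u_f(x)$. Taking $\cU_f,\cU_g$ disjoint, satisfying (9) of Section~\ref{subsect:other.no.}, and $C^1$-small enough, this persists under perturbation: for every word $\Omega\in\cU_f^{N_0}$ (also writing $\Omega$ for the composed diffeomorphism, $x=\Omega(z)$) one has $D_z\Omega(\PP\cC^u_z)\subset B_{3\C{\theta}{\Delta}/8}(E^u_f(x))$, and likewise for $\Omega\in\cU_g^{N_0}$ with $E^u_g$; by \textbf{(C3)} these two balls are disjoint, at distance $>\C{\theta}{\Delta}/4$. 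Shrinking $\cU_f,\cU_g$ further, \eqref{eqn:quant.avp}, \eqref{eqn:Lip.est.Dfn.u} and near-preservation of $m$ give a constant $\Lambda_1\in(0,1)$ with the property that the derivative of the projective map $D_z\Omega$ along $\PP\cC^u_z$ is $\ge\Lambda_1$ for all $\Omega\in\cU^{N_0}$ and $z$; equivalently $(D_z\Omega)^{-1}$ expands $D_z\Omega(\PP\cC^u_z)$ by at most $\Lambda_1^{-1}$. Then set $\epsilon_0:=\C{\theta}{\Delta}/16$, $\beta_0:=\beta^{N_0}$, $\eta:=\Lambda_1^{1/N_0}$, and $\alpha:=\log(1/(1-\beta_0))/\log(1/\Lambda_1)$, shrinking $\Lambda_1$ (or enlarging $N_0$) if necessary so that $\alpha\in(0,1)$, so that $\Lambda_1=(1-\beta_0)^{1/\alpha}$.

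For a Borel family $\nu^\bullet=\{\nu^\bullet_x\}$ with $\nu^\bullet_x\in\Prob(\PP T_x\TT^2)$ supported in $\PP\cC^u_x$, put $\Psi_n(r):=\sup(\mu^{*n}*\nu^\bullet)_x(B_r(u))$, the supremum over all $x$, all $u\in\PP T_x\TT^2$, and all such families; note $\Psi_n\le 1$, and that by \textbf{(C2)}/(10) each $\mu^{*m}*\nu^\bullet$ is again such a family. The crux is the inequality
\[
\Psi_n(r)\ \le\ (1-\beta_0)\,\Psi_{n-N_0}(\Lambda_1^{-1}r)\qquad(n\ge N_0,\ 0<r<\epsilon_0),
\]
which I would prove by peeling off the outermost block:
\[
(\mu^{*n}*\nu^\bullet)_x(B_r(u))=\int_{\cU^{N_0}}(\mu^{*(n-N_0)}*\nu^\bullet)_{z_\Omega}\big((D_{z_\Omega}\Omega)^{-1}B_r(u)\big)\,d\mu^{N_0}(\Omega),\qquad z_\Omega=\Omega^{-1}(x).
\]
Since $B_{3\C{\theta}{\Delta}/8}(E^u_f(x))$ and $B_{3\C{\theta}{\Delta}/8}(E^u_g(x))$ are more than $\C{\theta}{\Delta}/4>2r$ apart, $B_r(u)$ meets at most one of them; say it misses the one around $E^u_g(x)$. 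Then for every $g$-type word $\Omega$ the integrand vanishes, because $(\mu^{*(n-N_0)}*\nu^\bullet)_{z_\Omega}$ is supported on $\PP\cC^u_{z_\Omega}$ and $D_{z_\Omega}\Omega(\PP\cC^u_{z_\Omega})\subset B_{3\C{\theta}{\Delta}/8}(E^u_g(x))$ is disjoint from $B_r(u)$; while for every other $\Omega$ the set $(D_{z_\Omega}\Omega)^{-1}\big(B_r(u)\cap D_{z_\Omega}\Omega(\PP\cC^u_{z_\Omega})\big)$ has diameter $\le 2\Lambda_1^{-1}r$, hence lies in some ball of radius $\Lambda_1^{-1}r$, so the integrand is $\le\Psi_{n-N_0}(\Lambda_1^{-1}r)$. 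Integrating and using $\mu^{N_0}(\{\Omega\text{ not }g\text{-type}\})=1-\mu(\cU_g)^{N_0}\le 1-\beta_0$ gives the inequality.

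Iterating this $j$ times — valid as long as $jN_0\le n$ and the radii $\Lambda_1^{-i}r$, $i<j$, remain below $\epsilon_0$ — and bounding the last factor by $\Psi_m\le 1$ yields $\Psi_n(r)\le(1-\beta_0)^{j_{\max}}$ with $j_{\max}=\min\big(\lfloor n/N_0\rfloor,\ \lfloor\log(\epsilon_0/r)/\log(1/\Lambda_1)\rfloor\big)$. If the second term is the smaller, then $j_{\max}\ge\log(\epsilon_0/r)/\log(1/\Lambda_1)-1$, so $(1-\beta_0)^{j_{\max}}\le(1-\beta_0)^{-1}(r/\epsilon_0)^\alpha$ by the relation $\Lambda_1=(1-\beta_0)^{1/\alpha}$; if the first term is the smaller, then $j_{\max}\ge n/N_0-1$, and the hypothesis $r\ge\eta^n=\Lambda_1^{n/N_0}$ gives $(1-\beta_0)^{j_{\max}}\le(1-\beta_0)^{-1}(1-\beta_0)^{n/N_0}=(1-\beta_0)^{-1}(\Lambda_1^{n/N_0})^\alpha\le(1-\beta_0)^{-1}r^\alpha$. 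Either way $\Psi_n(r)\le\C{C}{5}r^\alpha$ with $\C{C}{5}:=(1-\beta_0)^{-1}\epsilon_0^{-\alpha}$; for $r\ge\epsilon_0$ this is immediate from $\Psi_n\le 1$, and for $1\le n<N_0$ the hypothesis forces $r>\eta^{N_0}=\Lambda_1>\Lambda_1\epsilon_0=\C{C}{5}^{-1/\alpha}$, so again $\Psi_n(r)\le 1\le\C{C}{5}r^\alpha$. Since $(\mu^{*n}*\widehat\nu)_x(B_r(u))\le\Psi_n(r)$ for the given continuous family $\widehat\nu$, this is the asserted bound.

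I expect the only genuine difficulty to lie in the first step: arranging $N_0$, $\cU_f$ and $\cU_g$ so that an $f$-block and a $g$-block push $\PP\cC^u$ into regions separated by $\gtrsim\C{\theta}{\Delta}$ uniformly in the basepoint. This is precisely where \textbf{(C3)} is used, and it is what forces the uniform mass loss $\beta_0$ at every block; if $E^u_f$ and $E^u_g$ agreed at some point the regions could overlap and the scheme would collapse. Everything after Step~1 is a bookkeeping argument with the constants.
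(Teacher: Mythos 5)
Your proposal is correct and follows the same route the paper takes, just carried out directly on the projective tangent bundle rather than through the abstract Lemma about Lipschitz skew-products (Proposition \ref{prop.gen holderness}): the block length $N_0$, the uniform separation of $f$-block and $g$-block cone images coming from \textbf{(C3)} via \eqref{eqn:cone.contraction.u}, the one-step inequality $\Psi_n(r)\le(1-\beta^{N_0})\Psi_{n-N_0}(\Lambda_1^{-1}r)$, and the final choice of $\eta$, $\alpha$, $\C{C}{5}$ are exactly the concrete instances of the paper's $m_0k_0$, $(k_0,\beta,\varphi)$-unstable separation condition, $K(\rho,j)\le(1-\beta^{m_0k_0})K(L^{m_0k_0}\rho,j-m_0k_0)$, and $\kappa$, $\gamma$, $C'$. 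The only stylistic difference is that you use the sharper inverse-Lipschitz bound $\Lambda_1^{-1}$ along the cone rather than the crude global inverse-Lipschitz constant $L$; this does not change the argument.
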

\Cref{prop.holderness} is a direct corollary of Proposition \ref{prop.gen holderness} below.  \Cref{prop.gen holderness} gives a quantitative Holder regularity of fiberwise measure for certain Lipschitz homeomorphisms which behave similarly to Anosov diffeomorphisms satisfying the cone condition.

Let $X$, $Y$ be compact metric spaces. We denote by $\cD_X(X\times Y)$ the collection of Lipschitz homeomorphisms $F:X\times Y\to X\times Y$ such that the following holds:
\begin{itemize}
\item There exist a Lipschitz homeomorphism $T_F:X\to X$ satisfying $\pr_X\circ F=T_F\circ \pr_X$, where $\pr_X:X\times Y\to X$ is the natural projection map.
\item For any $x\in X$, the map $F_x:Y\to Y$ defined as $F_x(y)=\pr_Y(F(x,y))$ is a Lipschitz homeomorphism.
\end{itemize}
One can easily check that for any $F,F'\in\cD_X(X\times Y)$, $F\circ F'\in\cD_X(X\times Y)$. Similar to the notations after Definition \ref{dfn.stationary}, we let $\Omega_{n}(X\times Y)=\cD_X(X\times Y)^n$ for any $n\in\ZZ_+$. For any $\omega\in\Omega_{n}(X\times Y)$ with $\omega=(F_1,\cdots,F_n)$, we write $F^j_\omega=F_j\circ\cdots\circ F_1$ for any $j\in\{1,\cdots,n\}$. We also write $T_{F^{n}_{\omega}}=T_{F_{n}}\circ\cdots \circ T_{F_{1}}$.

For any $\cF\subset \cD_X(X\times Y)$ and any $\omega=(F_1,\cdots,F_n)\in\Omega_{n}(X\times Y)$ (or any $\omega=(F_1,F_2,\cdots)\in\Omega^+(X\times Y)$), we say that $\omega$ is an \emph{$\cF$-word} if $F_1,\cdots, F_n\in\cF$ (or $F_1,F_2,\cdots \in\cF$). 
\begin{dfn}\label{dfn.assump for holder}
Let $\mu\in\Prob(\cD_X(X\times Y))$. We introduce the following properties for $\mu$.
\begin{enumerate}
\item (\textbf{$(C,\lambda)$-unstable cone condition}) We say that $\mu$ satisfies the \emph{unstable cone condition} if there exist an open subset $\cO\subset X\times Y$ such that for any $x\in X$, $\cO_x:=\cO\cap\{x\}\times Y$ is an non-empty open subset of $\{x\}\times Y$. Moreover, for any $F\in\supp(\mu)$,
$$F_x(\pr_Y(\cO_x))\subset \pr_Y(\cO_{T_F(x)})~\mathrm{and}~\Lip(F^n_x|_{\pr_Y(\cO_x)})\leq C\lambda^n,$$
for some constants $C>0$ and $\lambda\in(0,1)$ which are independent of the choice of $x$ and $F$. $\cO$ is called the \emph{unstable cone bundle} for $\mu$.
\item (\textbf{$(k_0,\beta,\varphi)$-unstable separation condition}) %Assume $\mu$ satisfies the \textbf{unstable cone condition} with unstable cone bundle $\cO$. 
We say that $\mu$ satisfies the \emph{$(k_{0},\beta,\varphi)$-unstable separation condition} if there exists some $k_0>0$, $\beta\in (0,1/2)$, $\varphi>0$ and two disjoint, $\mu$-measurable subsets $\cF_1,\cF_2\subset\cD_X(X\times Y)$, such that the following holds
\begin{itemize}
\item $\mu(\cF_1),\mu(\cF_2)\geq \beta$.
\item For any $x\in X$ and any $\cF_j$-word $\omega_j\in\Omega_{k_0}(X\times Y)$, $j=1,2$, we have
$$d_Y\left((F^{k_0}_{\omega_1})_{\left(T_{F^{k_0}_{\omega_1}}\right)^{-1}(x)}\left(\cO_{\left(T_{F^{k_0}_{\omega_1}}\right)^{-1}(x)}\right),(F^{k_0}_{\omega_2})_{\left(T_{F^{k_0}_{\omega_2}}\right)^{-1}(x)}\left(\cO_{\left(T_{F^{k_0}_{\omega_2}}\right)^{-1}(x)}\right)\right)>\varphi.$$ 
\end{itemize} 
\end{enumerate}
\end{dfn}
\begin{rmk}
One can easily check that if $\mu$ satisfies the \textbf{$(k_0,\beta,\varphi)$-unstable cone condition}, then for any integer $k\geq k_0$, $\mu$ also satisfies the \textbf{$(k,\beta,\varphi)$-unstable cone condition}.
\end{rmk}
For any $\nu\in\Prob(X\times Y)$, let $\{\nu_x\}_{x\in X}$ be the conditional measures with respect to the measurable partition $\{\{x\}\times Y\}_{x\in X}$ of $X\times Y$. By identifying $\{x\}\times Y$ with $Y$ via the natural map $(x,y)\to y$, we can view $\nu_x$ as probability measures on $Y$.

\begin{prop}\label{prop.gen holderness}
Let $\mu\in\mathrm{Prob}(\cD_X(X\times Y))$ be a probability measure satisfying the \textbf{$(C,\lambda)$-unstable cone condition} with unstable cone $\cO$, and the \textbf{$(k_0,\beta,\varphi)$-unstable separation condition}. We further assume that there exists a constant $L>1$ such that for any $x\in X$ and any $F\in\supp(\mu)$, we have
\begin{align}\label{eqn.inverseLip}
\mathrm{Lip}((F_x)^{-1})\leq L.
\end{align}
Then there exist some constants $C'>0$, $0<\kappa<1$ and $\gamma>0$ depending only on $C,\lambda,k_0,\beta,\varphi$ and $L$ such that for any $x\in M$, for any $\nu\in\Prob(X\times Y)$ supported on $\cO$, for any $n>0$, for any $y\in Y$ and for any $r>\kappa^n$, we have $\mu^{*n}*\nu$ is a probability measure supported on $\cO$ satisfying
$$(\mu^{*n}*\nu)_x(B_Y(y,r))\leq C'r^\gamma,$$
where $B_Y(y,r)$ denotes the open ball of radius $r$ centered at $y$ in $Y$.
\end{prop}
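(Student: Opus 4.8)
We outline a renormalization (multi‑scale) argument. Two consequences of the hypotheses drive it. First, the $(C,\lambda)$-unstable cone condition gives $F(\cO)\subseteq\cO$ for every $F\in\supp(\mu)$, so all the measures $\mu^{*j}*\nu$, $j\geq 0$, are again supported on $\cO$; moreover, for any word $\omega\in\Omega_m(X\times Y)$ and any $x_0\in X$ the fiber map $(F^m_\omega)_{x_0}$ is $(C\lambda^m)$-Lipschitz on $\pr_Y(\cO_{x_0})$, so each set $(F^m_\omega)_{x_0}(\pr_Y(\cO_{x_0}))$ has $Y$-diameter at most $C\lambda^m\diam(Y)$. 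Second, \eqref{eqn.inverseLip} gives that the fiberwise inverse $((F^{k_0}_\omega)_{x_0})^{-1}$ is $L^{k_0}$-Lipschitz, so the preimage of a radius-$r$ ball in a fiber is contained in a radius-$L^{k_0}r$ ball.

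For $N\in\ZZ_{\geq 0}$ put $\Phi_N(r):=\sup (\mu^{*N}*\nu)_x(B_Y(y,r))$, the supremum over $x\in X$, $y\in Y$, and $\nu\in\Prob(X\times Y)$ with $\nu(\cO)=1$; then $\Phi_N$ is nondecreasing in $r$, bounded by $1$, and $\Phi_{N+1}\leq\Phi_N$ (since $\mu*\nu$ is again supported on $\cO$). The crux is the recursive inequality
\begin{equation}\label{eqn:holder-recursion}
\Phi_{(j+1)k_0}(r)\ \leq\ (1-\beta^{k_0})\,\Phi_{jk_0}(L^{k_0}r)\qquad\text{for all }0<r<\varphi/2 .
\end{equation}
To prove \eqref{eqn:holder-recursion} one fixes $\nu$, sets $\nu'=\mu^{*jk_0}*\nu$ (supported on $\cO$), and disintegrates $\mu^{*k_0}*\nu'=\int_{\Omega_{k_0}(X\times Y)}(F^{k_0}_\omega)_*\nu'\,d\mu^{k_0}(\omega)$. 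Split $\Omega_{k_0}(X\times Y)$ into the all-$\cF_1$ words, the all-$\cF_2$ words (each family of $\mu^{k_0}$-mass at least $\beta^{k_0}$), and the rest. For a fixed $x$ and $x_\omega:=(T_{F^{k_0}_\omega})^{-1}(x)$, the contribution to the conditional over $x$ coming from an all-$\cF_j$ word $\omega$ is supported in $(F^{k_0}_\omega)_{x_\omega}(\pr_Y\cO_{x_\omega})\subseteq U_j(x)$, where $U_j(x)\subseteq Y$ is the union of these sets over all-$\cF_j$ words; the $(k_0,\beta,\varphi)$-unstable separation condition gives $d_Y(U_1(x),U_2(x))\geq\varphi$. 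Hence for $r<\varphi/2$ the ball $B_Y(y,r)$ is disjoint from $U_1(x)$ or from $U_2(x)$, so the all-$\cF_j$ words for at least one value of $j$ contribute nothing to $(\mu^{*k_0}*\nu')_x(B_Y(y,r))$ — this removes a fraction at least $\beta^{k_0}$ of the total fiber mass. The mass that does reach $B_Y(y,r)$ is estimated by pulling the ball back through the relevant $F^{k_0}_\omega$: using that the fiberwise inverse is $L^{k_0}$-Lipschitz and $\nu'(\cO)=1$, each contribution is at most $\Phi_{jk_0}(L^{k_0}r)$. Combining these two observations yields \eqref{eqn:holder-recursion}.

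Granting \eqref{eqn:holder-recursion}, fix $\kappa\in(1/L,1)$ and $\gamma:=|\log(1-\beta^{k_0})|/(k_0\log L)>0$. Given $n$ and $r>\kappa^n$, first reduce to the case $n=Nk_0$ using $\Phi_{Nk_0+s}\leq\Phi_{Nk_0}$; if $r\geq\varphi/2$ the bound $\Phi_n(r)\leq 1\leq C'r^\gamma$ is clear once $C'\geq(2/\varphi)^\gamma$. If $r<\varphi/2$, apply \eqref{eqn:holder-recursion} repeatedly, each step multiplying the scale by $L^{k_0}$ and the estimate by $(1-\beta^{k_0})$; one may do this $t$ times, where $t$ is the largest integer with both $L^{(t-1)k_0}r<\varphi/2$ and $t\leq N$. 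Since $\kappa L>1$, for $n$ large the scale constraint is the binding one, so $t\gtrsim \log(\varphi/2r)/(k_0\log L)$, whence $\Phi_n(r)\leq (1-\beta^{k_0})^t\,\Phi_{n-tk_0}(L^{tk_0}r)\leq (1-\beta^{k_0})^t\leq C'r^\gamma$ for a suitable $C'=C'(\beta,k_0,\varphi,L)$; the finitely many remaining pairs $(n,r)$ — those with $n$ small, where $r>\kappa^n$ already forces $r$ bounded below — are absorbed by enlarging $C'$. This produces $C',\kappa,\gamma$ depending only on $C,\lambda,k_0,\beta,\varphi,L$, and $\mu^{*n}*\nu$ is supported on $\cO$ by cone-invariance, as required.

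I expect the genuinely delicate point to be the disintegration step inside \eqref{eqn:holder-recursion}: when one conditions the mixture $\int(F^{k_0}_\omega)_*\nu'\,d\mu^{k_0}(\omega)$ onto a single fiber $\{x\}\times Y$, the $k_0$-words get reweighted according to how $T_{F^{k_0}_\omega}$ spreads the $X$-marginal of $\nu'$, and one must still keep a definite fraction of the fiber mass on the all-$\cF_1$ together with the all-$\cF_2$ words. This is where the contraction half of the cone condition is used (it plays no role in the crude version above): the $(C,\lambda)$-bound forces every $(F^{k_0}_\omega)_{x_0}(\pr_Y\cO_{x_0})$ to be a set of $Y$-diameter $\lesssim C\lambda^{k_0}\diam(Y)$, so at scales $r\gtrsim C\lambda^{k_0}$ the fiber measures no longer see the conditionals $\nu'_{x_0}$ and the estimate can be run with $\mu^{k_0}$ in place of the reweighted measure; that $x_\omega=(T_{F^{k_0}_\omega})^{-1}(x)$ is well defined for every $\omega$ uses that each $T_F$ is a homeomorphism. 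Making this bookkeeping precise — rather than the heuristic just sketched — is the main work; the multi-scale iteration afterwards is routine.
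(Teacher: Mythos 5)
Your proof follows the same multi-scale renormalization scheme as the paper's: fiber conditional measures, a one-step recursion driven by the separation condition together with the Lipschitz bound \eqref{eqn.inverseLip}, and then iteration to extract the H\"older exponent $\gamma=|\log(1-\beta^{k_0})|/(k_0\log L)$. The only cosmetic difference is that you iterate in blocks of $k_0$ with the cutoff $r<\varphi/2$, while the paper chooses $m_0$ so that $C\lambda^{m_0k_0}\diam(Y)<\varphi/4$ and iterates in blocks of $m_0k_0$ with cutoff $\rho<\varphi/4$; since the separation hypothesis propagates from $k_0$ to all $k\geq k_0$, both versions give the same exponent up to the irrelevant factor $m_0$, and your indexing is actually cleaner.

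The one point worth flagging concerns the step you yourself single out as ``genuinely delicate.'' The identity you use to condition a mixture onto a fiber,
\[
(\mu*\nu)_x=\int_{\cD_X(X\times Y)}\left(F_{(T_F)^{-1}(x)}\right)_*\nu_{(T_F)^{-1}(x)}\,d\mu(F),
\]
is exactly the paper's \eqref{eqn.convolutioncond}, and as an identity of disintegration-theoretic conditionals it is not true for arbitrary $\nu\in\Prob(X\times Y)$: if the $X$-marginals $(T_F)_*(\pr_X)_*\nu$ differ across $F$, the conditional of $\mu*\nu$ is the $\mu$-average of $(F_{(T_F)^{-1}(x)})_*\nu_{(T_F)^{-1}(x)}$ reweighted by Radon--Nikodym densities in $x$, and the recursion must control where those densities concentrate. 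Your proposed remedy — that at scales $r\gtrsim C\lambda^{k_0}$ ``the fiber measures no longer see the conditionals'' — does not engage with the reweighting at all (the small diameter of $(F^{k_0}_\omega)_{x_\omega}(\pr_Y\cO_{x_\omega})$ controls where the pushforward lands, not how much of the conditional mass is attached to each word). What actually closes this is simply to read $(\mu^{*n}*\nu)_x$ not as the disintegration of the measure $\mu^{*n}*\nu$ on $X\times Y$, but as the fiberwise Markov operator $\int(F^n_\omega)_{x_\omega*}\nu_{x_\omega}\,d\mu^n(\omega)$, for which \eqref{eqn.convolutioncond} is a tautology and your recursion \eqref{eqn:holder-recursion} is correct as written; this is also the interpretation the application (Proposition \ref{prop.holderness}, where $\widehat\nu$ is a continuous family $\{\widehat\nu_x\}$ rather than a measure on $X\times Y$) actually requires.
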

\begin{proof}
Since
\begin{align}\label{eqn.convolutioncond}
(\mu*\nu)_x=\int_{\cD_X(X\times Y)}\left(F_{(T_F)^{-1}(x)}\right)_*\nu_{(T_F)^{-1}(x)}d\mu(F).
\end{align}
The fact that $\mu^{* n}*\nu$ is supported on $\cO$ follows directly from the \textbf{unstable cone condition} and the assumption on $\nu$. By \eqref{eqn.inverseLip}, for any $x\in X$, for any $y\in Y$, for any $F\in\supp(\mu)$ and for any $\rho>0$, we have 
\begin{align}\label{eqn.radballpreimage}
F_x^{-1}(B_Y(y,\rho))\subset B_Y(F_x^{-1}(y),L\rho).
\end{align}
Let 
\begin{align}\label{eqn.supballvolume}
K(\rho,j):=\sup_{\substack{x\in X\\F\in\supp(\mu)}}(\mu^{*j}*\nu)_x(B_Y(y,\rho)),~\forall j\in\ZZ_+.
\end{align}
\iffalse
Then by \eqref{eqn.radballpreimage} and the fact that 
\begin{align}\label{eqn.convolutioncond}
(\mu*\nu)_x=\int_{\cD_X(X\times Y)}\left(F_{(T_F)^{-1}(x)}\right)_*\nu_{(T_F)^{-1}(x)}d\mu(F),
\end{align}
we have 
\begin{align}\label{eqn.gen holderness-0}
K(\rho,j)\leq K(L\rho, j-1),~\forall j\in\ZZ_+.
\end{align}
\fi
Choose a large positive integer $m_0=m_0(C,k_0,\lambda,\varphi)\geq 1$ such that $C\lambda^{m_0k_0}\cdot\mathrm{diam}(Y)<\varphi/4$. Then by the remark after Definition \ref{dfn.assump for holder}, the \textbf{$(C,\lambda)$-unstable cone condition} and the \textbf{$(k_0,\beta,\varphi)$-unstable separation condition} for $\mu$, there exist two disjoint, $\mu$-measurable subsets $\cF_1,\cF_2\subset\supp(\mu)$ with $\mu(\cF_1)\geq \beta$ and $\mu(\cF_2)\geq \beta$, such that for any $x\in X$ and any $y\in Y$, there exists some $j\in\{1,2\}$ such that for any $\cF_j$-word $\omega_j\in\Sigma_{m_0k_0}(X\times Y)$, we have
$$\left(\left(F^{m_0k_0}_{\omega_j}\right)_{\left(T_{F^{m_0k_0}_{\omega_j}}\right)^{-1}(x)}\left(\cO_{\left(T_{F^{k_0}_{\omega_j}}\right)^{-1}(x)}\right)\right)\cap B_Y(y,\varphi/4)=\emptyset.$$
As a corollary of \eqref{eqn.convolutioncond}, \eqref{eqn.radballpreimage} and the above, we have 
\begin{align}\label{eqn.gen holderness-1}
K(\rho,j)\leq \left(1-\beta^{m_0k_0}\right)K(L^{m_0k_0}\rho,j-m_0k_0),~\forall j\geq m_0k_0~\mathrm{and}~\forall \rho<\frac{\varphi}{4}.
\end{align}
Choose $\kappa=L^{-1}$. For any $r>\kappa^n$, we let
$$q(n,r):=\max\left\{\min\left\{\left[\frac{\log(\varphi/4r)}{k_0m_0\log(L)}\right],\left[\frac{n}{k_0m_0}\right]\right\},0\right\}.$$
Then for any $r\in(\kappa^n,1]$, we have
\begin{align}\label{eqn.gen holderness-2}
\left|q(n,r)-\frac{\log(1/r)}{k_0m_0\log(L)}\right|\leq1+\left|\frac{\log(\varphi/4)}{k_0m_0\log(L)}\right|.
\end{align}
We further choose
$$\gamma=-\frac{\log(1-\beta^{k_0m_0})}{k_0m_0\log(L)}>0$$
and 
$$C'=(1-\beta^{k_0m_0})^{-1-\left|\frac{\log(\varphi/4)}{k_0m_0\log(L)}\right|}\geq 1.$$ 
Since $K(\rho,j)\leq 1$ for any $\rho>0$ and any $j\in\ZZ_{\geq 0}$, $K(r,n)\leq C'r^\gamma$ obviously holds when $r>1$. When $r\in(\kappa^n,1]$, \eqref{eqn.gen holderness-1} and \eqref{eqn.gen holderness-2} imply that
\begin{align*}
K(r,n)\leq &(1-\beta^{k_0m_0})^{q(n,r)}K(L^{q(n,r)k_0m_0}r,n-q(n,r)k_0m_0) \\
\leq& (1-\beta^{k_0m_0})^{q(n,r)}\\
\leq& C'(1-\beta^{k_0m_0})^{\gamma\cdot\frac{\log(r)}{\log(1-\beta^{k_0m_0})}}=C'r^\gamma.
\end{align*}
Following the definition in \eqref{eqn.supballvolume}, the proof is complete.
\end{proof}
{\begin{proof}[Proof of Proposition \ref{prop.holderness}]
Let $X=\TT^2$ and $Y=\PP\RR^2$. We can naturally identify $X\times Y$ with $\PP T\TT^2$. 

Consider maps of the form $D\widetilde{f}:\PP T\TT^2\to \PP T\TT^2$ with $\widetilde{f}\in\cU_f\cup\cU_g$. Choose $\cO=\bigcup_{x\in\TT^2}\cC^u_x$. By \eqref{eqn:cone.contraction.u}, such maps satisfy the \textbf{$(C,\lambda)$-unstable cone condition} with $C=\C{C}{4}$ and $\lambda={\lambda_{s,+}/\lambda_{u,-}}$. (See Section \ref{subsect:setting}.) Let $\iota:\mathrm{Diff}^2(\TT^2)\to\cD_X(X\times Y)$ such that $\iota(F)=DF$. Then $\iota_*\mu$ satisfies the \textbf{$(k_0,\beta,\theta)$-unstable separation condition} for some $k_0=k_0(C,\lambda,\C{\theta}{\Delta})$ and $\theta=\C{\theta}{\Delta}/2$. ($\beta$ is given in the statement of Proposition \ref{prop.holderness}. $k_0$ is an integer such that $2\pi C\lambda^{k_0}<\C{\theta}{\Delta}/5$. To verify the \textbf{$(k_0,\beta,\theta)$-unstable separation condition} for $\iota_*\mu$, we choose $\cF_1=\iota({\cU}_f)$ and $\cF_2=\iota({\cU}_g)$. The rest follows from (1) in Section \ref{subsect:setting}.) Proposition \ref{prop.holderness} then follows from Proposition \ref{prop.gen holderness} with $L=2\C{C'}{0}$.
\end{proof}}

\section{Absolute continuity of stationary SRB measures}\label{sec : abscont}
\subsection{Standing assumptions and notation II}\label{subsect:setting2}
We retain the setting {in Section \ref{subsect:other.no.} and in} \Cref{subsect:setting}. 

\begin{enumerate}
\item Fix $\beta \in (0,\frac{1}{2}]$. 
\item Let  $\alpha=\alpha(\beta,\C{\theta}{0},\C{\theta}{\Delta})$, $\eta = \eta(\beta,\C{\theta}{0},\C{\theta}{\Delta})\in (0,1)$ and $\C{C}{5} = \C{C}{5}(\beta,\C{\theta}{0},\C{\theta}{\Delta})$ be the same as in Proposition \ref{prop.holderness}. 
\item Fix a positive constant $\varepsilon$ such that
$$0<\varepsilon<\min\left\{1,\frac{1+\lambda_{u,-}}{2},\frac{-\alpha \log \eta }{8}, \frac{-\alpha \theta\log (\lambda_s)}{10},\frac{-\alpha\log(\lambda_{s,+}/\lambda_{u,-})}{10}\right\},$$  
where  $\theta$ is the same as in Proposition \ref{prop:UH}.
\item Take open neighborhoods $\mathcal{U}_f$ and $\mathcal{U}_g$ no larger than the open neighborhoods in \Cref{prop.holderness} so that Lemma \ref{lem.neardeterminant} and Lemma \ref{lem.boundeddistortiondet} hold for $\varepsilon$. Moreover, we assume that for any $\widetilde{f}\in\cU_f\cup\cU_g$, we have $e^{-\varepsilon}<{d(\widetilde{f}_*m)}/{dm}<e^{\varepsilon}$ and $e^{-\varepsilon}<{d(\widetilde{f}^{-1}_*m)}/{dm}<e^{\varepsilon}.$ In particular, \eqref{eqn:quant.avp} holds. Let $\mathcal{U}=\mathcal{U}_{f}\cup \mathcal{U}_{g}$. Denote $\Sigma=\mathcal{U}^{\mathbb{Z}}$ and  $\Sigma^+ = \mathcal{U}^{\mathbb{N}}$.  
\item Take $\mu$ a probability measure such that $\mu(\mathcal{U}) = 1$ and $\mu(\mathcal{U}_\star) \in [\beta, 1-\beta]$, for $\star = f,g$.  %Let $\nu$ be the unique $\mu$-stationary SRB measure.

\item {Fix $0<\CS{\rho}{0}=\rho_0(\cU)<\min\{\frac{1}{100},\frac{3}{4\C{K}{0}(\cU)},\frac{\sin(\C{\theta}{0}/2)}{10\C{C}{3}},\frac{1}{10\C{C'}{3}}\}$. To simplify certain proofs, we assume in addition that for any $p\in \TT^n$, there exist lines $E,F\in\RR^2$ such that for any $q\in B(p,\C{C}{3}\C{\rho}{0})$, after identifying $\RR^2$ with $T_q\TT^2$, we have $F\in\cC^u_q$ and $E\in\cC^s_q$. Moreover, for any $F'\in\cC^u_q$ and for any $E'\in\cC^s_q$, we have $\max\{\sphericalangle(F,F'),\sphericalangle(E,E')\}<(\pi-\C{\theta}{0})/2$. } 
\end{enumerate}

\subsection{A key estimate for admissible measures}

Let $\nu_{0}$ be an admissible measure (see Definition \ref{def-admissiblemeasure}), and let $\widehat{\nu}_{0}$ be the measure defining $\nu_{0}$, that is,
\[
\nu_{0} = \int_{\mathfrak{C}} \widetilde{\nu}_{0} d\widehat{\nu}_{0}(\widetilde{\nu}_{0}).
\]  
Recall that we say that $\nu_{0}$ is \emph{supported on curves of length bounded from below by $r>0$} if for $\widehat{\nu}_{0}$-almost every $\widetilde{\nu}_{0}$, the measure $\widetilde{\nu}_{0}$ is supported on an admissible curve of length at least $r$.

%
%From now on, we fix $\varepsilon>0$ small (possibly to be determined later), and small neighborhoods of $A_1$ and $A_2$ such that Lemma \ref{lem.neardeterminant} and Theorem \ref{thm.measurerigidity} hold.  Fix $f_1 \in \mathcal{U}_1$, $f_2 \in \mathcal{U}_2$ and $p\in (0,1)$ (a precise condition on $p$ will appear later), and let $\mu = p\delta_{f_1} + (1-p) \delta_{f_2}$. Let $\nu$ be the unique SRB $\mu$-stationary measure.  Let $n_0\in \mathbb{N}$ be as in Lemma \ref{lem.neardeterminant}.

%**********************new Lemma 7.1 starts******************************

For $n\in \mathbb{N}$, $x\in \mathbb{T}^2$ and $\omega \in \mathcal{U}^{\mathbb{Z}}$, we will use the following notation:
\begin{itemize}
\item $\displaystyle J^s_{\omega,n}(x) := \inf_{ E\in D(f^{n}_{\omega})^{-1}(x)\mathcal{C}^s_{x}}  \| Df^n_\omega((f^{n}_\omega)^{-1} (x))|_E\|$;
\item $\displaystyle  J^{u}_{\omega,n}(x):= \inf_{ F \in \mathcal{C}^u_{(f^{n}_\omega)^{-1}(x)}} \| Df^n_{\omega}((f^{n}_\omega)^{-1}(x))|_F\|$.
\end{itemize}
We also let $\CS{C}{9}=2\C{C'}{3}(\C{C''}{0})^{-1}$ for simplicity. Recall that $\lambda_s = \lambda_{s,-}$ and the constant $L_1 =\C{L}{1}(\cU)$ given by \Cref{lem.goodmeasurespreserved}. The main result in this subsection is given by the following lemma. 

 \begin{lem}\label{lemma.maininequality1}
%Fix $\varepsilon>0$ small,  and let $n_0\in \mathbb{N}$, $\mathcal{U}_f$ and $\mathcal{U}_g$ be as in Lemma \ref{lem.neardeterminant}.  
For any $\rho'\in (0,\C{\rho}{0}(\cU))$ and any $L\geq \C{L}{1}(\cU)$, {there exists $\CS{n}{4}=\C{n}{4}(\varepsilon,L)\geq\C{n}{1}(\varepsilon)$ such that for any $n\geq\C{n}{4}$, for any $\rho \in (0, \lambda_s^n \rho')$, for any }$({\mathcal{U}},L)$-admissible measure $\nu_{0}$ supported on curves of length bounded from below by $2\C{C}{3}\lambda_s^{-n}\rho$, and for any $\omega \in \Sigma^+$, {we have} 
\[
\|(f^n_\omega)_*\nu_{0}\|^2_{ \rho} \leq  e^{6\varepsilon n} \|\nu_{0}\|^2_{\C{C}{9}\lambda_{s,+}^{-n} \rho }.
\]

 \end{lem}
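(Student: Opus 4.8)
The plan is to reduce the estimate to a pointwise comparison of the pushed-forward mass of small balls, using the admissible structure of $\nu_0$ together with the bounded-distortion Lemma \ref{lem.boundeddistortiondet} and the $m$-quasi-invariance \eqref{eqn:quant.avp}. Write $\nu_0 = \int_{\mathfrak{C}} \widetilde\nu_0 \, d\widehat\nu_0(\widetilde\nu_0)$ with each $\widetilde\nu_0$ an $L$-good measure on an admissible curve of length $\geq 2\C{C}{3}\lambda_s^{-n}\rho$. Since $\|\cdot\|_\rho^2$ is a quadratic form coming from an inner product, and $(f^n_\omega)_*\nu_0 = \int (f^n_\omega)_*\widetilde\nu_0 \, d\widehat\nu_0$, I would first use Cauchy--Schwarz (Jensen) to bound $\|(f^n_\omega)_*\nu_0\|_\rho^2 \leq \int \|(f^n_\omega)_*\widetilde\nu_0\|_\rho^2 \, d\widehat\nu_0$ after suitably normalizing; likewise $\|\nu_0\|_{\C{C}{9}\lambda_{s,+}^{-n}\rho}^2$ dominates the corresponding integral from below, so it suffices to prove the inequality for a single $L$-good measure $\widetilde\nu_0$ on an admissible curve $\gamma$. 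This is the standard "it is enough to treat one curve" reduction.

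The core estimate is then the following: for a fixed $z\in\TT^2$, I want to compare $(f^n_\omega)_*\widetilde\nu_0(B(z,\rho))$ with the $\widetilde\nu_0$-mass of a ball of radius $\sim \C{C}{9}\lambda_{s,+}^{-n}\rho$ around the appropriate preimage. The key geometric fact is that $(f^n_\omega)^{-1}(B(z,\rho))$, intersected with $f^n_\omega(\gamma)$, pulls back under $f^n_\omega$ to a piece of $\gamma$ of controlled size: because $\gamma$ is tangent to the unstable cone and $Df^{-n}_\omega$ contracts the unstable cone at rate between $\lambda_{u,+}^{-n}$ and $\lambda_{u,-}^{-n}$ (hence, relative to the stable direction, the relevant ball radius gets multiplied by at most $\C{C}{9}\lambda_{s,+}^{-n}$ once we account for the fact that the ball meets $\gamma$ in a set of diameter comparable to its radius). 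The hypothesis $\rho < \lambda_s^n\rho'$ and $\rho' < \C{\rho}{0}$ ensures the relevant preimage balls are small enough that the cone estimates and the local triviality assumption (6) of Section \ref{subsect:setting2} apply, and that the curve $\gamma$ of length $\geq 2\C{C}{3}\lambda_s^{-n}\rho$ is long enough that $f^n_\omega$ of a full ball's worth of $\gamma$ sits inside $f^n_\omega(\gamma)$ without boundary effects. Using that $\widetilde\nu_0$ is $L$-good (so its density with respect to arclength varies by a bounded log-Lipschitz factor over a piece of diameter $\leq \C{C}{3}\lambda_s^{-n}\rho \leq \C{C}{3}\rho' < $ const), and using Lemma \ref{lem.boundeddistortiondet} to control the Jacobian of $f^n_\omega$ along the curve (comparing the stretch at nearby points to within $e^{2\varepsilon n}$), I get
\[
(f^n_\omega)_*\widetilde\nu_0(B(z,\rho)) \;\leq\; e^{C\varepsilon n}\, J \cdot \widetilde\nu_0\big(B(x_z, \C{C}{9}\lambda_{s,+}^{-n}\rho)\big),
\]
where $x_z$ is a preimage point and $J$ is a normalizing Jacobian factor; the exponent of $\varepsilon$ here will absorb the bounded-distortion loss (factor $2\varepsilon n$) plus the quasi-invariance loss of $m$ under $f^n_\omega$ (another $\varepsilon n$ per application of \eqref{eqn:quant.avp}, and it appears squared inside $\|\cdot\|^2$).

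Finally I would square, integrate $dm(z)$, and change variables $z \mapsto x_z$. The change of variables introduces a Jacobian of $f^n_\omega$ which, by \eqref{eqn:quant.avp} and (7) of Section \ref{subsect:other.no.}, costs at most $e^{2\varepsilon n}$ (from the measure $m$) times a factor $(\lambda_s^n/\lambda_{s,+}^n)^{\text{something}}$ coming from the mismatch between the radius $\rho$ used on the $z$-side and the radius $\C{C}{9}\lambda_{s,+}^{-n}\rho$ on the $x$-side — this is exactly the $\rho^{-4}$ normalization in the definition of $\langle\cdot,\cdot\rangle_\rho$, so changing the radius from $\rho$ to $\C{C}{9}\lambda_{s,+}^{-n}\rho$ multiplies by $(\C{C}{9}\lambda_{s,+}^{-n})^{-4}$, while the area-scaling of the balls along the curve direction contributes the compensating powers; the leftover discrepancy between $\lambda_{s,+}$ and $\lambda_{u,-}$ scalings is harmless since we only claim an $e^{6\varepsilon n}$ bound and $\varepsilon$ was chosen small relative to $\log(\lambda_{s,+}/\lambda_{u,-})$ in item (3) of Section \ref{subsect:setting2}. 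Collecting all the $e^{(\cdot)\varepsilon n}$ factors — bounded distortion ($2\varepsilon n$, or $4\varepsilon n$ after squaring), quasi-invariance of $m$ ($2\varepsilon n$ total), and the choice of $\C{n}{4}$ large enough to absorb all the $n$-independent constants ($\C{C}{3}$, $\C{C}{9}$, $L$-good constants, the $N_0$-overlap constants as in Lemma \ref{generalized large<small}) into a further $e^{C\varepsilon n}$ — gives the claimed $e^{6\varepsilon n}\|\nu_0\|^2_{\C{C}{9}\lambda_{s,+}^{-n}\rho}$. The main obstacle is the geometric bookkeeping in the middle step: carefully verifying that balls in $\TT^2$ pull back to pieces of $\gamma$ of the right size with the right overlap multiplicity, so that summing the squared masses over a cover does not lose more than a bounded factor — this is where the admissibility (bounded curvature) of $\gamma$ and the smallness of $\rho'$ are essential, and where one must be careful not to double-count.
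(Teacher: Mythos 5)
Your opening reduction to a single curve does not work, and there is no way to patch it along the lines you describe. You claim that Cauchy--Schwarz gives both
$\|(f^n_\omega)_*\nu_0\|_\rho^2 \leq \int \|(f^n_\omega)_*\widetilde\nu_0\|_\rho^2 \, d\widehat\nu_0$
and the reverse-direction fact that $\|\nu_0\|^2_{\C{C}{9}\lambda_{s,+}^{-n}\rho}$ dominates $\int \|\widetilde\nu_0\|^2_{\C{C}{9}\lambda_{s,+}^{-n}\rho}\, d\widehat\nu_0$ from below. The first is fine (after accounting for the mass of $\widehat\nu_0$), but the second is false and goes the wrong way: for a probability $\widehat\nu_0$, Jensen gives $\|\nu_0\|_\delta^2 = \|\int\widetilde\nu_0\, d\widehat\nu_0\|_\delta^2 \leq \int\|\widetilde\nu_0\|_\delta^2\, d\widehat\nu_0$, the opposite of what you need. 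Concretely, if $\widehat\nu_0$ is continuous and the curves are nearly disjoint, $\int\|\widetilde\nu_0\|_\delta^2\, d\widehat\nu_0$ scales linearly in the number of curves while $\|\nu_0\|_\delta^2$ need not; the cross-terms $\langle\widetilde\nu_0,\widetilde\nu_0'\rangle_\delta$ are essential to $\|\nu_0\|_\delta^2$ and your reduction discards them. So even a correct per-curve estimate $\|(f^n_\omega)_*\widetilde\nu_0\|^2_\rho \leq e^{6\varepsilon n}\|\widetilde\nu_0\|^2_{\C{C}{9}\lambda_{s,+}^{-n}\rho}$ would only give an upper bound by $\int\|\widetilde\nu_0\|^2_{\dots} d\widehat\nu_0$, which you cannot compare to $\|\nu_0\|^2_{\dots}$.

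The paper's proof avoids this trap by never squaring per curve. Instead it establishes, for each $z$ and each $\widetilde\nu_0$, the scalar pointwise estimate
$\widetilde\nu_0\big((f^n_\omega)^{-1}(B(z,\rho))\big) \leq \C{C'}{9}\,\dfrac{\widehat{J}^s_{\omega,n}(z)}{\widehat{J}^u_{\omega,n}(z)}\,\widetilde\nu_0\Big(B\big((f^n_\omega)^{-1}(z),\, 2\C{C'}{3}(\widehat{J}^s_{\omega,n}(z))^{-1}\rho\big)\Big)$,
obtained exactly from the ingredients you list: the $L$-good density control, the cone/angle bound from (6) of Section \ref{subsect:setting2}, and a lower bound on the $\widetilde\nu_0$-mass of a sub-segment of the long admissible curve (this is where the hypothesis that curves have length $\geq 2\C{C}{3}\lambda_s^{-n}\rho$ enters). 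Because this is linear in $\widetilde\nu_0$, integrating over $\widehat\nu_0$ gives the same inequality with $\widetilde\nu_0$ replaced by $\nu_0$; only then does one square, integrate $dm(z)$, change variables $z = f^n_\omega(p)$ (costing $e^{\varepsilon n}$ via \eqref{eqn:quant.avp}), use $\widehat{J}^s\widehat{J}^u \approx 1$ from Lemma \ref{lem.boundeddistortiondet} (another $e^{4\varepsilon n}$ after squaring), and apply Lemma \ref{generalized large<small} to pass from the variable radius $\delta(p) = 2\C{C'}{3}(\widehat{J}^s_{\omega,n}(f^n_\omega(p)))^{-1}\rho$ to the constant radius $\C{C}{9}\lambda_{s,+}^{-n}\rho$ (absorbed into $e^{\varepsilon n}$ for $n\geq\C{n}{4}$). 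Your geometric picture of the middle step, the change of variables, and the use of Lemma \ref{generalized large<small} are all on the right track; the missing idea is that the admissible decomposition must be exploited at the level of the pointwise mass estimate, not at the level of the $\rho$-norm.
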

\begin{proof}

Take $\rho' \in (0,{\C{\rho}{0}(\cU)})$ {and $\omega\in\Sigma^+$}.  For each $z\in \mathbb{T}^2$, write
$$
\displaystyle \widehat{J}^{u}_{\omega,n}(z):= \inf_{x\in B(z,\lambda_s^n \rho')} J^{u}_{\omega,n}(x) \textrm{ and } \displaystyle \widehat{J}^s_{\omega,n}(z) : =\inf_{x\in B(z,\lambda_s^n \rho')} J^s_{\omega,n}(x).
$$
By Lemma \ref{lem.boundeddistortiondet}, for any $n\geq \C{n}{1}(\varepsilon)$, we have
\begin{align}\label{eqn:maininequality1.1}
e^{-2\varepsilon n} \leq \widehat{J}^s_{\omega,n}(z) \widehat{J}^{u}_{\omega,n}(z)\leq e^{2\varepsilon n}.
\end{align}

Write $\nu_{0} = \int_{\mathfrak{C}} \widetilde{\nu}_{0} d\widehat{\nu}_{0}(\widetilde{\nu}_{0})$. Observe that $(f^n_\omega)_*\nu_{0} = \int_{{\FFC}} (f^{n}_\omega)_* \widetilde{\nu}_{0} d\widehat{\nu}_{0}(\widetilde{\nu}_{0})$. Therefore,
\[
\|(f^n_{\omega})_*\nu_{0}\|^2_{\rho}  = \frac{1}{\rho^{4}} \int_{\mathbb{T}^2} \left(\int_{{\FFC}} \widetilde{\nu}_{0}((f^{n}_{\omega})^{-1}(B(z, \rho)))\ d\widehat{\nu}_{0}(\widetilde{\nu}_{0})\right)^2 dm(z).
\]
{By the assumptions on $\nu_0$, for $\widehat{\nu}_{0}$-almost every $\widetilde{\nu}_{0}$, $\widetilde{\nu}_{0}$ is an $L$-good measure supported on an admissible curve $\gamma_{\widetilde{\nu}_0}$ with length bounded from below by $2 \C{C}{3}\lambda_s^{-n} \rho$. (See Definition \ref{dfn:L-good} and Definition \ref{def-admissiblemeasure}.) For such a $\widetilde{\nu}_{0}$,} let us estimate $\widetilde{\nu}_{0}((f^{n}_{\omega})^{-1}(B(z,\rho)))$. {Let $m_{\widetilde{\nu}_{0}}$ be the arclength measure on $\gamma_{\widetilde{\nu}_0}$. Let $\zeta_{\widetilde{\nu}_{0}}=d\widetilde{\nu}_{0}/dm_{\widetilde{\nu}_{0}}$. Then $\log(\zeta_{\widetilde{\nu}_{0}})$ is $L$-Lipschitz. (See Definition \ref{dfn:L-good}.)} 

Let $I$ be a connected component of $\gamma_{\widetilde{\nu}_{0}} \cap (f^{n}_\omega)^{-1}(B(z,\rho))$. {Since $f^n_\omega\gamma_{\widetilde{\nu}_{0}}$ is everywhere tangent to the unstable cone field, by (6) in Section \ref{subsect:setting2}, the length of $f^n_\omega(I)$ is bounded from above by $2\rho/\sin(\C{\theta}{0}/2)$}. Hence, by \eqref{eqn:Lip.est.Dfn.s} and the assumptions on $\rho'$ and $\C{\rho}{0}$, we have $m_{\gamma_{\widetilde{\nu}_0}}(I)\leq 2\rho\cdot\C{C}{3}\lambda_s^{-n}/\sin(\C{\theta}{0}/2)\leq 1$. Fix $x\in I$. By the fact that $\log\zeta_{\widetilde{\nu}_{0}}$ is $L$-Lipschitz, for any $y\in I$, we have $\zeta_{\widetilde{\nu}_{0}}(y)\leq e^{Ld_{\gamma_{\widetilde{\nu}_{0}}}(x,y)}\zeta_{\gamma{\widetilde{\nu}_{0}}}(x)\leq e^{Lm_{\widetilde{\nu}_{0}}(I)}\zeta_{\widetilde{\nu}_{0}}(x)\leq e^L\zeta_{\widetilde{\nu}_{0}}(x)$. Since $\gamma_{\widetilde{\nu}_0}$ is everywhere tangent to the unstable cone field, we have
\[
\widetilde{\nu}_{0}(I) \leq e^L\zeta_{\widetilde{\nu}_{0}}(x)m_{\gamma_{\widetilde{\nu}_0}}(I)\leq \frac{2e^L}{\sin(\C{\theta}{0}/2) }\frac{\zeta_{\widetilde{\nu}_{0}}(x) \rho}{\widehat{J}^{u}_{\omega,n}(z)}.
\]
On the other hand, by \eqref{eqn:Lip.est.Dfn.s}, we have
\[
(f^{n}_{\omega})^{-1}(B(z,\rho)) \subset B\left((f^{n}_{\omega})^{-1}(z),  \C{C'}{3}(\widehat{J}^s_{\omega,n}(z))^{-1} \rho)\right)\subset B\left((f^{n}_{\omega})^{-1}(z), 2 \C{C'}{3}(\widehat{J}^s_{\omega,n}(z))^{-1} \rho\right).
\]
Let $J$ be the connected component of $\gamma_{\widetilde{\nu}_{0}} \cap B\left((f^{n}_{\omega})^{-1}(z), 2 \C{C'}{3}(\widehat{J}^s_{\omega,n}(z))^{-1} \rho\right)$ containing $I$. Observe that there is only one such component. The length of $\gamma_{\widetilde{\nu}_{0}}$ is bounded from below by $2\C{C}{3}\lambda_s^{-n} \rho$, which is greater than $2\C{C'}{3} (\widehat{J}^s_{\omega,n}(z))^{-1} \rho$ (see \eqref{eqn:Lip.est.Dfn.s}). Since $J$ contains $I$, $J$ intersects $(f^{n}_\omega)^{-1}(B(z,\rho))$ and the length of $J$ is bounded from below by  $\C{C'}{3}(\widehat{J}^s_{\omega,n}(z))^{-1}\rho$. {Choose a sub-segment $J_x$ of $J$ containing $x$ such that the length of $J_x$ is $\C{C'}{3}(\widehat{J}^s_{\omega,n}(z))^{-1}\rho$. Notice that $\C{C'}{3} (\widehat{J}^s_{\omega,n}(z))^{-1}\rho\leq \C{C}{3}\lambda_s^{-n}\rho\leq \C{C}{3}\C{\rho}{0}<1$ due to \eqref{eqn:Lip.est.Dfn.s} and the assumptions on $\rho$ and $\C{\rho}{0}$. By the fact that $\log\zeta_{\widetilde{\nu}_0}$ is $L$-Lipschitz, for any $y\in J_x$, we have $\zeta_{\widetilde{\nu}_{0}}(y)\geq e^{-Ld_{\gamma_{\widetilde{\nu}_{0}}}(x,y)}\zeta_{\widetilde{\nu}_{0}}(x)\geq e^{-Lm_{\gamma_{\widetilde{\nu}_{0}}}(J_x)}\zeta_{\widetilde{\nu}_{0}}(x)\geq e^{-L}\zeta_{\widetilde{\nu}_{0}}(x)$. Hence}
\begin{align*}
\widetilde{\nu}_{0} \left(B\left((f^{n}_\omega)^{-1}(z), 2\C{C'}{3} (\widehat{J}^s_{\omega,n}(z))^{-1}\rho\right)\right) \geq \widetilde{\nu}_{0}(J_x)\geq&  e^{-L}\zeta_{\widetilde{\nu}_{0}}(x)m_{\widetilde{\nu}_{0}}(J_x)\\
=& e^{-L}\C{C'}{3}(\widehat{J}^s_{\omega,n}(z))^{-1}\rho\zeta_{\widetilde{\nu}_{0}}(x).
\end{align*}

Hence, 
\[
1 \leq  e^L(\C{C'}{3})^{-1}\widetilde{\nu}_{0}\left(B\left((f^{n}_\omega)^{-1}(z), 2\C{C'}{3}\left(\widehat{J}^s_{\omega,n}(z)\right)^{-1} \rho\right)\right) \widehat{J}^s_{\omega,n}(z)(\rho\zeta_{\widetilde{\nu}_{0}}(x))^{-1}.
\]

Therefore,
\begin{align*}
 &\widetilde{\nu}_{0}((f^{n}_\omega)^{-1}(B(z, \rho)))\\
 \leq & \displaystyle \frac{2e^L}{\sin(\C{\theta}{0}/2) }\frac{ \rho \zeta_{\widetilde{\nu}_{0}}(x)}{\widehat{J}^{u}_{\omega,n}(z)} e^L(\C{C'}{3})^{-1} \frac{(\widehat{J}^s_{\omega,n}(z))}{\rho \zeta_{\widetilde{\nu}_{0}}(x)} \widetilde{\nu}_{0} \left(B\left((f^{n}_\omega)^{-1}(z), 2\C{C'}{3}(\widehat{J}^s_{\omega,n}(z))^{-1}\rho\right)\right)\\
 = & \displaystyle \C{C'}{9} \frac{\widehat{J}^s_{\omega,n}(z)}{ \widehat{J}^{u}_{\omega,n}(z)} \widetilde{\nu}_{0}\left(B\left((f^{n}_\omega)^{-1}(z), 2\C{C'}{3}(\widehat{J}^s_{\omega,n}(z))^{-1}\rho\right)\right),
\end{align*}
where $\CS{C'}{9}=\frac{2e^{2L}}{\C{C'}{3}\sin(\C{\theta}{0}/2)}$. Hence by \eqref{eqn:maininequality1.1} (used in the fourth (in)equality) and (4) in Section \ref{subsect:setting2} (used in the sixth (in)equality), we have
%
%Write
%\[
%\widehat{\chi}^{s,+}_{\omega,n} := \sup_{z\in\mathbb{T}^2} \widehat{\chi}^s_{\omega,n}(z) \textrm{ and } \widehat{\chi}^{s,-}_{\omega,n} := \inf_{z\in\mathbb{T}^2} \widehat{\chi}^s_{\omega,n}(z)
%\]
\begin{align*}
& \|(f^n_{\omega})_*\nu_{0}\|^2_{ \rho} \\
=&\frac{1}{\rho^{4}} \int_{\mathbb{T}^2} \left(\int_{{\FFC}} \widetilde{\nu}_{0}((f^{n}_\omega)^{-1}(B(z, \rho)))\ d\widehat{\nu}_{0}(\widetilde{\nu}_{0})\right)^2 dm(z)\\ 
\leq&   \int_{\mathbb{T}^2} \frac{1}{\rho^{4}}  (\C{C'}{9})^2 \frac{(\widehat{J}^s_{\omega,n}(z))^{2}}{ (\widehat{J}^{u}_{\omega,n}(z))^2} \left( \int_{{\FFC}} \widetilde{\nu}_{0}\left(B\left((f^{n}_\omega)^{-1}(z), 2\C{C'}{3}(\widehat{J}^s_{\omega,n}(z))^{-1}\rho \right)\right) d\widehat{\nu}_{0}(\widetilde{\nu}_{0})\right)^2  dm (z)\\
=&(\C{C'}{9} )^2\displaystyle  \int_{\mathbb{T}^2} \frac{(\widehat{J}^s_{\omega,n}(z))^{4}}{(2\C{C'}{3}\rho)^{4}}   \frac{16(\C{C'}{3})^4}{ (\widehat{J}^s_{\omega,n}(z)\widehat{J}^{u}_{\omega,n}(z))^2} \left( {\nu}_{0}\left(B\left((f^{n}_\omega)^{-1}(z), 2\C{C'}{3}(\widehat{J}^s_{\omega,n}(z))^{-1}\rho \right)\right)\right)^2  dm (z)\\
\leq&\C{C''}{9}e^{4\varepsilon n}\int_{\mathbb{T}^2} \frac{(\widehat{J}^s_{\omega,n}(z))^{4}}{(2\C{C'}{3}\rho)^{4}}    \left( {\nu}_{0}\left(B\left((f^{n}_\omega)^{-1}(z), 2\C{C'}{3}(\widehat{J}^s_{\omega,n}(z))^{-1}\rho \right)\right)\right)^2  dm (z)\\
=&\C{C''}{9}e^{4\varepsilon n}\int_{\mathbb{T}^2} \frac{(\widehat{J}^s_{\omega,n}(f^n_\omega(p)))^{4}}{(2\C{C'}{3}\rho)^{4}}    \left( {\nu}_{0}\left(B\left((p, 2\C{C'}{3}(\widehat{J}^s_{\omega,n}(f^n_\omega(p)))^{-1}\rho \right)\right) \right)^2  d(f^n_\omega)_*^{-1}m (p)\\
\leq&\C{C''}{9}e^{5\varepsilon n}\int_{\mathbb{T}^2} \frac{(\widehat{J}^s_{\omega,n}(f^n_\omega(p)))^{4}}{(2\C{C'}{3}\rho)^{4}}    \left( {\nu}_{0}\left(B\left((p, 2\C{C'}{3}(\widehat{J}^s_{\omega,n}(f^n_\omega(p)))^{-1}\rho \right)\right) \right)^2  dm (p)=\C{C''}{9}e^{5\varepsilon n}\|\nu_0\|_{\delta}.
\end{align*}
where $\CS{C''}{9}=16\C{C'}{9}(\C{C'}{3})^4$ and $\delta(p) = 2\C{C'}{3}(\widehat{J}^s_{\omega,n}(f^n_\omega(p)))^{-1} \rho$. (See \eqref{generalized r-norm})

Recall that {$\C{C}{9}=2\C{C'}{3}(\C{C''}{0})^{-1}$} and that $\lambda_s= \lambda_{s,-}$. Therefore, we have $\delta(\mathbb{T}^2) \subset [\C{C}{9}\lambda_{s,+}^{-n} \rho, 2\C{C}{3}\lambda_{s}^{-n} \rho]$. (See \eqref{eqn:Lip.est.Dfn.s} and (10) in Section \ref{subsect:other.no.}.) By Lemma \ref{generalized large<small}, 
\[
{ \|(f^n_{\omega})_*\nu_{0}\|^2_{ \rho}} \leq \C{C''}{9}e^{5\varepsilon n}\|\nu_0\|_{\delta}\leq{ \C{C''}{9}e^{5\varepsilon n}\C{C}{2} \left(1 + 2\log(\C{C''}{0})+n \log\left(\frac{\lambda_{s,+}}{\lambda_{s}}\right) \right)}\|\nu_{0}\|_{\C{C}{9}\lambda_{s,+}^{-n} \rho}^2
\]
The lemma then follows from choosing {$\C{n}{4}\geq \C{n}{1}(\varepsilon)$} large enough such that 
\[
{ \C{C''}{9}\C{C}{2} \left(1 + 2\log(\C{C''}{0})+n\log\left(\frac{\lambda_{s,+}}{\lambda_{s,-}}\right) \right)} < e^{\varepsilon n},~\forall n\geq \C{n}{4}.
\]
\end{proof}

%**********************new Lemma 7.1 ends******************************

\subsection{Transversality}
Let 
\begin{align}\label{eqn:lambda.def}
\C{C}{10}=\max\{2\C{C}{4},\C{L}{0}\}\text{ and }\lambda=\max\left\{\lambda_s^\theta,{\frac{\lambda_{s,+}}{\lambda_{u,-}}}\right\}\in(0,1).
\end{align}
(See Proposition \ref{prop:UH} for the definition of $\C{L}{0}$ and $\theta$.) For each $p\in \mathbb{T}^{2}$, $n\in\mathbb{N}$, and $\delta>0$, we define

\begin{align}\label{eqn:transversality}
\begin{split}
&\mathcal{E}(p, n,\delta)\\
=&\left\{(\omega_1,\omega_2)\left|
\begin{aligned}
&\omega_1,\omega_2\in\Sigma^+\text{ and for any line }F_i\text{ in }\mathcal{C}^u_{(f^{n}_{\omega_i})^{-1}(p)}, i=1,2\\
&\sphericalangle(Df^n_{\omega_1}((f^{n}_{\omega_1})^{-1}(p))F_1, Df^n_{\omega_2} ((f^{n}_{\omega_2})^{-1}(p)) F_2 )\leq 5\C{C}{10}\lambda^n e^{\delta n }.
\end{aligned}
\right.\right\}
\end{split}
\end{align}

Roughly speaking, if $(\omega_1,\omega_2)\not\in\cE(p,n,\delta)$, then the pair of cones $Df^n_{\omega_1}((f^{n}_{\omega_1})^{-1}(p))\cC^u_{(f^{n}_{\omega_1})^{-1}(p)}$ and $Df^n_{\omega_2}((f^{n}_{\omega_2})^{-1}(p))\cC^u_{(f^{n}_{\omega_2})^{-1}(p)}$ are ``transverse'' to each other. Here, two cones are ``transverse'' to each other if there is a large angle between them. 
\begin{lem}\label{lem:transversality.nearby}
 For any $\rho'\in(0,1)$,  $p\in\TT^2$,  $\omega\in\Sigma^+$,  $n>0$,  $\delta>0$,  and $q\in B(p,\lambda_s^n\rho')$,  and for any lines $F$ in $\cC^u_{(f^{n}_{\omega})^{-1}(q)}$ and $F'$ in $\cC^u_{(f^n_\omega)^{-1}(p)}$, we have 
$$\sphericalangle(Df^n_{\omega}((f^{n}_{\omega})^{-1}(q))F,  Df^n_{\omega} ((f^{n}_{\omega})^{-1}(p))F' ) \leq  2\C{C}{10}\lambda^n e^{\delta n }.$$

\end{lem}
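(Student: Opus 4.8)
The statement compares the image under $Df^n_\omega$ of an unstable cone at the point $(f^n_\omega)^{-1}(q)$ with the image of an unstable cone at the nearby point $(f^n_\omega)^{-1}(p)$. The natural strategy is to interpolate through a \emph{fixed} reference line: pick any line $F_0$ in the unstable cone at $(f^n_\omega)^{-1}(p)$ (viewing $\RR^2 \cong T_\cdot\TT^2$ via the trivialization, so the "same" line $F_0$ also lies in the unstable cone at nearby points, by item (6) of Section \ref{subsect:setting2} or the cone-field continuity), and estimate separately
\[
\sphericalangle\bigl(Df^n_\omega((f^n_\omega)^{-1}(q))F,\ Df^n_\omega((f^n_\omega)^{-1}(q))F_0\bigr)
\quad\text{and}\quad
\sphericalangle\bigl(Df^n_\omega((f^n_\omega)^{-1}(q))F_0,\ Df^n_\omega((f^n_\omega)^{-1}(p))F_0\bigr),
\]
then apply the triangle inequality for the angular metric on $\PP T\TT^2$.

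\textbf{Step 1 (cone contraction along a single orbit).} For the first term, both lines $F$ and $F_0$ lie in the unstable cone $\cC^u_{(f^n_\omega)^{-1}(q)}$, so by \eqref{eqn:cone.contraction.u} the angle between their images under $Df^n_\omega((f^n_\omega)^{-1}(q))$ is at most $\C{C}{4}(\lambda_{s,+}/\lambda_{u,-})^n \le \C{C}{4}\lambda^n \le \tfrac12\C{C}{10}\lambda^n$, recalling $\C{C}{10} = \max\{2\C{C}{4},\C{L}{0}\}$ and $\lambda = \max\{\lambda_s^\theta, \lambda_{s,+}/\lambda_{u,-}\}$ from \eqref{eqn:lambda.def}. (One must check the two points $(f^n_\omega)^{-1}(q)$ and $(f^n_\omega)^{-1}(p)$ are close enough that $F_0$ genuinely lies in both cones; this is where the assumption $q\in B(p,\lambda_s^n\rho')$ is used, together with \eqref{eqn:Lip.est.Dfn.s} which gives $d((f^n_\omega)^{-1}(q),(f^n_\omega)^{-1}(p)) \le \C{C}{3}\lambda_s^{-n}\cdot\lambda_s^n\rho' = \C{C}{3}\rho'$, small by the choice of $\C{\rho}{0}$ and the conventions in Section \ref{subsect:setting2}(6).)

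\textbf{Step 2 (tracking a fixed line through a nearby orbit).} For the second term, I compare the two orbits $(f^j_\omega((f^n_\omega)^{-1}(q)))_{j=0}^{n}$ and $(f^j_\omega((f^n_\omega)^{-1}(p)))_{j=0}^{n}$, which stay exponentially close: by \eqref{eqn:Lip.est.Dfn.s} the distance at step $j$ is at most $\C{C}{3}\lambda_s^{n-j}\cdot(\lambda_s^n\rho')\cdot\lambda_s^{-n}$... more precisely, since the $n$-th iterates are within $\lambda_s^n\rho'$, the $j$-th iterates of the preimages are within $\C{C}{3}\lambda_s^{n-j}\rho'$ of each other for $j \le n$. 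The quantity $Df^n_\omega|_{F_0}$ can be computed step by step; the relevant object is the image \emph{direction}, and the discrepancy accumulated per step between the two orbits is controlled by the H\"older continuity of $x\mapsto E^u_{\omega,x}$ with constant $\C{L}{0}$ (Proposition \ref{prop:UH}(3)) together with the contraction of the unstable cone \eqref{eqn:cone.contraction.u}: a standard telescoping/graph-transform estimate shows the angular deviation is bounded by a geometric series $\sum_{j} \C{C}{4}\lambda^{n-j}\cdot(\text{H\"older deviation at step }j)$, which sums to something of order $\C{L}{0}\lambda^n$ (up to a uniform constant absorbed into the definition $\C{C}{10}\ge \C{L}{0}$), hence at most $\tfrac32\C{C}{10}\lambda^n$. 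Combining Steps 1 and 2 via the triangle inequality gives the bound $2\C{C}{10}\lambda^n \le 2\C{C}{10}\lambda^n e^{\delta n}$ (the factor $e^{\delta n}$ is only needed for slack, and here it is a free gift since $\delta > 0$).

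\textbf{Main obstacle.} The only real subtlety is Step 2: making precise the claim that the unstable-cone image direction at a point depends H\"older-continuously, uniformly in $n$, on perturbing the entire backward orbit. This is essentially the statement that the "forward-graph-transform" of the unstable cone is exponentially contracting and H\"older-continuous in the base point — a consequence of \eqref{eqn:cone.contraction.u}, Proposition \ref{prop:UH}(3), and \eqref{eqn:Lip.est.Dfn.s}. I would handle it by an explicit telescoping estimate: insert, for each $j$, the intermediate line obtained by applying $Df^{n-j}_{\sigma^j\omega}$ to $F_0$ first along the $q$-orbit for $j$ steps then along a switched orbit, controlling each swap by H\"older continuity of $E^u$ and contraction. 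The constants $\C{C}{4}, \C{L}{0}$ are exactly the ones assembled into $\C{C}{10}$, which is why the final bound comes out with that clean coefficient. Everything is uniform in $\omega \in \Sigma^+$, in $p$, and in $n$, as required.
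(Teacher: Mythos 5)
Your Step 1 is fine and matches the paper, but your Step 2 has a genuine gap, and the paper's actual proof sidesteps the ``main obstacle'' you correctly identify.

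The crucial idea you are missing is that the right interpolating object is not a \emph{fixed} line $F_0$ (read off from the trivialization) but the \emph{dynamically invariant} unstable bundle for an auxiliary bi-infinite word. The paper picks any past $\omega^-$, forms the word $\omega'\in\cU^\ZZ$ by concatenating $\omega^-$ with the given $\omega$, and interpolates through $E^u_{\omega^-,(f^n_\omega)^{-1}(q)}$ and $E^u_{\omega^-,(f^n_\omega)^{-1}(p)}$. Because this bundle is invariant under the cocycle, $Df^n_\omega((f^n_\omega)^{-1}(q))E^u_{\omega^-,(f^n_\omega)^{-1}(q)}=E^u_{\sigma^n\omega',q}$ and likewise at $p$. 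The middle term in the triangle inequality therefore collapses to $\sphericalangle(E^u_{\sigma^n\omega',q},E^u_{\sigma^n\omega',p})\leq \C{L}{0}\,d(p,q)^\theta\leq\C{L}{0}\lambda_s^{n\theta}$ directly from the H\"older bound in Proposition~\ref{prop:UH}(3). No telescoping, no graph transform, no control on the variation of $Df$ along the two backward orbits. The two outer terms are the cone-contraction terms you already have, each $\leq\C{C}{4}(\lambda_{s,+}/\lambda_{u,-})^n$, and the combination $2\C{C}{4}(\lambda_{s,+}/\lambda_{u,-})^n+\C{L}{0}\lambda_s^{n\theta}\leq 2\C{C}{10}\lambda^n$ follows immediately from the definitions of $\C{C}{10}$ and $\lambda$ in \eqref{eqn:lambda.def}; the $e^{\delta n}$ is indeed pure slack.

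Your telescoping version of Step 2, as written, has two concrete problems. First, the intermediate distance estimate is reversed: from $d(q,p)\leq\lambda_s^n\rho'$ and \eqref{eqn:Lip.est.Dfn.s} one gets $d(f^j_\omega(x_q),f^j_\omega(x_p))\leq\C{C}{3}\lambda_s^{-(n-j)}\lambda_s^n\rho'=\C{C}{3}\lambda_s^j\rho'$, not $\C{C}{3}\lambda_s^{n-j}\rho'$. Second, and more seriously, the per-step error in comparing $Df_{\sigma^j\omega}$ at the two nearby points is controlled by the \emph{Lipschitz modulus of $Df$} (i.e.\ by $\C{C'}{0}$), not by the H\"older constant $\C{L}{0}$ of the unstable bundle --- $\C{L}{0}$ plays no role if you use a fixed line $F_0$. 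So even if the telescoping sum $\sum_k\C{C}{4}(\lambda_{s,+}/\lambda_{u,-})^{n-k-1}\cdot O(\lambda_s^k)$ converges (it does, up to a possible factor of $n$ when the two rates coincide), the resulting constant bears no relation to $\C{C}{10}=\max\{2\C{C}{4},\C{L}{0}\}$, and your assertion that ``$\C{C}{10}\geq\C{L}{0}$ is exactly what is needed'' does not go through. The clean coefficient $2\C{C}{10}$ in the statement is tailored to the paper's argument, which genuinely uses the H\"older continuity of $E^u$; your route would require a differently-defined constant.
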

\begin{proof}

Fix $\omega = (f_0, f_1, \cdots) \in \Sigma^+$,  and $\omega^- = (\cdots, f_{-2}', f_{-1}')\in \mathcal{U}^{-\mathbb{N}}$. Consider $\omega' = (\dots,f_{-2}', f_{-1}', f_0, f_1, \cdots, f_{n-1},\dots) \in \mathcal{U}^{\Zbb}$. The negative coordinate of the word $\omega'$ is $\omega^-$ and the non-negative coordinate of the word $\omega'$ is $\omega$. Note that unstable distributions and unstable manifolds only depend on the past and so $E^{u}_{\omega',p}=E^{u}_{\omega^{-},p}$.
By Proposition \ref{prop:UH}, \eqref{eqn:cone.contraction.u} and (6) in Section \ref{subsect:setting2}, we have
\begin{align*}
&\sphericalangle(Df^n_{\omega}((f^{n}_{\omega})^{-1}(q))F,  Df^n_{\omega} ((f^{n}_{\omega})^{-1}(p))F' ) \\
\leq &\sphericalangle(Df^n_{\omega}((f^{n}_{\omega})^{-1}(q))F,  Df^n_{\omega}((f^{n}_{\omega})^{-1}(q))E^u_{\omega^{-}, (f^n_\omega)^{-1}(q)} ) \\
&+\sphericalangle(Df^n_{\omega}((f^{n}_{\omega})^{-1}(q)) E^u_{\omega^-,(f^{n}_{\omega})^{-1}(q)},  Df^n_{\omega} ((f^{n}_{\omega})^{-1}(p)) E^u_{\omega^-,(f^{n}_{\omega})^{-1}(p)} ) \\
&+\sphericalangle(Df^n_{\omega}((f^{n}_{\omega})^{-1}(p)) E^u_{\omega^-,(f^{n}_{\omega})^{-1}(p)},  Df^n_{\omega} ((f^{n}_{\omega})^{-1}(p)) F' ) \\
=&\sphericalangle(Df^n_{\omega}((f^{n}_{\omega})^{-1}(q))F,  Df^n_{\omega}((f^{n}_{\omega})^{-1}(q))E^u_{\omega^-, (f^n_\omega)^{-1}(q)} )+\sphericalangle(E^u_{\sigma^{n}(\omega'),q},E^u_{\sigma^{n}(\omega'),p})\\
&+\sphericalangle(Df^n_{\omega}((f^{n}_{\omega})^{-1}(p)) E^u_{\omega^-,(f^{n}_{\omega})^{-1}(p)},  Df^n_{\omega} ((f^{n}_{\omega})^{-1}(p)) F' )\\
\leq &2\C{C}{4}\left(\frac{\lambda_{s,+}}{\lambda_{u,-}}\right)^n+\C{L}{0}d(p,q)^\theta\leq 2\C{C}{4}\left(\frac{\lambda_{s,+}}{\lambda_{u,-}}\right)^n+\C{L}{0}\lambda_s^{n\theta}.
\end{align*}
The lemma then follows from \eqref{eqn:lambda.def}. 
\end{proof}
\begin{rmk}
Let $(\omega_1,\omega_2)\not \in\cE(p,n,\delta)$. Fix an arbitrary $\rho'\in(0,1/2]$ and an arbitrary $p\in\TT^2$. Suppose $q_1,q_2,q_1',q_2'\in B(p,\lambda^n_sp)$ are points such that for any $i=1,2$, there exists a $C^1$-curve in $B(p,\lambda^n_s\rho')$ connecting $q_i$ and $q_i'$ which is everywhere tangent to the cone field $Df^n_{\omega_i}(\cC^u)$. Then the angle between $\overline{q_1q_1'}$ and $\overline{q_2q_2'}$ is at least $\C{C}{10}\lambda^n e^{\delta n}$, where $\overline{q_iq_i'}$ is the shortest geodesic segment connecting $q_i$ and $q_i'$, $i=1,2$. 
\end{rmk}
\begin{lem}\label{lem:nontrans.est}
 For any $\delta>0$, there exists a constant $\CS{n}{5}=\C{n}{5}(\delta)>0$ such that for any $\rho'\in(0,\C{\rho}{0})$,  $p\in\TT^2$,  $\omega\in\Sigma^+$,  $n\geq\C{n}{5}$ and  $r\geq \max\{\lambda e^{2\delta},\eta\}$, we have
$$\mu^{\mathbb{N}}(\{\omega'\in\Sigma^+|(\omega,\omega')\in\cE(p,n,\delta)\})\leq \C{C}{5}r^{\alpha n}.$$
See Proposition \ref{prop.holderness} for $\eta=\eta(\beta, \C{\theta}{0},\C{\theta}{\Delta})$ and $\alpha=\alpha(\beta, \C{\theta}{0},\C{\theta}{\Delta})$.
\end{lem}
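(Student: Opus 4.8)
The strategy is to reduce the estimate on $\mu^{\NN}(\{\omega' : (\omega,\omega')\in\cE(p,n,\delta)\})$ to the Hölder regularity statement of Proposition \ref{prop.holderness}, applied on the projective bundle $\PP T\TT^2$. First I would unpack the definition \eqref{eqn:transversality}: the event $(\omega,\omega')\in\cE(p,n,\delta)$ says that for every choice of lines $F$ in $\cC^u_{(f^n_\omega)^{-1}(p)}$ and $F'$ in $\cC^u_{(f^n_{\omega'})^{-1}(p)}$, the images $Df^n_\omega F$ and $Df^n_{\omega'}F'$ make an angle at most $5\C{C}{10}\lambda^n e^{\delta n}$ at $p$. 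By \eqref{eqn:cone.contraction.u} and \eqref{eqn:lambda.def}, for the \emph{fixed} word $\omega$ the whole cone $Df^n_\omega(\cC^u_{(f^n_\omega)^{-1}(p)})$ has angular diameter $\leq\C{C}{4}\lambda^n\leq\C{C}{10}\lambda^n$; pick a representative direction $u_0=u_0(\omega,p,n)\in\PP T_p\TT^2$ inside it. Then $(\omega,\omega')\in\cE(p,n,\delta)$ forces the $\omega'$-cone $Df^n_{\omega'}(\cC^u_{(f^n_{\omega'})^{-1}(p)})$, which also has angular diameter $\leq \C{C}{10}\lambda^n$, to lie within distance $(5\C{C}{10}+2\C{C}{10})\lambda^n e^{\delta n}\leq 7\C{C}{10}\lambda^n e^{\delta n}$ of $u_0$ in $\PP T_p\TT^2$; in particular the pushforward under $Df^n_{\omega'}$ of any point of $\cC^u$ lands in the ball $B_{r_n}(u_0)$ with $r_n:=7\C{C}{10}\lambda^n e^{\delta n}$.

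Next I would realize the probability as a fiberwise measure of a convolution. Let $\widehat\nu=\{\widehat\nu_x\}$ be \emph{any} continuous family of probability measures on $\PP T_x\TT^2$ supported in $\PP\cC^u_x$ — for definiteness one may take $\widehat\nu_x$ to be (the normalized angular measure on) $\PP\cC^u_x$, or a Dirac mass at a chosen interior direction; the argument does not depend on the choice. Then by the definition of convolution on the projective bundle (as in \eqref{eqn.convolutioncond} / the proof of Proposition \ref{prop.holderness}),
\[
(\mu^{*n}*\widehat\nu)_p(B_{r_n}(u_0))=\int_{\Sigma^+}\widehat\nu_{(f^n_{\omega'})^{-1}(p)}\!\left(\big(Df^n_{\omega'}((f^n_{\omega'})^{-1}(p))\big)^{-1}B_{r_n}(u_0)\right)d\mu^{\NN}(\omega').
\]
For $\omega'$ with $(\omega,\omega')\in\cE(p,n,\delta)$, the image cone lies in $B_{r_n}(u_0)$, so the integrand equals $\widehat\nu_{(f^n_{\omega'})^{-1}(p)}(\PP\cC^u)=1$; hence $\mu^{\NN}(\{\omega':(\omega,\omega')\in\cE(p,n,\delta)\})\leq (\mu^{*n}*\widehat\nu)_p(B_{r_n}(u_0))$. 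Now apply Proposition \ref{prop.holderness}: provided $r_n\geq\eta^n$, i.e. $7\C{C}{10}\lambda^n e^{\delta n}\geq \eta^n$, which holds once $n\geq\C{n}{5}(\delta)$ for a suitable threshold (using the hypothesis $r\geq\max\{\lambda e^{2\delta},\eta\}$, so that $r^n$ dominates $\lambda^n e^{\delta n}$ up to the constant $7\C{C}{10}$), we get
\[
(\mu^{*n}*\widehat\nu)_p(B_{r_n}(u_0))\leq \C{C}{5}\,r_n^{\alpha}.
\]
Finally I would absorb constants: since $r\geq\lambda e^{2\delta}\geq \lambda e^{\delta}$ and $r\geq\eta$, for $n$ large we have $r_n=7\C{C}{10}\lambda^n e^{\delta n}\leq r^n$ (the polynomial-in-$n$–free constant $7\C{C}{10}$ is eventually beaten by the geometric gap between $r^n$ and $(\lambda e^{\delta})^n$, using $r\geq \lambda e^{2\delta}$), whence $\C{C}{5}r_n^\alpha\leq \C{C}{5}r^{\alpha n}$, which is the claimed bound; choosing $\C{n}{5}=\C{n}{5}(\delta)$ to validate both the $r_n\geq\eta^n$ requirement and the $r_n\leq r^n$ comparison completes the proof.

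The main subtlety — more bookkeeping than obstacle — is the passage from "the two thin cones at $p$ are nearly aligned" to "one cone is contained in a metric ball in $\PP T_p\TT^2$ centered at a point determined by the \emph{fixed} word $\omega$ alone": one must be careful that the center $u_0$ depends only on $(\omega,p,n)$ and not on $\omega'$, so that Proposition \ref{prop.holderness} (which bounds $(\mu^{*n}*\widehat\nu)_p(B_r(u))$ uniformly over $u$) applies with $u=u_0$. This is exactly why one uses that the $\omega$-cone is itself $\C{C}{10}\lambda^n$-thin. Another routine point is checking that the constant $7\C{C}{10}$ (and the implicit dependence on $\C{C}{4},\C{L}{0}$) can be swallowed into the exponential slack afforded by the hypotheses $r\geq\lambda e^{2\delta}$ and $r\geq\eta$ once $n\geq\C{n}{5}(\delta)$; here the factor $e^{2\delta}$ rather than $e^{\delta}$ is what gives the room to absorb the fixed multiplicative constant. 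No measure-rigidity input is needed — only Proposition \ref{prop.holderness}, the cone-contraction bound \eqref{eqn:cone.contraction.u}, and the definition \eqref{eqn:lambda.def} of $\lambda$.
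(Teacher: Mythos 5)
Your overall route is the same as the paper's: realize the probability as a fiberwise ball mass of $\mu^{*n}*\widehat\nu$ on $\PP T\TT^2$, centered at a direction $u_0$ determined by the fixed word $\omega$ alone, and invoke Proposition \ref{prop.holderness}. The paper simply takes $\widehat\nu_p$ to be the Dirac mass at a fixed continuous line field $F_p$ and sets $u_0=Df^n_\omega((f^n_\omega)^{-1}(p))F_{(f^n_\omega)^{-1}(p)}$, so the relevant angle is already $\leq 5\C{C}{10}\lambda^n e^{\delta n}$ without your extra $2\C{C}{10}$ cushion; this is cosmetic.

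There is, however, a genuine error in your Step 5. You apply Proposition \ref{prop.holderness} at the radius $r_n:=7\C{C}{10}\lambda^n e^{\delta n}$ and assert that the required hypothesis $r_n\geq\eta^n$ ``holds once $n\geq\C{n}{5}(\delta)$''. Nothing in the setup forces $\lambda e^{\delta}\geq\eta$; if $\lambda e^{\delta}<\eta$ then $r_n/\eta^n=7\C{C}{10}(\lambda e^{\delta}/\eta)^n\to 0$, and the fixed constant $7\C{C}{10}$ cannot compensate, so the inequality fails for all large $n$. Your parenthetical justification in fact argues the \emph{other} comparison, $r_n\leq r^n$ (from $r\geq\lambda e^{2\delta}$), not $r_n\geq\eta^n$ — the two got conflated. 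The repair is what the paper does: apply Proposition \ref{prop.holderness} at radius $r^n$, for which $r^n\geq\eta^n$ is immediate from the hypothesis $r\geq\eta$ with no condition on $n$. All you need about $r_n$ is then the upper bound $r_n\leq r^n$, which holds once $e^{\delta n}\geq 7\C{C}{10}$ (indeed $r_n\leq\lambda^n e^{2\delta n}\leq r^n$), and that is exactly the constraint $\C{n}{5}(\delta)$ should be chosen to enforce. This yields $\mu^{\NN}(\{\omega':(\omega,\omega')\in\cE(p,n,\delta)\})\leq(\mu^{*n}*\widehat\nu)_p(B_{r^n}(u_0))\leq\C{C}{5}r^{n\alpha}$, as claimed. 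With this one correction your argument is complete and coincides with the paper's.
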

\begin{proof}
Fix a continuous line field $p\to F_p\in\PP\cC^u_p$, where $\PP\mathcal{C}^u_p$ is the projection of $\mathcal{C}^u_p$ in the projective space $\mathbb{P}T_p\mathbb{T}^2$. Let $\widehat{\nu}_p\in\mathrm{Prob}(\PP T_p\TT^2)$ be the Dirac mass at $F_p$. Then for any $p\in\TT^2$, by Proposition \ref{prop.holderness}, we have 
$$(\mu^{*n}*\widehat{\nu})_p\left(B_{r^n}\left(Df^n_{\omega}((f^{n}_\omega)^{-1}(p))F_{(f^{n}_\omega)^{-1}(p)}\right)\right)\leq \C{C}{5}r^{n\alpha},~\forall n>0.$$
Choose $\C{n}{5}(\delta)>0$ such that $e^{\C{n}{5}\delta}>5\C{C}{10}$. Then by \eqref{eqn:transversality}, for any $n\geq \C{n}{5}$ and for any $\omega'\in\Sigma^+$ such that $(\omega,\omega')\in\cE(p,n,\delta)$, we have 
$$\sphericalangle(Df^n_{\omega'}((f^{n}_{\omega'})^{-1}(p))F_{(f^{n}_{\omega'})^{-1}(p)},Df^n_{\omega}((f^{n}_\omega)^{-1}(p))F_{(f^{n}_\omega)^{-1}(p)})\leq r^n.$$
Therefore
\begin{align*}
\mu^{\mathbb{N}}(\{\omega'\in\Sigma^+|(\omega,\omega')\in\cE(p,n,\delta)\})\leq& (\mu^{*n}*\widehat{\nu})_p\left(B_{r^n}\left(Df^n_{\omega}((f^{n}_\omega)^{-1}(p))F_{(f^{n}_\omega)^{-1}(p)}\right)\right)\\
\leq &\C{C}{5}r^{n\alpha},~\forall n\geq \C{n}{5}.\qedhere
\end{align*}
\end{proof}

\subsection{A Lasota-Yorke type of estimate}

%Fix $\beta \in (0,\frac{1}{2}]$, let $\eta\in (0,1)$ and $\alpha$ be as in Proposition \ref{prop.holderness}. Fix $\varepsilon \in (0,\eta^\alpha)$ and take neighborhoods $\mathcal{U}_f$ and $\mathcal{U}_g$, of $f$ and $g$, respectively, such that Proposition \ref{prop.holderness}, Corollary \ref{cor.admissibletoadmissible} and Lemma \ref{lemma.maininequality1} hold. Let $\mathcal{U} = \mathcal{U}_f \cup \mathcal{U}_g$ and write $\Sigma = \mathcal{U}^{\mathbb{Z}}$.  Take $\mu$ a probability measure such that $\mu(\mathcal{U}) = 1$ and $\mu(\mathcal{U}_*) \in [\beta, 1-\beta]$, for $* = f,g$.   

In the setting in \Cref{subsect:setting} and \Cref{subsect:setting2}, by Theorem \ref{thm.uniquesrb}, there is a unique $\mu$-stationary SRB measure.  From now on, let $\nu$ be the unique $\mu$-stationary SRB measure.

Fix $\xi$ a $u$-subordinated measurable partition. Hence, 
\[
\displaystyle \nu = \int_{\Sigma} \int_{\mathbb{T}^2} \nu^u_{(\omega, x)} d\nu_{\omega}(x) d\mu^{\mathbb{Z}}(\omega),
\]
where family $\{\nu_\omega \}_{\omega}$ is the family of sample measures, and $\nu^u_{(\omega,x)}$ is the conditional measure of $\nu_{\omega}$ on $\xi(\omega,x)$.  Since $\nu$ is an SRB measure, we have that for $\mu^{\mathbb{Z}}$-almost every $\omega$ and for $\nu_{\omega}$-almost every $x$, the measure $\nu^u_{(\omega,x)}$ is a probability measure absolute continuous with respect to the arc-length measure $m_{\xi(\omega, x)}$ in $\xi(\omega,x)$. Moreover, for such $(\omega,x)$, there exists a constant $L>0$ independent of $(\omega,x)$ such that the density function $\rho^{u}_{(\omega,x)}:=d\nu^u_{(\omega,x)}/dm_{\xi(\omega, x)}$ is positive and that $\log\rho^{u}_{(\omega,x)}$ is $L$-Lipschitz (see \Cref{thm.log.lip.density}). From now on, we assume, without loss of generality, that $L\geq \C{L}{1}(\cU)$ (recall that $L_1(\mathcal{U})$ is given by \Cref{lem.goodmeasurespreserved}). For the remaining parts of this paper, when we introduce more constants, we do not track their dependence on $L$.

\subsubsection*{$(\mathcal{U}, L)$-admissible measure $\nu_{r}$} For each $r>0$, consider $\mathcal{G}_r:=\{(\omega,x)\in \Sigma \times \mathbb{T}^2: |\xi(\omega,x)| \geq r\}$. This is the set of points $(\omega,x)$ such that $\xi(\omega,x)$ has length at least $r$.  Define
\[
\nu_r = \displaystyle \iint_{\mathcal{G}_r} \nu^u_{(\omega,x)} d\nu_{\omega}(x) d\mu^{\mathbb{Z}}(\omega).
\]
This is the part of the measure $\nu$ supported on unstable curves with length bounded from below by $r$.  Observe that  $\nu_r$ is a (${\mathcal{U}}$,$L$)-admissible measure.

Let $\widehat{\nu}$ be the lift of the measure $\mu^{\mathbb{N}} \otimes \nu$, in $\Sigma^+\times \mathbb{T}^2$, to $\Sigma \times \mathbb{T}^2$.  
\begin{lem}\label{lemma.srbconvergence}
For $r$ sufficiently small,  the following properties hold:
\begin{enumerate}
\item[(a)] $\displaystyle \widehat{\nu}(\mathcal{G}_r) =  \iint_{\mathcal{G}_r} d\nu_{\omega}(x) \mu^{\mathbb{Z}}(\omega) >0.$
\item[(b)] $\displaystyle \lim_{n\to +\infty} \frac{1}{n} \sum_{j=0}^{n-1} \frac{1}{\widehat{\nu}_r(\mathcal{G}_r)}\mu^j_* \nu_r = \nu.$
\end{enumerate}

\end{lem}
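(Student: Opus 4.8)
The plan is to handle the two items separately: (a) is essentially bookkeeping, and (b) follows from the uniqueness of the $\mu$-stationary SRB measure. For (a), I would use the disintegration $\widehat\nu=\int_\Sigma\nu_\omega\,d\mu^{\mathbb Z}(\omega)$ from \Cref{prop.basic}, which immediately yields the displayed identity $\widehat\nu(\mathcal G_r)=\iint_{\mathcal G_r}d\nu_\omega(x)\,d\mu^{\mathbb Z}(\omega)$. Since $\xi$ is subordinated to $W^u$, for $\widehat\nu$-a.e.\ $(\omega,x)$ the atom $\xi(\omega,x)$ contains an open neighbourhood of $x$ in $W^u(\omega,x)$, so $|\xi(\omega,x)|>0$; hence the sets $\mathcal G_r$ increase, as $r\downarrow 0$, to a set of full $\widehat\nu$-measure, and continuity from below gives $\widehat\nu(\mathcal G_r)\to 1$. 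In particular $\widehat\nu(\mathcal G_r)>0$ for all sufficiently small $r$, which is (a).

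For (b), fix such an $r$, write $c_r:=\widehat\nu(\mathcal G_r)$ — which is also the total mass of $\nu_r$, since each conditional $\nu^u_{(\omega,x)}$ is a probability measure — and set
\[
\sigma_n:=\frac{1}{n\,c_r}\sum_{j=0}^{n-1}\mu^{*j}*\nu_r\in\mathrm{Prob}(\TT^2).
\]
The strategy is to show that every weak-$*$ accumulation point $\sigma_\infty$ of $(\sigma_n)_n$ equals $\nu$; since $\mathrm{Prob}(\TT^2)$ is weak-$*$ compact, this forces $\sigma_n\to\nu$, i.e.\ (b). First, $\sigma_\infty$ is $\mu$-stationary, because $\mu*\sigma_n-\sigma_n=\frac{1}{n c_r}(\mu^{*n}*\nu_r-\nu_r)$ has total variation at most $2/(nc_r)\to 0$. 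Second, $\sigma_\infty$ has the SRB property: $\nu_r$ is a $(\mathcal U,L)$-admissible measure — its conditionals are $L$-good on admissible curves by \Cref{thm.log.lip.density} together with the bounded curvature of unstable leaves (\Cref{lem.curvaturecontrol}) — so by \Cref{cor.admissibletoadmissible} each $\frac{1}{c_r}\mu^{*j}*\nu_r$ is $(\mathcal U,L)$-admissible for $j\ge\C{n}{3}(\cU,L)$, and by uniform unstable expansion ($\lambda_{u,-}>1$ in \textbf{(C2)}) the admissible curves carrying these measures have length bounded below by a fixed $\ell_0>0$ once $j$ is large. The bounded-distortion argument that produces the SRB property in the proof of \Cref{thm.uniquesrb} (following Chapter VII of \cite{Liu-Qian-book}) then applies to the Cesàro averages $\sigma_n$ exactly as it does to $\frac1n\sum_{j=0}^{n-1}\mu^{*j}*m^u$, since it only uses fiberwise uniform hyperbolicity and the $L$-good structure of the pieces. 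Thus $\sigma_\infty$ is a $\mu$-stationary SRB measure, and by the uniqueness assertion of \Cref{thm.uniquesrb} we conclude $\sigma_\infty=\nu$.

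The main obstacle will be the SRB property of the accumulation points: this amounts to re-running the existence half of \Cref{thm.uniquesrb} with an admissible measure in place of a single unstable disk, keeping track of the lengths of the admissible pieces (which grow to macroscopic scale and then remain $L$-good) and using a Besicovitch/bounded-distortion estimate to transfer absolute continuity of the conditionals to the limit. The remaining ingredients — the elementary estimate $\|\mu*\sigma_n-\sigma_n\|\to 0$, weak-$*$ compactness, and the verification that the atoms of a $W^u$-subordinated partition are $C^2$ curves tangent to $\cC^u$ with curvature at most $\C{K}{0}(\cU)$ (so that $\nu_r$, and likewise $\nu-\nu_r$, lie in the admissible class) — are routine.
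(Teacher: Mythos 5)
Your treatment of (a) matches the paper exactly: both observe that $\widehat\nu(\mathcal G_r)\uparrow 1$ as $r\downarrow 0$ because the atoms of a $W^u$-subordinated partition have positive length $\widehat\nu$-a.e. For (b), however, you take a genuinely different route. The paper's proof is a one-liner: since $\nu_r\le\nu$ (so $\nu_r/c_r\ll\nu$ with bounded density) and $\nu$ is the unique, hence ergodic, $\mu$-stationary SRB measure, the mean ergodic theorem for the Markov operator on $L^2(\nu)$ directly yields $\frac1n\sum_j\mu^{*j}*(\nu_r/c_r)\to\nu$. You instead extract a weak-$*$ accumulation point $\sigma_\infty$, verify it is stationary via the telescoping estimate $\|\mu*\sigma_n-\sigma_n\|\le 2/(nc_r)$, then re-run the existence half of Theorem \ref{thm.uniquesrb} with $\nu_r$ in place of a single unstable disk to show $\sigma_\infty$ is SRB, and invoke uniqueness. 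Both arguments are correct and both ultimately rest on Theorem \ref{thm.uniquesrb}. The paper's route is shorter because it exploits the absolute continuity $\nu_r\ll\nu$ — a structural fact you never use — and outsources everything to the ergodic theorem. Your route avoids invoking ergodicity of $\nu$ as a separate fact, instead getting convergence from uniqueness alone, at the cost of re-deriving the bounded-distortion/SRB-persistence argument; you correctly identify that this re-derivation is the nontrivial step and that it goes through because $\nu_r$ is $(\cU,L)$-admissible with pieces whose lengths grow to macroscopic scale under iteration.
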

 
 Item (a) is a direct consequence of the fact that $\lim_{r \to 0} \hat{\nu}(\mathcal{G}_r) = 1$. Item (b) follows from the ergodicity of $\nu$.
 
 \begin{Remark}\label{remark.lengthpartition}
 Take  $(\omega,x) \in \mathcal{G}_r$.  {By (10) in Section \ref{subsect:other.no.}}, for any $\omega' \in \Sigma^+$, the measure $(f^n_{\omega'})_* \nu^u_{(\omega,x)}$ is a probability measure supported on $f^n_{\omega'}(\xi(\omega,x))$ which has length bounded from below by $(\C{C''}{0})^{-1}\lambda_{u,-}^n r$.  Hence, the measure $\mu^{*n}*\nu_r$ admits a disintegration by measures supported on unstable curves with length bounded from below by $(\C{C''}{0})^{-1}\lambda_{u,-}^n r$.   
\end{Remark}

%*******
\subsubsection*{Constants $\rho_1,\rho_2(n)>0$} Fix $\CS{\rho}{1}\in(0,\C{\rho}{0}/3)$ small such that $\widehat{\nu}(\mathcal{G}_{\C{\rho}{1}})>0$,  and for each $n\in \mathbb{N}$, let $\CS{\rho}{2} = \C{\rho}{2}(n)  =(\C{C}{3}\lambda_{u,+}^n)^{-1} \lambda_s^{2n} \C{\rho}{1}$. {Let $\Gamma_n$ be the collection of points $(x_1,x_2)$ in $\mathbb{T}^2$ such that $x_1,x_2\in([2/\C{\rho}{2}]+1)^{-1}\cdot\ZZ$.} In particular, 
\begin{align}\label{eqn:lattice}
{\bigcup_{x\in \Gamma_n} B(x,\C{\rho}{2}) = \mathbb{T}^2\text{~and~}\sup_{z\in\TT^n}|\{x\in\Gamma_n|z\in B(x,3\C{\rho}{2})\}|\leq 10^2.}
\end{align}
The following key Lasota--Yorke type inequality closely follows \cite[Lemma 6.5]{Tsujii-bigpaper}. 
\begin{lem}\label{lem.lasotayork}
Let $\nu'$ be an arbitrary $(\cU,L)$-admissible measure supported on curves of length bounded from below by $\C{\rho}{1}$. Then there exist constants $\CS{C}{11}(n,\C{\rho}{1})>0$, $\CS{n}{6}=\C{n}{6}(\varepsilon)>0$ and $\widehat{\lambda} \in (0,1)$ independent of the choice of $\nu'$, such that for any $n\geq \C{n}{6}$ and for any $\rho$ such that
\begin{align}\label{eqn:rho.cond}
0<\rho<\min\left\{\frac{\C{C}{10}\lambda^n\C{\rho}{1}}{10},\frac{\C{\rho}{2}(n)}{\C{C}{3}\lambda_{u,+}^n\cdot \C{C}{3}\lambda_s^{-n}}\right\},
\end{align}
we have
$$\|\mu^{*n}*\nu'\|^2_{\rho} \leq \widehat{\lambda}^n \|\nu'\|^2_{  \rho} + \C{C}{11}(n,\C{\rho}{1})(\nu'(\TT^2))^2.$$

\end{lem}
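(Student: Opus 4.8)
The plan is to follow the Lasota--Yorke strategy of \cite[Lemma 6.5]{Tsujii-bigpaper}: split the mass of $\mu^{*n}*\nu'$ into a ``transverse'' part and a ``non-transverse'' part according to the set $\cE(p,n,\delta)$, estimate each separately, and combine. First I would disintegrate $\nu'=\int\widetilde\nu_0\,d\widehat\nu_0(\widetilde\nu_0)$ into $L$-good measures on admissible curves of length $\geq\C{\rho}{1}$, so that, by \Cref{lem.goodmeasurespreserved} and \Cref{cor.admissibletoadmissible}, $\mu^{*n}*\nu'$ is again $(\cU,L)$-admissible once $n\geq\C{n}{3}$; moreover by \Cref{remark.lengthpartition} its pieces live on unstable curves of length $\geq(\C{C''}{0})^{-1}\lambda_{u,-}^n\C{\rho}{1}$. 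Writing out $\|\mu^{*n}*\nu'\|_\rho^2=\rho^{-4}\int_{\TT^2}\big(\int\int (f^n_\omega)_*\widetilde\nu_0(B(z,\rho))\,d\mu^{\NN}(\omega)\,d\widehat\nu_0\big)^2\,dm(z)$ and expanding the square, one gets a double integral over pairs $(\omega_1,\omega_2)\in\Sigma^+\times\Sigma^+$, which I would break as $\{(\omega_1,\omega_2)\in\cE(p,n,\delta)\}$ versus its complement, using a lattice $\Gamma_n$ of scale $\C{\rho}{2}(n)$ (see \eqref{eqn:lattice}) to localize $z$.

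For the \emph{non-transverse} part (pairs in $\cE(p,n,\delta)$): here \Cref{lem:nontrans.est} gives that the $\mu^{\NN}\times\mu^{\NN}$-measure of such pairs, after fixing the localizing point $p\in\Gamma_n$, is at most $\C{C}{5}r^{\alpha n}$ for suitable $r<1$; since the individual ball-masses $(f^n_\omega)_*\widetilde\nu_0(B(z,\rho))$ contribute boundedly (each piece is a probability measure on a curve, and by \eqref{eqn:Lip.est.Dfn.s}, \eqref{eqn:Lip.est.Dfn.u} the images of $B(z,\rho)$ under $(f^n_\omega)^{-1}$ have controlled size), this part is bounded by $C(n,\C{\rho}{1})\cdot\C{C}{5}r^{\alpha n}\cdot(\nu'(\TT^2))^2$ — which, once $\alpha n$-decay beats the polynomial-in-$n$ constant, is absorbed into the $\C{C}{11}(n,\C{\rho}{1})(\nu'(\TT^2))^2$ term (this is where the lower bound on $\rho$ in \eqref{eqn:rho.cond} via $\C{\rho}{2}(n)$ is used, to keep $\rho$ comparable to lattice scale). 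For the \emph{transverse} part (pairs not in $\cE(p,n,\delta)$): here the key geometric input is the remark after \Cref{lem:transversality.nearby} — two unstable-cone curves for $\omega_1$ and $\omega_2$ through nearby points make an angle $\gtrsim\C{C}{10}\lambda^n e^{\delta n}$, so the preimages $(f^n_{\omega_1})^{-1}(B(z,\rho))$ and $(f^n_{\omega_2})^{-1}(B(z,\rho))$, being thin tubes along transverse directions, have intersection of controlled measure against an admissible ($L$-good, bounded-curvature) measure. Feeding this transversality into the Cauchy--Schwarz / change-of-variables computation in the style of \Cref{lemma.maininequality1} — replacing $\rho$ by a point-dependent radius $\delta(p)$ sandwiched in $[\C{C}{9}\lambda_{s,+}^{-n}\rho,\,2\C{C}{3}\lambda_s^{-n}\rho]$ and then invoking \Cref{generalized large<small} to revert to a constant scale — produces a bound of the form $e^{O(\varepsilon n)}\cdot(\text{geometric factor from the angle})\cdot\|\nu'\|_\rho^2$. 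The geometric factor contributes a genuine contraction: the transverse intersection of two tubes of width $\sim\rho$ meeting at angle $\gtrsim\lambda^n e^{\delta n}$ has area $\sim\rho^2/(\lambda^n e^{\delta n})$, i.e.\ a gain of $(\lambda^n e^{\delta n})^{-1}$ compared to a single tube's contribution, and after the Cauchy--Schwarz this becomes a factor $\lesssim \lambda^n e^{\delta n}$ relative to $\|\nu'\|_\rho^2$. Choosing $\delta$ and $\varepsilon$ small (using the constraints on $\varepsilon$ in (3) of \Cref{subsect:setting2}) so that $e^{(C\varepsilon+\delta)n}\lambda^{n}<\widehat\lambda^n$ for some $\widehat\lambda\in(0,1)$ gives the main term $\widehat\lambda^n\|\nu'\|_\rho^2$.

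The main obstacle I expect is the careful bookkeeping in the transverse estimate: one must simultaneously (i) control the size and shape of $(f^n_\omega)^{-1}(B(z,\rho))$ — a tube roughly $\lambda_s^n\rho$ by $\rho$ long, tangent to the appropriate $\omega$-dependent cone, which requires the bounded-distortion Lemmas \ref{lem.neardeterminant} and \ref{lem.boundeddistortiondet}; (ii) use the $L$-good / bounded-curvature property of $\nu'$ to turn ``small area of intersection'' into ``small $\nu'$-mass of intersection,'' as in the estimates inside \Cref{lemma.maininequality1}; and (iii) handle the localization over the lattice $\Gamma_n$ so that the overlap multiplicity $\leq 10^2$ from \eqref{eqn:lattice} does not accumulate with $n$. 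The constant $\C{C}{11}(n,\C{\rho}{1})$ is allowed to depend on $n$, which gives some slack, but the contraction rate $\widehat\lambda$ must be genuinely independent of $n$ and of $\nu'$, so the transverse estimate's exponential gain must be extracted cleanly before any $n$-dependent constants are absorbed. Assembling: fix $\delta=\delta(\varepsilon)$ small, take $\C{n}{6}\geq\max\{\C{n}{3},\C{n}{4},\C{n}{5}(\delta),\C{n}{1}(\varepsilon)\}$ large enough that the decaying terms dominate, and set $\widehat\lambda=e^{-c}$ for an appropriate $c>0$; the stated inequality then follows.
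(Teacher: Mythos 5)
Your decomposition into transverse and non-transverse pairs localized over the lattice $\Gamma_n$, and the list of ingredients you invoke (Lemmas~\ref{lemma.maininequality1}, \ref{lem:nontrans.est}, \ref{lem:transversality.nearby}, \ref{lem.changeofscale}, \ref{generalized large<small}), are exactly the right framework, but you have assigned the two output terms to the wrong halves of the split, and neither estimate closes as you describe. You route the non-transverse part (pairs in $\cE(p,n,\delta)$) into the constant term $\C{C}{11}(n,\C{\rho}{1})(\nu'(\TT^2))^2$, reasoning that the ball-masses ``contribute boundedly'' and multiplying by the small probability of non-transversality. But for a non-transverse pair the two image measures may be concentrated along essentially the same direction, so the inner product $\langle(f^n_{\omega_1})_*\nu'|_{B(p,\C{\rho}{2})},(f^n_{\omega_2})_*\nu'|_{B(p,\C{\rho}{2})}\rangle_\rho$ is only controlled by $\|(f^n_{\omega_1})_*\nu'|_{B(p,\C{\rho}{2})}\|_\rho^2$, which by Lemma~\ref{lemma.maininequality1} scales like $\|\nu'\|^2$ at a comparable radius (up to the $e^{6\varepsilon n}$ factor), not like $(\nu'(\TT^2))^2$. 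Since $\nu'$ lives on curves and $\rho$ is only bounded from \emph{above} in \eqref{eqn:rho.cond}, $\|\nu'\|_\rho^2\sim\rho^{-1}(\nu'(\TT^2))^2$ as $\rho\to 0$, so no constant depending only on $(n,\C{\rho}{1})$ can dominate the non-transverse part by $(\nu'(\TT^2))^2$. The non-transverse part necessarily carries the $\|\nu'\|_\rho^2$-dependence; its smallness comes from the probability bound $\C{C}{5}\widehat{\eta}^{\alpha n}$ of Lemma~\ref{lem:nontrans.est} multiplying that norm, and that is what produces the contraction $\widehat\lambda^n\|\nu'\|_\rho^2$.

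Symmetrically, you try to extract the contraction from the transverse part via the tube-intersection geometry at angle $\gtrsim\C{C}{10}\lambda^n e^{\delta n}$. Transversality does control the overlap, but only in absolute terms: it yields $\langle(f^n_{\omega_1})_*\nu'|_{B(p,\C{\rho}{2})},(f^n_{\omega_2})_*\nu'|_{B(p,\C{\rho}{2})}\rangle_\rho\leq C(n,\C{\rho}{1})\,\nu'(D^n_{\omega_1}(p,3\C{\rho}{2}))\,\nu'(D^n_{\omega_2}(p,3\C{\rho}{2}))$, a product of masses with a constant that \emph{grows} in $n$. There is no mechanism here to produce a coefficient strictly less than $1$ multiplying $\|\nu'\|_\rho^2$: the transverse estimate is insensitive to the concentration of $\nu'$, so after summing over $p\in\Gamma_n$ and over pairs it can only feed the $(\nu'(\TT^2))^2$ term. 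Swapping the two assignments --- non-transverse $\to\widehat\lambda^n\|\nu'\|_\rho^2$, transverse $\to\C{C}{11}(n,\C{\rho}{1})(\nu'(\TT^2))^2$ --- is the correct structure, and is what the paper's proof does.
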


\begin{proof}
For each $n\in \mathbb{N}$, let $\rho,$ $\C{\rho}{2}$ and $\Gamma_n$ be as above. By \eqref{eqn:lattice}, we have 
\begin{align}\label{eqn:chopping.1}
\|\mu^{*n}* \nu' \|^2_{\rho} \leq 10^2 \displaystyle \sum_{p\in \Gamma_n} \|\mu^{*n}* \nu'|_{B(p,\C{\rho}{2})}\|^2_{\rho} 
\end{align}

Fix $p\in\Gamma_n$. Let us estimate $\|\mu^n* \nu'|_{B(p,\C{\rho}{2})}\|^2_{\rho}$. For any $\omega_1,\omega_2\in\Sigma^+$, we say that $\omega_1$ is transverse to $\omega_2$, if $(\omega_1, \omega_2) \notin \mathcal{E}(p,n,\varepsilon)$. We will write $\omega_1 \pitchfork \omega_2$ whenever $\omega_1$ is transverse to $\omega_2$. Otherwise, we write $\omega_1\parallel\omega_2$. 

We want to estimate
\begin{align}\label{eqn:trans.decomp}
\|\mu^{*n}* \nu'|_{B(p,\rho)}\|^2_{\rho} 
=&  \left\langle \int (f^n_{\omega_1})_*\nu'|_{B(p,\C{\rho}{2})}d{\mu^{\mathbb{N}}}(\omega_1), \int(f^n_{\omega_2})_*\nu'|_{B(p,\C{\rho}{2})}d{\mu^{\mathbb{N}}}(\omega_2) \right\rangle_\rho\nonumber\\
=& \iint_{\{\omega_1 \pitchfork \omega_2\}} \left\langle (f^n_{\omega_1})_* \nu'|_{B(p,\C{\rho}{2})}, (f^n_{\omega_2})_*\nu'|_{B(p,\C{\rho}{2})} \right\rangle_{\rho} d{\mu^{\mathbb{N}}}(\omega_1) d{\mu^{\mathbb{N}}}(\omega_2)\nonumber\\
& +\iint_{\{\omega_1  \parallel \omega_2\}} \left\langle (f^n_{\omega_1})_* \nu'|_{B(p,\C{\rho}{2})}, (f^n_{\omega_2})_*\nu'|_{B(p,\C{\rho}{2})} \right\rangle_{\rho} d{\mu^{\mathbb{N}}}(\omega_1) d{\mu^{\mathbb{N}}}(\omega_2)\nonumber\\
=& I + II.
\end{align}
For each $\omega$ and for any $r>0$, we write $D^n_\omega(p,r) = (f^{n}_\omega)^{-1}(B(p,r))$. 

Let us first estimate $II$. Observe that
\begin{align}\label{eqn:II-main-1}
%&\iint_{\{\omega_1 \not \pitchfork \omega_2\}} \langle (f^n_{\omega_1})_* \nu'|_{B(p,\C{\rho}{2})}, (f^n_{\omega_2})_*\nu'|_{B(p,\C{\rho}{2})} \rangle_{\rho}d\mu^{\mathbb{N}}(\omega_1) d\mu^{\mathbb{N}}(\omega_2) \nonumber\\ 
II\leq &\iint_{\{\omega_1\parallel \omega_2\}} \frac{1}{2} \left(\|(f^n_{\omega_1})_* \nu'|_{B(p,\C{\rho}{2})}\|_\rho^2 + \|(f^n_{\omega_2})_*\nu'|_{B(p,\C{\rho}{2})}\|^2_\rho\right)d{\mu^{\mathbb{N}}}(\omega_1) d{\mu^{\mathbb{N}}}(\omega_2) \nonumber\\
 =&\int_{\Sigma^+}\mu^{\mathbb{N}}(\{\omega_2:\omega_1\parallel\omega_2\})\cdot\|(f^n_{\omega_1})_* \nu'|_{B(p,\C{\rho}{2})}\|_\rho^2d\mu^{\mathbb{N}}(\omega_1).
\end{align}
We would like to apply Lemma \ref{lemma.maininequality1} to $ \|(f^n_{\omega_1})_*\nu'|_{B(p,\C{\rho}{2})}\|^2_\rho$. However, the measure $\nu'|_{D^n_{\omega_1}(p,\C{\rho}{2})}$, which is an admissible measure,  might not be supported on admissible curves with length bounded from below by $2
\C{C}{3}\lambda_s^{-n} \rho$.

Let $\gamma$ be  an admissible curve with length greater than $2 \C{C}{3}\lambda_s^{-n}\rho$ intersecting \linebreak $D^n_{\omega_1}(p, 3\C{\rho}{2})$, and let $\gamma'$ be a connected component of $\gamma \cap D^n_{\omega_1}(p,3\C{\rho}{2})$ with length smaller than $2\C{C}{3}\lambda_s^{-n} \rho$. Hence, $\gamma'$ must intersect the boundary of $D^n_{\omega_1}(p,3\C{\rho}{2})$. By \eqref{eqn:Lip.est.Dfn.u} and the assumptions on $\rho$, $\gamma'$ does not intersect $D^n_{\omega_1}(p,\C{\rho}{2})$. 

Consider $\nu'|_{D^n_{\omega_1}(p,3\C{\rho}{2})}$ and let $\widetilde{\nu}$ be the measure obtained by discarding the part of the measure supported on small admissible curves (smaller than $2\C{C}{3}\lambda_s^{-n}\rho$) from $\nu'|_{D^n_{\omega_1}(p,3\C{\rho}{2})}$.  It follows from the above discussion that $\nu'|_{D^n_{\omega_1}(p,\C{\rho}{2})} \leq \widetilde{\nu} \leq \nu'|_{D^n_{\omega_1}(p,3\C{\rho}{2})}$. By Lemma \ref{lemma.maininequality1}, for any $n\geq \C{n}{4}(\varepsilon)$, we have
\begin{align}\label{eqn:II-est-1}
\begin{split}
\|(f^n_{\omega_1})_*\nu'|_{B(p,\C{\rho}{2})}\|^2_\rho \leq \|(f^n_{\omega_1})_* \widetilde{\nu}\|^2_\rho \leq &e^{6\varepsilon n} \|\widetilde{\nu}\|^2_{\C{C}{9}\lambda_{s,+}^{-n} \rho }\\
 \leq &e^{6\varepsilon n} \|\nu'|_{D^n_{\omega_1}(p,3\C{\rho}{2})}\|^2_{\C{C}{9}\lambda_{s,+}^{-n} \rho }.
\end{split}
\end{align}
Observe that the same estimate works for $\omega_2$.

Take  $\widehat{\eta}= \max\{\lambda e^{2\varepsilon}, \eta\}$. By Lemma \ref{lem:nontrans.est}, for any $n\geq \C{n}{5}(\varepsilon)$, we have  
\begin{align}\label{eqn:II-est-2}
\mu^{\mathbb{N}}(\{\omega_2:\omega_1 \parallel \omega_2\}) \leq \C{C}{5} \widehat{\eta}^{\alpha n},~\forall \omega_1\in\Sigma^+.
\end{align}
Since for each $\omega$, $(f^n_\omega)^{-1}$ is a diffeomorphism, by \eqref{eqn:lattice}, $\{D^n_\omega(p,3\C{\rho}{2})\}_{p \in \Gamma_n}$ form a finite cover of $\mathbb{T}^2$ whose maximum number of overlaps is bounded from above by $10^2$. Thus, by \eqref{eqn:II-main-1} (used in the first inequality), \eqref{eqn:II-est-1} (used in the second inequality), \eqref{eqn:II-est-2} (used in the first inequality) and the above (used in the third inequality), we have
\begin{align*}
\sum_{p\in\Gamma_n}II\leq& \sum_{p\in\Gamma_n}\int_{\Sigma^+}\mu^{\mathbb{N}}(\{\omega_2:\omega_1\parallel\omega_2\})\cdot\|(f^n_{\omega_1})_* \nu'|_{B(p,\C{\rho}{2})}\|_\rho^2d\mu^{\mathbb{N}}(\omega_1)\\
\leq& \C{C}{5} \widehat{\eta}^{\alpha n}e^{6\varepsilon n}\int_{\Sigma^+} \sum_{p\in\Gamma_n}\|\nu'|_{D^n_{\omega_1}(p,3\C{\rho}{2})}\|^2_{\C{C}{9}\lambda_{s,+}^{-n} \rho}d\mu^{\mathbb{N}}(\omega_1)\\
\leq &10^4\C{C}{5} \widehat{\eta}^{\alpha n}e^{6\varepsilon n}\|\nu'\|^2_{\C{C}{9}\lambda_{s,+}^{-n} \rho }.
\end{align*}
By our choice of $\varepsilon$ in Section \ref{subsect:setting2}, we have that $\widehat{\lambda}:=\widehat{\eta}^\alpha e^{7\varepsilon}<1$. Let $\CS{n'}{6}=\C{n'}{6}(\varepsilon)>\max\{\C{n}{4}(\varepsilon),\C{n}{5}(\varepsilon)\}$ such that $\max\{\C{C}{9}\lambda_{s,+}^{-\C{n'}{6}}  ,10^4\C{C}{1}\C{C}{5}e^{-\varepsilon \C{n'}{6}}\}<1$. Then by Lemma \ref{lem.changeofscale}, for any $n\geq \C{n'}{6}(\varepsilon)$, we have
\begin{align}\label{eqn:II}
\sum_{p\in\Gamma_n}II\leq\C{C}{1}\cdot10^4\C{C}{5}e^{-\varepsilon n}\widehat{\lambda}^n\|\nu'\|^2_\rho\leq \widehat{\lambda}^n\|\nu'\|^2_\rho
\end{align}

%****************New Lemma 7.5-I start*************
We then estimate $I$.  We would like to show that there exists a constant $\CS{C'}{11}(n,\C{\rho}{1})>0$ and $\C{n''}{6}>0$ such that for any $n\geq \C{n''}{6}$, we have
\begin{align}\label{eqn:I.integrand.est}
\left\langle (f^n_{\omega_1})_* \nu'|_{B(p,\C{\rho}{2})}, (f^n_{\omega_2})_*\nu'|_{B(p,\C{\rho}{2})} \right\rangle_{\rho}\leq\C{C'}{11}(n,\C{\rho}{1})\nu'(D^n_{\omega_1}(p,3\C{\rho}{2}))\nu'(D^n_{\omega_2}(p,3\C{\rho}{2})).
\end{align}
Assume that \eqref{eqn:I.integrand.est} is true, then for any $n\geq\C{n''}{6}$, we have 
\begin{align}\label{eqn:I}
\sum_{p\in\Gamma_n}I\leq &\sum_{p\in\Gamma_n}\iint_{\{\omega_1 \pitchfork \omega_2\}}\C{C'}{11}(n,\C{\rho}{1})\nu'(D^n_{\omega_1}(p,3\C{\rho}{2}))\nu'(D^n_{\omega_2}(p,3\C{\rho}{2})) d{\mu^{\mathbb{N}}}(\omega_1) d{\mu^{\mathbb{N}}}(\omega_2)\nonumber\\
\leq &\C{C'}{11}(n,\C{\rho}{1})|\Gamma_n|\cdot(\nu'(\TT^2))^2.
\end{align}
Notice that $|\Gamma_n|$ only depends on $n$ and $\C{\rho}{1}$. Choose $\C{C}{11}(n,\C{\rho}{1}):=\C{C'}{11}(n,\C{\rho}{1})|\Gamma_n|$ and $\C{n}{6}(\varepsilon):=\max\{\C{n'}{6}(\varepsilon),\C{n''}{6}\}$. Then the lemma follows from \eqref{eqn:II} and \eqref{eqn:I}.

It remains for us to verify \eqref{eqn:I.integrand.est}. Observe that $(f^n_{\omega_i})_*\nu'|_{B(p,3\C{\rho}{2})} = \nu'|_{D^n_{\omega_i}(p,3\C{\rho}{2})}  \circ (f^{n}_{\omega_i})^{-1}$. Since both sides of \eqref{eqn:I.integrand.est} are bilinear in $ \nu'|_{D^n_{\omega_i}(p,3\C{\rho}{2})}$, by the assumption on $\nu'$, we can assume without loss of generality that there exists an admissible curve $\gamma_i'$ with length at least $\C{\rho}{1}$ satisfying the following:
\begin{itemize}
\item $ \nu'|_{D^n_{\omega_i}(p,3\C{\rho}{2})}$ is supported on a connected component $\gamma_i$ of $\gamma_i'\cap D^n_{\omega_i}(p,3\C{\rho}{2})$. 
\item $ \nu'|_{D^n_{\omega_i}(p,3\C{\rho}{2})}$ is absolutely continuous with respect to the arclength measure $m_{\gamma_i}$ of $\gamma_i$ with an $L$-log-Lipschitz density.
\end{itemize} 
If $\gamma_i$ has length less than $2(\C{C}{3}\lambda_{u,+}^n)^{-1}\C{\rho}{2}$,  then by the fact that $\C{\rho}{1}\geq (\C{C}{3}\lambda_{u,+}^n)^{-1}\C{\rho}{2}$, $\gamma_i$ must intersect the boundary of $D^n_{\omega_i}(p,3\C{\rho}{2})$. By \eqref{eqn:Lip.est.Dfn.u}, $\gamma_i$ does not intersect $D^n_{\omega_i}(p,\C{\rho}{2})$ and hence \eqref{eqn:I.integrand.est} holds trivially. Therefore, we  assume, without loss of generality,  that
\begin{align}\label{eqn:I.length.lowerbd}
m_{\gamma_i}(\gamma_i)=m_{\gamma_i}(D_{\omega_i}^n(p,3\C{\rho}{2}))\geq 2(\C{C}{3}\lambda_{u,+}^n)^{-1}\C{\rho}{2}=2(\C{C}{3}\lambda_{u,+}^n)^{-2}\lambda_s^{2n}\C{\rho}{1}.
\end{align}

Let $\nu_i' = \nu'|_{D^n_{\omega_i}(p,3\C{\rho}{2})}$. By \eqref{eqn:Lip.est.Dfn.s} and the definition of $\C{\rho}{2}$, we have
$$D^n_{\omega_i}(p,3\C{\rho}{2})\subset B((f^{n}_{\omega_i})^{-1}(p),3\lambda^{-n}_{u,+}\lambda^n_s\C{\rho}{1})\subset B((f^{n}_{\omega_i})^{-1}(p),\lambda^{-n}_{u,+}\lambda^n_s\C{\rho}{0}).$$
Hence, by (6) in Section \ref{subsect:setting2} and the fact that $\gamma_i$ is everywhere tangent to the unstable cone field, for any $z\in\TT^2$, we have \begin{align}\label{eqn:I.length.upperbd}
m_{\gamma_i}(\gamma_i)\leq \frac{2\lambda^{-n}_{u,+}\lambda^n_s\C{\rho}{0}}{\sin(\C{\theta}{0}/2)}=(\C{C}{3}\lambda_{u,+}^n)^{-1}\cdot \frac{2\lambda_s^n\C{C}{3}\C{\rho}{0}}{\sin(\C{\theta}{0}/2)}\leq \frac{1}{2}(\C{C}{3}\lambda_{u,+}^n)^{-1}\lambda_s^n< \frac{1}{2}.
\end{align}
As a corollary of \eqref{eqn:I.length.lowerbd}, \eqref{eqn:I.length.upperbd} and the $L$-log-Lipschitz property of $d\nu_i'/dm_{\gamma_i}$, we have
$$\frac{\nu_i'(D^n_{\omega_i}(z,\rho))}{\nu_i'(D^n_{\omega_i}(p,3\C{\rho}{2}))}\leq e^{L}\frac{m_{\gamma_i}(D^n_{\omega_i}(z,\rho))}{m_{\gamma_i}(D^n_{\omega_i}(p,3\C{\rho}{2}))}\leq\frac{e^{L}(\C{C}{3}\lambda^n_{u,+})^2}{2\lambda_s^{2n}\C{\rho}{1}}m_{\gamma_i}(D^n_{\omega_i}(z,\rho)).$$
Therefore, we obtain 
\begin{align}\label{eqn:I-1}
&\frac{\left\langle (f^n_{\omega_1})_* \nu'|_{B(p,\C{\rho}{2})}, (f^n_{\omega_2})_*\nu'|_{B(p,\C{\rho}{2})} \right\rangle_{\rho}}{\nu'(D^n_{\omega_1}(p,3\C{\rho}{2}))\nu'(D^n_{\omega_2}(p,3\C{\rho}{2}))}\nonumber\\
=&\left(\frac{1}{\rho^4}\int_{\TT^2}\nu_1'(D^n_{\omega_1}(z,\rho))\nu_2'(D^n_{\omega_2}(z,\rho))dm(z)\right)\cdot\frac{1}{\nu'(D^n_{\omega_1}(p,3\C{\rho}{2}))\nu'(D^n_{\omega_2}(p,3\C{\rho}{2}))}\nonumber\\
\leq&\left(\frac{e^{L}(\C{C}{3}\lambda^n_{u,+})^2}{2\lambda_s^{2n}\C{\rho}{1}}\right)^2\cdot\frac{1}{\rho^4}\int_{\TT^2}m_{\gamma_1}(D^n_{\omega_1}(z,\rho))m_{\gamma_2}(D^n_{\omega_2}(z,\rho))dm(z)\nonumber\\
=&\C{C''}{11}(n,\C{\rho}{1})\int_{\mathbb{T}^2} \frac{1}{\rho^{4}} \int_{\gamma_1 \times \gamma_2} \mathbbm{1}_{\rho}(f^n_{\omega_1}(x), z) \mathbbm{1}_\rho(f^n_{\omega_2}(y), z) dm_{\gamma_1}(x) dm_{\gamma_2}(y) dm(z)
\end{align}
where $\CS{C''}{11}(n,\C{\rho}{1}):=\left(\frac{e^{2}(\C{C}{3}\lambda^n_{u,+})^2}{2\lambda_s^{2n}\C{\rho}{1}}\right)^2$ and
\[
\mathbbm{1}_\rho(x,y) = 
\begin{cases}
1, \textrm{  if }d(x,y) \leq \rho,\\
0, \textrm{ otherwise.}
\end{cases}
\]
However,
\[
\mathbbm{1}_{\rho}(f^n_{\omega_1}(x), z) \mathbbm{1}_\rho(f^n_{\omega_2}(y), z)  \leq \mathbbm{1}_{2\rho}(f^n_{\omega_1}(x) , f^n_{\omega_2}(y)) \mathbbm{1}_\rho(f^n_{\omega_2}(y), z).
\]
Hence by (7) in Section \ref{subsect:other.no.} and the above, we have
\begin{align}\label{eqn:I-2}
 &\int_{\mathbb{T}^2} \frac{1}{\rho^{4}} \int_{\gamma_1 \times \gamma_2} \mathbbm{1}_{\rho}(f^n_{\omega_1}(x), z) \mathbbm{1}_\rho(f^n_{\omega_2}(y), z) dm_{\gamma_1}(x) dm_{\gamma_2}(y) dm(z) \nonumber\\
\leq& \int_{\gamma_1 \times \gamma_2} \frac{1}{\rho^{4} }  \mathbbm{1}_{2\rho}(f^n_{\omega_1}(x) , f^n_{\omega_2}(y))\left( \int_{\mathbb{T}^2} \mathbbm{1}_\rho(f^n_{\omega_2}(y), z)dm(z) \right) dm_{\gamma_1}(x) dm_{\gamma_2}(y)  \nonumber\\
\leq&\frac{\C{C}{0}\pi}{\rho^2}\int_{\gamma_1 \times \gamma_2} \mathbbm{1}_{2\rho}(f^n_{\omega_1}(x) , f^n_{\omega_2}(y))dm_{\gamma_1}(x) dm_{\gamma_2}(y).
\end{align}

For any $i=1,2$, we write $\gamma_i^n = f_{\omega_i}^n(\gamma_i)$ and let $m_{\gamma_i^n}$ be the arc length measure on $\gamma_i^n$. We first notice that for each $y_n \in \gamma_2^n$, by (6) in Section \ref{subsect:setting2}, we have 
\begin{align*}
m_{\gamma_1^n}(\{x_n\in \gamma_1^n: d(x_n,y_n)< 2\rho\}) < \frac{4\rho}{\sin(\C{\theta}{0}/2)}.
\end{align*}
Applying $(f^n_{\omega_1})^{-1}$ to $\gamma_1^n$, it follows from (10) in Section \ref{subsect:other.no.} that 
\begin{align}\label{eqn:I-2-1}
m_{\gamma_1}((f^n_{\omega_1})^{-1}(\{x_n\in \gamma_1^n: d(x_n,y_n)< 2\rho\})) < \frac{4\C{C''}{0}\lambda_{u,-}^{-n}\rho}{\sin(\C{\theta}{0}/2)}.
\end{align}
Assume that there exist $x_n\in\gamma_1^n$ and $y_n\in\gamma_2^n$ such that $d(x_n,y_n)<2\rho$. Observe the following: 
\begin{itemize}
\item $m_{\gamma_i^n}(\gamma_i^n)\leq \lambda_s^n/2$ for any $i=1,2$. (This is due to \eqref{eqn:I.length.upperbd} and \eqref{eqn:Lip.est.Dfn.u}.)
\item Since $\omega_1\pitchfork\omega_2$, for any point $y\in \gamma_2^n$ and any point $x\in \gamma_1^n$ such that $d(x,y) < 2\rho$, the angle between the $\overline{xx_n}$ and $\overline{yy_n}$ is at least $\C{C}{10}\lambda^n e^{\varepsilon n}$, where $\overline{xx_n}$ ($\overline{yy_n}$ resp.) is the geodesic segment connecting $x$ ($y$ resp.) and $x_n$ ($y_n$ resp.) in $\TT^2$. (This follows from the previous bullet point and the remark after Lemma \ref{lem:transversality.nearby}).
\end{itemize}
Choose $\CS{n''}{6}$ such that $\lambda_s^{\C{n''}{6}}/2<\C{C}{3}\C{\rho}{0}$ and that $2\sin(t)>t$ for any $0\leq t\leq  \C{C}{10}\lambda^{\C{n''}{6}}$. Then for any $n\geq \C{n''}{6}$, One can then easily verify that $\gamma_2^n\subset B(y_n,\C{C}{3}\C{\rho}{0})$ and that
$$\{y\in\gamma_2^n: d(y,\gamma_1^n)<2\rho\}\subset B(y_n,(\sin(\C{C}{10}\lambda^n))^{-1}\cdot 4\rho)\subset B(y_n,(\C{C}{10}\lambda^n )^{-1}\cdot 8\rho).$$
By (6) in Section \ref{subsect:setting2} and the assumptions on $\rho$ and $\C{\rho}{1}$, we have 
$$m_{\gamma_2^n}(\{y\in\gamma_2^n: d(y,\gamma_1^n)<2\rho\})\leq \frac{16\rho}{\C{C}{10}\lambda^n \sin(\C{\theta}{0}/2)},~\forall n\geq\C{n''}{6}.$$
Hence by (10) in Section \ref{subsect:other.no.}, we have 
\begin{align}\label{eqn:I-2-2}
m_{\gamma_2}((f^n_{\omega_2})^{-1}(\{y\in\gamma_2^n: d(y,\gamma_1^n)<2\rho\}))\leq \frac{16\C{C''}{0}\lambda_{u,-}^{-n}\rho}{\C{C}{10}\lambda^n \sin(\C{\theta}{0}/2)},~\forall n\geq\C{n''}{6}.
\end{align}
Apply \eqref{eqn:I-2-1} and \eqref{eqn:I-2-2} to \eqref{eqn:I-2}, for any $n\geq \C{n''}{6}$, we have 
\begin{align}\label{eqn:I-3}
 &\int_{\mathbb{T}^2} \frac{1}{\rho^{4}} \int_{\gamma_1 \times \gamma_2} \mathbbm{1}_{\rho}(f^n_{\omega_1}(x), z) \mathbbm{1}_\rho(f^n_{\omega_2}(y), z) dm_{\gamma_1}(x) dm_{\gamma_2}(y) dm(z) \nonumber\\
 \leq&\frac{\C{C}{0}\pi}{\rho^2}\int_{\gamma_2} m_{\gamma_1}((f^n_{\omega_1})^{-1}(\{x_n\in \gamma_1^n: d(x_n,f^n_{\omega_2}(y))< 2\rho\}))dm_{\gamma_2}(y)\nonumber\\
 \leq&\frac{\C{C}{0}\pi}{\rho^2}\cdot\frac{4\C{C''}{0}\lambda_{u,-}^{-n}\rho}{\sin(\C{\theta}{0}/2)}\cdot m_{\gamma_2}((f^n_{\omega_2})^{-1}(\{y\in\gamma_2^n: d(y,\gamma_1^n)<2\rho\}))\nonumber\\
 \leq&\frac{\C{C}{0}\pi}{\rho^2}\cdot\frac{4\C{C''}{0}\lambda_{u,-}^{-n}\rho}{\sin(\C{\theta}{0}/2)}\cdot\frac{16\C{C''}{0}\lambda_{u,-}^{-n}\rho}{\C{C}{10}\lambda^n \sin(\C{\theta}{0}/2)}=\frac{64\C{C}{0}(\C{C''}{0})^2\lambda_{u,-}^{-2n}\pi}{\C{C}{10}\lambda^n\sin^2(\C{\theta}{0}/2)}=:\C{C'''}{11}(n).
\end{align}
Let $\C{C'}{11}(n,\C{\rho}{1}):=\C{C''}{11}(n,\C{\rho}{1})\C{C'''}{11}(n)$. \eqref{eqn:I.integrand.est} then follows from \eqref{eqn:I-1} and \eqref{eqn:I-3}. This finishes the proof.
%****************New Lemma 7.5-I end*************
\end{proof}
%*******

\subsection{Conclusion of the proof}

\begin{proof}[Proof of Theorem \ref{thm.mainthm}]
Let $\nu' = \nu_{\C{\rho}{1}}$ and fix $n'>\C{n}{6}(\varepsilon)$. In particular, for any $n\in\ZZ_{\geq0}$, we have $\mu^{*n}*\nu'(\TT^2)=\nu'(\TT^2)$. For simplicity, we write $\widehat{c}_n:=\C{C}{11}(n,\C{\rho}{1})$.

\begin{claim}\label{claim.upperbound}
There exists a constant $K>0$ such that for any $\rho$ satisfying \eqref{eqn:rho.cond}, we have $\displaystyle \limsup_{m \to +\infty} \left\|\frac{1}{m}\sum_{i=0}^{m-1} \mu^{*i}* \nu'\right\|^2_\rho \leq K.$ 
\end{claim}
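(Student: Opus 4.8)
The plan is to iterate the Lasota--Yorke inequality of Lemma \ref{lem.lasotayork} along the single arithmetic progression $\{kn'\}_{k\ge 0}$, extract from it a bound on $\|\mu^{*i}*\nu'\|_\rho$ that is uniform in $i$ and, crucially, in $\rho$, and then pass to the Ces\`aro averages by the triangle inequality for the seminorm. The genuine analytic content is entirely contained in Lemma \ref{lem.lasotayork}; what is left is bookkeeping.

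First I would check that $\mu^{*i}*\nu'$ stays in the class to which Lemma \ref{lem.lasotayork} applies once $i$ is large. Since $\nu'=\nu_{\C{\rho}{1}}$ is $(\cU,L)$-admissible and supported on admissible curves of length $\ge\C{\rho}{1}$, Corollary \ref{cor.admissibletoadmissible} together with Lemma \ref{lem.curvaturecontrol} and Remark \ref{remark.lengthpartition} shows that for every $i\ge N_0$, where $N_0=N_0(\cU,L)\ge\max\{\C{n}{2},\C{n}{3}\}$ is chosen so that in addition $\lambda_{u,-}^{N_0}\ge\C{C''}{0}$, the measure $\mu^{*i}*\nu'$ is $(\cU,L)$-admissible and supported on admissible curves of length $\ge(\C{C''}{0})^{-1}\lambda_{u,-}^{i}\C{\rho}{1}\ge\C{\rho}{1}$. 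Hence each such $\mu^{*i}*\nu'$ is a legitimate input measure for Lemma \ref{lem.lasotayork}.

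Set $a_i:=\|\mu^{*i}*\nu'\|_\rho^2$. From $(\mu^{*i}*\nu')(B(z,\rho))\le\nu'(\TT^2)$ and $m(\TT^2)=1$ one has the crude finite bound $a_i\le(\nu'(\TT^2))^2\rho^{-4}$ for all $i$. Now fix $\rho$ satisfying \eqref{eqn:rho.cond} for $n=n'$ and apply Lemma \ref{lem.lasotayork} with $n=n'\ (\ge\C{n}{6}(\varepsilon))$ to the admissible measure $\mu^{*(i-n')}*\nu'$, valid for $i\ge N_0+n'$ by the previous step; using $\mu^{*n'}*(\mu^{*(i-n')}*\nu')=\mu^{*i}*\nu'$ and $(\mu^{*(i-n')}*\nu')(\TT^2)=\nu'(\TT^2)$ this yields the recursion
\[
a_i\ \le\ \widehat{\lambda}^{\,n'}\,a_{i-n'}\ +\ \widehat{c}_{n'}\,(\nu'(\TT^2))^2,\qquad i\ge N_0+n',
\]
with $\widehat{\lambda}\in(0,1)$ and $\widehat{c}_{n'}=\C{C}{11}(n',\C{\rho}{1})$ both independent of $\rho$. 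Iterating this separately along each residue class modulo $n'$ (the finitely many initial terms being finite by the crude bound) gives
\[
\limsup_{i\to\infty} a_i\ \le\ \frac{\widehat{c}_{n'}\,(\nu'(\TT^2))^2}{1-\widehat{\lambda}^{\,n'}}\ =:\ K,
\]
and $K$ depends only on $n'$, $\C{\rho}{1}$ and $\nu'(\TT^2)$, hence not on $\rho$.

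Finally, by the triangle inequality for $\|\cdot\|_\rho$,
\[
\Bigl\|\tfrac1m\sum_{i=0}^{m-1}\mu^{*i}*\nu'\Bigr\|_\rho\ \le\ \tfrac1m\sum_{i=0}^{m-1}\|\mu^{*i}*\nu'\|_\rho\ =\ \tfrac1m\sum_{i=0}^{m-1}\sqrt{a_i},
\]
and since $\limsup_i\sqrt{a_i}\le\sqrt K$ while any fixed finite block of initial terms contributes $o(1)$ to the Ces\`aro average, $\limsup_{m\to\infty}\tfrac1m\sum_{i=0}^{m-1}\sqrt{a_i}\le\sqrt K$; squaring gives Claim \ref{claim.upperbound} with this $K$. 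The only point that needs a little attention is to make sure the constraint \eqref{eqn:rho.cond} invoked at each application of Lemma \ref{lem.lasotayork} is always the one attached to the single fixed exponent $n'$, so that no further shrinking of $\rho$ is forced as the iteration proceeds, and that $\widehat{\lambda}$ and $\widehat{c}_{n'}$ really are $\rho$-independent; both are immediate from the statement of that lemma. I do not expect a real obstacle here: the difficulty of the whole circle of ideas lives in Lemma \ref{lem.lasotayork} and in the transversality and admissibility estimates feeding it, not in this last consequence.
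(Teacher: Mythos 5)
Your proof is correct and follows the same strategy as the paper's: iterate the Lasota--Yorke inequality of Lemma~\ref{lem.lasotayork} along arithmetic progressions of period $n'$, sum the geometric tail to get the $\rho$-independent constant $K=\widehat{c}_{n'}(\nu'(\TT^2))^2/(1-\widehat{\lambda}^{n'})$, and observe that the finitely many initial terms contribute nothing to the $\limsup$. You are somewhat more explicit than the paper in verifying, via Corollary~\ref{cor.admissibletoadmissible} and Remark~\ref{remark.lengthpartition}, that $\mu^{*i}*\nu'$ satisfies the admissibility and curve-length hypotheses of Lemma~\ref{lem.lasotayork} only from some threshold $N_0$ onward and in restricting the iteration accordingly---a point the paper passes over when it iterates down to $\mu^{*r}*\nu'$ for $1\le r<n'$---but this is a refinement of bookkeeping, not a different route.
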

\begin{proof}
In what follows, write $M_{\rho,n'} := \max \{\|\mu^{*r}*\nu_{\rho_0}\|^2_\rho: r=0, \cdots, n'-1\}$.  By Lemma \ref{lem.lasotayork}, we have
\begin{align*}
&\left\|\frac{1}{m} \sum_{i=0}^{m-1} \mu^{*i}*\nu'\right\|_\rho^2 
\leq \frac{1}{m}\sum_{r=0}^{n'-1} \sum_{l=0}^{\left[\frac{m}{n'}\right]} \left\|\mu^{*ln' + r}* \nu'\right\|^2_\rho \\
%\leq& \frac{1}{m}\sum_{r=0}^{n'-1} \sum_{l=0}^{\left[\frac{m}{n'}\right]}\left( \widehat{\lambda}^{n'} \left\|\mu^{*(l-1)n' + r}* \nu'\right\|^2_\rho + \widehat{c}_{n'}\nu'(\TT^2)\right) \\
\leq& \frac{1}{m}\sum_{r=0}^{n'-1} \sum_{l=0}^{\left[\frac{m}{n'}\right]}\left( \widehat{\lambda}^{l n'} \left\|\mu^{*r}* \nu'\right\|^2_\rho + \widehat{c}_{n'}\left(\sum_{j=0}^l \widehat{\lambda}^{jn'} \right)(\nu'(\TT^2))^2 \right)  \\
\leq& \frac{1}{m}\sum_{r=0}^{n'-1} \sum_{l=0}^{\left[\frac{m}{n'}\right]}\left( \widehat{\lambda}^{l n'} \left\|\mu^{*r}* \nu'\right\|^2_\rho + \frac{\widehat{c}_{n'} (\nu'(\TT^2))^2}{1-\widehat{\lambda}^{n'}}\right)\\
\leq & \frac{1}{m}\sum_{r=0}^{n'-1} \sum_{l=0}^{\left[\frac{m}{n'}\right]}M_{\rho, n'} \widehat{\lambda}^{ln'} + \frac{1}{m}\sum_{r=0}^{n'-1} \sum_{l=0}^{\left[\frac{m}{n'}\right]} \frac{\widehat{c}_{n'} (\nu'(\TT^2))^2}{1-\widehat{\lambda}^{n'}}
\leq \frac{M_{\rho, n'} n' K'}{m} + \frac{\widehat{c}_{n'} (\nu'(\TT^2))^2}{1-\widehat{\lambda}^{n'}}.
\end{align*}
Therefore,
\[
\displaystyle \limsup_{m\to + \infty}  \left\|\frac{1}{m} \sum_{i=0}^{m-1} \mu^{*i}*\nu'\right\|_\rho^2 \leq \frac{\widehat{c}_{n'} (\nu'(\TT^2))^2}{1-\widehat{\lambda}^{n'}}=: K \qedhere
\]
\end{proof}

By Lemma \ref{lemma.srbconvergence}, the measure $\frac{1}{m{\nu'(\TT^2)}} \sum_{i=0}^{m-1} \mu^i_*\nu'$ converges to the unique SRB measure $\nu$.  By Lemma \ref{lem.normconvergence}, for any $\rho$
\[
\displaystyle \|\nu\|^2_\rho =\frac{1}{{(\nu'(\TT^2))^2}} \lim_{m\to +\infty} \left\|\frac{1}{m} \sum_{i=0}^{m-1} \mu^{*i}*\nu'\right\|^2_\rho \leq \frac{K}{{(\nu'(\TT^2))^2}}.
\]
Since this is true for any $\rho>0$ small enough,   we obtain
\[
\displaystyle \liminf_{\rho \to 0} \|\nu\|^2_\rho \leq \frac{K}{{(\nu'(\TT^2))^2}}.
\]
By Lemma \ref{lem.liminfnorm},  $\nu$ is absolutely continuous with respect to the smooth measure $m$ on $\mathbb{T}^2$ and $\lim_{\rho \to 0} \|\nu\|_{\rho} = \left\|\frac{d\nu}{d m}\right\|_{\mathrm{L}^2(m)}$. \qedhere

\end{proof}

\subsection*{The proofs of Corollaries \ref{thm.thmforsurfaces} and \ref{thm.invariantmeasure}}

In this section we will suppose that $f,g\in \mathrm{Diff}_m^2(\mathbb{T}^2)$ verify conditions \textbf{(C1)-(C4)}. Fix $\beta \in (0, \frac{1}{2}]$ and let $\mathcal{U}_f$ and $\mathcal{U}_g$ be the open sets given by Theorem \ref{thm.mainthm}.

\begin{proof}[Proof of Corollary \ref{thm.thmforsurfaces}]
Let $\mu$ be a probability measure verifying the hypothesis of Corollary \ref{thm.thmforsurfaces}. Conditions \textbf{(C1),(C2)} and \textbf{(C4)} allows us to apply the main result in \cite{Brown-Hertz} to conclude that any $\mu$-stationary ergodic measure $\nu$ is either SRB or atomic (condition \textbf{(C4)} implies that the stable direction is random).  The conclusion is a direct consequence of Theorem \ref{thm.mainthm}. \qedhere

\end{proof}

\begin{proof}[Proof of Corollary \ref{thm.invariantmeasure}]
Fix $\widehat{f} \in \mathcal{U}_f$ and $\widehat{g} \in \mathcal{U}_g$, and suppose that $\nu$ is a non-atomic invariant measure for $\widehat{f}$ and $\widehat{g}$. Consider $\mu = \frac{1}{2} \delta_{\widehat{f}} + \frac{1}{2} \delta_{\widehat{g}}$.  Clearly $\mu$ verify the hypothesis of Corollary \ref{thm.thmforsurfaces}. Since $\nu$ is invariant by the two diffeomorphisms, we have that $\nu$ is $\mu$-stationary.  Let $\Gamma$ be the semigroup generated by $\widehat{f}$ and $\widehat{g}$. Observe that ergodic, atomic $\mu$-stationary measures are supported on points with finite $\Gamma$-orbits. In particular, there are at most countably many of them.  Therefore, we conclude that there are at most countably many different ergodic $\mu$-stationary measures.  If $\nu$ were not $\mu$-ergodic, there would be a finite orbit with positive $\nu$-measure. This is not possible since $\nu$ is non-atomic. Therefore, $\nu$ is $\mu$-ergodic. The conclusion follows directly from Corollary \ref{thm.thmforsurfaces}, using that $\nu$ is non-atomic.
\end{proof}

\section{Equidistribution and orbit closure classification} \label{sec:equidistribution}

In this section, we prove Theorem \ref{thm.equidistribution},  and Corollaries \ref{thm.orbitclosureclassification} and \ref{thm.genericminimality}.  The proof of Theorem \ref{thm.equidistribution} and Corollary \ref{thm.orbitclosureclassification} is essentially the same as the proofs of Propositions $4.1$ and $4.2$ from \cite{chung}. 
In this section, let $\mathcal{U}_f$ and $\mathcal{U}_g$ be $C^2$-neighborhoods of $f$ and $g$, respectively, such that conditions (C1)-(C4) hold for any pair $(\widehat{f}, \widehat{g}) \in \mathcal{U}_f \times \mathcal{U}_g$.

\begin{dfn}
A probability measure $\mu$ on $\text{Diff}^2(\mathbb{T}^2)$ is \emph{uniformly expanding} if there are constants $C>0$ and $N\in \mathbb{N}$ such that for every $x\in \mathbb{T}^2$ and unit vector $v\in T_x \mathbb{T}^2$, it holds
\[
\displaystyle \int \log \|Df^n_\omega(x) v\| d\mu^N(\omega) > C.
\]
\end{dfn}

In other words, one sees uniform expansion at a uniform time on average for every point and direction.

\begin{lem}\label{lemma.uniformexpansionforus}

Let $\mu$ be a probability measure supported on $\mathcal{U}_f \cup \mathcal{U}_g$ such that $\mu(\mathcal{U}_f)>0$ and $\mu(\mathcal{U}_g)>0$. Then $\mu$ is uniformly expanding. 
\end{lem}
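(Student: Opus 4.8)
The plan is to use the joint cone conditions together with \textbf{(C4)}: from every point $x$ and every direction there is a uniform positive probability of entering the common unstable cone $\mathcal{C}^u$ within a uniformly bounded number of steps, and once inside $\mathcal{C}^u$ a vector is expanded at the uniform rate $\lambda_{u,-}>1$; a renewal-type computation then converts this into a uniform lower bound for the $\mu^N$-average of $\log\|Df^N_\omega(x)v\|$. First I would record two elementary facts from item (10) of Section \ref{subsect:other.no.}: $\mathcal{C}^u$ is forward-invariant under every diffeomorphism in $\mathcal{U}_f\cup\mathcal{U}_g$ and $\|Df^n_\omega(x)v\|\geq(\C{C''}{0})^{-1}\lambda_{u,-}^{n}\|v\|$ whenever $v\in\mathcal{C}^u_x$; moreover, since every map in $\mathcal{U}_f\cup\mathcal{U}_g$ and its inverse has $C^2$-norm at most $\C{C'}{0}$, one has $\C{C'}{0}^{-n}\|v\|\leq\|Df^n_\omega(x)v\|\leq\C{C'}{0}^{n}\|v\|$ for every unit vector $v$.

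The main step is the claim that there are $N_0\in\mathbb{N}$ and $c_0>0$, depending only on $f,g$ and the chosen neighborhoods, so that for every $x\in\TT^2$ and every direction $[v]\in\mathbb{P}T_x\TT^2$ (with representative vector $v$),
\[
\mu^{N_0}\bigl(\{\omega: Df^{N_0}_\omega(x)v\in\mathcal{C}^u_{f^{N_0}_\omega(x)}\}\bigr)\ \geq\ c_0 .
\]
To prove it I would use \textbf{(C4)}. By continuity of $x\mapsto E^s_f(x)$ and $x\mapsto E^s_g(x)$ and compactness of $\TT^2$ there is $\delta_s>0$ with $d\bigl(E^s_f(x),E^s_g(x)\bigr)\geq 4\delta_s$ for all $x$; after possibly shrinking $\mathcal{U}_f,\mathcal{U}_g$ (which the statement of Theorem \ref{thm.equidistribution} allows), continuity of the stable distribution of the skew product over $\mathcal{U}_\star$-words gives $d\bigl(E^s_{\omega,x},E^s_\star(x)\bigr)<\delta_s$ for every $\omega\in\mathcal{U}_\star^{\mathbb{N}}$ and $\star=f,g$ (here $E^s_{\omega,x}$ is well defined since stable directions depend only on the future). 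A standard estimate for the induced action on directions, based on the uniform hyperbolicity rates of Proposition \ref{prop:UH} and the cone estimates \eqref{eqn:cone.contraction.u}--\eqref{eqn:cone.contraction.s}, then yields $N_1=N_1(\delta_s)$ such that for every $x$, every $\omega\in\mathcal{U}_\star^{\mathbb{N}}$ and every direction $[v]$ with $d\bigl([v],E^s_{\omega,x}\bigr)\geq\delta_s$ one has $Df^{N_1}_\omega(x)v\in\mathcal{C}^u_{f^{N_1}_\omega(x)}$. For an arbitrary $(x,[v])$, since $E^s_f(x)$ and $E^s_g(x)$ are $4\delta_s$-apart, $[v]$ is $2\delta_s$-far from $E^s_\star(x)$ for at least one $\star\in\{f,g\}$, hence $\delta_s$-far from $E^s_{\omega,x}$ for all $\omega\in\mathcal{U}_\star^{\mathbb{N}}$; intersecting with the event that the first $N_1$ coordinates of $\omega$ lie in $\mathcal{U}_\star$ (of $\mu^{N_1}$-measure $\mu(\mathcal{U}_\star)^{N_1}$) and using forward-invariance of $\mathcal{C}^u$ gives the claim with $N_0=N_1$ and $c_0=\min\{\mu(\mathcal{U}_f),\mu(\mathcal{U}_g)\}^{N_0}>0$.

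Finally I would run the renewal estimate. Fix $(x,[v])$ with $v$ unit, put $a=\log\C{C'}{0}\geq0$, $b=\log\lambda_{u,-}>0$, $c=\log\C{C''}{0}\geq0$, and let $\tau(\omega)=\min\{n\geq0: Df^n_\omega(x)v\in\mathcal{C}^u_{f^n_\omega(x)}\}$. By the claim and the i.i.d.\ structure, $\mu^{\mathbb{N}}(\tau>mN_0)\leq(1-c_0)^m$ for all $m$. If $\tau(\omega)\leq N$, the facts recorded above give $\log\|Df^N_\omega(x)v\|\geq-\tau a+(N-\tau)b-c$, and in all cases $\log\|Df^N_\omega(x)v\|\geq-Na$. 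Splitting the integral over $\{\tau\leq\varepsilon_0N\}$ and its complement with $\varepsilon_0=\tfrac{b}{2(a+b)}$, so that on the first set the integrand is at least $\tfrac{Nb}{2}-c$, one gets
\[
\int\log\|Df^N_\omega(x)v\|\,d\mu^{N}(\omega)\ \geq\ N\Bigl(\tfrac{b}{2}-(1-c_0)^{\lfloor\varepsilon_0N/N_0\rfloor}\bigl(a+\tfrac{b}{2}\bigr)\Bigr)-c ,
\]
which exceeds $\tfrac{Nb}{8}$ once $N$ is large enough, with a threshold independent of $(x,[v])$. Choosing such an $N$ and $C=\tfrac{Nb}{8}>0$ shows that $\mu$ is uniformly expanding. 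I expect the real difficulty to sit in the main step: upgrading the transversality of $E^s_f$ and $E^s_g$ in \textbf{(C4)} to a uniform transversality for the stable directions of all words in $\mathcal{U}_f^{\mathbb{N}}$ and $\mathcal{U}_g^{\mathbb{N}}$, together with the uniform-in-word bound on the number of iterates needed to push a direction transverse to the word's stable direction into $\mathcal{C}^u$; both rely on the uniform hyperbolicity constants of Proposition \ref{prop:UH} and the cone-contraction estimates of Section \ref{subsect:setting}.
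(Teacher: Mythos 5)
Your argument is correct, but it takes a genuinely different route from the paper. The paper's proof is one line: it observes that condition \textbf{(C4)} forces the stable distribution to be non-invariant for $\mu$-a.e.\ generator and then invokes Proposition 3.17 of \cite{chung}, a black-box criterion that converts such non-invariance of the stable line field (together with uniform hyperbolicity) directly into uniform expansion. You instead reprove the content of that criterion from scratch in this setting: from \textbf{(C4)} and continuity you extract a uniform spectral gap $4\delta_s$ between $E^s_f$ and $E^s_g$, upgrade it (after shrinking $\cU_f,\cU_g$, which Section~\ref{sec:equidistribution} permits) to transversality between all $\cU_f^{\Nbb}$-word and all $\cU_g^{\Nbb}$-word stable directions, conclude that any direction has at least one $\star\in\{f,g\}$ for which it is uniformly far from $E^s_{\omega,x}$ for all $\omega\in\cU_\star^{\Nbb}$, and use the cone-contraction estimate \eqref{eqn:cone.contraction.u} to get a bounded time $N_1$ to enter $\cC^u$ with probability at least $\min\{\mu(\cU_f),\mu(\cU_g)\}^{N_1}$; the renewal/Markov estimate then turns this hitting-time bound into a lower bound on $\int\log\|Df_\omega^N(x)v\|\,d\mu^N$ that is uniform in $(x,v)$. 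The paper's approach buys brevity at the cost of an external reference; your approach is self-contained and makes the mechanism explicit, at the cost of a page of bookkeeping. One small point to tighten if you write this up: the event you intersect with in the main step only constrains the first $N_1$ coordinates of $\omega$, while $E^s_{\omega,x}$ depends on the whole future, so you should phrase the pushing-into-the-cone statement so that it manifestly depends only on the first $N_1$ coordinates (e.g.\ by extending a finite $\cU_\star$-word to an infinite one and noting $Df^{N_1}_\omega$ is unaffected). This is routine but worth stating, since it is exactly the hinge of the renewal step.
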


\begin{proof}
Since the stable distribution is not invariant for $\mu$-almost every $h$, the Lemma follows as a direct application of Proposition $3.17$ from \cite{chung}.
\end{proof}

Let $S$ be a finite set contained in $\mathcal{U}_f \cup \mathcal{U}_g$ such that $S$ intersects both $\mathcal{U}_f $ and $\mathcal{U}_g$, and let $\Gamma_S$ be the semigroup generated by $S$.  Let $\mu$ be as in the statement of Theorem \ref{thm.equidistribution}. By Lemma \ref{lemma.uniformexpansionforus}, $\mu$ is uniform expanding.  Below, we will state several results from \cite{chung} that hold in more generality, under some integrability condition.

\begin{prop}[Proposition $4.6$ from \cite{chung}]\label{proposition.countablefiniteorbits}
The number of points with finite $\Gamma_S$-orbit is countable. 
\end{prop}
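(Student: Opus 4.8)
The plan is to show that for a uniformly expanding finitely supported $\mu$, points with finite $\Gamma_S$-orbit form a countable set, by combining the classification of stationary measures (Corollary \ref{thm.thmforsurfaces}) with a counting argument on periodic data. First I would observe that a point $x$ with finite $\Gamma_S$-orbit gives rise to a finitely supported $\Gamma_S$-invariant, hence $\mu$-stationary, probability measure $\nu_x$ (the normalized uniform measure on the orbit). If two points have disjoint finite orbits they produce distinct ergodic atomic stationary measures; so it suffices to bound the number of ergodic atomic stationary measures, or equivalently the number of finite $\Gamma_S$-orbits, or to directly exhibit a countable exhaustion of the set of such points. The cleanest route is the latter: I would stratify the set of periodic points by period and by which word realizes the return, and show each stratum is finite (or at worst countable).

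The key step is a uniform-expansion pigeonhole. Suppose $x$ has finite $\Gamma_S$-orbit $\{x = x_1, \dots, x_k\}$. Each $x_i$ is fixed by some element $h_i \in \Gamma_S$ (take $h_i$ to be a word returning $x_i$ to itself; such a word exists because $S$ acts on the finite orbit, so some iterate of the "follow $x_i$ around" dynamics closes up — more carefully, consider the finite-state graph on the orbit with edges labeled by $S$ and extract a cycle through $x_i$). Because $\mu$ is uniformly expanding (Lemma \ref{lemma.uniformexpansionforus}, applicable since $\mu(\mathcal{U}_f), \mu(\mathcal{U}_g) > 0$), and since all generators lie in $\mathcal{U}_f \cup \mathcal{U}_g$ and hence satisfy the joint cone condition \textbf{(C1)}--\textbf{(C2)}, every word $h \in \Gamma_S$ is a hyperbolic-like map on the fibered level: $Dh$ contracts the stable cone and expands the unstable cone at definite rates. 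Therefore each $h_i$ is an Anosov-type diffeomorphism with a unique fixed point in any sufficiently small neighborhood, and the fixed points of a fixed word $h$ form a finite set (the graph transform / hyperbolic fixed point count: the number of fixed points of $h$ is finite, bounded in terms of the expansion/contraction rates and the diameter of $\mathbb{T}^2$). Since $\Gamma_S$ is a countable semigroup, $\bigcup_{h \in \Gamma_S} \mathrm{Fix}(h)$ is a countable union of finite sets, hence countable; and every point with finite $\Gamma_S$-orbit lies in this union. This gives the result.

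I would organize the write-up as: (i) reduce "finite orbit" to "fixed by some word"; (ii) invoke the joint cone condition to see each word is uniformly hyperbolic on fibers, so $\mathrm{Fix}(h)$ is finite for each $h \in \Gamma_S$ (this uses the same graph-transform / local product structure already set up in Section \ref{sec:stablemfd}, applied to the constant word $\omega = (h, h, h, \dots)$); (iii) conclude by countability of $\Gamma_S$. Alternatively, if the periodic-point-counting estimate is awkward to state cleanly, I would instead argue via stationary measures: distinct finite orbits give distinct ergodic atomic $\mu$-stationary measures, and one shows there are only countably many of these — but this essentially reduces to the same finiteness of $\mathrm{Fix}(h)$, so the direct argument is preferable.

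The main obstacle I expect is step (i): making precise why a point with finite $\Gamma_S$-orbit is fixed by a genuine element of the semigroup. One must be slightly careful because $\Gamma_S$ is a semigroup, not a group — a word permuting the finite orbit need not have an inverse in $\Gamma_S$ — but since the orbit is finite and $S$ is finite, the action of $S^*$ on the orbit factors through a finite transition structure, and one extracts a word $h$ with $h(x) = x$ by following a cycle in this structure (e.g., among the maps $x \mapsto w(x)$ for words $w$ of length $\le k$, two must agree, and then a suitable "difference" word fixes the relevant point after possibly composing once more to land back at $x$). This is routine once set up, and the hyperbolicity input needed for step (ii) is already available from Proposition \ref{prop:UH} applied to constant words.
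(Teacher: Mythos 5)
The paper itself does not prove this statement: it cites Proposition 4.6 of \cite{chung} and merely notes that Chung's argument is via a Margulis (drift) function. Your proposal is a genuinely different, more elementary route that exploits extra structure specific to this paper. The decisive ingredient you use is that every element of $\mathcal{U}_f\cup\mathcal{U}_g$ preserves the common cone fields $\cC^s,\cC^u$ of \textbf{(C1)}--\textbf{(C2)} with uniform rates, so that every word $h\in\Gamma_S$ is itself an Anosov diffeomorphism of $\TT^2$ (apply Proposition~\ref{prop:UH} to the periodic word $(\ldots,f_0,\ldots,f_{n-1},f_0,\ldots)$ to get $Dh$-invariant hyperbolic splittings); hence $\mathrm{Fix}(h)$ is finite for each $h$, and $\bigcup_{h\in\Gamma_S}\mathrm{Fix}(h)$ is countable. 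Chung's proposition, by contrast, is proved for arbitrary uniformly expanding $C^2$-actions on surfaces, where individual words need not be hyperbolic and could in principle have infinitely many fixed points; the Margulis function supplies a recurrence estimate in that much weaker setting. Note that uniform expansion (your appeal to Lemma~\ref{lemma.uniformexpansionforus}) is not actually used in your argument --- only the joint cone condition is --- which is precisely why your shortcut is available here but would not replace Chung's argument in general.

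One point to tighten is the reduction in step (i). Since $\Gamma_S$ is only a semigroup, a point $x$ with finite orbit $O=\{h(x):h\in\Gamma_S\}$ need not lie in $O$, so it need not be fixed by any word, and your phrase ``composing once more to land back at $x$'' is not always available. What is true is that each $s\in S$ restricts to a bijection of the finite set $O$ (injectivity of $s$ plus $s(O)\subseteq O$), so $\Gamma_S$ acts on $O$ through a finite permutation group and some nonempty word $w$ satisfies $w|_O=\mathrm{id}$; thus $s(x)\in\mathrm{Fix}(w)$ for every $s\in S$. Fixing one $s\in S$, the set of points with finite $\Gamma_S$-orbit is therefore contained in $s^{-1}\bigl(\bigcup_{w\in\Gamma_S}\mathrm{Fix}(w)\bigr)$, which is still countable. (If one adopts the convention that $\Gamma_S$ contains the identity, then $x\in O$, the finite group acts transitively on $O$, and your argument closes without the preimage step.)
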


% 
%\begin{lem}[Lemma $4.3$ in \cite{chung}] 
%Suppose that $\mu$ is a uniformly expanding measure. Then there exist a proper continuous function $u:\mathbb{T}^2 \times \mathbb{T}^2/ \Delta \to \mathbb{R}$,  constants $c\in (0,1)$ and $b>0$, and an integer $n_0$ such that for all $(x,y) \in \mathbb{T}^2 \times \mathbb{T}^2/ \Delta$,
%\[
%\displaystyle \int u(h(x), h(y)) d\mu^{n_0}(h) \leq cu(x,y) + b.
%\]
%\end{lem}
%

\begin{lem}[Lemma $4.7$ in \cite{chung}]\label{lemma.omegaset}
Let $\mathcal{N}$ be a finite $\Gamma_S$-orbit in $\mathbb{T}^2$. For any $\varepsilon >0$, there exists an open set $\Omega_{\mathcal{N},\varepsilon}$ containing $\mathcal{N}$, such that for any compact set $F \subset \mathbb{T}^2 / \mathcal{N}$, there exists a positive integer $n_F$, such that for all $x\in F$, and $n> n_F$, we have
\[
\displaystyle \left(\frac{1}{n} \sum_{i=0}^{n-1} \mu^{*i}* \delta_x\right)(\Omega_{\mathcal{N},\varepsilon}) < \varepsilon. 
\]
\end{lem}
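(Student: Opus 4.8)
The plan is to construct a Lyapunov (Margulis‑type) function for the random walk that blows up along $\mathcal N$ and satisfies a strict drift estimate, and then to feed it into a routine Birkhoff–averaging argument. Throughout write $S=\supp(\mu)$ (a finite set), $\mathcal N=\{p_1,\dots,p_\ell\}$, and interpret $\TT^2/\mathcal N$ as $\TT^2\setminus\mathcal N$. Recall that by Lemma~\ref{lemma.uniformexpansionforus} the measure $\mu$ is uniformly expanding; let $N,C>0$ be the corresponding constants, so that $\int\log\|Df^N_\omega(x)v\|\,d\mu^N(\omega)>C$ for every $x$ and every unit $v$ (we will only use this at the points of $\mathcal N$). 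Since $S$ is finite and consists of $C^2$ diffeomorphisms, $M:=\sup\{|\log\|Df^N_\omega(x)v\||:x\in\TT^2,\ \|v\|=1,\ \omega\in S^N\}<\infty$.

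First I would record a structural observation: each generator $h\in S$ is a diffeomorphism of $\TT^2$, hence a bijection, and $h(\mathcal N)\subset\mathcal N$ because $\mathcal N$ is a (forward) $\Gamma_S$-orbit; injectivity and finiteness then force $h(\mathcal N)=\mathcal N=h^{-1}(\mathcal N)$, so $f^n_\omega(\mathcal N)=\mathcal N=(f^n_\omega)^{-1}(\mathcal N)$ for every finite word $\omega$ in $S$. Consequently $\TT^2\setminus\mathcal N$ is invariant under the random walk, and for $x\notin\mathcal N$ every measure $\mu^{*i}*\delta_x$ lives on $\TT^2\setminus\mathcal N$; in particular, for any $\delta>0$, the function $u(x):=d(x,\mathcal N)^{-\delta}$ is finite and continuous along the whole orbit of such an $x$.

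The core step is the drift inequality: for a suitable $\delta>0$ there are $\lambda\in(0,1)$ and $b>0$ with
\[
\int u\big(f^N_\omega(x)\big)\,d\mu^N(\omega)=\int u\,d(\mu^{*N}*\delta_x)\ \le\ \lambda\,u(x)+b\qquad\text{for all }x\in\TT^2\setminus\mathcal N .
\]
For $x$ near $\mathcal N$, say $d(x,\mathcal N)=d(x,p_j)<r_1$, standard distortion estimates (using the uniform $C^2$ bound on $f^N_\omega$ over the finitely many length‑$N$ words, and the separation of the points of $\mathcal N$) give, for $r_1$ small and any prescribed $\tau\in(0,1)$, that $f^N_\omega(p_j)$ is the nearest point of $\mathcal N$ to $f^N_\omega(x)$ and $d(f^N_\omega(x),\mathcal N)\ge(1-\tau)\|Df^N_\omega(p_j)v_x\|\,d(x,p_j)$, where $v_x$ is the unit vector at $p_j$ pointing toward $x$; hence $\int u(f^N_\omega(x))\,d\mu^N\le(1-\tau)^{-\delta}u(x)\int\|Df^N_\omega(p_j)v_x\|^{-\delta}\,d\mu^N$. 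Applying $e^{-\delta t}\le 1-\delta t+\tfrac{\delta^2}{2}t^2e^{\delta|t|}$ with $t=\log\|Df^N_\omega(p_j)v_x\|$, $|t|\le M$, and $\int t\,d\mu^N\ge C$, gives $\int\|Df^N_\omega(p_j)v_x\|^{-\delta}\,d\mu^N\le 1-\tfrac{\delta C}{2}$ for $\delta$ small. One then first chooses $\tau$ small enough that $\tfrac C2+\log(1-\tau)>\tfrac C4$, and afterwards $\delta$ small enough that $\lambda:=(1-\tau)^{-\delta}(1-\tfrac{\delta C}{2})=1-\delta\big(\tfrac C2+\log(1-\tau)\big)+O(\delta^2)<1$; this yields the inequality with constant $\lambda$ and no additive term on $\{d(\cdot,\mathcal N)<r_1\}$. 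For $x$ with $d(x,\mathcal N)\ge r_1$ one has $u(x)\le r_1^{-\delta}$, while $(f^N_\omega)^{-1}(\mathcal N)=\mathcal N$ together with a uniform Lipschitz bound on $(f^N_\omega)^{-1}$ gives $d(f^N_\omega(x),\mathcal N)\ge c_0 r_1$, so $\int u(f^N_\omega(x))\,d\mu^N\le(c_0r_1)^{-\delta}=:b$. \emph{The main obstacle is exactly this tuning}: the uniform‑expansion constant $C$ may be small, so one cannot afford a crude distortion loss (a factor $\tfrac12$ would destroy the estimate) — $\tau$ must be chosen small relative to $C$ before $\delta\to0$.

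Finally I would iterate. Writing $\mu^{(m)}_x:=\mu^{*mN}*\delta_x$, the drift inequality gives $\int u\,d\mu^{(m+1)}_x-\int u\,d\mu^{(m)}_x\le-(1-\lambda)\int u\,d\mu^{(m)}_x+b$; summing over $m<M$ and using $u\ge0$ yields $\frac1M\sum_{m=0}^{M-1}\int u\,d\mu^{(m)}_x\le\frac{\sup_F u}{M(1-\lambda)}+\frac{b}{1-\lambda}$, so for $F\subset\TT^2\setminus\mathcal N$ compact (where $\sup_F u<\infty$) and $M\ge M_F$ the right side is at most $B:=\frac{2b}{1-\lambda}$, a bound independent of $F$. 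Interpolating the intermediate times $mN+r$, $0\le r<N$, via $u\circ f^r_\omega\le c_1^{-\delta}u$ pointwise — again from $(f^r_\omega)^{-1}(\mathcal N)=\mathcal N$ and a uniform Lipschitz bound — one gets $\frac1n\sum_{i=0}^{n-1}\int u\,d(\mu^{*i}*\delta_x)\le 2c_1^{-\delta}B$ for all $n$ large enough (the threshold $n_F$ depending on $F$ through $\sup_F u$). Now set $\Omega_{\mathcal N,\varepsilon}:=\{x:d(x,\mathcal N)<r_\Omega\}$ with $r_\Omega$ chosen small enough that $2c_1^{-\delta}B\,r_\Omega^{\delta}<\varepsilon$ — this depends only on $\mathcal N$ and $\varepsilon$ — and apply Markov's inequality: $(\mu^{*i}*\delta_x)(\Omega_{\mathcal N,\varepsilon})=(\mu^{*i}*\delta_x)(\{u>r_\Omega^{-\delta}\})\le r_\Omega^{\delta}\int u\,d(\mu^{*i}*\delta_x)$, whence $\frac1n\sum_{i=0}^{n-1}(\mu^{*i}*\delta_x)(\Omega_{\mathcal N,\varepsilon})\le r_\Omega^{\delta}\cdot 2c_1^{-\delta}B<\varepsilon$ for all $x\in F$ and $n>n_F$. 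This is precisely the assertion of the lemma.
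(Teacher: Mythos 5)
Your proposal is correct and follows exactly the route the paper indicates: the paper cites Chung's Lemma 4.7 and notes that the proof rests on a Margulis function (Chung's Lemma 4.3), and you have reconstructed that argument in full — building the Margulis function $u(x)=d(x,\mathcal N)^{-\delta}$, deriving the drift inequality $\int u\,d(\mu^{*N}*\delta_x)\le\lambda u(x)+b$ from the uniform expansion constants $N,C$ of Lemma~\ref{lemma.uniformexpansionforus} (with the correct care in ordering the choices of the distortion parameter $\tau$ before $\delta$, using the $\Gamma_S$-invariance of $\mathcal N$ so that $(f^n_\omega)^{-1}(\mathcal N)=\mathcal N$), then iterating, interpolating between multiples of $N$, and closing with Markov's inequality on the sub-level set $\Omega_{\mathcal N,\varepsilon}=\{d(\cdot,\mathcal N)<r_\Omega\}$. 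This is the standard Eskin–Margulis/Benoist–Quint scheme that Chung's proof uses, so the blind proof and the cited proof agree.
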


The proof of Proposition \ref{proposition.countablefiniteorbits} and Lemma \ref{lemma.omegaset} uses a Margulis function (see Lemma $4.3$ from \cite{chung}). 

\begin{proof}[Proof of Theorem \ref{thm.equidistribution}]
The proof is exactly the same as the proof of Proposition $4.1$ from \cite{chung}, where the unique $\mu$-stationary SRB measure $\nu$ takes the role of the smooth measure $m$ in the proof.  The main property of $m$ used by Chung is that it is fully supported. Observe that \Cref{lem.fullsupport} gives us that the unique SRB $\mu$-stationary measure is fully supported.
\end{proof}

\begin{proof}[Proof of Theorem \ref{thm.orbitclosureclassification}]
Let $\nu$ be the unique $\mu$-stationary SRB measure. By Lemma \ref{lem.fullsupport}, $\nu$ is fully supported, in particular, it gives positive measure to any open set. The proof then follows by Theorem \ref{thm.equidistribution}.
\end{proof}

\begin{proof}[Proof of Theorem \ref{thm.genericminimality}]
Fix $\widehat{g} \in \mathcal{U}_g$.  For each $n\in \mathbb{N}$, the set of periodic points of period $n$, $\text{Per}(\widehat{g})$,  is finite.  It is easy to see that the set $\mathcal{U}_{f,\widehat{g},n}:= \{\widehat{f} \in \mathcal{U}_f: \text{Per}_n(\widehat{f}) \cap \text{Per}_n(\widehat{g}) = \emptyset\}$ is open.  It is also easy to see that $\mathcal{U}_{f,\widehat{g}, n}$ is dense.  By Baire's theorem, the set \
\[
\displaystyle \mathcal{R}_{\widehat{g}}:=\bigcap_{n\in \mathbb{N}} \mathcal{U}_{f,\widehat{g},n}
\]
 is a dense $\text{G}_\delta$ subset of $\mathcal{U}_f$.  Let $\widehat{f} \in \mathcal{R}_{\widehat{g}}$ and let $S = \{\widehat{f}, \widehat{g}\}$. Since $\text{Per}(\widehat{f}) \cap \text{Per}(\widehat{g}) = \emptyset$, there are no finite $\Gamma_S$-orbit. By Theorem \ref{thm.orbitclosureclassification}, every $\Gamma_S$-orbit is dense and the action is minimal. 
\end{proof}

\bibliographystyle{alpha}
\bibliography{bibliography}

\begin{thebibliography}{BFLM07}


\bibitem[BQ11]{MR2831114}
Yves Benoist and Jean-Fran{\c{c}}ois Quint.
\newblock Mesures stationnaires et ferm\'es invariants des espaces homog\`enes.
\newblock {\em Ann. of Math. (2)}, 174(2):1111--1162, 2011.

\bibitem[BQ13]{MR3037785}
Yves Benoist and Jean-Fran{\c{c}}ois Quint.
\newblock Stationary measures and invariant subsets of homogeneous spaces
  ({II}).
\newblock {\em J. Amer. Math. Soc.}, 26(3):659--734, 2013.

\bibitem[BFLM07]{MR2340439}
Jean Bourgain, Alex Furman, Elon Lindenstrauss, and Shahar Mozes.
\newblock Invariant measures and stiffness for non-abelian groups of toral
  automorphisms.
\newblock {\em C. R. Math. Acad. Sci. Paris}, 344(12):737--742, 2007.

\bibitem[BRH17]{Brown-Hertz}
Aaron Brown and Federico Rodriguez~Hertz.
\newblock Measure rigidity for random dynamics on surfaces and related skew
  products.
\newblock {\em J. Amer. Math. Soc.}, 30(4):1055--1132, 2017.

\bibitem[Chu20]{chung}
Ping~Ngai Chung.
\newblock Stationary measures and orbit closures of uniformly expanding random
  dynamical systems on surfaces.
\newblock arXiv:2006.03166, 2020.

\bibitem[EM18]{MR3814652}
Alex Eskin and Maryam Mirzakhani.
\newblock Invariant and stationary measures for the {${\rm SL}(2,\Bbb R)$}
  action on moduli space.
\newblock {\em Publ. Math. Inst. Hautes \'Etudes Sci.}, 127:95--324, 2018.

\bibitem[EMM15]{MR3418528}
Alex Eskin, Maryam Mirzakhani, and Amir Mohammadi.
\newblock Isolation, equidistribution, and orbit closures for the {${\rm
  SL}(2,\Bbb R)$} action on moduli space.
\newblock {\em Ann. of Math. (2)}, 182(2):673--721, 2015.

\bibitem[KH95]{KH}
Anatole Katok and Boris Hasselblatt.
\newblock {\em Introduction to the modern theory of dynamical systems},
  volume~54 of {\em Encyclopedia of Mathematics and its Applications}.
\newblock Cambridge University Press, Cambridge, 1995.
\newblock With a supplementary chapter by Katok and Leonardo Mendoza.

\bibitem[LY85]{LY1}
François Ledrappier and Lai~Sang Young.
\newblock The metric entropy of diffeomorphisms. {I}. {C}haracterization of
  measures satisfying {P}esin's entropy formula.
\newblock {\em Ann. of Math. (2)}, 122(3):509--539, 1985.

\bibitem[LQ95]{Liu-Qian-book}
Pei-Dong Liu and Min Qian.
\newblock {\em Smooth ergodic theory of random dynamical systems}, volume 1606
  of {\em Lecture Notes in Mathematics}.
\newblock Springer-Verlag, Berlin, 1995.



\bibitem[Tsu01]{MR1862809}
Masato Tsujii.
\newblock Fat solenoidal attractors.
\newblock {\em Nonlinearity}, 14(5):1011--1027, 2001.

\bibitem[Tsu05]{Tsujii-bigpaper}
Masato Tsujii.
\newblock Physical measures for partially hyperbolic surface endomorphisms.
\newblock {\em Acta Math.}, 194(1):37--132, 2005.

\end{thebibliography}

\end{document}